\newtheorem{theorem}{Theorem}[section]
\newtheorem{claim}[theorem]{Claim}
\newtheorem{lemma}[theorem]{Lemma}
\newtheorem{proposition}[theorem]{Proposition}
\newtheorem{corollary}[theorem]{Corollary}
\theoremstyle{definition}
\newtheorem{definition}[theorem]{Definition}
\theoremstyle{remark}
\newtheorem{remark}[theorem]{Remark}
\numberwithin{equation}{section}
\newcommand{\dist}{\operatorname{dist}}
\renewcommand{\th}{\operatorname{Th}}
\newcommand{\spt}{\operatorname{spt}}
\newcommand{\diam}{\operatorname{diam}}
\newcommand{\graph}{\operatorname{graph}}
\newcommand{\tr}{\operatorname{tr}}
\newcommand{\eps}{\varepsilon}
\newcommand{\R}{\mathbb{R}}
\newcommand{\del}{\partial}
\newcommand{\mcfK}{\mathcal{K}}
\newcommand{\mcfM}{\mathcal{M}}
\newcommand{\vel}{\mathbf{vel}}
\newcommand{\Hvec}{\mathbf{H}}
\providecommand{\abs}[1]{\lvert #1\rvert}
\title[Mean convex flow with free boundary]{Mean convex mean curvature flow with free boundary}
\author[N. Edelen, R. Haslhofer, M. N. Ivaki, J. J. Zhu]{Nick Edelen, Robert Haslhofer, Mohammad N. Ivaki, Jonathan J. Zhu}
\begin{document}

\begin{abstract}
In this paper, we generalize White's regularity and structure theory for mean-convex mean curvature flow \cite{White_size,White_structure,White_subsequent} to the setting with free boundary. A major new challenge in the free boundary setting is to derive an a priori bound for the ratio between the norm of the second fundamental form and the mean curvature.  We establish such a bound via the maximum principle for a triple-approximation scheme, which combines ideas from Edelen \cite{Edelen16}, Haslhofer-Hershkovits \cite{HH18}, and Volkmann \cite{Vol}. Other important new ingredients are a Bernstein-type theorem and a sheeting theorem for low entropy free boundary flows in a halfslab, which allow us to rule out multiplicity 2 (half-)planes as possible tangent flows and, for mean convex domains, as possible limit flows. 
\end{abstract}

\maketitle

\tableofcontents

\section{Introduction}
A family of closed embedded hypersurfaces $\{M_t\}_{t\geq 0}$ in a Riemannian manifold $(N^{n+1},g)$ moves by mean curvature flow if the normal velocity at each point on the (hyper)surface is given by the mean curvature vector at that point. If the initial surface is mean-convex, i.e. if everywhere on the surface the mean curvature vector points inwards, then the flow can be continued uniquely through all singularities \cite{ES91,White_size}. This evolution can be described both in the language of partial differential equations as a level set flow \cite{ES91,CGG} and in the language of geometric measure theory as an integral Brakke flow \cite{brakke}.

White and Huisken-Sinestrari developed a far-reaching structure theory for the mean curvature flow of such mean-convex hypersurfaces. Their papers \cite{White_size,White_structure,White_subsequent,HS1,HS2} provide a package of estimates and structural results that yield a precise description of singularities and of high curvature regions (more recently, an alternative treatment of this theory has been given by Haslhofer-Kleiner \cite{HK}). In particular, their work implies that the parabolic Hausdorff dimension of the singular set is at most $n-1$, and all blowup limits are convex, noncollapsed, and smooth until they become extinct.\footnote{By Brendle \cite{Brendle_inscribed} and Haslhofer-Hershkovits \cite{HH18} this applies without dimensional restrictions and in general ambient manifolds.}

The goal of the present paper is to generalize this theory to hypersurface with free-boundary, i.e. hypersurfaces satisfying a Neumann-type boundary condition.  In analogy to the flow of closed hypersurfaces, the free-boundary flow through singularities can be described as a level-set flow \cite{GigaSato} or as a Brakke-type flow \cite{Edelen17}.  We prove here the existence of a unique mean-convex free boundary flow for all time and establish sharp results on the size and structure of singularities for this flow.  Recent work of White \cite{White_boundary} has developed an existence and regularity theory for flows with prescribed (Dirichlet) data, and in particular showed that the resulting flow is always regular near its Dirichlet data.  This is very different from the free-boundary setting, where one expects singularities on the boundary to be modelled on interior singularities.

\subsection{Mean convex free boundary flow}
To describe the setup, fix a smooth compact domain $D\subset N^{n+1}$. A smooth domain $K\subset D$ has \emph{free boundary} if $\partial K:=K\setminus \textrm{Int}_D(K)$ meets $\partial D$ orthogonally,\footnote{It is more convenient to phrase everything in terms of compact domains $K$ instead of hypersurfaces $M=\partial K$. These points of view are of course equivalent.} in other words
\begin{equation}
\langle N(x), \nu(x) \rangle= 0 \qquad \textrm{for all $x\in \partial K\cap \partial D$},
\end{equation}
where $\nu$ denotes the outward unit normal of $\partial K$ and $N$ denotes the outward unit normal to $\partial D$. These notions are illustrated in Figure \ref{figure_1}. We assume that $K$ is \emph{mean-convex}, i.e. that the mean curvature vector everywhere on $\partial K$ points inside $K$.

\begin{figure}[h]
\centering
\includegraphics[scale=1]{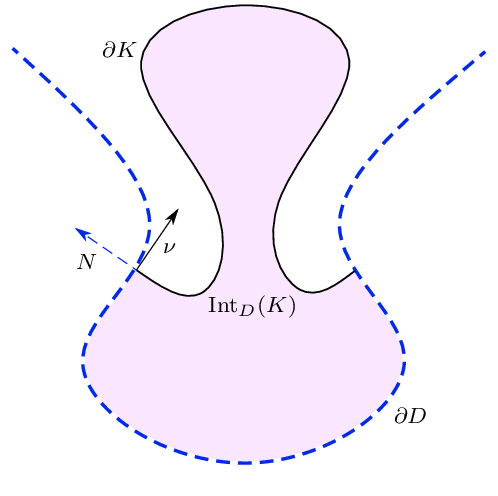}
\caption{Mean curvature flow with free boundary}\label{figure_1}
\end{figure}

Given $K$, there is a unique evolution by the \emph{free boundary level set flow} $F_t(K)$ from Giga-Sato \cite{GigaSato}. This has been described originally in terms of viscosity solutions of degenerate partial differential equations. Using the more geometric interpretation from Ilmanen \cite{I93}, $K_t:=F_t(K)\subset D$ simply is the maximal family of closed sets starting at $K$ that does not bump into any smooth free boundary subsolution (see Section \ref{Sec_level_set} for more detailed definitions). Mean convexity is preserved under the free boundary level set flow (see again Section \ref{Sec_level_set}), i.e.
\begin{equation}
K_{t_2}\subseteq \textrm{Int}_D K_{t_1} \quad \textrm{for all $t_2>t_1\geq 0$}.
\end{equation}
We denote by
\begin{equation}
\mathcal{K}:=\bigcup_{t\geq 0} K_t\times \{t\}\subset D\times \mathbb{R}_+
\end{equation}
the space-time track of the free boundary level set flow. Moreover, it can be checked (see Theorem \ref{thm_ell_reg} below) that
\begin{equation}
\mathcal{M}=\{ \mathcal{H}^n\lfloor\partial K_t \}_{t\geq 0}
\end{equation}
is a \emph{free boundary Brakke flow} as defined in \cite{Edelen17} (see also Section \ref{sec_free_brakke}).
The pair $(\mathcal{M},\mathcal{K})$ is called a \emph{mean-convex free boundary flow}.

\subsection{Size of the singular set}
Given a mean-convex free boundary flow $(\mathcal{M},\mathcal{K})$ in a domain $D\subset N^{n+1}$, its singular set $\mathcal{S}\subset D\times \mathbb{R}_+$ consists by definition of all points $X=(x,t)$ in the support of the flow, such that there is no two-sided space-time neighborhood of $X$ in which evolution can be described as smooth free boundary flow. The parabolic Hausdorff dimension of a subset of spacetime is the Hausdorff dimension with respect to the parabolic metric
\begin{equation}
d\left((x,t),(y,s)\right)=\max \{ |x-y| , |t-s|^{1/2}\}.
\end{equation}
For example, the parabolic Hausdorff dimension of $D\times \mathbb{R}_+$ is $n+3$. Our first main theorem shows that the singular set in any mean-convex free boundary flow is quite small:
\begin{theorem}[size of the singular set]\label{thm_size}
Let $(\mathcal{M},\mathcal{K})$ be a mean-convex free boundary flow in $D\subset N^{n+1}$. Then the parabolic Hausdorff dimension of its singular set $\mathcal{S}$ is
\begin{itemize}
\item at most $n-1$, if the domain $D$ is mean-convex, and
\item at most $n$, if the domain $D$ is arbitrary.
\end{itemize}
\end{theorem}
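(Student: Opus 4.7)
The plan is to split the singular set into its interior part $\mathcal{S}_{\mathrm{int}} := \mathcal{S} \cap (\mathrm{Int}_N(D) \times \R_+)$ and boundary part $\mathcal{S}_{\mathrm{bdry}} := \mathcal{S} \cap (\partial D \times \R_+)$, and bound the parabolic Hausdorff dimension of each via a Federer-style stratification indexed by the translational symmetries of (limit) tangent flows.

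For $\mathcal{S}_{\mathrm{int}}$, I would run the standard White/Huisken--Sinestrari/Haslhofer--Kleiner machinery. Assuming the paper has already established (via the triple-approximation scheme advertised in the abstract) the a priori curvature bound $|A| \le CH$ together with $\alpha$-non-collapsedness, standard blow-up arguments force every tangent flow at an interior singular point to be a mean-convex, non-collapsed, smooth ancient solution. By the classification (Brendle, Haslhofer--Hershkovits), these are exactly shrinking generalized cylinders $S^{n-k} \times \R^k$ with $0 \le k \le n-1$. Stratifying by translation dimension and applying Federer-style dimension reduction then bounds the parabolic dimension of $\mathcal{S}_{\mathrm{int}}$ by $n-1$.

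For $\mathcal{S}_{\mathrm{bdry}}$, I would work locally near a point $X_0 = (x_0,t_0) \in \partial D \times \R_+$. Parabolic rescalings at $X_0$ flatten $\partial D$ to a hyperplane $\Pi$, so every tangent flow is a free boundary Brakke flow in the halfspace bounded by $\Pi$, to which the $|A|/H$ and non-collapsedness bounds carry by boundary Schauder/reflection arguments. Reflecting across $\Pi$ produces an ancient convex non-collapsed interior flow, so the classification applies: each original tangent flow is either a free-boundary cylinder with at most $n-1$ translations (the $\R^k$ factor must lie along $\Pi$), or a multiplicity-two half-plane with $n$ translations. The Bernstein-type theorem from the abstract rules out the latter as a tangent flow. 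Stratifying by translation dimension then gives parabolic dimension at most $n-1$ whenever the stratification machinery has enough input---this is the step where the two cases of the theorem diverge.

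To complete the dichotomy: the stratification requires not only that tangent flows have the desired structure, but also quantitative $\varepsilon$-regularity for nearby \emph{limit flows}. When $D$ is mean-convex, the sheeting theorem rules out multiplicity-two half-planes as limit flows as well, so $\varepsilon$-regularity propagates globally and the sharp $n-1$ bound follows at the boundary. When $D$ is arbitrary, such multiplicity-two half-planes can arise as limit flows (though still not as tangent flows) near concave portions of $\partial D$; the strata of maximal translation dimension $n$ then survive, and the argument yields only the weaker bound $n$. The main obstacle, in my view, is the rigorous propagation of the Bernstein/sheeting input through the approximate reflection at the boundary: the doubled flow satisfies the ancient convex flow equation only up to an error governed by the second fundamental form of $\partial D$, and the a priori curvature bound, non-collapsedness, and blow-up analysis must all absorb this error uniformly on scales comparable to the curvature scale of $\partial D$.
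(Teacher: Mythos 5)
There are two genuine gaps, and the more serious one is that your proposal misidentifies what actually separates the two bounds. In the paper the dimension bound comes from White's stratification applied to \emph{tangent} flows, so only tangent flows matter for Theorem \ref{thm_size}; multiplicity-two (quasi)static planes and half-planes are excluded as tangent flows for \emph{arbitrary} $D$ (Theorem \ref{thm:quasi-tangent-flows}, whose static case is Corollary \ref{cor_static_tangent_mult}), not just for mean-convex $D$. What mean-convexity of $D$ buys is the exclusion of case (6) of Proposition \ref{prop:planar-limit}: the quasistatic plane coincident with the barrier $\partial\mathbb{H}$ with reflected density two, i.e.\ the ``popping'' scenario realized by the annulus example, whose exclusion uses the minimal hypersurface constructed \emph{outside} $D$ in Lemma \ref{lem:min-surface-outside} and genuinely needs $\partial D$ mean-convex. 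Your boundary dichotomy omits this tangent flow entirely and instead attributes the weaker bound $n$ to multiplicity-two half-planes surviving as \emph{limit} flows near concave portions of $\partial D$; since dimension reduction is run on tangent flows, that mechanism neither produces the bound $n$ for general $D$ in the way you describe nor explains why it improves to $n-1$ when $D$ is mean-convex.

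The second gap is the a priori $\alpha$-noncollapsedness you feed into the blow-up analysis (in the interior, and after reflection at the boundary) in order to invoke a classification of ancient noncollapsed flows. The triple approximation scheme yields $|A|\le CH$, the lower bound on $H$, and one-sided minimization (Theorem \ref{thm_ell_reg}), but \emph{not} noncollapsing: the paper explicitly records that Andrews' maximum-principle argument does not generalize to the free boundary setting, and noncollapsedness appears only as the last bullet of Theorem \ref{thm_structure}, downstream of the multiplicity-one analysis. The actual route is White's: one-sided minimization plus $|A|\le CH$ rule out nonflat stationary cones, and the expanding-hole/Bernstein/sheeting package of Section \ref{sec_mult_one} (together with the isolation theorem and the Harnack-type argument for the quasistatic case) rules out density-two planar tangent flows; for the dimension bound one then only needs that tangent flows at singular points are shrinking -- plus, for general $D$, possibly the quasistatic density-two plane -- and the stratification of \cite{White_stratification}, with no classification of ancient solutions at all. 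As written, your argument is circular: the noncollapsedness and convexity of blow-ups that you assume are consequences of the very structure theory whose first step is this theorem.
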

Simple examples show that the result is sharp. Indeed, rotationally symmetric thin shrinking tori respectively half-tori in a ball $D\subset\mathbb{R}^{n+1}$ show that an $(n-1)$-dimensional singular set can occur both in the interior and at the boundary. Likewise, the number $n$ is sharp for general barriers, since the surface can pop when it hits $\partial D$, c.f. \cite{Edelen17}. For example if $D=\bar{B}_4\setminus B_1\subset\mathbb{R}^{n+1}$ is an annulus, and $K=\bar{B}_{2}\setminus B_1$, then the shrinking sphere pops when it hits the inner boundary of the annulus and produces an $n$-dimensional singular set.

\subsection{Structure of singularities}
Let  $(\mathcal{M},\mathcal{K})$ be a mean-convex free boundary flow in a mean-convex domain $D$.

Given $X_i=(x_i,t_i)\to X=(x,t)$ in the support of the flow and $\lambda_i\to \infty$, we consider the \emph{blowup sequence} $(\mathcal{M}^i,\mathcal{K}^i)$ which is obtained from $(\mathcal{M},\mathcal{K})$ by translating $X_i$ to the origin and parabolically rescaling by $\lambda_i$. It is always possible to find a convergent subsequence, where $\mathcal{M}^i$ converges in the sense of (free boundary) Brakke flows \cite{I94,Edelen17} and $\mathcal{K}^i$ converges in the Hausdorff sense. Any subsequential limit $(\mathcal{M}',\mathcal{K}')$ is called a \emph{limit flow at $X$}. Here, $(\mathcal{M}',\mathcal{K}')$ is either a flow without boundary in $\mathbb{R}^{n+1}$ or a flow with free boundary defined in a halfspace $\mathbb{H}\subset\mathbb{R}^{n+1}$. If $X_i=X$ we call it a \emph{tangent flow at $X$}. Tangent flows are always backwardly selfsimilar \cite{Huisken_monotonicity,Edelen17}.

\begin{theorem}[structure of singularities]\label{thm_structure}
Let $(\mathcal{M},\mathcal{K})$ be a mean-convex free boundary flow in a mean-convex domain $D\subset N^{n+1}$. Let $(\mathcal{M}',\mathcal{K}')$ be a limit flow, and denote by $T_{\textrm{ext}}=T_{\textrm{ext}}(\mathcal{M}',\mathcal{K}')\in (-\infty,+\infty]$ its extinction time, i.e. the supremum of all $t$ with $K'_t\neq\emptyset$. Then:
\begin{itemize}
\item $\mathcal{M}'$ has multiplicity one and its support equals $\partial\mathcal{K}'$.
\item $(\mathcal{M}',\mathcal{K}')\cap \{t <T_{\textrm{ext}}\}$ is smooth.
\item $\mathcal{K}'$ has convex time slices, i.e. $K_t'$ is convex for every $t$.
\item $\mathcal{K}'$ is either a static halfspace or quarterspace, or it has strictly positive mean curvature and sweeps out the whole space or halfspace, i.e. $\cup_t \partial K_t' = \mathbb{R}^{n+1}$ or  $\cup_t \partial K_t' = \mathbb{H}$, respectively.
\item $\mathcal{K}'$ is $1$-noncollapsed, i.e. every $p\in\partial K_t'$ admits interior and exterior balls of radius $1/H(p,t)$.
\end{itemize}
Furthermore, if $(\mathcal{M}',\mathcal{K}')$ is backwardly selfsimilar, then it either (i) a static multiplicity one plane or halfplane, (ii) a round shrinking sphere or halfsphere, or (iii) a round shrinking cylinder or halfcylinder.
\end{theorem}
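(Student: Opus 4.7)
My plan is to follow the template White developed for closed mean-convex flows, with the new inputs advertised in the abstract---the a priori $|A|/H$ bound from the triple-approximation scheme and the Bernstein/sheeting theorems for low entropy free boundary flows in a halfslab---replacing White's interior ingredients whenever the free boundary intervenes.

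I would first prove that $\mathcal{M}'$ has multiplicity one. By White's local regularity for unit density Brakke flows, a tangent flow of density one forces smoothness at that point, so the only obstruction is a multiplicity $k\geq 2$ (half-)plane arising as a tangent or limit flow. The $|A|/H$ bound forces any nearly flat piece of the flow to have entropy close to that of a plane or halfplane. The Bernstein-type theorem then rules out entire self-shrinking low-entropy solutions of this form in a halfslab, while the sheeting theorem promotes any low entropy piece of the flow to a union of smooth graphs over the putative plane. Combined exactly as in White's interior argument, these rule out $k\geq 2$ and yield multiplicity one. That $\operatorname{spt}\mathcal{M}'=\partial \mathcal{K}'$ then follows from multiplicity one together with the level-set description of $\mathcal{K}'$.

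Next I would establish smoothness, convexity of time slices, and the static/sweep-out dichotomy. The $|A|/H$ bound passes to the blowup limit, so together with multiplicity one and the standard free boundary parabolic Schauder estimates, the flow is smooth wherever $H$ is locally bounded, in particular on $\{t<T_{\textrm{ext}}\}$. Convexity of time slices is obtained by passing to the limit the Huisken--Sinestrari convexity estimates (in the free boundary form proved earlier in the paper), which at high curvature force $\lambda_1/H\geq -\eta$ for any $\eta>0$; in the limit $\lambda_1\geq 0$. Applying the strong maximum principle to the parabolic equation for $H$, together with the Hopf boundary lemma at $\partial\mathbb{H}$ handled via the Neumann condition, gives the dichotomy: either $H\equiv 0$ and the flow is static, in which case convex plus $1$-noncollapsed forces a halfspace or quarterspace; or $H>0$ everywhere and strict inward motion together with convexity forces $\bigcup_t \partial K_t'$ to exhaust $\mathbb{R}^{n+1}$ or $\mathbb{H}$. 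The $1$-noncollapsing claim itself follows by passing to the limit the quantitative $\alpha$-noncollapsing estimate established earlier, where $\alpha\to 1$ under parabolic rescaling at any singular point.

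For the final classification statement, assume $(\mathcal{M}',\mathcal{K}')$ is backwardly selfsimilar. It is then a smooth, convex, mean-convex, $1$-noncollapsed self-shrinker, either closed in $\mathbb{R}^{n+1}$ or with free boundary in $\mathbb{H}$. In the closed case, Huisken's classification of convex self-shrinkers gives plane, sphere, or generalized cylinder $S^k\times\mathbb{R}^{n-k}$. In the free boundary case, the orthogonality of $\partial K_t'$ to $\partial\mathbb{H}$ means that the doubled flow obtained by reflecting across $\partial\mathbb{H}$ is a smooth closed self-shrinker in $\mathbb{R}^{n+1}$ invariant under that reflection; applying the closed classification to the doubled flow and descending gives halfplane, halfsphere, or halfcylinder, as stated. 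The main obstacle throughout is the multiplicity one claim, specifically ruling out multiplicity $2$ halfplanes: the classical Bernstein and sheeting arguments require substantial modification in the presence of a free boundary, and this is precisely what the new halfslab theorems---together with the $|A|/H$ bound that makes their hypotheses available---are designed to accomplish.
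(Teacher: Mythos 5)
Your high-level plan (run White's program with the new free boundary ingredients) is the right one, but as written the argument has genuine gaps, the most serious being circularity. You take as inputs ``the quantitative $\alpha$-noncollapsing estimate established earlier'' and ``the Huisken--Sinestrari convexity estimates (in the free boundary form proved earlier in the paper)''. Neither exists prior to this theorem: the sharp noncollapsing and the convexity estimate appear only in the corollary that is \emph{deduced from} Theorem \ref{thm_structure}, and the introduction explicitly records that the Andrews-type maximum principle proof of noncollapsing could not be generalized to the free boundary setting (the related estimate of \cite{Edelen16} is for smooth compact flows and does not pass through the weak flow or to limit flows without further work). The paper instead proves convexity, smoothness before extinction, and the static/sweep-out dichotomy by running the arguments of White's second paper \cite{White_structure} on the class $\mathcal{F}$ of (reflected) limit flows, the only new inputs being that $\mathcal{F}$ contains no singular stationary cones (from $|A|\leq CH$ of Theorem \ref{thm_ell_reg}) and that every planar tangent flow of a flow in $\mathcal{F}$ is a static multiplicity one plane (Section \ref{sec_mult_one}); the $1$-noncollapsedness is then obtained at the very end from \cite[Cor.~1.5]{HK_inscribed} applied to the already-smooth convex limit flows, not from an a priori estimate. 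Related to this, your claim that the flow is ``smooth wherever $H$ is locally bounded, in particular on $\{t<T_{\textrm{ext}}\}$'' presupposes a local curvature bound up to the extinction time, which is itself one of the hard conclusions of White's structure theory rather than a consequence of multiplicity one plus Schauder estimates.

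A second gap is in the multiplicity one step. Ruling out density two planar flows in the free boundary setting is not obtained by combining Bernstein and sheeting ``exactly as in White's interior argument'': besides multiplicity two planes and free boundary halfplanes there is the genuinely new case of a (quasi)static plane \emph{coincident with the barrier} $\partial\mathbb{H}$ (case (6) of Proposition \ref{prop:planar-limit}), which has no interior analogue. Excluding it is precisely where the mean-convexity of $D$ enters, via the construction of a minimal hypersurface outside $D$ (Lemma \ref{lem:min-surface-outside}) and the comparison flow of Lemma \ref{lem:jacobi-barrier} and Theorem \ref{thm:quasi-tangent-flows}; one-sided minimization (which supplies the entropy bound $\leq 2$, not the $|A|/H$ bound as you suggest) is also an essential ingredient there and in the isolation theorem. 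Your proposal never addresses this barrier-coincident scenario, so even the multiplicity one claim is not established by the argument you sketch.
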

Examples where halfcylinders occur as tangent flows are neckpinches at the boundary. Note that there are two types of halfcylinders with free boundary, depending on whether the axis is contained in $\partial\mathbb{H}$ or perpendicular to it. An illustrative example where a limit flow is not backwardly selfsimilar is a degenerate neck pinch at the boundary, in which case some limit flow $(\mathcal{M}',\mathcal{K}')$ would be a halfbowl. These examples are illustrated in Figure \ref{figure_neckpinches}.

\begin{figure}[h]
\centering
\includegraphics[scale=0.4]{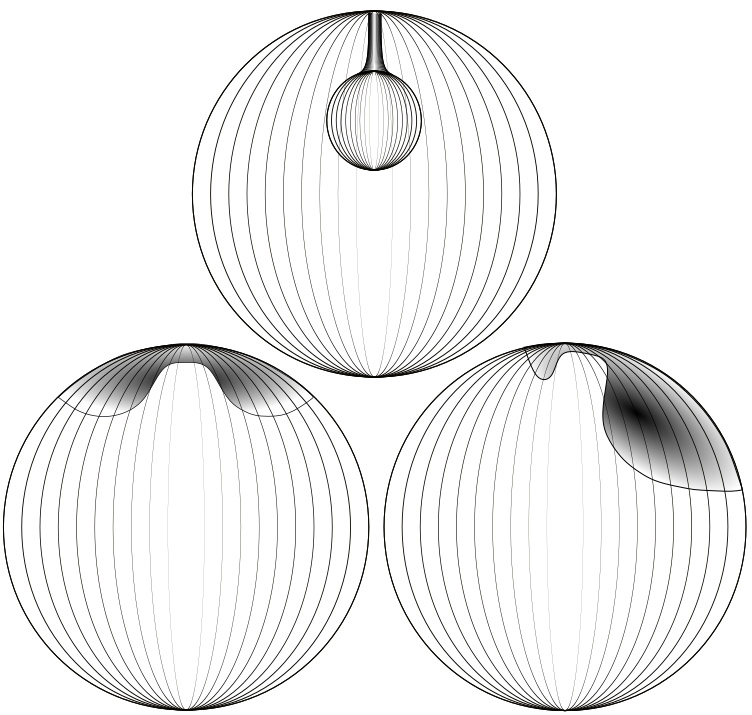}
\caption{Neckpinches at the boundary}\label{figure_neckpinches}
\end{figure}

Together with the recent breakthroughs by Brendle-Choi \cite{BC} and Angenent-Daskalopoulos-Sesum \cite{ADS} we obtain for $n=2$:

\begin{corollary}[classification of limit flows]
Let $(\mathcal{M},\mathcal{K})$ be a two-dimensional mean-convex free boundary flow in a mean-convex domain $D\subset N^{3}$. Then any limit flow $(\mathcal{M}',\mathcal{K}')$ is one of the following:
\begin{itemize}
\item a static multiplicity one plane or halfplane,
\item a round shrinking sphere or halfsphere,
\item a round shrinking cylinder or halfcylinder,
\item a translating bowl or halfbowl,
\item an ancient oval or halfoval.
\end{itemize}
\end{corollary}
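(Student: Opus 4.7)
The plan is to combine the structural restrictions furnished by Theorem \ref{thm_structure} with the known classification of noncollapsed ancient flows in $\R^3$, handling the halfspace case by a reflection argument across the boundary. By Theorem \ref{thm_structure}, any limit flow $(\mathcal{M}',\mathcal{K}')$ is either (i) a static multiplicity one plane, halfplane, or quarterspace, or (ii) smooth, strictly mean convex, $1$-noncollapsed, with convex time slices, sweeping out $\R^3$ or the halfspace $\mathbb{H}\subset\R^3$. A static quarterspace in $\mathbb{H}$ has for its free boundary surface precisely a static halfplane meeting $\partial \mathbb{H}$ orthogonally, so case (i) is already in the statement; I may assume case (ii).

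Suppose first that $(\mathcal{M}',\mathcal{K}')$ lives in $\R^3$. Since the time slices are convex and $1$-noncollapsed, either the flow splits off a line or its time slices are strictly convex. A splitting yields a $1$-noncollapsed convex ancient curve, which must be a shrinking circle (straight lines are ruled out by strict mean convexity, grim reapers by noncollapsing), producing a round shrinking cylinder. In the strictly convex case, Angenent-Daskalopoulos-Sesum \cite{ADS} classifies the compact ancient solutions as shrinking spheres or ancient ovals, while Brendle-Choi \cite{BC} classifies the noncompact ones as translating bowl solitons.

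If instead $(\mathcal{M}',\mathcal{K}')$ lives in $\mathbb{H}$, I reflect across $\partial \mathbb{H}$. The orthogonality condition $\langle \nu,N\rangle=0$ ensures that the even extension of the flow across $\partial\mathbb{H}$ is a $C^\infty$ mean curvature flow in $\R^3$ away from extinction. Interior and exterior half-balls of radius $1/H$ at boundary points double to full balls, and balls in the interior of $\mathbb{H}$ reflect to balls, so the reflected flow remains $1$-noncollapsed and strictly mean convex. The preceding $\R^3$ classification therefore applies: the reflected flow is one of the five listed models. Each such model is symmetric only across specific hyperplanes --- any diameter for the sphere, the axis or an orthogonal midplane for the cylinder, any plane through the axis for the bowl, the axes of symmetry for the oval --- and intersecting with $\mathbb{H}$ picks out precisely the corresponding halfsphere, halfcylinder, halfbowl, or halfoval.

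The main obstacle is verifying that reflection across $\partial\mathbb{H}$ produces a genuinely $C^\infty$ (not merely $C^{1,1}$) mean curvature flow. This is the parabolic analogue of the even-extension principle for Neumann boundary value problems: in boundary normal coordinates about $\partial\mathbb{H}$, the graph-form evolution equation for the flow is even in the normal variable, so a smooth one-sided solution satisfying $\partial_\nu u=0$ on $\partial\mathbb{H}$ admits a smooth even extension to $\R^3$. Once this is in hand, the rest is bookkeeping about which hyperplanes are genuine reflection symmetry planes of each of the five ancient models in the statement.
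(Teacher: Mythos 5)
Your argument is correct and follows essentially the route the paper intends: the corollary is deduced from Theorem \ref{thm_structure} together with the Brendle--Choi and Angenent--Daskalopoulos--Sesum classifications, with the halfspace case handled by doubling across the flat barrier, which is exactly the reflected limit flow machinery the paper already sets up (cf. Section \ref{sec_blowup_seq}). The points you single out (smoothness of the even reflection, persistence of noncollapsing and convexity under doubling, and the symmetry bookkeeping identifying the half models) are routine and consistent with the paper's treatment.
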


In any dimension, Theorem \ref{thm_structure} (structure of singularities), in particular implies the following estimates for the flow $(\mathcal{M},\mathcal{K})$ itself.

\begin{corollary}[estimates for mean-convex free boundary flow]
Let $(\mathcal{M},\mathcal{K})$ be a mean-convex free boundary flow in a mean-convex domain $D\subset N^{n+1}$. Then
\begin{itemize}
\item (sharp noncollapsing) For any $\alpha<1$ and $T<\infty$ there exists an $H_0<\infty$ such that for any $t\leq T$ any $p\in \partial K_t$ with $H(p,t)\geq H_0$ admits interior and exterior balls of radius at least $\alpha/H(p,t)$.
\item (convexity estimate) For all $\eps>0$ and $T<\infty$ there exists an $H_0<\infty$ such that for any $t\leq T$ any $p\in \partial K_t$ with $H(p,t)\geq H_0$ satisfies $\lambda_1\geq -\eps H$.
\item (gradient estimate) For all $T<\infty$ there is a $C<\infty$ such that at all times $t\leq T$ at all points we have $|\nabla H|\leq CH^2$.
\end{itemize}
\end{corollary}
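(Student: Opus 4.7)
The plan is to deduce all three estimates from Theorem \ref{thm_structure} via a uniform blowup/contradiction scheme. For each estimate, suppose by contradiction that it fails, producing a sequence $(p_i,t_i)$ with $t_i\leq T$ at which the relevant quantity misbehaves. Translate $p_i$ to the origin and parabolically rescale by $\lambda_i:=H(p_i,t_i)$, so that after rescaling $H=1$ at the base point. By the compactness theory for free boundary Brakke flows together with Hausdorff convergence of $\mathcal{K}^i$, a subsequence converges to a limit flow $(\mathcal{M}',\mathcal{K}')$ in the sense of the introduction. Depending on whether $\lambda_i\,\dist(p_i,\partial D)\to\infty$ or converges to a finite limit, this is either an interior limit flow in $\R^{n+1}$ or a free boundary limit flow in a halfspace $\mathbb{H}$, and Theorem \ref{thm_structure} applies in either case.

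For the sharp noncollapsing estimate, if the largest interior (resp.\ exterior) ball of $\partial K_{t_i}$ tangent at $p_i$ has radius below $\alpha/H(p_i,t_i)$, then Hausdorff convergence gives a point of $\partial K'_0$ with $H=1$ admitting an interior (resp.\ exterior) ball of radius at most $\alpha<1$, contradicting the $1$-noncollapsing clause of Theorem \ref{thm_structure}. For the convexity estimate, $\lambda_1\leq-\eps H$ passes to the limit to give $\lambda_1(0,0)\leq -\eps<0$ on $\partial K'_0$, contradicting convexity of the time slices of $\mathcal{K}'$. For the gradient estimate, at the origin of $(\mathcal{M}',\mathcal{K}')$ one has $H=1$, so $T_{\textrm{ext}}>0$ and the origin lies strictly before the extinction time; by Theorem \ref{thm_structure} the limit flow is smooth in a spacetime neighborhood of the origin. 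Standard parabolic regularity (in the spirit of White) then upgrades weak convergence to smooth convergence near the base point, so $|\nabla H|/H^2$ is bounded at the origin of the limit, contradicting $|\nabla H|(p_i,t_i)/H(p_i,t_i)^2\to\infty$.

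The main obstacle is the uniformity of the convergence in the boundary case: when $\lambda_i\,\dist(p_i,\partial D)$ stays bounded, one must pass rescaled geometric quantities (inscribed radii, eigenvalues of the second fundamental form, $\nabla H$) to the limit up to the free boundary of the halfspace $\mathbb{H}$. This requires the free boundary regularity theory for Brakke flows developed earlier in the paper, which in turn rests on the a priori $|A|/H$ bound and the Bernstein-type and sheeting theorems mentioned in the abstract. Granting these convergence statements, each of the three estimates is an immediate consequence of the corresponding clause of Theorem \ref{thm_structure}.
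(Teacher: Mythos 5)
Your overall strategy --- a contradiction/blowup argument, rescaling by $H(p_i,t_i)$ and invoking the corresponding clause of Theorem \ref{thm_structure} --- is exactly the route the paper intends (the paper itself gives no more detail than the assertion that the corollary follows from Theorem \ref{thm_structure}). However, one step of your argument is circular as written, and it is precisely the step carrying the real content. For all three bullets you need that near the rescaled base points the convergence $(\mathcal{M}^i,\mathcal{K}^i)\to(\mathcal{M}',\mathcal{K}')$ is \emph{smooth with multiplicity one}, so that $H$, $\lambda_1$, $\nabla H$ and the tangent ball radii pass to the limit. For the gradient estimate you justify this by saying that ``at the origin of $(\mathcal{M}',\mathcal{K}')$ one has $H=1$, so $T_{\textrm{ext}}>0$'' and hence the limit is smooth near the origin --- but you cannot evaluate $H$ at the origin of the limit before knowing that the origin is a regular, multiplicity-one point of the limit with smooth convergence there; that is the conclusion, not a hypothesis. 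The scenario that must be excluded is that the space-time origin is a \emph{singular} point of the limit flow (for instance a shrinking (half-)sphere or neck becoming extinct or pinching exactly at time $0$ at the origin), in which case the curvature of the approximators blows up at space-time points arbitrarily close to the base point even though $H=1$ there (and $|A|\leq C$ at the base point by Theorem \ref{thm_ell_reg}), and none of the quantities you wish to compare pass to the limit. Brakke/Hausdorff convergence plus interior parabolic regularity does not rule this out; the same issue is implicit in your noncollapsing and convexity bullets, where you read off $H=1$ and $\lambda_1\leq-\eps$ at the origin of the limit from Hausdorff convergence alone.

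Closing this gap is exactly the content of a local curvature estimate (``regularity at points where $H$ stays bounded''), and it needs a genuine argument rather than an appeal to standard parabolic regularity: for example, one notes that any tangent flow of $\mathcal{M}'$ at the origin is (by a diagonal argument) again a limit flow of $(\mathcal{M},\mathcal{K})$, hence by the multiplicity-one results of Section \ref{sec_mult_one} and the classification in Theorem \ref{thm_structure} it is either a static multiplicity-one (half-)plane --- in which case the local regularity theorems of \cite{W05,Edelen17} make the origin a regular point and give the smooth convergence you want --- or a shrinking (half-)sphere or (half-)cylinder, and one must then show this is incompatible with the approximators having a regular point with $H=1$ at the origin, e.g.\ by a point-selection argument or by the sheeting/expanding-hole machinery, in the spirit of White's local curvature bounds or the Haslhofer--Kleiner local curvature estimate. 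Your remark that the boundary case requires the free boundary regularity theory of \cite{Edelen17} is correct, but as written the proposal assumes regularity of the limit at the base point rather than proving it; once that step is supplied, the rest of your deductions from Theorem \ref{thm_structure} go through as you describe.
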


We mention that a related convexity estimate was proved in \cite{Edelen16}. On the other hand, the gradient estimate for mean-convex free boundary flows only follows after establishing our main structure theorem (Theorem \ref{thm_structure}), and we are not aware of any shorter path towards establishing such a gradient estimate directly. This is similar in spirit to the fact that Perelman obtained the gradient estimate for 3d Ricci flow as a corollary after establishing his canonical neighborhood theorem \cite{Per1,Per2}.

\subsection{Long-time behavior} Mean curvature flow of closed surfaces in Euclidean space always becomes extinct in finite time, since there are no closed minimal surfaces it can converge to. The mean curvature flow with free boundary can have more interesting long-time behavior even for $D\subset\mathbb{R}^{n+1}$:

\begin{theorem}[long-time behavior]\label{thm_longtime}
Let $(\mathcal{M},\mathcal{K})$ be a mean-convex free boundary flow in a mean-convex domain $D\subset N^{n+1}$, and set
\begin{equation}
K_\infty :=\bigcap_t K_t.
\end{equation}
Then either $K_\infty=\emptyset$ or $K_\infty$ has finitely many connected components. The boundary of each component is a stable free boundary minimal surface whose singular set has Hausdorff dimension at most $n-7$. Furthermore, $\partial K_t$ converges smoothly (either locally one-sheeted or two-sheeted depending whether or not the component has interior points) to $\partial K_\infty$ away from the singular set of $\partial K_\infty$.
\end{theorem}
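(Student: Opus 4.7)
The plan is to realize $\partial K_\infty$ as a stationary limit of the flow and then invoke Theorem~\ref{thm_structure} together with stable minimal surface regularity to control its size and its mode of convergence.

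By mean convexity $K_t\subseteq K_s$ whenever $t>s$, so $K_\infty=\bigcap_{t\ge 0}K_t$ is well defined and $K_t\to K_\infty$ in the Hausdorff sense. The first variation formula together with the Neumann boundary condition makes $t\mapsto \haus^n(\partial K_t)$ non-increasing, and boundedness from below then yields
\[
\int_0^\infty\int_{\partial K_t}|\Hvec|^2\,d\haus^n\,dt<\infty.
\]
Picking $t_i\to\infty$, the time-translates $\mathcal{M}+t_i$ subsequentially converge, as free boundary integral Brakke flows in $D$, to a time-independent flow $\mathcal{M}_\infty$ whose support is $\partial K_\infty$ and whose associated varifold is free-boundary stationary in $D$.

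To regularize this weak limit I would mimic the classical closed mean curvature flow story while carefully distinguishing the two sheeting scenarios. At any $p\in\partial K_\infty$, a tangent flow of $\mathcal{M}_\infty$ arises by a diagonal procedure combining $t_{i_k}\to\infty$ with parabolic blow-ups, hence is a limit flow of the original mean convex free boundary flow; by Theorem~\ref{thm_structure} it therefore lies in the classified list. Stationarity of $\mathcal{M}_\infty$ forces such a tangent flow to be static, and the classification (together with the Bernstein-type and sheeting theorems announced in the abstract, which rule out multiplicity-two (half-)planes) leaves only static multiplicity one (half-)planes, possibly with two sheets when $\partial K_\infty$ is approached from both sides. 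Allard's regularity theorem and its free boundary version (cf.\ Gr\"uter--Jost, \cite{Edelen17}) then produce a relatively open, dense subset $\reg(\partial K_\infty)\subset\partial K_\infty$ along which $\partial K_\infty$ is a smooth properly embedded minimal surface meeting $\partial D$ orthogonally, with $\partial K_t\to\partial K_\infty$ smoothly.

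For the dimension bound on $\sing(\partial K_\infty)=\partial K_\infty\setminus\reg(\partial K_\infty)$, note that near each regular point the family $\{\partial K_t\}$ locally graphs over $\partial K_\infty$ with mean curvature of a fixed sign; normalizing the graph functions produces a positive Jacobi field in the limit, so $\partial K_\infty$ is a stable free boundary minimal surface. The Schoen--Simon theory for stable minimal hypersurfaces and its free boundary analogue then yield $\dim_{\haus}\sing(\partial K_\infty)\le n-7$. The monotonicity formula (with reflection across $\partial D$) gives a uniform lower area bound $\haus^n(\partial C)\ge \epsilon_0(D)>0$ for each connected component $C\subseteq K_\infty$, and comparison with $\sum_C\haus^n(\partial C)\le \haus^n(\partial K_0)$ forces at most finitely many components. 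Whether the convergence is one- or two-sheeted is dictated by whether the ambient inclusion $K_t\supseteq K_\infty$ together with the strong maximum principle confines $\partial K_t$ to one side of $C$ (nonempty interior) or not (empty interior). The main obstacle, as for Theorem~\ref{thm_structure} itself, is excluding multiplicity-two (half-)plane tangent flows at boundary points of $D$; this is precisely the role played by the Bernstein and sheeting theorems announced in the abstract and is what makes the smooth two-sheeted convergence in the ``ghost'' minimal surface case possible.
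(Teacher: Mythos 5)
There is a genuine gap at the central step. You invoke Theorem \ref{thm_structure} (and, implicitly, the whole multiplicity-one package: Proposition \ref{prop_part_reg}, the Bernstein-type theorem and the sheeting theorem) for tangent flows of the long-time limit $\mathcal{M}_\infty$, arguing that a diagonal sequence makes these ``limit flows of the original flow.'' But in this paper limit flows are by definition blowups along times $t_i$ with $\limsup_i t_i<\infty$, and the results you are quoting rest on the a priori estimates from elliptic regularization, $\inf_{\partial K_t^{\reg}}H\geq ce^{-\rho t}$ and $\sup_{\partial K_t^{\reg}}|A|/H\leq Ce^{\rho t}$, which are uniform only on compact time intervals and degenerate as $t\to\infty$. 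In particular, the step in Proposition \ref{prop_part_reg} that forces tangent cones to be flat (``$|A|\leq CH$ plus local regularity $\Rightarrow$ $A\equiv 0$'') and the final contradiction in the exclusion of density-two limit flows are exactly the places where a uniform $|A|\leq CH$ bound is needed, and your diagonal blowups at times tending to infinity do not inherit such a bound. So the assertion ``a tangent flow of $\mathcal{M}_\infty$ is a limit flow, hence classified by Theorem \ref{thm_structure}'' does not follow as written. The paper closes precisely this gap by first observing that the Bernstein-type theorem still holds if one replaces the degenerating $|A|\leq CH$ input by the Schoen--Simon regularity/compactness theory for (mildly singular) stable minimal hypersurfaces in a slab, as in \cite{White_size}*{Cor.~7.3}; only then can the sheeting theorem be applied as $t\to\infty$, and the rest of the argument (time-translates converge to a static limit with $K'_t=K_\infty$, one-sided minimization plus sheeting give finitely many stable free boundary minimal components, Simons and Gr\"uter give the $n-7$ bound) goes through. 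Your proposal is missing this substitute ingredient, and without it the regularity and sheeting conclusions for the $t\to\infty$ limit are unjustified.

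A secondary, smaller point: you describe the Bernstein/sheeting theorems as ``ruling out'' multiplicity-two (half-)planes here, but in the long-time setting they must not rule them out --- two-sheeted limits genuinely occur when a component of $K_\infty$ has empty interior; their role is to upgrade a weak multiplicity-two planar limit to smooth convergence by two separate graphical sheets. You acknowledge the ghost case at the end, so this is mostly a matter of phrasing, but the earlier sentence as stated is inconsistent with the theorem you are proving. On the positive side, your derivation of stability from a normalized positive Jacobi field and your finiteness argument via a uniform lower area bound per component are reasonable alternatives to the paper's appeal to one-sided minimization and the sheeting theorem, and the initial reduction (monotone $K_t$, $L^2$ bound on $\Hvec$, static subsequential limit supported on $\partial K_\infty\times\mathbb{R}$) matches the paper's argument; the paper additionally notes that the limit is independent of the time sequence, so no subsequence is needed.
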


For example, consider the case where $D$ looks like a dumbbell, which contains a strictly stable free boundary minimal disc $\Sigma$. Fix $\eps>0$ small enough.
If $K=\{x\in D: d(x,\Sigma)\leq \eps\}$, then we get two-sheeted convergence to $\Sigma$. On the other hand, letting $C$ be one of the components of $D\setminus \Sigma$ and setting
$K=\{x\in D: d(x,C)\leq \eps\}$, we get one-sheeted convergence.

\subsection{Some remarks on the proofs}
As the reader will expect, one of the key steps is of course to rule out blowup limits of higher multiplicity. In the case of mean-convex surfaces without boundary there is a short maximum principle argument by Andrews \cite{Andrews12}. Unfortunately, we have not been able to generalize the argument by Andrews to the free boundary setting (related to this, Brendle's proof of the Lawson conjecture \cite{Brendle_lawson} does not seem to generalize in any obvious way to establish uniqueness of the critical catenoid). We thus follow White's original approach from \cite{White_size}. To this end, we prove free boundary versions of the expanding hole lemma, the Bernstein-type theorem for low entropy flows in a slab, and the sheeting theorem, and use them to rule out static and quasistatic planes or free boundary halfplanes with density two as potential tangent flows or limit flows (see Section \ref{sec_mult_one}).

Another major issue is to rule out nontrivial minimal cones, such as the Simons cone, as potential blow up limits. This requires an a priori bound for the ratio between the norm of the second fundamental form and the mean curvature, which turns out to be substantially more involved than in the case without boundary. The technical heart of the present paper is the following:

\begin{theorem}[elliptic regularization and consequences]\label{thm_ell_reg}
Let $K_t:=F_t(K)$ be a free boundary level set flow with smooth strictly mean-convex initial data $K\subset D$. Then:
\begin{itemize}
\item $\{K_t\times \mathbb{R}\}_{t> 0}$ arises as a limit of smooth free boundary flows.
\item $t\mapsto \mathcal{H}^n\lfloor\partial K_t$ is a free boundary Brakke flow.\footnote{In the proof we will derive a Lipschitz estimate, which together with the co-area formula implies that the reduced boundary $\partial^\ast\! K_t$ agrees with the topological boundary $\partial K_t$ up to $\mathcal{H}^n$-measure zero at almost every time, c.f. \cite{MetzgerSchulze}.}
\item $K_t$ is one sided minimizing, i.e. $\mathcal{H}^n(\partial K_t)\leq \mathcal{H}^n(\partial F)$ whenever $K_t\subseteq F\subseteq K$.
\item There exist constants $c=c(K,D)>0$ and $\rho=\rho(K,D)<\infty$ such that for all $t>0$ we have
\begin{equation*}
\inf_{\partial K_t^\textrm{reg}}H\geq c e^{-\rho t}.
\end{equation*}
\item For every $\delta>0$ there exist constants $C=C(K,D,\delta)<\infty$ and $\rho=\rho(K,D,\delta)<\infty$ such that for every $t\geq \delta$ we have
\begin{equation*}
\sup_{ \partial K_t^\textrm{reg}}\frac{|A|}{H}\leq C e^{\rho t}.
\end{equation*}
\end{itemize}
\end{theorem}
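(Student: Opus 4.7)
The overall strategy is Ilmanen's elliptic regularization adapted to the free boundary setting. For each $\epsilon > 0$, I would construct a smooth translating free-boundary soliton $\Sigma^\epsilon \subset D \times \R$ by minimizing the $e^{-z/\epsilon}$-weighted area among hypersurfaces asymptotic to $\partial K \times [0,\infty)$ and meeting $\partial D \times \R$ orthogonally. Free boundary regularity for minimizers (Gr\"uter-Jost, Edelen) gives smoothness of $\Sigma^\epsilon$, and then $\Sigma^\epsilon_t := \Sigma^\epsilon - (t/\epsilon) e_{n+1}$ is a smooth free boundary mean curvature flow in $D\times \R$. Slicing by height and sending $\epsilon \to 0$ recovers the free boundary level set flow $\{K_t\}$ in the sense of Ilmanen; this yields the first two bullets (the boundary/reduced-boundary identification at a.e.\ time follows from co-area as in Metzger-Schulze), and the one-sided minimizing property in the third bullet is inherited from the weighted minimization by using $F\times\R$ as a competitor.

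For the lower bound on $H$ in the fourth bullet, I would apply the parabolic maximum principle to $H$ along the smooth approximating flow $\Sigma^\epsilon_t$. The interior heat equation $(\partial_t-\Delta)H=(|A|^2+\Ric(\nu,\nu))H$ together with the standard free-boundary identity $\partial_N H = \sff^{\partial D}(\nu,\nu)\,H$ along $\partial K_t\cap\partial D$ gives $\min_{\partial K_t^\epsilon} H \geq c\, e^{-\rho t}$ with constants depending only on ambient curvature and on $\partial D$, uniformly in $\epsilon$. The bound persists on $\partial K_t^\reg$ in the $\epsilon\to 0$ limit by smooth convergence on the regular part.

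The heart of the proof is the last bullet, the bound on $|A|/H$. I would apply the maximum principle to $f = |A|^2/H^2$ along $\Sigma^\epsilon_t$. The interior computation reduces to the classical cancellations of Huisken-Sinestrari. The obstruction is the boundary term: at a boundary maximum, $\partial_N f$ carries a potentially wrong-signed contribution from $\sff^{\partial D}$ together with cross-terms involving the tangential behavior of $A$. To force a definitive sign I would implement the triple-approximation scheme flagged in the abstract: (i) following Edelen, conformally modify the ambient metric in a neighborhood of $\partial D$ so that $\partial D$ becomes totally geodesic, killing the leading-order boundary error; (ii) retain the $\epsilon$-elliptic regularization so that $|A|/H$ remains well defined with good compactness; (iii) following Volkmann and Haslhofer-Hershkovits, add a small auxiliary perturbation (for example a suitable function of $\dist(\cdot,\partial D)$) that renders the boundary Neumann inequality strict, so that no maximum of the perturbed $f$ can occur on $\partial D$. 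In the combined regime the maximum principle closes and yields $f\leq C_{\epsilon,\delta}$, and tracking dependencies as the perturbations are removed yields the stated bound $C e^{\rho t}$ on $\partial K_t^\reg$.

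The main obstacle, as the abstract itself makes explicit, is precisely making the three approximations interact correctly: each introduces errors that must be absorbed by the others, so that in the simultaneous limit the boundary term in the maximum principle is unambiguously non-positive while the interior quantity still degenerates to $|A|/H$ on the original flow. Verifying this compatibility — especially balancing the conformal deformation of the ambient metric against the Volkmann-style strict perturbation near $\partial D$, and doing so in a way that is consistent with the $\epsilon$-soliton structure — is the delicate new technical input of this paper, and I expect it to occupy the bulk of the detailed argument.
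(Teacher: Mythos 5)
Your overall skeleton (elliptic regularization, then maximum principles for $H$ and for a curvature ratio with special care at the barrier) matches the paper, but two of your central steps fail as stated. Step (i) of your ``triple approximation'' --- a conformal change of metric near $\partial D$ making $\partial D$ totally geodesic --- is not available: a conformal factor $e^{2\varphi}$ shifts $A_{\partial D}$ by $(\partial_N\varphi)$ times the induced metric, so it can only alter the trace part; unless $\partial D$ is umbilic it cannot be made totally geodesic (and a non-conformal deformation would destroy the mean curvature flow structure you are trying to estimate). Relatedly, your argument for the fourth bullet --- ``$\partial_N H=A_{\partial D}(\nu,\nu)H$ plus the maximum principle gives $\min H\ge ce^{-\rho t}$'' --- breaks down precisely when $A_{\partial D}(\nu,\nu)<0$, which is allowed since $D$ is arbitrary: at a boundary minimum the Hopf lemma and the Neumann identity are then compatible, so nothing forces the minimum off the barrier. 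The paper's substitute for both issues comes from \cite{Edelen16}: a weight $w=e^{\eps az-bd}$ with $N(d)=-1$ and $b$ comparable to the curvature of $\partial D$, so that $N(\log(Vw))>0$ and minima of $Vw$ cannot sit on the barrier (Lemma \ref{basic equations}, Proposition \ref{V_bd_1}), together with the perturbed second fundamental form $B=A+T_{\cdot\cdot\nu}$ satisfying $B(X,N)=0$, which is what yields the controlled Neumann derivative $N|B|\le C(|B|+\sigma u+1)$ (Lemma \ref{lemma_pert_2ff_normal}); a distance-function perturbation alone does not control the $\nabla_N A$ terms entering $N|A|$, so your step (iii) cannot close the boundary case for $|A|^2/H^2$.

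You have also misidentified the actual triple approximation and thereby dropped two ingredients the estimates genuinely need. The paper's parameters are: $\eps$ (elliptic regularization); $\sigma$, a capillary zeroth-order term $\sigma u$ following \cite{HH18}, which supplies the a priori bound $u\le\kappa/(\sigma\eps)$ --- indispensable because a free boundary flow need not become extinct in finite time (cf.\ Theorem \ref{thm_longtime}), so the arrival time is unbounded and there is no $L^\infty$ estimate; the $\sigma u$ terms then enter every maximum-principle computation ($V=H+\sigma u$, the numerator $|B|+\Lambda\sigma u+\Theta$ in \eqref{eq_def_f}); and $\tau$, Volkmann's bending of the corner of $K$, which makes the mixed Dirichlet--Neumann problem solvable via the continuity method with Lieberman's estimates and gives the uniform Dirichlet-boundary gradient bound (Lemma \ref{grad bound with tau}, Theorem \ref{existence}). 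Your weighted-area minimization plausibly produces translators and would cover the first three bullets much as in Ilmanen--White, but the fourth and fifth bullets are proved by closing maximum principles on graphs that are controlled up to the Dirichlet boundary and the corner, with constants uniform in $(\eps,\sigma,\tau)$ (Theorems \ref{grad estim} and \ref{thm_double_approx_ah}) before passing to limits; none of this appears in your sketch, so the technical heart of the theorem --- the $|A|/H$ bound --- is not established by your proposal.
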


To prove Theorem \ref{thm_ell_reg} (elliptic regularization and consequences) we use a triple-approximate elliptic regularization scheme. We use a capillary parameter to deal with the lack of $L^\infty$-estimate in case the flow does not become extinct in finite time (c.f. Haslhofer-Hershkovits \cite{HH18}). Next, we bend the corners of $K$ to deal with the mixed Dirichlet-Neumann problem (c.f. Volkmann \cite{Vol}). Moreover, we have to perturb the second fundamental form to get a reasonable normal derivative at the barrier (c.f. Edelen \cite{Edelen16}). Finally, we need an additional weight function to force the extrema away from the barrier (c.f. Edelen \cite{Edelen16}). We carry this out in Section \ref{sec_ell_reg}.

Once Theorem \ref{thm_ell_reg} (free boundary level set flow) and multiplicity one (see Section \ref{sec_mult_one}) are established, all the remaining steps proceed broadly following the proof strategy of \cite{White_size,White_structure}, the main point being to rule out density 2 (planar) limit flows. In addition to multiplicity 2 planes and free boundary half-planes, there is the new possibility of a (quasi)-static limit flow defined in a half-space and coincident with the barrier plane, which is not directly analogous to the case without boundary. We rule out this potential scenario if the barrier is mean-convex.\footnote{This assumption on the barrier is indeed necessary. For general barriers, the flow can ``pop", which is captured by quasistatic density two tangent flows.}

\bigskip

\noindent\textbf{Acknowledgments.} NE has been supported in part by an NSF postdoctoral fellowship (DMS-1606492). RH has been partially supported by an NSERC Discovery Grant (RGPIN-2016-04331) and a Sloan Research Fellowship. MI has been supported by a Jerrold E. Marsden postdoctoral fellowship from the Fields Institute. JZ has been supported in part by an NSF postdoctoral fellowship (DMS-1802984).

\bigskip

\section{Notation and conventions}
Let us fix a Riemannian manifold $(N^{n+1},g)$, and a compact domain $D \subset (N^{n+1},g)$ with smooth boundary $\del D$.
For any domain $K\subset D$ we write $\mathring{K}$ for the interior of $K$ viewed as a subset of $N$, and $\textrm{Int}_DK$ for the interior of $K$ viewed as a subset of the topological space $D$. We call $\partial K:= K\setminus \textrm{Int}_DK$ the Dirichlet boundary of $K$, and $\delta K:=K\setminus (\mathring{K}\cup \partial K)$ the Neumann boundary of $K$.
We let $\nu$ be the outward unit normal of $\partial K$ and use the sign convention that $\mathbf{H} = -H \nu$. We write $N$ for the outward unit normal to $\partial D$.

\bigskip

\section{Free boundary level set flow}\label{Sec_level_set}
A smooth family of closed domains $\{L_t\subset D\}_{t\in [a,b]}$ is a \emph{smooth free boundary subsolution} if $\del L_t$ moves inwards at least as fast as the mean curvature flow, and hits $\del D$ in a convex fashion, i.e.
\begin{align}\label{subsup def}
\langle \vel,  \nu \rangle \leq \langle \Hvec , \nu\rangle, \quad N \cdot \nu \geq 0,
\end{align}
where $\mathbf{vel}$ denotes the normal velocity of $\del L_t$. A family of closed sets $\{K_t\subset D\}_{t\in [a,b]}$ is a \emph{set theoretic subsolution of the free boundary mean curvature flow} if it avoids any smooth free boundary subsolution, i.e. whenever $\{ L_t\subset  D \}_{t\in [t_0,t_1]}$, where $a\leq t_0\leq t_1\leq b$, is a smooth free boundary subsolution with $K_{t_0}\cap L_{t_0}=\emptyset$, then $K_{t}\cap L_{t}=\emptyset$ for all $t\in [t_0,t_1]$. The \emph{free boundary level set flow} $F_t(K)$ is the maximal set theoretic subsolution with initial condition $F_0(K)=K$. By the work of Giga-Sato \cite{GigaSato} the free boundary level set flow has the following basic properties:
\begin{enumerate}
\item Consistency with smooth flows: If $\{M_t\}_{t\in[0,T]}$ is a smooth free-boundary mean curvature flow, then $M_t=F_t(M_0)$.
\item Avoidance: If $K$ and $L$ are disjoint, then $F_t(K)$ and $F_t(L)$ are also disjoint and $F_t(K\cup L)=F_t(K)\cup F_t(L)$.
\item Inclusion: If $K\subseteq L$, then $F_t(K)\subseteq F_t(L)$.
\item Semigroup property: $F_{t_1+t_2}(K)=F_{t_2}(F_{t_1}(K))$.
\item Strict inclusion: If $K\subseteq \operatorname{Int}_D L$, then $F_t(K)\subseteq \operatorname{Int}_DF_t(L)$.
\end{enumerate}

A closed subset $K\subset D$ is said to be \emph{mean-convex} if for some $\delta>0$,
 $ F_t(K)\subseteq  \operatorname{Int}_{D} K$ for all $0<t<\delta$. If $K$ is smooth this is equivalent to the condition that $H\geq 0$ everywhere and not identically zero.

\begin{proposition}
Let $K\subset D$ be mean-convex. Mean convexity is preserved under the free boundary level set flow, i.e. $F_{t_2}(K)\subseteq \textrm{Int}_D F_{t_1}(K)$ for all $t_2>t_1\geq 0$. Moreover, the boundaries $\partial F_t(K)$ form a possibly singular foliation of $K\setminus \cap_{t=0}^\infty F_t(K)$, and it holds that $F_t(\partial K)=\partial F_t(K)$. In particular, the flow $t\mapsto F_t(\partial K)$ is nonfattening.
\end{proposition}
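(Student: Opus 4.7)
The proof uses only the five axiomatic properties (consistency, avoidance, inclusion, semigroup, strict inclusion) of $F_t$, exploiting strict inclusion together with semigroup to bootstrap the single-scale mean-convexity hypothesis into strict monotonicity at all scales. Write $K_t:=F_t(K)$ throughout.

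\emph{Preservation of mean convexity.} By hypothesis, $F_{s_0}(K)\subseteq \textrm{Int}_D K$ for some $s_0\in(0,\delta)$. Applying strict inclusion (5) with the pair $(F_{s_0}(K),K)$ gives $F_t(F_{s_0}(K))\subseteq \textrm{Int}_D F_t(K)$, and rewriting the left side by semigroup (4) yields
\[
K_{t+s_0}\subseteq \textrm{Int}_D K_t\qquad \text{for all }t\geq 0.
\]
For arbitrary $t_2>t_1\geq 0$: if $t_2-t_1\geq s_0$, inclusion (3) gives $K_{t_2}\subseteq K_{t_1+s_0}\subseteq \textrm{Int}_D K_{t_1}$; otherwise $t_2-t_1\in(0,\delta)$ is itself a valid $s_0$, and the same reasoning applies.

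\emph{Foliation and the easy inclusion.} Define the arrival time $T\colon K\to[0,\infty]$ by $T(x)=\sup\{t:x\in K_t\}$. By strict monotonicity and closedness of the spacetime track, $K_t=\{T\geq t\}$; strict inclusion combined with the observation that any ball in $\textrm{Int}_D K_t$ is contained in $K_{t+\epsilon}$ for small $\epsilon$ (as balls are smooth subsolutions) gives $\textrm{Int}_D K_t=\bigcup_{\epsilon>0}K_{t+\epsilon}=\{T>t\}$, so $\partial K_t=\{T=t\}$; hence the $\partial K_t$ disjointly foliate $K\setminus K_\infty$. For the inclusion $F_t(\partial K)\subseteq \partial K_t$: mean-convexity gives $\partial K\cap K_\epsilon=\emptyset$ for every $\epsilon\in(0,\delta)$, so avoidance (2) yields $F_t(\partial K)\cap K_{t+\epsilon}=\emptyset$; combined with $F_t(\partial K)\subseteq K_t$ from (3), intersecting over $\epsilon>0$ and invoking the foliation description gives the claim.

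\emph{Reverse inclusion and non-fattening.} For $\partial K_t\subseteq F_t(\partial K)$ the key point is that the family $\{\partial K_s\}_{s\geq 0}$ itself is a set-theoretic subsolution with initial data $\partial K$: given any smooth free-boundary subsolution $\{L_s\}_{s\in[s_0,s_1]}$ with $L_{s_0}\cap\partial K_{s_0}=\emptyset$, the foliation separates a neighborhood of $\partial K_{s_0}$ into a ``past'' region $\{T>s_0\}=\textrm{Int}_D K_{s_0}$ and a ``future'' region $D\setminus K_{s_0}$, and $L_{s_0}$ lies in one or the other on each connected component of its support; avoidance against the solid flow $\{K_s\}$ (for the outside case) or against its complement flow (for the inside case) together with strict inclusion transports this separation, yielding $L_s\cap\partial K_s=\emptyset$ for all $s\geq s_0$. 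Maximality of $F_t(\partial K)$ then forces $\partial K_t\subseteq F_t(\partial K)$. Finally, non-fattening follows at once: the spacetime track of $t\mapsto F_t(\partial K)=\partial K_t$ coincides with the graph $\{(x,T(x)):x\in K\setminus K_\infty\}$ of a function on $D$, which has $\mathcal{L}^{n+2}$-measure zero in $D\times\mathbb{R}_+$.

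\emph{Main obstacle.} The delicate step is the reverse inclusion $\partial K_t\subseteq F_t(\partial K)$, i.e.\ verifying that the foliation family passes the set-theoretic avoidance test. The subtlety is the interaction with the Neumann portion $\delta K_s\subseteq\partial D$: smooth free-boundary subsolutions may themselves meet $\partial D$ and slide along it, so one must carefully track both the solid and complement evolutions and use strict inclusion to keep $L_s$ on one side of the moving leaves throughout.
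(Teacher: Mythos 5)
Your overall architecture is the same as the paper's: strict monotonicity from the definition of mean-convexity together with properties (4) and (5), the foliation via the last-exit (arrival) time, the inclusion $F_t(\partial K)\subseteq\partial F_t(K)$ from avoidance, inclusion and the identity $\bigcup_{h>0}K_{t+h}=\operatorname{Int}_D K_t$, and the reverse inclusion from maximality once $t\mapsto\partial K_t$ is known to be a set-theoretic subsolution; your measure-zero formulation of nonfattening is an acceptable variant of the paper's ``empty interior'' conclusion. One small slip in the first step: ``inclusion (3) gives $K_{t_2}\subseteq K_{t_1+s_0}$'' is not justified, since (3) compares the flows of nested initial sets at equal times and does not by itself give monotonicity in time (which is what is being proved). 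The fix is immediate and is the paper's induction: every $h\in(0,\delta)$ serves as $s_0$, so $K_{t+h}\subseteq\operatorname{Int}_D K_t$ for all $t\geq 0$ and all $h\in(0,\delta)$, and the general case $t_2>t_1$ follows by chaining finitely many such steps.

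The substantive issue is the ``inside case'' of your key step, i.e.\ verifying that $\{\partial K_s\}$ passes the avoidance test against a smooth free-boundary subsolution $\{L_s\}$ a component of which starts in $\operatorname{Int}_D K_{s_0}$: as written, the appeal to ``avoidance against its complement flow'' is circular. If the complement flow means the family $D\setminus\operatorname{Int}_D K_s$, then using avoidance against it presupposes that this family is a set-theoretic subsolution, which is exactly the claim at stake; if it means the level set flow $F_{s-s_0}\big(D\setminus\operatorname{Int}_D K_{s_0}\big)$, then avoidance does apply to $L$, but to conclude $L_s\cap\partial K_s=\emptyset$ you would need $\partial K_s$ to be contained in that flow, which is again (a form of) the reverse inclusion $\partial K_s\subseteq F_{s-s_0}(\partial K_{s_0})$ being proved. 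The non-circular argument, which is what the paper's terse ``(2) and (5)'' (following White, Sec.~3) encodes, goes instead through compactness and maximality: since $\operatorname{Int}_D K_{s_0}=\bigcup_{h>0}K_{s_0+h}$ and $K_{s_0+h}\subseteq\operatorname{Int}_D K_{s_0+h/2}$, the compact component $L'_{s_0}$ lies in $K_{s_0+h}$ for some $h>0$; a smooth free-boundary subsolution is itself a set-theoretic subsolution (smooth subsolutions avoid each other), and unions of set-theoretic subsolutions are set-theoretic subsolutions, so maximality of the level set flow gives $L'_s\subseteq F_{s-s_0}(K_{s_0+h})=K_{s+h}\subseteq\operatorname{Int}_D K_s$ for all $s\geq s_0$, hence $L'_s\cap\partial K_s=\emptyset$. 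With this replacement (and the minor fix above) your proof coincides with the paper's.
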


\begin{proof}
This follows from the above basic properties, arguing as in \cite{White_size}*{Sec. 3}. Namely, using the definition of mean-convex and the basic properties (4) and (5) we infer that
\begin{equation}\label{eq_mean_conv_ind}
F_{t+h}(K)\subseteq \operatorname{Int}_DF_t(K)
\end{equation}
for all $t\geq 0$ and all $0<h<\delta$. By induction we conclude that \eqref{eq_mean_conv_ind} holds for all $t\geq 0$ and all $0<h<j\delta$, where $j=1,2,\ldots$, hence mean convexity is preserved. In particular, the sets $\{\partial F_t(K)\}_{t\geq 0}$ are disjoint. Next, given any $x\in K\setminus \cap_{t=0}^\infty F_t(K)$, denoting by $u(x)$ the last time such that $x\in F_{u(x)}(K)$, we have $x\in\partial F_{u(x)}(K)$. Hence
\begin{equation}
K\setminus \cap_{t=0}^\infty F_t(K)=\cup_{t\geq 0} \partial F_t(K).
\end{equation}

Next, it follows from (2) and (5) that $t\mapsto \partial F_t(K)$ is a set-theoretic subsolution. Also, note that $F_t(\partial K)\subseteq F_t(K)$ be property (3). Together with the observation that $F_t(\partial K)$ is disjoint from
\begin{equation}
\bigcup_{h>0}F_{t+h}(K)=\operatorname{Int}_D F_t(K)
\end{equation}
by (5), we infer that $F_t(\partial K)\subseteq \partial F_t(K)$. Recalling that $t\mapsto \partial F_t(K)$ is a set-theoretic subsolution with initial condition $\partial K$, while $F_t(\partial K)$ is the maximal set-theoretic subsolution with initial condition $\partial K$, we conclude that $F_t(\partial K)=\partial F_t(K)$. Since $\textrm{Int}_D\partial F_t(K)=\emptyset$, this finishes the proof of the proposition.
\end{proof}

\bigskip

\section{Elliptic regularization and consequences}\label{sec_ell_reg}
In this section, we prove Theorem \ref{thm_ell_reg} (elliptic regularization and consequences).
\subsection{Triple approximation scheme}\label{Triple approximation}
The aim of this subsection is to construct solutions of a mixed Dirichlet-Neumann problem:

\begin{theorem}\label{cor tau=0}
Given a strictly mean-convex domain $K\subset D$, and constants $\eps,\sigma>0$, there exists a unique solution $u_{\eps,\sigma}\in  C^{\infty}(K\backslash\partial K)\cap C^{0,1}(K)$ of the problem
\begin{align}\label{tau=0}
\operatorname{div}\left(\frac{Du_{\eps,\sigma}}{\sqrt{\eps^2+\abs{Du_{\eps,\sigma}}^2}}\right)+\frac{1}{\sqrt{\eps^2+\abs{Du_{\eps,\sigma}}^2}}&=\sigma u_{\eps,\sigma}  &  \textrm{in} \,\,\mathring{K}\nonumber \\
\langle N, Du_{\eps,\sigma}\rangle&= 0 & \textrm{on} \,\, \delta K\\
u_{\eps,\sigma}&= 0 & \textrm{on} \,\,\partial K.\nonumber
\end{align}
\end{theorem}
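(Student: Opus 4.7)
The plan is to set up the PDE as a quasilinear elliptic equation of mean-curvature type with a dissipative zeroth-order term, smooth out the edge $\partial K\cap\delta K$ to reduce to a classical mixed Dirichlet--Neumann problem, and pass to a limit. The $\eps^{2}$ inside the radical makes the principal part uniformly elliptic on bounded sets, while the term $-\sigma u$ produces an $L^{\infty}$ bound and makes the linearization bijective.

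First I would approximate $K$ by $C^{2,\alpha}$ domains $K_{\tau}$ obtained by Volkmann-style corner bending, whose boundary splits as $\partial K_{\tau}=\Gamma^{D}_{\tau}\cup\Gamma^{N}_{\tau}$ with the two pieces meeting $C^{1}$-transversally and with $\Gamma^{D}_{\tau}\to\partial K$, $\Gamma^{N}_{\tau}\to\delta K$ as $\tau\to 0$. On $K_{\tau}$ I would look for a solution of the same equation with $u=0$ on $\Gamma^{D}_{\tau}$ and $\langle N, Du\rangle=0$ on $\Gamma^{N}_{\tau}$. At an interior maximum one has $Du=0$ and $D^{2}u\leq 0$, so the equation forces $\sigma u\leq 1/\eps$; the Hopf lemma on $\Gamma^{N}_{\tau}$ rules out boundary extrema there, and $u=0$ on $\Gamma^{D}_{\tau}$, yielding the uniform estimate $0\leq u\leq 1/(\sigma\eps)$. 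Interior gradient estimates would come from a Korevaar-type maximum-principle argument on a function of the form $|Du|^{2}e^{f(u)}$; boundary gradient bounds along $\Gamma^{D}_{\tau}$ would follow from upper and lower barriers constructed from the strict mean-convexity of $K$ (in the spirit of Serrin), and along $\Gamma^{N}_{\tau}$ from a reflection argument using smoothness of $\partial D$. The right-angle meeting forced by the free-boundary hypothesis is what keeps these barriers effective, and hence the estimates uniform, as $\tau\to 0$.

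With uniform $C^{1,\alpha}$ bounds on $K_\tau$ in hand, I would run the continuity method interpolating the right-hand side (for instance multiplying $1/\sqrt{\eps^{2}+|Du|^{2}}$ by a parameter $s\in[0,1]$, with the trivial solution $u\equiv 0$ at $s=0$). Openness is the implicit function theorem on a Zaremba-type H\"older space adapted to the smooth mixed problem on $K_{\tau}$; the linearization $Lv=a^{ij}v_{ij}+b^{i}v_{i}-\sigma v$ is invertible precisely because $\sigma>0$. Closedness uses the a priori estimates together with Schauder theory for smooth mixed problems. Taking $\tau\to 0$ and extracting a subsequential limit yields the desired solution $u_{\eps,\sigma}\in C^{\infty}(K\setminus\partial K)\cap C^{0,1}(K)$, with the higher interior regularity from standard elliptic theory and smoothness up to $\delta K$ from oblique-derivative estimates. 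Uniqueness is immediate, since the difference of two solutions satisfies a linear elliptic equation with zeroth-order coefficient $-\sigma<0$ and homogeneous mixed boundary data, so the maximum principle (with Hopf on $\delta K$) forces it to vanish.

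The main technical obstacle is ensuring that the gradient estimates are uniform as $\tau\to 0$ across the bent corner, where generic mixed-problem theory only gives Lipschitz regularity and constants that blow up. The orthogonal free-boundary condition on $K$ together with the strict mean-convexity assumption are exactly what is needed to construct barriers that survive the degeneration, and it is this combination that makes the Volkmann corner-bending strategy succeed here; the only regularity asserted up to $\partial K$ is $C^{0,1}$, reflecting the fact that the corner persists in the limit domain $K$.
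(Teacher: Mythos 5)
Your proposal follows essentially the same route as the paper: Volkmann-style corner bending to a smooth mixed Dirichlet--Neumann problem (with strictly acute corner angle), the continuity method in a parameter multiplying the $1/\sqrt{\eps^2+|Du|^2}$ term starting from the trivial solution, the maximum-principle bound $0\le u\le 1/(\sigma\eps)$, Dirichlet boundary gradient barriers built from strict mean-convexity, Lieberman/Zaremba-type mixed Schauder theory for openness and closedness, passage to the limit $\tau\to 0$, and uniqueness via the maximum principle and Hopf lemma. The only difference is in how the gradient bound is packaged: the paper applies the maximum principle to the graph quantity $V=(\eps^2+|Du|^2)^{-1/2}$ weighted by $e^{mz-bd}$ so that the Neumann boundary term acquires a favorable sign, rather than your Korevaar-plus-reflection variant, but for fixed $\eps,\sigma$ either mechanism delivers the required a priori estimate.
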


To solve the problem \eqref{tau=0}, we need some further approximations.
Namely, to deal with the mixed Dirichlet-Neumann boundary we approximate the initial domain $K$ by domains $K^\tau$ ($\tau>0$) as in Volkmann \cite{Vol}*{p. 74}, which in particular satisfy the strict angle condition
\begin{equation}\label{eq_strict_angle}
\langle\nu_{\partial K^{\tau}}, N\rangle\leq -\tau/2.
\end{equation}

Given $\eps,\sigma,\tau>0$, we then consider the triple approximation problem
\begin{align}\label{trip_approx_prob}
\operatorname{div}\left(\frac{Du_{\eps,\sigma,\tau}}{\sqrt{\eps^2+\abs{Du_{\eps,\sigma,\tau}}^2}}\right)
+\frac{1}{\sqrt{\eps^2+\abs{Du_{\eps,\sigma,\tau}}^2}}&=\sigma u_{\eps,\sigma,\tau}  &  \textrm{in} \,\,\mathring{K}^\tau\nonumber\\
\langle N, Du_{\eps,\sigma,\tau}\rangle&= 0 & \textrm{on} \,\, \delta K^\tau\\
u_{\eps,\sigma,\tau}&= 0 & \textrm{on} \,\,\partial K^\tau.\nonumber
\end{align}

To solve \eqref{trip_approx_prob} we use the continuity method, i.e. we introduce yet another parameter $\kappa\in [0,1]$ and consider the problem
\begin{align}\label{cont_meth}
\operatorname{div}\left(\frac{Du_{\eps,\sigma,\tau,\kappa}}{\sqrt{\eps^2+\abs{Du_{\eps,\sigma,\tau,\kappa}}^2}}\right)
+\frac{\kappa}{\sqrt{\eps^2+\abs{Du_{\eps,\sigma,\tau,\kappa}}^2}}&=\sigma u_{\eps,\sigma,\tau,\kappa}  &  \textrm{in} \,\,\mathring{K}^\tau\nonumber\\
\langle N,Du_{\eps,\sigma,\tau,\kappa}\rangle&= 0 & \textrm{on} \,\, \delta K^\tau\\
u_{\eps,\sigma,\tau,\kappa}&= 0 & \textrm{on} \,\,\partial K^\tau.\nonumber
\end{align}
For $\kappa=0$ the problem \eqref{cont_meth} has the obvious solution $u_{\eps,\sigma,\tau,0}=0$. We will now derive the needed a-priori estimates for $\kappa\in(0,1]$. Note first that we have the sup-bound
\begin{equation}\label{easy_sup_bound}
0 \leq u_{\eps,\sigma,\tau,\kappa}\leq \frac{\kappa}{\sigma\eps},
\end{equation}
which follows directly from the maximum principle. To proceed further, we consider the graph
\begin{equation}
M^{\eps,\sigma,\tau,\kappa}=\graph(u_{\eps,\sigma,\tau,\kappa}/\eps)\subset N\times \mathbb{R}_+.
\end{equation}
We write $\eta=\tfrac{\partial}{\partial z}$ for the unit vector in $\mathbb{R}_+$ direction, and $\nu$ for the upward pointing unit normal of $M$ (here and in the following we drop the dependence on $(\eps,\sigma,\tau,\kappa)$ in the notation when there is no risk of confusion). Written more geometrically, problem  \eqref{cont_meth} takes the form
\begin{align}\label{equation_geom}
&\textrm{$M$ satisfies } H+\sigma u=\kappa V,\nonumber\\
&\textrm{with free boundary on}~\partial D\times \mathbb{R}_+,\\
&\textrm{and Dirichlet boundary on}~\partial K^\tau \times \{0\},\nonumber
\end{align}
where $H$ is the mean curvature of $M\subset N\times \mathbb{R}_+$, and
\begin{equation}
V=\tfrac{1}{\eps}\langle \eta,\nu\rangle=\frac{1}{\sqrt{\eps^2+\abs{Du}^2}}.
\end{equation}
We recall from \cite{HH18}*{Lem. 2.5} that
\begin{equation}\label{prod_laplacian}
\Delta V=\tfrac{1}{\eps}\langle  \eta, \nabla H \rangle-\left(|A|^2+\operatorname{Rc}(\nu,\nu)\right)V.
\end{equation}
Let $d: D\to \R_{+}$ be a $C^2$-function satisfying
\begin{equation}
d|_{\partial D}\equiv0,\quad N(d)=-1,\quad |Dd|\leq 1,\quad |D^2d|\leq 10C_{\partial D},
\end{equation}
where $C_{\partial D}$ denotes the maximal curvature of the barrier. We will sometimes tacitly view $d$ as a function on $M$ which is independent of the $z$-coordinate.

\begin{lemma}\label{weigh_fn}
The weight function $w=e^{mz-bd}$, where $m$ and $b$ are constants, satisfies
\begin{align}
\nabla \log w=& m\eta^\top-b\nabla d,
\end{align}
and
\begin{align}\label{lemma_lapl_weight}
\Delta w= \left( |m\eta^\top - b\nabla d|^2-b\operatorname{tr}_{TM}D^2d-(m\eps V-b \nu(d)) H\right)w.
\end{align}
\end{lemma}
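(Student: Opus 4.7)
The lemma is a direct computation, since $\log w = mz - bd$ is an affine combination of two particularly tame ambient functions on $N\times\R_+$: the height $z$, whose ambient gradient $\eta = \partial_z$ is parallel in the product metric (so $D^2 z \equiv 0$), and $d$, which is independent of the $z$-factor (so its ambient Hessian reduces to $D^2 d$ on $N$, extended trivially).

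For the gradient identity I would simply restrict and project tangentially: the intrinsic gradient on $M$ of a function pulled back from $N\times\R_+$ is the tangential component of the ambient gradient. Applied to $f := mz - bd$ this gives $\nabla f = m\eta^\top - b\nabla d$, where $\nabla d$ is read as the intrinsic gradient of $d|_M$ (equivalently $(Dd)^\top$). For the Laplacian the plan is to combine the chain rule $\Delta_M(e^f) = e^f(|\nabla f|^2 + \Delta_M f)$ with the standard extrinsic formula
\[
\Delta_M \phi = \operatorname{tr}_{TM}(D^2\phi) + \langle \Hvec, D\phi\rangle,
\]
applied to $\phi = f$. The first term contributes $|m\eta^\top - b\nabla d|^2$; the trace term reduces to $-b\operatorname{tr}_{TM}D^2 d$ because the $z$-contribution vanishes identically; and the mean curvature term, using the paper's convention $\Hvec = -H\nu$ together with the identification $\langle \eta,\nu\rangle = \eps V$ built into the definition of $V$, equals $-H(m\eps V - b\,\nu(d))$. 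Summing these three pieces and multiplying by $w = e^f$ produces the formula \eqref{lemma_lapl_weight}.

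I do not anticipate any real obstacle --- the lemma is entirely a bookkeeping exercise. The only points that warrant care are (i) the paper's sign convention $\Hvec = -H\nu$, which is what makes the two scalars $m\eps V$ and $-b\,\nu(d)$ appear with the signs shown inside the final parenthesis, and (ii) the interpretation of the symbol $\nabla d$ in the statement as the intrinsic gradient along $M$ of $d|_M$ rather than the ambient gradient on $N\times\R_+$.
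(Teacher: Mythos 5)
Your proposal is correct and is essentially the paper's own argument: the paper likewise proves \eqref{lemma_lapl_weight} by a direct computation in a normal frame, writing $Dw=(m\eta-bDd)w$ and differentiating tangentially, which produces exactly your three pieces $|m\eta^\top-b\nabla d|^2$, $-b\operatorname{tr}_{TM}D^2d$ (the $z$-Hessian vanishing since $\eta$ is parallel), and the mean curvature term $-H\langle m\eta-bDd,\nu\rangle=-(m\eps V-b\nu(d))H$. Your packaging via the chain rule for $e^f$ and the identity $\Delta_M\phi=\operatorname{tr}_{TM}(D^2\phi)+\langle \Hvec,D\phi\rangle$ is just a reorganization of the same frame computation, and your sign bookkeeping with $\Hvec=-H\nu$ and $\langle\eta,\nu\rangle=\eps V$ matches the paper.
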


\begin{proof}
The first formula is immediate. To prove the second formula choosing an orthonormal frame $\{e_i\}$ with $\nabla_{e_i}e_{j}=0$ at the point in consideration we compute
\begin{align}
\Delta w &= \nabla_{e_i}\left(  \langle m\eta-bD d,e_i\rangle w\right)\\
&=\langle m\eta-bD d,e_i\rangle\langle m\eta-bD d,e_i\rangle w\\
&\qquad-b \langle D^2 d, e_i\otimes e_i\rangle w -H\langle m\eta -bD d,\nu\rangle w.\nonumber
\end{align}
This implies the assertion.
\end{proof}

\begin{lemma}\label{basic equations} The function $Vw:M^{\eps,\sigma,\tau,\kappa}\to \mathbb{R}_+$ satisfies
\begin{align}\label{normder_Vw}
N\left(\log(Vw)\right)=  b +  A_{\partial D}(\nu,\nu),
\end{align}
and
\begin{align}
\Delta(Vw)=&2\langle \nabla \log w,\nabla (Vw)\rangle+\tfrac{1}{\varepsilon}\langle  \eta, \kappa \nabla V-\sigma \nabla u \rangle w\nonumber\\
&-\left(|m\eta^\top-b\nabla d|^2+|A|^2+\operatorname{Rc}(\nu,\nu)+b\operatorname{tr}_{TM}D^2d\right)Vw\\
&-(\kappa V-\sigma u)(m\eps V-b\nu(d)) Vw.\nonumber
\end{align}
\end{lemma}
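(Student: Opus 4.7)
The lemma asserts two independent formulas, one on the free boundary $\delta K^\tau \times \R_+$ and one on the interior of $M$. I would treat them separately and combine at the end.

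For the normal derivative identity, the plan is direct differentiation at a point of the free boundary, exploiting the structure $w = e^{mz - bd}$. Since $N$ is the outward normal to $\partial D \subset N$, extended trivially to $\partial D \times \R_+$, it has no $\eta$-component, so $N(z) = 0$; combined with $N(d) = -1$, this gives $N(\log w) = b$ immediately. For the $V$-term, I would differentiate $V = 1/\sqrt{\eps^2+|Du|^2}$ to get $N(V) = -V^3 D^2u(N,Du)$. The key is then to differentiate the free boundary condition $\langle N, Du\rangle = 0$ tangentially along $\delta K^\tau$. Because $Du$ itself is tangent to $\delta K^\tau$ (as $\langle N, Du\rangle=0$), plugging $e = Du$ into $0 = e\langle N,Du\rangle = \langle\nabla_e N, Du\rangle + D^2u(e,N)$ yields $D^2u(Du,N) = -A_{\partial D}(Du,Du)$. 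Finally, since $\nu_M = V(-Du,\eps)$ on $M$, its horizontal part is $-VDu$, and the ambient second fundamental form of $\partial D \times \R_+$ satisfies $A_{\partial D}(\nu,\nu) = V^2 A_{\partial D}(Du,Du)$. Assembling, $N(\log V) = A_{\partial D}(\nu,\nu)$, and adding the two pieces gives the claim.

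For the Laplacian formula I would simply expand by the product rule and substitute. Writing
\begin{equation*}
\Delta(Vw) = V\Delta w + w\Delta V + 2\langle \nabla V, \nabla w\rangle,
\end{equation*}
I would plug in \eqref{prod_laplacian} for $\Delta V$, \eqref{lemma_lapl_weight} for $\Delta w$, and use the PDE \eqref{equation_geom} in the form $H = \kappa V - \sigma u$ (so that $\nabla H = \kappa \nabla V - \sigma \nabla u$) to eliminate $H$ and $\nabla H$. The cross term is then rewritten using $\nabla(Vw) = w\nabla V + Vw\nabla\log w$ as
\begin{equation*}
2\langle\nabla V,\nabla w\rangle = 2w\langle\nabla\log w,\nabla V\rangle = 2\langle\nabla\log w,\nabla(Vw)\rangle - 2Vw|\nabla\log w|^2.
\end{equation*}
Since Lemma \ref{weigh_fn} contributes a term $+|\nabla\log w|^2 Vw = |m\eta^\top - b\nabla d|^2 Vw$ via $V\Delta w$, combining with the $-2|\nabla\log w|^2 Vw$ from the cross term leaves a net $-|m\eta^\top - b\nabla d|^2 Vw$, exactly as in the target formula. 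The remaining terms line up directly.

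The only real obstacle is bookkeeping of signs on the boundary, because the second fundamental form of $\partial D$ and the orientation of $\nu$ must be used consistently with the paper's convention $\mathbf{H} = -H\nu$; in particular one must verify the relation $A_{\partial D}(\nu,\nu) = V^2 A_{\partial D}(Du,Du)$ carefully, using that on the free boundary $\nu$ is tangent to $\partial D\times\R_+$ so its horizontal projection lies in $T\partial D$. Everything else is an unobjectionable combination of the product rule, Lemma \ref{weigh_fn}, identity \eqref{prod_laplacian}, and the PDE satisfied by $u$.
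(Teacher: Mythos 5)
Your proposal is correct and follows essentially the same route as the paper: the Laplacian identity via the product rule, the cross-term rewriting $2\langle\nabla V,\nabla w\rangle=2\langle\nabla\log w,\nabla(Vw)\rangle-2Vw|\nabla\log w|^2$, and substitution of \eqref{prod_laplacian}, Lemma \ref{weigh_fn} and $H=\kappa V-\sigma u$; the boundary identity by differentiating the free boundary condition, which the paper does in the intrinsic form $\langle\nu,N\rangle=0$ while you do the equivalent computation in graphical form $\langle N,Du\rangle=0$, both yielding $N(V)=A_{\partial D}(\nu,\nu)V$ and $N(\log w)=b$.
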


\begin{proof}
Differentiating $\langle\nu,N\rangle=0$ and using \eqref{equation_geom} one obtains that
\begin{equation}
N(V)=A_{\partial D}(\nu,\nu) V.
\end{equation}
Together with $N(\log w)=b$, this yields \eqref{normder_Vw}. Using that
\begin{align*}
\Delta(Vw)=&w\Delta V+V\Delta w+2\langle \nabla \log w,\nabla (Vw)\rangle-2Vw|\nabla\log w|^2,
\end{align*}
the second formula follows from \eqref{prod_laplacian} and \eqref{lemma_lapl_weight}.
\end{proof}

\begin{proposition}\label{V_bd_1}
Let $b=2C_{\partial D}$ and $m=\max(20C_{\partial D},2\max\limits_{D}|\operatorname{Rc}|^{\frac{1}{2}})$. Then the function $V:M^{\eps,\sigma,\tau,\kappa}\to \mathbb{R}$ satisfies
\begin{equation}
V(x,z) \geq e^{-b\max d} \min  \left(\frac{1}{2\varepsilon},\min_{\partial K^\tau}V\right)  e^{-mz}.
\end{equation}
\end{proposition}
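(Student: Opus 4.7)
The plan is to run the maximum principle for the weighted function $Vw = V e^{mz-bd}$ on the compact graph $M := M^{\eps,\sigma,\tau,\kappa}$. The point of the weight is that since $d \geq 0$ on $D$, one has $w \leq e^{mz}$ pointwise, so any uniform lower bound $\min_M Vw \geq c_0$ immediately yields $V(x,z) \geq c_0\,e^{-mz}$. Hence the claim reduces to
\begin{equation*}
\min_M Vw \;\geq\; e^{-b\max d}\,\min\!\bigl(\tfrac{1}{2\eps},\,\min_{\partial K^\tau} V\bigr),
\end{equation*}
and since $M$ is compact and $Vw$ is smooth the minimum is attained; I would argue by casework on where.

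First I would rule out the Neumann boundary $\partial D \times \R_+$. At a minimum there, Hopf's lemma (applied to the linear elliptic operator for $Vw$ implicit in Lemma \ref{basic equations}) would force $N(\log(Vw)) \leq 0$ with $N$ outward from $D$. But the free-boundary identity \eqref{normder_Vw} with $b = 2C_{\partial D}$ and $A_{\partial D}(\nu,\nu) \geq -C_{\partial D}$ gives $N(\log(Vw)) \geq C_{\partial D} \geq 0$, incompatible with Hopf's strict inequality. On the Dirichlet boundary $\partial K^\tau \times \{0\}$ the bound is immediate: $z = 0$ and $d \leq \max d$ give $Vw \geq e^{-b\max d}\min_{\partial K^\tau} V$.

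The interior case is where I expect the main difficulty, and the heart of the argument. At an interior minimum, $\nabla(Vw) = 0$ forces $\nabla V = -V(m\eta^\top - b\nabla d)$, and $\Delta(Vw) \geq 0$. Plugging this into the formula from Lemma \ref{basic equations} and using the graph identities $\langle\eta,\eta^\top\rangle = 1-\eps^2 V^2$, $\langle\eta,\nabla d\rangle = -\eps V\nu(d)$, $\langle\eta,\nabla u\rangle = \eps(1-\eps^2 V^2)$ turns everything into a purely algebraic inequality in $V$. Crucially, the cubic contribution $\kappa m\eps V^3$ produced by $\tfrac{\kappa}{\eps}\langle\eta,\nabla V\rangle w$ cancels exactly against the matching term in $-(\kappa V - \sigma u)(m\eps V - b\nu(d))V$, as does the $\kappa b V^2\nu(d)$ contribution. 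What remains is dominated by a strongly negative transport term $-\tfrac{\kappa m V}{\eps}(1-\eps^2 V^2)$, a geometric square $-|m\eta^\top - b\nabla d|^2 V$, and controlled curvature remainders.

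Now I would assume for contradiction that $V < \tfrac{1}{2\eps}$, so $1-\eps^2 V^2 > \tfrac{3}{4}$. The positive contributions involving $\sigma u$ are bounded by multiples of $\kappa/\eps$ (using $\sigma u \leq \kappa/\eps$) and are absorbed by the leading $-\tfrac{\kappa m V}{\eps}(1-\eps^2 V^2)$ term. For the geometric block I expand
\begin{equation*}
|m\eta^\top - b\nabla d|^2 \;\geq\; m^2(1-\eps^2 V^2) + 2mb\eps V\,\nu(d),
\end{equation*}
which is $\gtrsim m^2$ once the calibration $m \geq 20C_{\partial D}$ (so $b \leq m/10$) is invoked; this dominates the Ricci contribution $-\mathrm{Rc}(\nu,\nu)V$ thanks to $m \geq 2\max_D|\mathrm{Rc}|^{1/2}$ and the barrier-Hessian term $-b\,\mathrm{tr}_{TM}D^2 d\cdot V$ thanks to $|D^2 d| \leq 10C_{\partial D}$. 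The resulting RHS is then strictly negative, contradicting $\Delta(Vw) \geq 0$; hence $V \geq \tfrac{1}{2\eps}$ at the interior minimum, and therefore $Vw \geq \tfrac{1}{2\eps}\,e^{-b\max d}$ in that case. Combining the three cases gives the reduced bound and completes the proof. The delicate point is the algebraic cancellation together with the precise balancing of $b$, $m$ against the ambient Ricci curvature and the barrier curvature; once those are tuned correctly the strict inequality closes the interior maximum principle.
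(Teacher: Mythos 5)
Your proposal is correct and follows essentially the same route as the paper: the maximum principle applied to $Vw$ with the weight $w=e^{mz-bd}$, exclusion of the Neumann boundary via the identity \eqref{normder_Vw} with $b=2C_{\partial D}$, the trivial Dirichlet case, and at an interior minimum the critical-point identity $\nabla V=-V(m\eta^\top-b\nabla d)$ fed into Lemma \ref{basic equations} together with $\langle\nabla d,\eta\rangle=-\eps V\nu(d)$, $\sigma u\le\kappa/\eps$ and $\eps V<\tfrac12$, balanced by $m\ge\max(20C_{\partial D},2\max_D|\operatorname{Rc}|^{1/2})$. The only cosmetic remarks are that Hopf's lemma is not needed (the strict positivity of $N(\log(Vw))$ already contradicts the first-order condition at a boundary minimum), and that your cancellation bookkeeping is slightly redundant since the leftover cubic term is negative and can simply be dropped, exactly as in the paper's passage from \eqref{dropped_good_term} to \eqref{x}.
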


\begin{proof}
By equation \eqref{normder_Vw}, the normal derivative of $Vw$ is positive at the barrier for $b=2C_{\partial D}$. Therefore, the minimum of $Vw$ is attained in $K^\tau\setminus\delta K^\tau$. If the minimum is attained on $\partial K^\tau$ or if the minimum is at least $\frac{1}{2\eps}e^{-b\max d}$, then we are done. Suppose now towards a contradiction that the minimum of $Vw$ is attained in $\mathring{K}^\tau$ and is less than $\frac{1}{2\eps}e^{-b\max d}$.
By Lemma \ref{weigh_fn} and Lemma \ref{basic equations} at such an interior point we get
\begin{align}
 \nabla \log V=b\nabla d- m\eta^{\top},
\end{align}
and
\begin{multline}\label{dropped_good_term}
0 \geq |m\eta^\top-b\nabla d|^2+|A|^2+\operatorname{Rc}(\nu,\nu)+b\operatorname{tr}_{TM}D^2d\\
+(\kappa V-\sigma u)\left(m\eps V-b\nu(d)\right)+\tfrac{\kappa}{\varepsilon}\langle m\eta^{\top}-b\nabla d,\eta\rangle,
\end{multline}
where we also used that $\langle \nabla u,\eta\rangle=\eps |\eta^{\top}|^2 \geq 0$.
Furthermore, taking also into account the graphical identity
\begin{align}\label{eq_graphical_id}
\langle\nabla d,\eta\rangle=-\eps V \nu(d),
\end{align}
this implies
\begin{multline}\label{x}
0\geq m^2|\eta^\top|^2+m\left( \tfrac{\kappa}{\varepsilon}|\eta^{\top}|^2-\sigma u\eps V\right)\\
+\operatorname{Rc}(\nu,\nu)-b\left(2m\varepsilon V+\sigma u+10C_{\partial D}\right).
\end{multline}
Since $\varepsilon V<\frac{1}{2}$, we have $|\eta^{\top}|^2\geq \frac{3}{4}$. Hence, $m\geq 2\max\limits_{D}|\operatorname{Rc}|^{\frac{1}{2}}$ yields
\begin{align}
0\geq \frac{m^2}{2}+\frac{m\kappa}{4\varepsilon}-2C_{\partial D}\left(m+\frac{\kappa}{\varepsilon}+10C_{\partial D}\right),
\end{align}
but this contradicts $m\geq 20C_{\partial D}$.
\end{proof}

Note that a lower bound for $V$ is equivalent to an upper bound for $\abs{D u}$. The next lemma provides a uniform lower bound for $\min_{\partial K^\tau}V$.

\begin{lemma}\label{grad bound with tau}
There exists a constant $C=C(\eps,\sigma,K)<\infty$, such that
\begin{equation}
\sup_{\partial K^\tau}|D u_{\eps,\sigma,\tau,\kappa}| \leq C.
\end{equation}
\end{lemma}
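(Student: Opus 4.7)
The plan is to bound $|Du|$ on the Dirichlet boundary $\partial K^\tau$, where $u := u_{\eps,\sigma,\tau,\kappa}$, by constructing a $\tau$- and $\kappa$-independent upper barrier $\bar u \geq u$ with $\bar u = 0$ on $\partial K^\tau$. Since $u \geq 0$ by the maximum principle, comparing normal derivatives at $\partial K^\tau$ will then give $|Du| \leq |D\bar u|$.

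Two uniform inputs are available. First, \eqref{easy_sup_bound} gives $0 \leq u \leq 1/(\sigma\eps)$ independently of $\tau$ and $\kappa$. Second, since Volkmann's construction is a localized smoothing of $K$ near the corner $\partial K\cap \delta K$, the Dirichlet boundaries $\partial K^\tau$ inherit strict mean-convexity uniformly, i.e.\ $H_{\partial K^\tau}\geq c_0(K)>0$. Consequently, the signed distance $d_\tau(x):=\operatorname{dist}(x,\partial K^\tau)$ is smooth on a tubular neighborhood $U$ of $\partial K^\tau$ of $\tau$-independent width $s_0=s_0(K)>0$, with $|D^2d_\tau|$ uniformly bounded and $-\Delta d_\tau\geq c_0/2$ on $U$.

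I would try the barrier $\bar u(x):=\phi(d_\tau(x))$ with $\phi(s):=A(1-e^{-\alpha s})$, choosing $\alpha=\alpha(\eps,\sigma,K)$ large and $A$ so that $\phi(s_0)\geq 1/(\sigma\eps)$. A direct computation using $|Dd_\tau|=1$ and writing $W:=\sqrt{\eps^2+(\phi')^2}$ gives
\begin{equation*}
\operatorname{div}\lb(\frac{D\bar u}{W}\rb)+\frac{1}{W}-\sigma\bar u=\frac{\phi''\eps^2}{W^3}+\frac{\phi'\Delta d_\tau}{W}+\frac{1}{W}-\sigma\phi.
\end{equation*}
Since $\phi''=-\alpha\phi'<0$ and $\phi'\Delta d_\tau\leq -(c_0/2)\phi'$, for $\alpha$ large enough the negative terms dominate $1/W$ uniformly on $U$, making $\bar u$ a supersolution of \eqref{cont_meth}. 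Comparison with $u$ then gives $u\leq \bar u$ on $U$, hence $|Du|\leq |D\bar u|=A\alpha$ on $\partial K^\tau$, with all constants depending only on $\eps,\sigma,K$.

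The main obstacle is compatibility of $\bar u$ with the Neumann condition on $\delta K^\tau$: the comparison $u\leq \bar u$ in $U$ closes only if $\langle N, D\bar u\rangle\leq 0$ on $\delta K^\tau\cap U$, so that the Hopf lemma rules out $u-\bar u$ from attaining a positive maximum there. This is exactly where the strict angle condition \eqref{eq_strict_angle} is used: by Volkmann's construction $\partial D$ is uniformly transverse to the level sets $\{d_\tau=s\}$ near the corner in the correct direction, yielding $\langle N, Dd_\tau\rangle\leq 0$ on $\delta K^\tau\cap U$ uniformly in $\tau$. Since $\phi'>0$, this gives $\langle N, D\bar u\rangle=\phi'(d_\tau)\langle N, Dd_\tau\rangle\leq 0$, closing the argument.
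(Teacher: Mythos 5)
Your overall skeleton (an upper barrier built from the distance to $\partial K^\tau$ in a collar of uniform width, Dirichlet comparison via the sup-bound \eqref{easy_sup_bound}, supersolution property from the strict mean-convexity of $\partial K$ through the Riccati/Laplacian comparison for the distance) is the same as the paper's, and your interior computation is fine. The gap is in the Neumann boundary step, which is the actual crux of this lemma, and it is wrong in two places. First, the sign of the condition you need is backwards: to conclude $u\leq \bar u$ you must exclude a positive maximum of $u-\bar u$ on $\delta K^\tau$; since $N$ is the \emph{outward} unit normal of $D$, the Hopf lemma at such a maximum gives $N(u-\bar u)>0$, i.e. $N(\bar u)<0$ (because $N(u)=0$), so the barrier must satisfy $\langle N, D\bar u\rangle \geq 0$ on the Neumann portion — not $\leq 0$ as you require. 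Second, the geometric input you invoke is also false: near $\partial K^\tau$ one has $Dd_\tau=-\nu_{\partial K^\tau}$, so the strict angle condition \eqref{eq_strict_angle} gives $\langle N, Dd_\tau\rangle \geq \tau/2>0$ at the corner, the opposite of your claim.

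These two sign errors do not cancel into a correct proof. Even with both corrected, a barrier of the form $\bar u=\phi(d_\tau)$ has $\langle N,D\bar u\rangle=\phi'(d_\tau)\langle N,Dd_\tau\rangle$, and the only control on $\langle N, Dd_\tau\rangle$ that holds uniformly in $\tau$ on a collar of fixed width is of the form $N(\operatorname{dist}(\cdot,\partial K^\tau))\geq -\gamma\,\operatorname{dist}(\cdot,\partial K^\tau)$ (for nonconvex $D$ the normal derivative of the distance is genuinely negative away from the corner, while the positivity of order $\tau$ supplied by the bending degenerates as $\tau\to 0$). So $\langle N, D\bar u\rangle\geq 0$ fails in general on the whole Neumann part of a $\tau$-independent collar, and a barrier depending on $d_\tau$ alone cannot repair this. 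This is precisely why the paper takes $v=\alpha r\, g(s)$ with the extra factor $g(s)=1+f(\beta s)$ in the distance $s$ to $\partial D$: on $\delta K^\tau$ one has $N(s)=-1$, so the term $\alpha r\, g'(s)N(s)=\alpha r\beta |f'(\beta s)|$ is positive and dominates the possibly negative contribution $\alpha g(s)N(r)\geq -2\gamma\alpha r$ once $\beta>4\gamma$, giving $N(v)>0$ uniformly in $\tau$ and $\kappa$. Your argument needs this (or an equivalent correction term transversal to $\partial D$) to close; the rest of your constants do depend only on $\eps,\sigma,K$ as required.
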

\begin{proof}
We argue as in the proof of \cite{Vol}*{Lem. 3.9} and seek a super-solution of the form
\begin{equation}
v=\alpha r\cdot g(s),
\end{equation}
where $r$ is the distance from $\partial K^{\tau}$ and $s$ is the distance from $\partial D$. Here, $g(s)=1+f(\beta  s)$, with $f(t)$ denoting a smooth mollification of $\max(0,1-t)$.
We work in the region $T_{\rho}:=\{x\in K^{\tau}: r(x)<\rho\}$, where $\rho>0$ is in particular small enough to ensure that $r$ is smooth on $T_\rho$.
Due to the sup-bound (\ref{easy_sup_bound}) we have  $v\geq u_{\eps,\sigma,\tau,\kappa}$ on $\partial T_\rho$, provided
\begin{equation}
\alpha\geq \frac{1}{\sigma\varepsilon\rho}.
\end{equation}
Let $\gamma=\gamma(D,K)>0$ be a constant such that for any $\tau\in[0,1]$ we have
$N(\log \textrm{dist}(\cdot, \partial K^\tau))\geq -\gamma$ on $\delta K^\tau$. Then the normal derivative of $v$ on the Neumann-boundary $\delta T_{\rho}$ satisfies
\begin{align}
N(v)=\alpha g(s) N(r)+\alpha r g'(s)N(s)> 0,
\end{align}
provided
\begin{equation}
\beta > 4\gamma.
\end{equation}
By the maximum principle, it is thus enough to show that
\begin{align}\label{barrier_est_toshow}
\mathcal{D}v:=&\operatorname{div}\left(\frac{Dv}{\sqrt{\varepsilon^2+|Dv|^2}}\right)+\frac{\kappa}{\sqrt{\varepsilon^2+|Dv|^2}}-\sigma v\leq 0
\end{align}
in $\mathring{T}_\rho$ (once this is done, one concludes that $u_{\eps,\sigma,\tau,\kappa}\leq v$ in $T_\rho$, which yields the assertion of the lemma).

Let us now show that \eqref{barrier_est_toshow} indeed holds for suitable choice of constants $\alpha,\beta,\rho$. To this end, we start by estimating
\begin{align}
\mathcal{D}v\leq& \frac{1}{\sqrt{\varepsilon^2+|Dv|^2}}\left(\Delta v+1-\frac{1}{2(\varepsilon^2+|Dv|^2)}\langle D|Dv|^2,Dv\rangle\right).
\end{align}

If $\rho$ is small enough, then by the Riccati equation we have
\begin{equation}
\Delta r\leq -\tfrac{1}{2}\min_{\partial K} H.
\end{equation}
Thus, we obtain
\begin{align}\label{x1}
\Delta v=&\alpha g\Delta r+ 2\alpha\langle Dr,Dg \rangle +  \alpha r\Delta g\nonumber\\
\leq&-\frac{\alpha}{2}\min_{\partial K} H+2\alpha\beta f'\langle Dr,Ds\rangle+C(\beta)\alpha\rho.
\end{align}
Next, we calculate
\begin{align}
\frac{1}{\alpha}Dv=gDr+\beta rf'Ds,
\end{align}
and
\begin{align}\label{eq_dv_sq}
\frac{1}{\alpha^2}|Dv|^2=g^2+\beta ^2r^2f'^2+2\beta rf'g\langle Dr,Ds\rangle.
\end{align}
In particular, we see that
\begin{equation}
|Dv|^2\geq \frac{\alpha^2}{2}
\end{equation}
for $\rho$ small enough. Taking another derivative of \eqref{eq_dv_sq} we get
\begin{align}
\frac{1}{\alpha^2}D|Dv|^2=2\beta f'gDs+2\beta f'g\langle Dr,Ds\rangle Dr+\mathcal{R},
\end{align}
where the remainder satisfies $|\mathcal{R}|\leq C(\beta)\rho$. This yields
\begin{align}\label{eq_compl_rem}
\left| \frac{1}{\alpha^3}\langle D|Dv|^2,Dv\rangle-4\beta f'g^2\langle Dr,Ds\rangle\right| \leq C(\beta)\rho.
\end{align}
Finally, using again \eqref{eq_dv_sq} we see that
\begin{equation}
2\alpha\beta f' \langle Dr,Ds\rangle - \frac{4\alpha^3 \beta  f'g^2 \langle Dr,Ds\rangle}{2(\eps^2+|Dv|^2)}\\
\leq \frac{\beta}{\alpha} + C(\beta)\alpha\rho.
\end{equation}
Putting everything together we conclude that
\begin{equation}
{\sqrt{\varepsilon^2+|Dv|^2}}\mathcal{D}v\leq -\frac{\alpha}{2}\min_{\partial K} H+1+\frac{\beta}{\alpha}+C(\beta)\alpha\rho <0,
\end{equation}
provided we first fix $\beta$ large enough, and then choose $\alpha$ very large and set $\rho=\frac{1}{\sigma\eps\alpha}$. This proves the lemma.
\end{proof}

We can now prove existence for our triple approximate problem.

\begin{theorem}\label{existence}
There exists $u_{\eps,\sigma,\tau}\in C^{\infty}(K^\tau\setminus\partial K^\tau)\cap C^{1,\alpha}(K^\tau)$, where $\alpha=\alpha(\tau)>0$, which solves the problem \eqref{trip_approx_prob}.
\end{theorem}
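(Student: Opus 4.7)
I would proceed by the continuity method on the family \eqref{cont_meth} parametrized by $\kappa\in[0,1]$. Let $S\subset[0,1]$ consist of those $\kappa$ for which \eqref{cont_meth} admits a solution $u_{\eps,\sigma,\tau,\kappa}\in C^{\infty}(K^\tau\setminus\partial K^\tau)\cap C^{1,\alpha}(K^\tau)$ for some $\alpha=\alpha(\tau)>0$. The trivial solution $u\equiv 0$ gives $0\in S$, and by connectedness of $[0,1]$ it suffices to show $S$ is open and closed in $[0,1]$, in which case $1\in S$ produces the desired $u_{\eps,\sigma,\tau}$.

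\textbf{Openness.} Fix $\kappa_0\in S$ with solution $u_{\kappa_0}$, and linearize \eqref{cont_meth} in the direction of a test function $\varphi$. Since Proposition \ref{V_bd_1} combined with Lemma \ref{grad bound with tau} produce a uniform bound $\sup|Du_{\kappa_0}|\le C(\eps,\sigma,\tau,K,D)$, the linearized operator $L$ is uniformly elliptic with smooth coefficients, has strictly negative zeroth-order coefficient $-\sigma$, Dirichlet data on $\partial K^\tau$, and oblique data $\langle N,D\varphi\rangle=0$ on $\delta K^\tau$, where the obliquity is uniformly strict thanks to \eqref{eq_strict_angle}. The maximum principle yields injectivity (any kernel element attains an interior extremum and is forced to vanish by $-\sigma<0$), and standard mixed Dirichlet-oblique theory gives an isomorphism $L:C^{1,\alpha}(K^\tau)\to C^{\alpha}(K^\tau)$ in the H\"older class adapted to the corner. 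The implicit function theorem then furnishes solutions for all $\kappa$ near $\kappa_0$.

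\textbf{Closedness, and the main obstacle.} For closedness I would establish uniform a priori bounds on $u_\kappa$ in $C^{1,\alpha(\tau)}(K^\tau)$ independent of $\kappa\in[0,1]$. The $L^\infty$-bound is \eqref{easy_sup_bound}; the gradient bound $\sup|Du_\kappa|\le C(\eps,\sigma,\tau,K,D)$ is the content of Proposition \ref{V_bd_1} together with Lemma \ref{grad bound with tau}, the latter supplying the boundary gradient control at $\partial K^\tau$ which feeds into the interior estimate. With these in hand the equation becomes uniformly elliptic with smooth coefficients, so interior Krylov-Safonov together with Schauder bootstrap yields smoothness on $K^\tau\setminus\partial K^\tau$. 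The delicate point---and the main obstacle---is the regularity at the corner $\partial K^\tau\cap\delta K^\tau$, where Dirichlet and oblique conditions meet; this is precisely what the strict angle condition \eqref{eq_strict_angle} was arranged to handle, as Lieberman-type estimates for mixed boundary-value problems then yield uniform $C^{1,\alpha(\tau)}$ bounds with some $\alpha(\tau)>0$ (deteriorating as $\tau\to 0$, which is harmless here since $\tau$ is fixed). Arzel\`a-Ascoli then passes any sequence $\kappa_j\to\kappa_\infty$ in $S$ to a solution at $\kappa_\infty$, closing $S$ and completing the argument.
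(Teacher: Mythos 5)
Your proposal follows essentially the same route as the paper: the continuity method in $\kappa$ for \eqref{cont_meth}, the sup bound \eqref{easy_sup_bound}, the gradient bound from Proposition \ref{V_bd_1} combined with Lemma \ref{grad bound with tau}, Lieberman's estimates for mixed boundary value problems exploiting the strict angle condition \eqref{eq_strict_angle} for closedness, and the maximum principle together with the Fredholm alternative and the inverse function theorem for openness. The only (harmless) difference is bookkeeping: the paper carries out both the a priori estimate and the openness step in the weighted H\"older space $H_{2,\alpha}^{(-1-\alpha)}(K^\tau)$, which is the correct class for the second-order linearized operator (your stated isomorphism $L\colon C^{1,\alpha}(K^\tau)\to C^{\alpha}(K^\tau)$ should be read in that weighted $C^{2,\alpha}$-type setting), and it invokes the Hopf lemma alongside the maximum principle so that kernel elements are also excluded from attaining extrema on the Neumann boundary $\delta K^\tau$.
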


\begin{proof}
As in \cite{Lieb1,Mar,Vol} we work with the weighted H\"{o}lder space
\begin{equation}
H_{2,\alpha}^{(-1-\alpha)}(K^\tau):=\{u:\|u\|_{2,\alpha}^{(-1-\alpha)}<\infty\},
\end{equation}
equipped with the norm
\begin{align}
\|u\|_{2,\alpha}^{(-1-\alpha)}:=\sup_{\delta>0}\delta \|u\|_{C^{2,\alpha}(\textrm{Int}_\delta K^\tau)},
\end{align}
where
\begin{equation}
\textrm{Int}_\delta K^\tau:=\{ x\in K^\tau : d(x,\partial K^\tau)\geq \delta\}.
\end{equation}
It follows directly from the definitions that
\begin{equation}
H_{2,\alpha}^{(-1-\alpha)}(K^{\tau})\subseteq C^{2,\alpha}_{\textrm{loc}}(\textrm{Int}_D{K}^\tau)\cap C^{1,\alpha}(K^\tau).
\end{equation}

Fix $\eps,\sigma,\tau>0$, and consider
\begin{equation}
I:=\{\kappa\in [0,1]\,|\, \textrm{\eqref{cont_meth} has a solution in}~H_{2,\alpha}^{(-1-\alpha)}(K^{\tau})\}.
\end{equation}
We want to show that $1\in I$, provided $\alpha=\alpha(\tau)>0$ is sufficiently small. Since $0\in I$, it suffices to show that $I\subseteq [0,1]$ is open and closed.\\

Note that by equation \eqref{easy_sup_bound}, Proposition \ref{V_bd_1} and Lemma \ref{grad bound with tau} we have the a priori estimate
\begin{equation}
\sup_{K^\tau}\left( u+|Du|\right)\leq C,
\end{equation}
where $C<\infty$ is independent of $\kappa$. Since by \eqref{eq_strict_angle} the corners of the domain $K^\tau$ have angles strictly less than $\pi/2$, for $\alpha=\alpha(\tau)>0$ small enough, we can now apply Lieberman's estimates for mixed boundary value problems \cite{Lieb1,Lieb2}, to get the a priori estimate
\begin{equation}
\|u\|_{2,\alpha}^{(-1-\alpha)}\leq C,
\end{equation}
where $C<\infty$ is independent of $\kappa$. It follows that $I$ is closed.

Next, observe that the linearization of \eqref{cont_meth} is given by
\begin{multline}
\mathcal{L}(v)=\mathrm{div}\left(\frac{D v}{\sqrt{\eps^2+|D u_{\eps,\sigma,\tau,\kappa}|^2}}-\frac{\langle D u_{\eps,\sigma,\tau,\kappa},D v \rangle D u_{\eps,\sigma,\tau,\kappa}}{\left(\eps^2+|D u_{\eps,\sigma,\tau,\kappa}|^2\right)^{3/2}} \right)\\
-\frac{\kappa\langle D u_{\eps,\sigma,\kappa},D v \rangle}{\left(\eps^2+|D u_{\eps,\sigma,\tau,\kappa}|^2\right)^{3/2}}-\sigma v.
\end{multline}
By the maximum principle and the Hopf lemma the only solution of $\mathcal{L}(v)=0$ with zero boundary conditions is $v=0$. Together with the Fredholm alternative for mixed boundary value problems \cite{Lieb1,Lieb2} and the inverse function theorem, it follows that $I$ is open.

Finally, by standard elliptic estimates, the solution is smooth away from the corners.
\end{proof}

\begin{proof}[{Proof of Theorem \ref{cor tau=0}}]
Existence of solutions $u_{\eps,\sigma}$ for the problem \eqref{tau=0} now follows by taking the solution $u_{\eps,\sigma,\tau}$ of problem \eqref{trip_approx_prob} from Theorem \ref{existence}, and sending $\tau\to 0$. Uniqueness is a consequence of the maximum principle and the Hopf lemma.
\end{proof}

\subsection{Double approximate estimate for $H$}\label{sec: H bound}
The goal of this subsection is to prove a lower bound for $V=H+\sigma u_{\eps,\sigma}$.  We start by giving a uniform sup-bound for $u_{\eps,\sigma}$ in a neighborhood of $\partial K$.
\begin{lemma}\label{supbound close to K0}
There exist constants $\delta_{\star}=\delta_{\star}(K,D)>0$ and $C=C(K,D)<\infty$, such that $u_{\eps,\sigma}\leq C$ in $K\backslash K_{\delta_{\star}}$.
\end{lemma}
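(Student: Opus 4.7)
The strategy is to dominate $u_{\eps,\sigma}$ in a tubular neighborhood of the Dirichlet boundary $\partial K$ by a supersolution built from the arrival time of the smooth free boundary mean curvature flow starting at $K$. By standard short-time existence for smooth free boundary MCF from strictly mean-convex initial data (Stahl), there exist $T_\star=T_\star(K,D)>0$ and a smooth family $\{K_t\}_{t\in[0,T_\star]}$ with $K_0=K$, each $K_t$ strictly mean-convex with $\partial K_t$ meeting $\partial D$ orthogonally, and $\partial K_t\subset\mathring K$ for $t\in(0,T_\star]$. Let $\tilde u:K\setminus\mathring K_{T_\star}\to[0,T_\star]$ be the associated arrival time function, so that $\tilde u|_{\partial K}=0$, $\tilde u$ is smooth, $|\nabla\tilde u|=1/H$ on each level $\partial K_{\tilde u(x)}$, and, crucially, $N\cdot\nabla\tilde u\equiv 0$ on $\delta K\cap(K\setminus\mathring K_{T_\star})$ thanks to the free boundary orthogonality of each $\partial K_t$ to $\partial D$. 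The arrival time equation $\operatorname{div}(\nabla\tilde u/|\nabla\tilde u|)+1/|\nabla\tilde u|=0$ is equivalent to the algebraic identity $D^2\tilde u(\nabla\tilde u,\nabla\tilde u)=|\nabla\tilde u|^2(\Delta\tilde u+1)$.

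Define the barrier $\Phi:=f(\tilde u)$ on $\Omega:=\{0<\tilde u<T_\star\}$ with $f(t):=-C_0\log(1-t/T_\star)$ and $C_0:=2T_\star$. A chain rule computation, using the above identity to eliminate $D^2\tilde u(\nabla\tilde u,\nabla\tilde u)$, yields
\begin{equation*}
\mathcal{D}\Phi\;:=\;\operatorname{div}\!\Big(\tfrac{D\Phi}{\sqrt{\eps^2+|D\Phi|^2}}\Big)+\tfrac{1}{\sqrt{\eps^2+|D\Phi|^2}}\;=\;\frac{\eps^2(f''W^2+f'\Delta\tilde u+1)-f'^2W^2(f'-1)}{(\eps^2+f'^2W^2)^{3/2}},
\end{equation*}
with $W:=|\nabla\tilde u|$, $f'(t)=C_0/(T_\star-t)\ge 2$, and $f''=f'^2/C_0$. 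Since $\tilde u\in C^\infty(\overline{\{\tilde u\le T_\star\}})$ and $W=1/H$ is uniformly bounded above and away from zero there, for $\eps\le\eps_0(K,D)$ the negative term $-f'^2W^2(f'-1)$ dominates the $\eps^2$ correction, so $\mathcal{D}\Phi\le 0\le\sigma\Phi$. The Neumann condition $N\cdot D\Phi=f'(\tilde u)(N\cdot\nabla\tilde u)=0$ on $\delta K\cap\Omega$ is satisfied, and $\Phi|_{\partial K}=0=u_{\eps,\sigma}|_{\partial K}$; hence $\Phi$ is a supersolution of \eqref{tau=0} with compatible mixed boundary data.

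By the sup-bound \eqref{easy_sup_bound}, $u_{\eps,\sigma}\le 1/(\sigma\eps)$, while $\Phi\to\infty$ as $\tilde u\to T_\star^-$. For $\eta>0$ small enough (depending on $\eps,\sigma$) we therefore have $\Phi\ge u_{\eps,\sigma}$ on $\{\tilde u=T_\star-\eta\}$, and the comparison principle (using the Hopf lemma at $\delta K$, made sharp by the equality $N\cdot D\Phi=0$) applied on $\Omega_\eta:=\{\tilde u<T_\star-\eta\}$ gives $u_{\eps,\sigma}\le\Phi$ there; letting $\eta\to 0$ yields $u_{\eps,\sigma}\le\Phi$ on all of $\Omega$. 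Choose $\delta_\star=\delta_\star(K,D)>0$ small enough that $K\setminus K_{\delta_\star}\subset\{\tilde u\le T_\star/2\}$ -- possible because $d(\partial K,\partial K_{T_\star/2})>0$ uniformly from the smooth strictly mean-convex flow -- to conclude $u_{\eps,\sigma}\le f(T_\star/2)=C_0\log 2=:C(K,D)$ on $K\setminus K_{\delta_\star}$. The main difficulty is extracting the correct sign in the $\mathcal{D}\Phi$ computation, which is enabled by the arrival time identity, together with the Neumann compatibility at $\delta K$; using $\tilde u$ rather than the Riemannian distance to $\partial K$ is essential for the latter, since it automatically encodes the free boundary condition and so makes $\Phi$ respect the Neumann data on $\delta K$ without any auxiliary correction factor.
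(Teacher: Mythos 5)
Your proposal is correct and follows essentially the same route as the paper's proof: there, too, the supersolution is obtained by composing a function blowing up in finite time (the paper uses $\phi(t)=\tfrac{1}{\delta-t}-\tfrac{1}{\delta}$ rather than your logarithm) with the arrival time of the short-time smooth strictly mean-convex free boundary flow, whose orthogonality to $\partial D$ gives the zero Neumann data, and one concludes with the maximum principle and the Hopf lemma. The only real difference is that you carry out the supersolution computation explicitly (the paper cites Lem.~3.8 of \cite{HH18}), at the harmless cost of the restriction $\eps\leq\eps_0(K,D)$, which is immaterial since smallness of $\eps,\sigma$ is assumed in the subsequent arguments and both parameters are eventually sent to zero.
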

\begin{proof}
We will construct a suitable supersolution.
Let $u$ be the arrival time function the free boundary mean curvature flow $\{\partial K_t\}$. Then $u:K\setminus K_{\delta_0}\to \R$, for $\delta_0$ sufficiently small, is smooth and satisfies
\begin{equation}
\textrm{div}\left(\frac{Du}{|Du|}\right)+\frac{1}{|Du|}=0,\qquad \langle N,Du\rangle = 0,
\end{equation}
and
\begin{align}
C^{-1} \leq |Du|\leq C,\quad |D^2u|\leq C,
\end{align}
for some $C<\infty$. For $\delta\in(0,\delta_0)$ consider the function $\phi(t)=\frac{1}{\delta-t}-\frac{1}{\delta}$. A straightforward calculation as in \cite{HH18}*{Lem. 3.8} shows that $\phi(u)$ is a supersolution of (\ref{tau=0}), provided $\delta$ is small enough. Hence, by the maximum principle we conclude that
\begin{equation}
u_{\eps,\sigma}\leq \phi(u)\leq\frac{1}{\delta}\quad \textrm{in}~K_0\backslash K_{\frac{\delta}{2}}.
\end{equation}
This proves the proves the lemma.
\end{proof}

\begin{proposition}\label{V_bd_2}
Let $b=2C_{\partial D}$. Then for $a=a(K,D)<\infty$ sufficiently large, the function $V:M^{\eps,\sigma,\tau}\to \mathbb{R}$ satisfies
\begin{align}
V(x,z) \geq e^{-b\max d} \min \left(\tfrac{1}{2},\min_{\partial K^{\tau}}V\right)e^{-\eps az}.
\end{align}
\end{proposition}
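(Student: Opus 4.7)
My plan is to mirror the argument of Proposition \ref{V_bd_1} with the modified weight $w = e^{\eps a z - b d}$ in place of $w = e^{m z - b d}$, supplemented by Lemma \ref{supbound close to K0} to tame the $\sigma u$ contribution. By Lemma \ref{basic equations} applied with $m = \eps a$, the normal derivative $N(\log Vw) = b + A_{\partial D}(\nu,\nu) \geq C_{\partial D} > 0$ is still strictly positive on the Neumann boundary $\delta K^\tau$ when $b = 2 C_{\partial D}$, so the minimum of $Vw$ cannot be attained there. If the minimum is attained on the Dirichlet boundary $\partial K^\tau \subset \{z = 0\}$, then $w \geq e^{-b\max d}$ gives $Vw \geq e^{-b\max d}\min_{\partial K^\tau}V$, accounting for the second alternative in the claimed minimum.

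Arguing by contradiction, suppose the minimum of $Vw$ is attained at an interior point $p \in \mathring K^\tau$ with $Vw(p) < \tfrac{1}{2}e^{-b\max d}$. Since $z(p) \geq 0$ and $d(p) \leq \max d$ imply $w(p) \geq e^{-b\max d}$, we must have $V(p) < \tfrac{1}{2}$, and for $\eps \leq 1$ this forces $|\eta^\top|^2 = 1 - \eps^2 V^2 \geq \tfrac{3}{4}$. Applying Lemma \ref{basic equations} at $p$ with $m = \eps a$ and $\kappa = 1$, then using the graphical identity \eqref{eq_graphical_id} to produce the cancellation $-bV\nu(d) + bV\nu(d) = 0$ between the expansions of $(V - \sigma u)(\eps^2 a V - b\nu(d))$ and of $\tfrac{1}{\eps}\langle \eps a \eta^\top - b\nabla d, \eta\rangle$ (exactly as in the proof of Proposition \ref{V_bd_1}), and dropping the manifestly nonnegative terms $|A|^2$, $\eps^2 a V^2$, and $|\eps a \eta^\top - b\nabla d|^2$, we arrive at
\begin{equation*}
0 \geq a|\eta^\top|^2 + \Ric(\nu, \nu) + b\tr_{TM}D^2 d - \eps^2 a \sigma u V + \sigma u b \nu(d).
\end{equation*}
The new positive term $a|\eta^\top|^2 \geq \tfrac{3a}{4}$ arises from the $\tfrac{m\kappa}{\eps}|\eta^\top|^2$ contribution with $m = \eps a$, replacing the $m^2/2$ term used in Proposition \ref{V_bd_1}.

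The main obstacle is the term $\sigma u b\nu(d)$, bounded below only by $-b\sigma u$: the crude bound $\sigma u \leq 1/\eps$ from \eqref{easy_sup_bound} makes this $O(1/\eps)$, which would swamp $\tfrac{3a}{4}$ unless $a$ depended on $\eps$, disallowed by the statement. To circumvent this, I invoke Lemma \ref{supbound close to K0}, which provides the $\eps,\sigma,\tau$-uniform bound $u \leq C(K, D)$ on the collar $K \setminus K_{\delta_\star}$; for $\sigma$ bounded above, this yields $\sigma u(p) \leq C'(K, D)$ when $p$ lies in the collar, replacing the dangerous $O(1/\eps)$ term by $O(1)$, so that the inequality reduces to $0 \geq \tfrac{3a}{4} - C''(K, D)$, contradicting the choice of $a = a(K, D)$ sufficiently large. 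For interior minima lying deeper inside $K_{\delta_\star}$, I would extend the supersolution comparison of Lemma \ref{supbound close to K0} further into $K$, adapting the barrier construction $v = \alpha r \cdot g(s)$ used in Lemma \ref{grad bound with tau} and taking advantage of the gradient bound to propagate the $u$-bound throughout $K^\tau$; executing this extension is the main technical hurdle of the proof.
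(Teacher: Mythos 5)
Your reduction of the interior-minimum inequality is algebraically correct (the cancellation of the $\pm bV\nu(d)$ terms is exactly what happens in the paper), but the way you then handle the remaining capillary terms $b\sigma u\,\nu(d)$ and $-\eps^2 a\,\sigma u V$ is a genuine gap, and the route you propose to close it cannot work. Your plan is to get an $(\eps,\sigma)$-uniform sup bound on $u_{\eps,\sigma,\tau}$ (hence on $\sigma u$) throughout $K^\tau$ by ``extending'' Lemma \ref{supbound close to K0} into the deep interior. But no such interior bound exists in general: $u_{\eps,\sigma}$ is comparable to the arrival time of the flow, which may be arbitrarily large or infinite (the flow need not become extinct), and indeed the whole purpose of the capillary parameter $\sigma$ is to compensate for precisely this lack of an $L^\infty$ estimate. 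The supersolution $\phi(u)$ in Lemma \ref{supbound close to K0} is built from the smooth arrival time of the level set flow, which is only available in a collar $K\setminus K_{\delta_\star}$; wherever the level set flow survives for a long time, $u_{\eps,\sigma}\to\infty$ as $\sigma\to 0$, so the ``main technical hurdle'' you defer is not a technicality but an impossibility. (There is also a secondary issue: even on the collar, the retained term $-\eps^2 a\,\sigma u V$ carries a factor $a$, so bounding $\sigma u$ by a constant alone does not obviously yield a contradiction for $a$ large unless you also keep the $\eps^2a^2|\eta^\top|^2$ piece you discarded.)

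The paper's proof avoids any sup bound on $u$ by exploiting the equation itself at the minimum point: since $w\geq e^{-b\max d}$ forces $V<\tfrac12$ there, the relation $H+\sigma u=V$ from \eqref{equation_geom} (with $\kappa=1$) gives $|\sigma u|\leq \tfrac12+\sqrt{n}\,|A|$. For this to help you must \emph{not} drop $|A|^2$ (nor the $\eps^2a^2|\eta^\top|^2$ part of the square): in \eqref{ineq_contr_alarge} the positive terms $\left(a+\eps^2a^2\right)|\eta^\top|^2+|A|^2$, with $|\eta^\top|^2\geq\tfrac34$, absorb all the $\sigma u$-contributions (which are at most linear in $|A|$ with coefficients $O(1)+O(\eps^2 a)$) once $a=a(K,D)$ is large, uniformly in $\eps,\sigma,\tau$. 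So the fix is local and pointwise, whereas your approach would require a global estimate that is false; everything else in your outline (the Neumann sign via $b=2C_{\partial D}$, the Dirichlet alternative, the $|\eta^\top|^2\geq\tfrac34$ step) matches the paper.
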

\begin{proof}
Consider the function $Vw$, where $w=\exp(\eps az-bd)$. Suppose towards a contradiction that the minimum of $Vw$ is attained in $\mathring{K}^\tau$ and is less than $\tfrac12 e^{-b\max d}$.
Setting $\kappa=1$ and $m=\eps a$, equations \eqref{dropped_good_term} and \eqref{eq_graphical_id} imply
\begin{multline}\label{ineq_contr_alarge}
0\geq \left(a+\eps^2a^2\right) |\eta^\top|^2+|A|^2+\operatorname{Rc}(\nu,\nu)\\
-b\left(\sigma u  +2a\eps^2V -\operatorname{tr}_{TM}D^2d\right)-\eps^2 a\sigma uV.
\end{multline}
Using $\min V<\frac{1}{2}$ we get $ |\eta^\top|^2\geq \tfrac{3}{4}$ and $
|\sigma u|\leq \tfrac12 + \sqrt{n}|A|$.
Hence, for $a$ sufficiently large, the positive term $\left(a+\eps^2a^2\right) |\eta^\top|^2+|A|^2$ dominates all other terms in \eqref{ineq_contr_alarge}, and  we obtain a contradiction.
\end{proof}

\begin{theorem}\label{grad estim}
There exist constants $a=a(K,D)<\infty$ and $c=c(K,D)>0$ such that for all $\eps,\sigma>0$ we have the estimate
\begin{equation}
H\left(x,\tfrac{1}{\eps} u_{\eps,\sigma}(x)\right)+\sigma u_{\eps,\sigma}(x)\geq ce^{-a u_{\eps,\sigma}(x)}
\end{equation}
for all $x\in K\setminus\partial K$.
\end{theorem}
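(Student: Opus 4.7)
The plan is to combine Proposition \ref{V_bd_2} with a uniform lower bound for $V$ on the Dirichlet boundary $\partial K^\tau$ and then pass to the limit $\tau\to 0$. At the graph point $(x,u_{\eps,\sigma}(x)/\eps)\in M^{\eps,\sigma}$, equation \eqref{equation_geom} with $\kappa=1$ gives $V=H+\sigma u_{\eps,\sigma}(x)$, and the exponent $-\eps a z$ from Proposition \ref{V_bd_2} specializes to $-a u_{\eps,\sigma}(x)$. Thus the statement is equivalent to the pointwise bound $V\geq c\,e^{-\eps a z}$ on $M^{\eps,\sigma}$, and by Proposition \ref{V_bd_2} this reduces to producing a constant $c_0=c_0(K,D)>0$, independent of $\eps,\sigma\in(0,1]$ and small $\tau>0$, with $\min_{\partial K^\tau} V\geq c_0$.

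On $\partial K^\tau$ one has $u_{\eps,\sigma,\tau}=0$, so $V=(\eps^2+|Du_{\eps,\sigma,\tau}|^2)^{-1/2}$, and the task becomes a uniform upper bound for $|Du_{\eps,\sigma,\tau}|$ there. The constant in Lemma \ref{grad bound with tau} blows up as $\eps,\sigma\to 0$, so I would instead exploit the parameter-independent supersolution from Lemma \ref{supbound close to K0}: the arrival time $u_{\mathrm{arr}}$ of the smooth free boundary flow of the strictly mean-convex $K$ is $C^2$ with bounded gradient on a collar $K\setminus K_{\delta_0}$, and $\phi(u_{\mathrm{arr}})$ with $\phi(t)=(\delta-t)^{-1}-\delta^{-1}$ is a supersolution of \eqref{tau=0} that vanishes on $\partial K$, satisfies the Neumann condition, and has $|D\phi(u_{\mathrm{arr}})|\leq C_1(K,D)$. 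Running the analogous construction on the strictly mean-convex approximations $K^\tau$ (which converge to $K$ smoothly away from the Dirichlet-Neumann corner) would give a barrier $\Psi_\tau$ with $|D\Psi_\tau|\leq C_1$ uniformly in small $\tau$. Then the maximum principle yields $u_{\eps,\sigma,\tau}\leq \Psi_\tau$ on a collar of $\partial K^\tau$, and comparing normal derivatives on $\partial K^\tau$ (where both sides vanish) gives $|Du_{\eps,\sigma,\tau}|\leq C_1$, hence $V\geq (1+C_1^2)^{-1/2}=:c_0$.

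Inserting $\min_{\partial K^\tau} V\geq c_0$ into Proposition \ref{V_bd_2}, sending $\tau\to 0$ (so that $u_{\eps,\sigma,\tau}\to u_{\eps,\sigma}$ by Theorem \ref{cor tau=0}), and specializing to $z=u_{\eps,\sigma}(x)/\eps$ produces the desired inequality with $c:=e^{-b\max d}\min(1/2,c_0)$. The main obstacle will be constructing $\Psi_\tau$ with $\tau$-uniform gradient bounds near the Dirichlet-Neumann corner of $\partial K^\tau$, where the arrival time of $K^\tau$ itself is not a priori smooth: one must check that Volkmann's smoothing procedure $K\leadsto K^\tau$ does not destroy the supersolution property or the Neumann compatibility of the barrier. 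Once this technical point is handled, the remaining pieces assemble from the results already established in this section.
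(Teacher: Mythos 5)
Your overall reduction coincides with the paper's: identify $V=H+\sigma u_{\eps,\sigma}$ via \eqref{equation_geom} with $\kappa=1$, note that Proposition \ref{V_bd_2} turns the theorem into an $(\eps,\sigma,\tau)$-uniform lower bound for $\min_{\partial K^\tau}V$, i.e.\ an $(\eps,\sigma,\tau)$-uniform bound for $|Du_{\eps,\sigma,\tau}|$ on $\partial K^\tau$, and then let $\tau\to 0$. That part is fine. The genuine gap is precisely the step you defer to the end: producing the barrier $\Psi_\tau$ on the perturbed domains $K^\tau$. The supersolution of Lemma \ref{supbound close to K0} is built from the arrival time of the free boundary flow of $K$ itself; it vanishes on $\partial K$, not on $\partial K^\tau$, so on $\partial K^\tau$ the normal-derivative comparison you invoke is not available. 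If you instead try to use the arrival time of the flow starting from $K^\tau$, you run into the fact that $\partial K^\tau$ meets $\partial D$ at a strictly acute angle by \eqref{eq_strict_angle}, so this flow is not smooth near the Dirichlet--Neumann corner at small times, and the hypotheses needed for the supersolution computation ($C^{-1}\leq |Du|$, $|D^2u|\leq C$ on a collar, plus exact Neumann compatibility) are exactly what is missing there, uniformly in $\tau$. So the ``main obstacle'' you flag is not a checkable technicality; it is the heart of the estimate, and as written the proof is incomplete.

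The paper's proof closes this gap without constructing any new barrier: the only reason the constant in Lemma \ref{grad bound with tau} degenerates as $\eps,\sigma\to 0$ is that the Dirichlet comparison on the inner boundary of the collar used the crude sup-bound \eqref{easy_sup_bound}, forcing $\alpha\geq \tfrac{1}{\sigma\eps\rho}$. Once one knows $u_{\eps,\sigma,\tau}\leq 2\Lambda$ on $K^\tau\setminus K_{\delta_\star}$ for small $\tau$ --- which follows from Lemma \ref{supbound close to K0} together with the uniform convergence $u_{\eps,\sigma,\tau}\to u_{\eps,\sigma}$ as $\tau\to 0$ --- the \emph{same} barrier $v=\alpha r\, g(s)$, which already handles the Neumann boundary and the corner through the factor $g(s)$, works with $\alpha$ (hence $\rho$ and the boundary gradient bound) chosen independently of $\eps,\sigma,\tau$. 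I would suggest replacing your arrival-time construction on $K^\tau$ by this observation; the rest of your argument (insertion into Proposition \ref{V_bd_2}, passage to the limit $\tau\to 0$, and the resulting constant $c=e^{-b\max d}\min(1/2,c_0)$) then goes through as you describe.
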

\begin{proof}
Fix $\eps$ and $\sigma$. In Subsection \ref{Triple approximation}, we have proved that $|Du_{\eps,\sigma,\tau}|\leq C(\eps,\sigma,K)$ in $K^{\tau}$ and that $u_{\eps,\sigma,\tau}$ converges uniformly in $K$ to the unique solution $u_{\eps,\sigma}$ of (\ref{tau=0}) as $\tau\to 0$. Hence, by Lemma \ref{supbound close to K0} we get $u_{\eps,\sigma,\tau}\leq 2\Lambda$ in $K^{\tau}\setminus K_{\delta_{\star}},$ for $\tau$ sufficiently small. Now due to this new $(\eps,\sigma,\tau)$-independent sup-bound, we can choose the constant $\alpha$ in the proof of Lemma \ref{grad bound with tau} (for $\kappa=1$) to be also independent of $\eps,\sigma,\tau$. This in turn implies an $(\eps,\sigma,\tau)$-independent gradient bound for $u_{\eps,\sigma,\tau}$ on $\partial K^{\tau}$. Hence, by Proposition \ref{V_bd_2} we get
\begin{equation}
H\left(x,\tfrac{1}{\eps} u_{\eps,\sigma,\tau}(x)\right)+\sigma u_{\eps,\sigma,\tau}(x)\geq c(K,D)e^{-a u_{\eps,\sigma,\tau}(x)}
\end{equation}
for all $x\in K^{\tau}$. Taking $\tau\to 0$, this estimate passes to the limit in $K\setminus \del K$.
\end{proof}

\subsection{Double approximate estimate for $|A|/H$}
The goal of this section is to prove the following estimate:

\begin{theorem}\label{thm_double_approx_ah}
There exist constants $a=a(K,D)<\infty$ and $C=C(K,D)<\infty$ such that for any $\delta\in (0,\delta_\ast)$ we have
\begin{multline}
\frac{|A|\left(x,\tfrac{1}{\eps}u_{\eps,\sigma}(x)\right)}{H\left(x,\tfrac{1}{\eps}u_{\eps,\sigma}(x)\right)+\sigma u_{\eps,\sigma}(x)}\\
\leq C\left(1+\max_{y\in \partial K_\delta}|A|\left(y,\tfrac{1}{\eps}u_{\eps,\sigma}(y)\right) \right) e^{a u_{\eps,\sigma}(x)}
\end{multline}
for all $x\in K_{\delta}$.
\end{theorem}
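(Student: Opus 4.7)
Following the strategy of Propositions~\ref{V_bd_1}--\ref{V_bd_2}, we apply the maximum principle to
\[
F := \frac{1+|A|^2}{V^2}\, e^{-2(au + bd)}
\]
on the portion of the graph $M^{\eps,\sigma,\tau}$ lying over $K_\delta^\tau$, where $V = H + \sigma u$ (equation~\eqref{equation_geom} with $\kappa=1$), $d$ is the barrier function introduced before Lemma~\ref{weigh_fn}, and $a = a(K,D)$, $b = b(K,D)$ are constants to be chosen large, independently of $\eps,\sigma,\tau$. The boundary of this region consists of the Neumann piece $(\delta K^\tau \cap K_\delta^\tau) \times \R_+$ and an ``inner'' Dirichlet-type piece lying over $\del K_\delta^\tau$, which will contribute the $\max_{\del K_\delta}|A|$ term. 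A bound of the form $\sup F \leq C\max_{\del K_\delta^\tau}(1+|A|^2) + C_0$, combined with the lower bound $V \geq ce^{-au}$ from Theorem~\ref{grad estim} and the uniform boundedness of $d$, yields the asserted estimate; finally $\tau \to 0$ is handled exactly as in the proof of Theorem~\ref{grad estim}.

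\textbf{Neumann boundary.} On the free-boundary piece one has $N(u)=0$, $N(d)=-1$, and $N(\log V) = A_{\del D}(\nu,\nu)$ by Lemma~\ref{basic equations}, so
\[
N(\log F) = \frac{N(|A|^2)}{1+|A|^2} - 2 A_{\del D}(\nu,\nu) + 2b.
\]
A direct free-boundary computation of $\nabla_N h_{ij}$, using $\nu \perp N$ and $\nabla_\nu N = A_{\del D}(\nu,\cdot)$ as carried out in Edelen~\cite{Edelen16}, yields $N(|A|^2) \geq -C_{\del D}(|A|^2 + 1)$, and hence $N(\log F) \geq -C_{\del D} - 2 A_{\del D}(\nu,\nu) + 2b$. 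Choosing $b$ large in terms of $C_{\del D}$ makes $N(\log F) > 0$ along the Neumann piece, ruling out a maximum of $F$ there.

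\textbf{Interior maximum and conclusion.} At an interior critical point the relation $\nabla\log F = 0$ reads
\[
\nabla\log(1+|A|^2) = 2\nabla\log V + 2a\nabla u + 2b \nabla d,
\]
and $\Delta\log F \leq 0$. Expanding this via Simons' identity for $\Delta|A|^2$, equation~\eqref{prod_laplacian} for $\Delta V$, and Lemma~\ref{weigh_fn} for $\Delta d$, and using Kato's inequality $|\nabla A|^2 \geq |\nabla |A||^2$ together with the critical-point relation to absorb the gradient terms, one arrives at an inequality whose leading contribution at large $|A|$ is a positive multiple of $|A|^2$ (from the $-|A|^4$ term of Simons divided by $|A|^2$), balanced by $-2a\Delta_M u - 2b\Delta_M d$ coming from the weight. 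On $M$, $u=\eps z$ gives $\Delta_M u = -\eps^2 HV = -\eps^2 V(V-\sigma u)$ and $|\nabla_M u|^2 = \eps^2|\eta^\top|^2$; together with the a priori bounds $\eps V \leq 1$ and $\sigma u \leq C$ from \S\ref{Triple approximation}--\S\ref{sec: H bound}, the perturbation terms are controlled, and for $a=a(K,D)$ large enough the inequality forces $|A|^2 \leq C_0(K,D)$ at any interior maximum. Combined with the Neumann analysis, $\sup F$ is therefore bounded by $\max(C_0,\, C\max_{\del K_\delta^\tau}(1+|A|^2))$, from which the theorem follows after letting $\tau \to 0$. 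The main obstacle is this interior calculation: one must verify, uniformly in $\eps,\sigma,\tau$, that all the $\sigma u$-, $\eps$-, ambient curvature, and barrier error terms generated by Simons combined with $H = V - \sigma u$ and the weight are dominated by the leading Simons contribution once $a$ and $b$ are chosen large.
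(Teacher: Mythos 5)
Your quotient/maximum-principle strategy is the right family of ideas, but the proposal breaks down exactly at the point the paper identifies as the main difficulty: the Neumann boundary term for the \emph{unperturbed} second fundamental form. First, a sign issue: at a maximum of $F$ on the free boundary one only gets $N(F)\geq 0$ for the outward normal $N$, so showing $N(\log F)>0$ does not exclude a boundary maximum (it excludes a minimum); with your weight $e^{-2(au+bd)}$ and $N(d)=-1$ the contribution $+2b$ pushes $N(\log F)$ in the wrong direction, so a Hopf-type exclusion would require the $d$-weight with the opposite sign together with an \emph{upper} bound on $N(|A|^2)/(1+|A|^2)$. Second, and more seriously, no bound of the form $|N(|A|^2)|\le C_{\partial D}(1+|A|^2)$ holds uniformly in $\eps$: the very computation you cite from Edelen (quoted in the proof of Lemma \ref{lemma_pert_2ff_normal}) gives $\nabla_N h_{11}=k(\nu,\nu)V+O(|A|+1)$, so $N(|A|^2)$ contains a signless term of size $|A|\,V$ with $V=(\eps^2+|Du|^2)^{-1/2}$, i.e.\ of order $1/\eps$, which cannot be absorbed by constants $a,b$ depending only on $(K,D)$. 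This is precisely why the paper replaces $A$ by the perturbed tensor $B=A+T(\cdot,\cdot,\nu)$, for which the dangerous $k(\nu,\nu)V$ term cancels and $N|B|\le C(|B|+\sigma u+1)$ holds (Lemma \ref{lemma_pert_2ff_normal}), and why it works with $f=(|B|+\Lambda\sigma u+\Theta)/(Vw)$ rather than with $|A|$ itself.

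The interior step is also not as described. Since $|A|^2/V^2$ is scale-invariant, the $-2|A|^4/(1+|A|^2)$ contribution from Simons' identity is cancelled, to leading order, by the $+2|A|^2$ coming from $-2\Delta\log V$ via \eqref{prod_laplacian}; there is no leftover ``positive multiple of $|A|^2$'', so the contradiction must come from finer bookkeeping, which the paper achieves through the extra terms $\Lambda\sigma u+\Theta$ and the hierarchy $a\gg\Theta\gg\Lambda\gg C$. Moreover, Simons' identity produces $\langle A,\nabla^2H\rangle$ (third derivatives of $u$), which the first-order critical-point relation cannot control; the paper cancels these against the translator-type identity for $\tfrac1\eps\nabla_{\eta^\top}A$ by running the maximum principle for the operator $\tfrac1\eps\nabla_{\eta^\top}-\Delta$, and the plain Kato inequality $|\nabla A|\ge|\nabla|A||$ gives nothing here — one needs the strict improvement of Lemma \ref{Kato} with $\gamma<1$, combined with the critical-point relation, to absorb the $|\nabla H|$-terms. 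So the missing ingredients are exactly the perturbation $B$ at the barrier and the refined Kato/translator-identity combination in the interior; without them the proposed function $F$ does not admit the claimed boundary and interior estimates.
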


To prove Theorem \ref{thm_double_approx_ah} we will apply the maximum principle to the function
\begin{align}\label{eq_def_f}
f:=\frac{|B|+ \Lambda\sigma u+\Theta}{Vw},
\end{align}
where $w=e^{\eps a z-b d}$, and $a,b,\Lambda,\Theta<\infty$ are constants to be chosen below. Here, $B$ denotes a certain perturbation of the second fundamental form, c.f. Edelen \cite{Edelen16}. To define $B$, fix some smooth extensions $k$ and $N$ of the second fundamental form and the unit normal vector of the barrier $\partial D\times \mathbb{R}_+$ to $D\times \mathbb{R}_+$.
The perturbed second fundamental form is then defined by
\begin{align}
B_{ij}=A_{ij}+T_{ij\nu},
\end{align}
where $A$ is second fundamental form of the graph of $u_{\eps,\sigma}/\varepsilon$, and $T$ is a 3-tensor on $D\times \mathbb{R}_+$ defined by
\begin{equation}
T(X,Y,Z)=k(X,Z)\langle Y,N\rangle+k(Y,Z)\langle X,N\rangle.
\end{equation}
Since the graph of $u$ is perpendicular to $\del D\times \R_+$, we have $\langle N,\nu\rangle=0$. Thus, we get $A(X,N)=-k(X,\nu)$ for any tangent vector $X$ perpendicular to $N$, which in turn implies
\begin{equation}\label{equation_b_normal}
B(X,N)=0
\end{equation}
whenever $X$ is perpendicular to $N$.

\begin{lemma}\label{lemma_pert_2ff_normal}
The perturbed second fundamental form satisfies \[N|B|\leq C(|B|+\sigma u+1).\]
\end{lemma}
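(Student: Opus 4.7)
The plan is to exploit the defining property of the perturbation $T$, namely that $B(X, N) = 0$ for every $X \in T_pM$ perpendicular to $N$ (equation (\ref{equation_b_normal})), to eliminate the potentially bad mixed boundary terms when differentiating $|B|^2$ in the $N$-direction.

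I work pointwise on $\partial M := M \cap (\partial D \times \mathbb{R}_+)$, where $\langle N, \nu\rangle = 0$ makes $N$ tangent to $M$. Choose an orthonormal frame $\{\tau_1, \ldots, \tau_{n-1}, N\}$ of $T_pM$ with each $\tau_i$ tangent to $\partial M$ (so $\tau_i \perp N$). On $\partial M$ the identity (\ref{equation_b_normal}) gives $B(\tau_i, N) = 0$, while $B(\tau_i, \tau_j) = A(\tau_i, \tau_j)$ for $i,j < n$ (since every term of $T(\tau_i, \tau_j, \nu)$ carries a factor $\langle \tau_i, N\rangle$ or $\langle \tau_j, N\rangle$ that vanishes on $\partial D$), and $B_{NN} = A_{NN} + 2k(N, \nu)$. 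Writing $N|B|^2 = 2\sum_{i,j} B_{ij}\, N(B_{ij})$, the cross terms $B_{iN}$ ($i < n$) contribute zero on $\partial M$, so only the tangential block and the $(N,N)$ entry survive.

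For the tangential block, apply Codazzi on $M$:
\[
(\nabla^M_N A)(\tau_i, \tau_j) = (\nabla^M_{\tau_i} A)(N, \tau_j) + \bar R(N, \tau_i, \nu, \tau_j),
\]
and evaluate $(\nabla^M_{\tau_i} A)(N, \tau_j)$ using the boundary identity $A(N, \tau_j) = -k(\tau_j, \nu)$, which follows by differentiating $\langle N, \nu\rangle = 0$ tangentially. All resulting correction terms (Christoffel symbols of the frame, derivatives of $k$ and $\nu$, ambient Riemann) are linear in $A$ with coefficients controlled by the fixed geometry of $D$; the same analysis handles $N(T_{ij\nu})$. This yields $|N(B_{ij})| \leq C(|A| + 1) \leq C(|B| + 1)$ for $i,j<n$. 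For the $(N,N)$ entry, use $H + \sigma u = V$ to write $A_{NN} = V - \sigma u - \sum_{i<n} A(\tau_i, \tau_i)$, differentiate in $N$, and invoke the Neumann condition $N(u) = 0$ on $\partial M$ to get $N(A_{NN}) = N(V) - \sum_{i<n} N(A(\tau_i, \tau_i))$. By Lemma \ref{basic equations}, $N(V) = A_{\partial D}(\nu, \nu)\, V$, hence $|N(V)| \leq C V$; then the a priori bound
\[
V = H + \sigma u \leq \sqrt{n}\,|A| + \sigma u \leq C(|B| + \sigma u + 1),
\]
using $|A| \leq |B| + |T| \leq |B| + C$, controls $N(V)$ by $C(|B| + \sigma u + 1)$. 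Together with $|N(2k(N, \nu))| \leq C$, this gives $|N(B_{NN})| \leq C(|B| + \sigma u + 1)$.

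Assembling the pieces, $|N|B|^2| \leq 2|B|\cdot C(|B| + \sigma u + 1)$, so dividing by $2|B|$ yields the claim (with $|B| = 0$ handled by the standard $\sqrt{|B|^2 + \eta}$ regularization as $\eta \to 0$). The main obstacle is bookkeeping: Codazzi produces ambient Riemann terms, the boundary identity introduces derivatives of the extended tensor $k$, and the moving frame contributes Christoffel corrections. Each is bounded uniformly in $(\eps, \sigma)$ because they depend only on the fixed ambient and barrier geometry, while the appearance of the $\sigma u$ term in the final estimate traces back precisely to the bound $V \leq C(|B| + \sigma u + 1)$ used to absorb $N(V)$.
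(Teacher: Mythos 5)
Your proof is correct and follows essentially the same route as the paper: the paper simply cites the frame computation of \cite{Edelen16}*{Lemma 6.1} (giving $\nabla_N h_{ij}=O(|A|+1)$ for tangential indices and $\nabla_N h_{11}=k(\nu,\nu)V+O(|A|+1)$, the latter being exactly your trace-plus-Neumann argument with $N(V)=A_{\partial D}(\nu,\nu)V$), and then uses $B(\cdot,N)=0$ and $\nabla T=DT+A\ast T$ just as you do. The only nitpick is that $N(k(N,\nu))$ is $O(1+|A|)$ rather than $O(1)$ because of the Weingarten term from $\nabla_N\nu$, but this is harmlessly absorbed into $C(|B|+1)$.
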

\begin{proof}
Let $\{e_i\}$ be an orthonormal frame field for $T\del M$ with $e_1=N.$
A calculation as in \cite{Edelen16}*{Lemma 6.1} shows that
\begin{align}
\nabla_Nh_{ij}&=O(|A|+1),\quad \forall i,j>1\\
\nabla_Nh_{11}&=k(\nu,\nu)V+O(|A|+1).
\end{align}
Together with the fact that $b_{1j}=0$ for $j>1$ by \eqref{equation_b_normal}, and the formula  $\nabla T=DT+A\ast T$, c.f. \cite{Edelen16}*{p. 13}, this implies the assertion.
\end{proof}

Moreover, the perturbed second fundamental form controls the second fundamental form. Namely, at any point with $|B|\geq 1$ we have
\begin{equation}\label{bconta1}
|A|\leq C|B|,
\end{equation}
and
\begin{equation}\label{bconta2}
|\nabla A|\leq C(|B|+|\nabla B|).
\end{equation}
Indeed, \eqref{bconta1} follows from the fact that $T$ is bounded, and \eqref{bconta2} follows from the formula $\nabla T=DT+A\ast T$.

\begin{proposition}\label{before applying kato}
Whenever $|B|\geq 1$, then
\begin{multline}\label{key grad barA}
\left(\tfrac{1}{\varepsilon}\nabla_{\eta^\top}-\Delta\right)|B|
 \leq -\tfrac{|\nabla B|^2-|\nabla|B||^2}{|B|}+|B|^3+C(1+\sigma  u)|B|^2.
\end{multline}
\end{proposition}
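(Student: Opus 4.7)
The plan is to derive a Simons-type identity for $A$ on the graph $M := \graph(u_{\eps,\sigma}/\eps) \subset N\times\R_+$ adapted to the prescribed mean-curvature equation $H + \sigma u = V$, transfer it to the perturbed tensor $B = A + T(\cdot,\cdot,\nu)$, and then pass from the tensor identity to the claimed scalar inequality for $|B|$ via the Kato-type conversion
\[
|B|\lb(\tfrac{1}{\eps}\nabla_{\eta^\top}-\Delta\rb)|B| \;=\; B\cdot\lb(\tfrac{1}{\eps}\nabla_{\eta^\top}-\Delta\rb)B \;-\; \lb(|\nabla B|^2 - |\nabla|B||^2\rb),
\]
which is valid pointwise wherever $|B|>0$.

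For the Simons computation, the parallelism of $\eta=\partial_z$ in the product $N\times\R_+$ yields the graphical identities $\nabla_M z = \eta^\top$, hence $\nabla_M u = \eps\eta^\top$, together with $\nabla_i V = \tfrac{1}{\eps}A_{ik}\eta^\top_k$ and $\nabla_j\eta^\top_i = -\eps V A_{ji}$ from Weingarten. Differentiating $H = V - \sigma u$ twice and substituting,
\[
\nabla_j\nabla_i H \;=\; \tfrac{1}{\eps}(\nabla_j A_{ik})\eta^\top_k \;-\; V A_{ik}A_{jk} \;+\; \sigma\eps^2 V\,A_{ij}.
\]
Feeding this into the classical Simons identity $\Delta A_{ij} = \nabla_i\nabla_j H + H A_{ik}A_{kj} - |A|^2 A_{ij} + \mathfrak R(A)$, where $\mathfrak R(A)$ is an ambient Riemann contraction of order $O(|A|)$, using Codazzi to convert $(\nabla_j A_{ik})\eta^\top_k$ into $(\nabla_{\eta^\top}A)_{ij}$ up to $O(|A|)$, and absorbing the cancellation $(H-V)A^2=-\sigma u A^2$, one obtains
\[
\lb(\tfrac{1}{\eps}\nabla_{\eta^\top}-\Delta\rb)A_{ij} \;=\; |A|^2 A_{ij} \;+\; \sigma u\,A_{ik}A_{kj} \;+\; O(|A|+\sigma).
\]

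Transferring to $B$, I would invoke Edelen's identity $\nabla T = DT + A\ast T$ from \cite{Edelen16}; the key point is that the only contractions of $\nabla A$ entering $\Delta T_{ij\nu}$ are divergences $\nabla_k A_{kl}$, which by Codazzi reduce to $\nabla_l H$ plus ambient curvature, hence are $O(|A|)$ rather than $O(|\nabla A|)$. This gives
\[
\lb(\tfrac{1}{\eps}\nabla_{\eta^\top}-\Delta\rb)B_{ij} \;=\; |A|^2 B_{ij} \;+\; \sigma u\,B_{ik}B_{kj} \;+\; E_{ij}, \qquad |E|\leq C(1+\sigma u)|B|.
\]
Contracting with $B$, and using $|A|^2 = |B-T|^2 \leq |B|^2 + C|B|$ and $B\cdot A = |B|^2 + O(|B|)$ for $|B|\geq 1$, the leading cubic term becomes $|A|^2(B\cdot A) = |B|^4 + O(|B|^3)$, so that
\[
B\cdot\lb(\tfrac{1}{\eps}\nabla_{\eta^\top}-\Delta\rb)B \;\leq\; |B|^4 + C(1+\sigma u)|B|^3.
\]
Dividing by $|B|\geq 1$ and combining with the Kato identity yields exactly the claimed inequality.

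The principal difficulty is the bookkeeping needed to keep the leading coefficient of $|B|^3$ equal to $1$: all ambient-curvature corrections, Codazzi commutators, the $\sigma\eps^2 V A$ deviation, and the full perturbation $\lb(\tfrac{1}{\eps}\nabla_{\eta^\top}-\Delta\rb)T_{ij\nu}$ must land in the lower-order bucket $C(1+\sigma u)|B|^2$, never contaminating the cubic term. The saving observation is that the a priori dangerous $|\nabla A|$ contributions to $\Delta T_{ij\nu}$ appear only as divergences and are tamed by the Codazzi reduction to $\nabla H = O(|A|+\sigma)$. Beyond this careful accounting, the argument is a clean merger of Edelen's perturbation technique with the drift Simons identity on the elliptic-regularized graph.
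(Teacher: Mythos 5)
Your outline is the same as the paper's — a drift--Simons identity for $A$ on the regularized graph, Edelen's perturbation $T$, and the Kato-type conversion to $|B|$ — and the final assembly (contracting with $B$, trading $|A|$ for $|B|$ when $|B|\geq 1$) matches the paper's proof. The genuine gap is in your control of the perturbation term. The constant $C$ here must be independent of $\eps$ and $\sigma$, since the estimate feeds into Theorem \ref{thm_double_approx_ah} and must survive the limits $\sigma\to 0$, $\eps\to 0$. But on the graph $H=V-\sigma u$ one has $\nabla H=\nabla V-\sigma\nabla u=\tfrac{1}{\eps}A(\eta^\top,\cdot)-\sigma\eps\,\eta^\top$, so $|\nabla H|$ is only $O(|A|/\eps)$; your key claim that the $\nabla A$-contractions entering $\Delta T_{ij\nu}$ are divergences which ``by Codazzi reduce to $\nabla_l H$ \dots hence are $O(|A|)$'' is false with uniform constants and would put $\eps^{-1}$-sized contributions into $E_{ij}$. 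You also never address $\tfrac{1}{\eps}\nabla_{\eta^\top}T_{ij\nu}$, which carries the equally dangerous contractions $\partial_kV\,T_{ijk}$ of size $|A|/\eps$. The paper's actual mechanism is a cancellation: in Edelen's two identities the first-order terms appear as $\partial V$-contractions in $\tfrac{1}{\eps}\nabla_{\eta^\top}T_{ij\nu}$ and as the \emph{same} $\partial H$-contractions in $\Delta T_{ij\nu}$, so in the difference they cancel up to $\nabla(V-H)=\sigma\nabla u=\sigma\eps\,\eta^\top=O(\sigma)$, and only then does one get $\left(\tfrac{1}{\eps}\nabla_{\eta^\top}-\Delta\right)T=O(|B|^2)$ with a constant depending only on $K,D$. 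Without this cancellation (or a substitute for it) your bound $|E|\leq C(1+\sigma u)|B|$ is unjustified.

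The same $\eps^{-1}$ issue is glossed over in your Codazzi step for $A$: the commutation error is $\tfrac{1}{\eps}\bar{R}(\nu,e_i,e_j,\eta^\top)$, naively $O(1/\eps)$, not $O(|A|)$. It is harmless only because $\eta$ is parallel in $N\times\R_+$, so $\bar{R}(\cdot,\cdot,\cdot,\eta)=0$ and hence $\tfrac{1}{\eps}\bar{R}(\nu,e_i,e_j,\eta^\top)=-V\,\bar{R}(\nu,e_i,e_j,\nu)=O(H+\sigma u)$; this is exactly why the ambient curvature enters multiplied by $H+\sigma u$ in the identity the paper quotes from Haslhofer--Hershkovits and uses in place of your direct computation. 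So the statement and your skeleton are right, but the two places where the weight $\tfrac{1}{\eps}\eta^\top$ hits a curvature or $\nabla A$ term are precisely where the proof is delicate, and your argument as written does not control them.
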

\begin{proof}
To begin with, by \cite{HH18}*{eq. (4.8), (4.9)} we have
\begin{equation}
\tfrac{1}{\varepsilon}\nabla_{\eta^\top} A=\nabla^2H +\left(A^2-\eps^2\sigma A+\textrm{Rm}(\nu,\cdot,\nu,\cdot)\right) (H+\sigma u).
\end{equation}
Together with Simon's identity
\begin{equation}
\Delta A = \nabla^2 H + A^2H-|A|^2A+O(1+|A|),
\end{equation}
this yields
\begin{equation}\label{evol_a}
\left(\tfrac{1}{\varepsilon}\nabla_{\eta^\top}-\Delta\right)A\leq |B|^2B+C(1+\sigma u)|B|^2.
\end{equation}
Next, the tensor $T$ satisfies the identities
\begin{align}
\tfrac{1}{\varepsilon} \nabla_{\eta^\top} T_{ij\nu}&=\del_kV T_{ijk}-\del_iV T_{\nu j\nu}-\del_jV T_{i\nu\nu}+O(|B|^2),\\
\Delta T_{ij\nu}&=\del_kH T_{ijk}-\del_iHT_{\nu j\nu}-\del_jHT_{i\nu\nu}+O(|B|^2),
\end{align}
c.f. \cite{Edelen16}*{p. 14}. Taking the difference and using $\nabla u =\eps \eta^\top=O(1)$, we infer that
\begin{equation}\label{evol_b}
\left(\tfrac{1}{\varepsilon}\nabla_{\eta^\top}-\Delta\right) T\leq C|B|^2.
\end{equation}
Combining \eqref{evol_a} and \eqref{evol_b} we conclude that
\begin{align}
\tfrac{1}{2}\left(\tfrac{1}{\varepsilon}\nabla_{\eta^\top}-\Delta\right)|B|^2\leq - |\nabla B |^2 +  |B|^4+C(1+\sigma u)|B|^3.
\end{align}
From this the claim follows.
\end{proof}

\begin{lemma}\label{Kato}
There exists a constant $\gamma=\gamma(n)<1$ such that
\begin{align}
|\nabla|B||\leq \gamma|\nabla B|+2|\nabla H|+C(1+|A|).
\end{align}
\end{lemma}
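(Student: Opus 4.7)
The strategy is to treat the perturbed second fundamental form $B$ as an approximate Codazzi tensor, and then invoke the classical improved Kato inequality for Codazzi $(0,2)$-tensors, propagating the Codazzi defect and the perturbation errors from $T$ as lower-order terms of size $C(1+|A|)$.

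First I would verify that $B$ is Codazzi up to such an error. The Codazzi equation for the graph $M \subset N \times \R_+$ gives $A_{ij,k} - A_{ik,j} = \overline{R}_{ikj\nu}$, which is pointwise bounded by ambient curvature. For the perturbation $T'_{ij}:=T_{ij\nu}$, the identity $\nabla T = DT + A\ast T$ from \cite{Edelen16}, used already in Proposition \ref{before applying kato}, implies $|\nabla T'| \leq C(1+|A|)$. Combining these bounds gives
\begin{equation*}
|B_{ij,k} - B_{ik,j}| \leq C(1+|A|),
\end{equation*}
and since $\tr B - H$ is bounded by the ambient geometry, $|\nabla \tr B - \nabla H| \leq C(1+|A|)$.

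Next I would apply the improved Kato inequality in this approximate Codazzi setting. Diagonalize $B$ at a point with eigenvalues $\lambda_i$, so $|B|\,\partial_k |B| = \sum_i \lambda_i B_{ii,k}$. Splitting $\lambda_i = (\lambda_i - \bar\lambda) + \bar\lambda$ with $\bar\lambda = (\tr B)/n$ and applying Cauchy--Schwarz with a parameter $\delta>0$ yields a bound of the form
\begin{equation*}
|\nabla|B||^2 \leq (1+\delta)\Bigl(1 - \tfrac{(\tr B)^2}{n|B|^2}\Bigr)\!\!\sum_{i,k} B_{ii,k}^2 + (1+\delta^{-1})\tfrac{\bar\lambda^2}{|B|^2}|\nabla \tr B|^2.
\end{equation*}
For an exactly Codazzi tensor, $\nabla B$ is a totally symmetric $(0,3)$-tensor, and a standard linear algebra argument (valid for $n \geq 2$, in the spirit of Huisken's and Schoen--Simon--Yau's refinements) yields a strict inequality $\sum_{i,k} B_{ii,k}^2 \leq c(n)|\nabla B|^2$ with $c(n) < 1$. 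Treating the Codazzi defect as a perturbation via Cauchy--Schwarz and absorbing the resulting $|A|$-terms, one arrives at
\begin{equation*}
|\nabla|B||^2 \leq \gamma_0^2 |\nabla B|^2 + C|\nabla H|^2 + C(1+|A|)^2,
\end{equation*}
for some $\gamma_0 = \gamma_0(n) < 1$.

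Finally, taking square roots via $\sqrt{a^2+b^2+c^2} \leq a+b+c$ for nonnegative terms, choosing $\gamma$ slightly larger than $\gamma_0$ so that the coefficient of $|\nabla H|$ can be reduced to at most $2$, yields the claimed bound. The main obstacle is the algebraic improvement in Stage 2: for $n \geq 2$ one must show that the ``trace-like'' piece $\sum_{i,k} B_{ii,k}^2$ is strictly less than $|\nabla B|^2$ by a dimension-dependent constant, and the errors from the ambient curvature and from $|\nabla T'| \leq C(1+|A|)$ must be tracked carefully through the Cauchy--Schwarz inequalities so that they can be absorbed into $C(1+|A|)$ without destroying the strict inequality $\gamma < 1$.
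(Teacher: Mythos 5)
Your Stage 1 is fine, and your overall strategy (treat $B$ as an approximate Codazzi tensor with defect $C(1+|A|)$ coming from the ambient curvature and from $\nabla T=DT+A\ast T$, then run an improved Kato inequality) is the same route the paper takes: its proof consists of citing Proposition 4.12 of \cite{HH18} and adding the $O(1+|A|)$ error terms. The genuine gap is the central algebraic claim of your Stage 2: for a totally symmetric $(0,3)$-tensor $S$ the inequality $\sum_{i,k}S_{kii}^{2}\leq c(n)|S|^{2}$ with $c(n)<1$ is false. Take $S=e_{1}\otimes e_{1}\otimes e_{1}$ (only $S_{111}\neq0$): it is totally symmetric and $\sum_{i,k}S_{kii}^{2}=|S|^{2}$. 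In general $|S|^{2}-\sum_{i,k}S_{kii}^{2}=2\sum_{k\neq i}S_{kii}^{2}+6\sum_{i<j<k}S_{ijk}^{2}$, so near-equality forces $S$ to concentrate on the components $S_{kkk}$, and then the traces $\sum_{i}S_{kii}=S_{kkk}$ carry all of $|S|$. The correct algebraic input is therefore of the form $\sum_{i,k}S_{kii}^{2}\leq\bigl(1-\theta(n)\bigr)|S|^{2}+C(n)\sum_{k}\bigl(\sum_{i}S_{kii}\bigr)^{2}$, and it is exactly this trace term, i.e. $|\nabla\operatorname{tr}B|^{2}=|\nabla H|^{2}+O\bigl((1+|A|)^{2}\bigr)$, that produces the $|\nabla H|$ term in the lemma. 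In your decomposition the only $\nabla H$ contribution comes from the $\bar{\lambda}\,\nabla_{k}\operatorname{tr}B$ piece of the first Cauchy--Schwarz step, and the prefactor $1-(\operatorname{tr}B)^{2}/(n|B|^{2})$ involves the trace of $B$, not of $\nabla B$; since $\operatorname{tr}B=H+O(1)$ need not be comparable to $|B|$ (in the regime where the lemma is applied one even has $H\leq V\leq\tfrac{1-\gamma}{4}|B|$), that factor can be arbitrarily close to $1$ and cannot rescue the false inequality.

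A second, smaller problem is the final absorption: you cannot lower the coefficient of $|\nabla H|$ to $2$ by ``choosing $\gamma$ slightly larger than $\gamma_{0}$'', because converting $|\nabla H|$ back into $|\nabla B|$ costs a factor up to $\sqrt{n}$ and destroys $\gamma<1$. The standard remedy is a dichotomy: if $2|\nabla H|\geq(1-\gamma)|\nabla B|$ the lemma follows trivially from the crude Kato inequality $|\nabla|B||\leq|\nabla B|$; otherwise $C(n)|\nabla H|^{2}\leq C(n)\tfrac{(1-\gamma)^{2}}{4}|\nabla B|^{2}$ can be absorbed for $\gamma=\gamma(n)$ sufficiently close to $1$, which together with the corrected algebraic inequality above and your Stage 1 error bookkeeping gives $|\nabla|B||\leq\gamma|\nabla B|+C(1+|A|)$ in that case. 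With these two repairs your argument closes, and it then coincides in substance with the proof of \cite{HH18} that the paper invokes.
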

\begin{proof}
The proof is essentially the same as the one of \cite{HH18}*{Prop. 4.12}. The only change is that due to $|\nabla T|=O(1+|A|)$, we need to add $O(1+|A|)$ to the right-hand sides of \cite{HH18}*{eq. (4.16), (4.18)}. The rest of the argument remains intact.
\end{proof}

\begin{proof}[{Proof of Theorem \ref{thm_double_approx_ah}}]
Consider the function $f$ as defined in \eqref{eq_def_f}. The constants $a,b,\Lambda,\Theta$ will be chosen below. Throughout the proof, we tacitly assume that $\eps$ and $\sigma$ are small enough such that
\begin{align}\label{small eps-sigma}
\eps a<1,\quad\sigma \Lambda<1.
\end{align}
Using Lemma \ref{lemma_pert_2ff_normal} we see that
\begin{align}
N(\log f)\leq \frac{C(|B|+\sigma u+1)}{|B|+\Lambda \sigma u + \Theta} - k(\nu,\nu)-b<0,
\end{align}
provided $b$ is large enough. Hence, the maximum of $f$ over $K_\delta$ is either attained in the interior $\mathring{K}_\delta$ or on the Dirichlet boundary $\partial K_{\delta}$. Suppose now the maximum is attained at $x_0\in\mathring{K}_\delta$, and let $z_0=\tfrac{1}{\eps}u_{\eps,\sigma}(x_0)$.

\begin{claim}\label{claim_kato_combined}
There exists $\delta=\delta(n)>0$, such that at $(x_0,z_0)$ we have
\begin{align}\label{evin_b}
\left(\tfrac{1}{\varepsilon}\nabla_{\eta^\top}-\Delta\right)|B|\leq -\delta\tfrac{|\nabla |B||^2}{|B|} + |B|^3+C(1+\sigma u)|B|^2,
\end{align}
provided that $|B|\geq \max \left\{1,\frac{4}{1-\gamma}V\right\}$ at $(x_0,z_0)$.
\end{claim}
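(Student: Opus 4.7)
The plan is to start from the evolution inequality in Proposition \ref{before applying kato} and upgrade the reaction term $-(|\nabla B|^2-|\nabla|B||^2)/|B|$ into a full $-\delta|\nabla|B||^2/|B|$ by exploiting two inputs at the critical point: (i) the fact that $\nabla \log f = 0$ pins $\nabla H$ down in terms of $\nabla|B|$ with a small coefficient, and (ii) the Kato-type inequality from Lemma~\ref{Kato}, which has $\gamma<1$.

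First, I would compute $\nabla \log f$ at $(x_0,z_0)$. From the definition \eqref{eq_def_f} and $\nabla \log V = (\nabla H+\sigma\nabla u)/V$, the critical point equation gives
\begin{equation*}
\nabla H + \sigma\nabla u = V\!\left(\tfrac{\nabla|B|+\Lambda\sigma\nabla u}{|B|+\Lambda\sigma u+\Theta}-\nabla\log w\right).
\end{equation*}
Since $|\nabla u|=\eps|\eta^\top|\leq\eps$, $|\nabla\log w|\leq \eps a+b$, and $\eps a<1$ by \eqref{small eps-sigma}, this yields
\begin{equation*}
|\nabla H|\leq \tfrac{V}{|B|+\Lambda\sigma u+\Theta}|\nabla|B||+CV+C.
\end{equation*}
Under the hypothesis $|B|\geq \frac{4}{1-\gamma}V$, the coefficient in front of $|\nabla|B||$ is at most $(1-\gamma)/4$, and $CV$ is dominated by $C|B|$. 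Hence
\begin{equation*}
|\nabla H|\leq \tfrac{1-\gamma}{4}|\nabla|B||+C|B|+C.
\end{equation*}

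Next, I plug this into the Kato inequality of Lemma~\ref{Kato} together with $|A|\leq C|B|$ from \eqref{bconta1}, to obtain
\begin{equation*}
|\nabla|B||\leq\gamma|\nabla B|+\tfrac{1-\gamma}{2}|\nabla|B||+C|B|,
\end{equation*}
i.e.\ $\tfrac{1+\gamma}{2}|\nabla|B||\leq\gamma|\nabla B|+C|B|$. Setting $\beta=\tfrac{2\gamma}{1+\gamma}<1$ and applying a Cauchy--Schwarz/AM--GM split of the cross term $|\nabla B|\cdot|B|$, this gives, for any small $\eta>0$,
\begin{equation*}
|\nabla|B||^2\leq(1+\eta)\beta^2|\nabla B|^2+C|B|^2,
\end{equation*}
so choosing $\eta$ small so that $(1+\eta)\beta^2<1$ and solving for $|\nabla B|^2$,
\begin{equation*}
|\nabla B|^2-|\nabla|B||^2\geq\delta\,|\nabla|B||^2-C|B|^2
\end{equation*}
with $\delta=\delta(n)>0$. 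Dividing by $|B|$ and substituting into Proposition~\ref{before applying kato}, the remainder $C|B|$ is absorbed into $C(1+\sigma u)|B|^2$ using $|B|\geq 1$, producing exactly \eqref{evin_b}.

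The main obstacle is the second step: without the a priori condition $|B|\geq\frac{4}{1-\gamma}V$, the coefficient of $|\nabla|B||$ coming from the critical point equation would be close to $1$, and the $2|\nabla H|$ term in Kato would not be absorbable. The weight $w$ and the $\Lambda\sigma u$ term in $f$ are there precisely to ensure that on the boundary $\delta K^\tau$ the normal derivative of $\log f$ is strictly negative (via Lemma~\ref{lemma_pert_2ff_normal} and the choice $b=2C_{\partial D}$), so that the maximum of $f$ is indeed attained at an interior point of $K_\delta$ or on the Dirichlet boundary; I would verify this briefly before running the computation above.
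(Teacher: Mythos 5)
Your proposal is correct and follows essentially the same route as the paper: use the critical point equation $\nabla\log f=0$ together with $V=H+\sigma u$ and the hypothesis $|B|\geq\frac{4}{1-\gamma}V$ to bound $|\nabla H|$ by $\frac{1-\gamma}{4}$ times the gradient of $|B|$ plus $C|B|$, feed this into Lemma \ref{Kato} to get $|\nabla B|^2-|\nabla|B||^2\geq\delta|\nabla|B||^2-C|B|^2$, and conclude via Proposition \ref{before applying kato}, absorbing the error with $|B|\geq 1$. The closing remark about the sign of $N(\log f)$ is not needed for the claim itself (it is handled separately in the proof of Theorem \ref{thm_double_approx_ah}, where $(x_0,z_0)$ is already assumed to be an interior maximum), but it is consistent with the paper.
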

\begin{proof}[{Proof of Claim \ref{claim_kato_combined}}]
At $(x_0,z_0)$ we have $\nabla \log f=0$ or equivalently
\begin{align}
\nabla H=\left(\frac{\nabla |B|+\Lambda\sigma\eps\eta^{\top}}{|B|+\Lambda \sigma u+\Theta}-\eps a\eta^{\top}+b\nabla d\right)V-\eps\sigma\eta^{\top}.
\end{align}
Using (\ref{small eps-sigma}) and $|B|\geq \max \left\{1,\frac{4}{1-\gamma}V\right\}$ we deduce that
\begin{align}
|\nabla H|\leq \frac{1-\gamma}{4}|\nabla B|+C| B|.
\end{align}
Therefore, in view of Lemma \ref{Kato} we get the estimate
\begin{align}
|\nabla B|^2-|\nabla|B||^2\geq \delta |\nabla|B||^2-C|B|^2,
\end{align}
where $\delta=\delta(n)>0$. Together with Proposition \ref{before applying kato} this implies the claim.
\end{proof}

Continuing the proof of the theorem, we compute
\begin{align}\label{ev_u}
\left(\tfrac{1}{\varepsilon}\nabla_{\eta^\top}-\Delta\right)u\leq 1,
\end{align}
and
\begin{align}\label{evin_Vw}
\left(\tfrac{1}{\varepsilon}\nabla_{\eta^\top}-\Delta\right)(Vw)
\geq& \left(|B|^2+a-C(|B|+\sigma u)\right)Vw\\
&-2\langle \nabla \log w,\nabla(Vw)\rangle,\nonumber
\end{align}
c.f. Lemma \ref{basic equations}.

Since $0\leq \left(\tfrac{1}{\varepsilon}\nabla_{\eta^\top}-\Delta\right) f -2\langle \nabla \log(Vw),\nabla f\rangle$
at $(x_0,z_0)$, combining the inequalities \eqref{evin_b}, \eqref{ev_u} and \eqref{evin_Vw} we obtain
\begin{multline}
0\leq \frac{1}{Vw}\left(-\delta\tfrac{|\nabla |B||^2}{|B|}+|B|^3+C\sigma u|B|^2\right)+Cf\left(|B|+\sigma u\right)\\
-f\left(|B|^2+a-2\langle \nabla \log w,\nabla\log(Vw)\rangle\right)
\end{multline}
provided that $|B|\geq \left\{ 1,\frac{4}{1-\gamma}V\right\}$. Due to $|\nabla u|\leq 1$, the gradient term can be estimated by
\begin{align}
2f\left| \langle \nabla \log w,\nabla\log(Vw)\rangle\right|=&\frac{2}{Vw}\left|\langle\nabla \log w,\nabla (|B|+\Lambda \sigma u)\rangle\right|\nonumber\\
\leq& \frac{1}{Vw}\left(\delta\tfrac{|\nabla|B||^2}{|B|}+C|B|\right).
\end{align}
Using this, and assuming $|B|\geq \max\{1,\frac{4}{1-\gamma}V,\frac{\sigma}{2n}u\}$, we infer that
\begin{multline}
0\leq (C-\Lambda)\sigma u |B|^2+(C\Lambda-\Theta)|B|^2+(C\Theta-a)|B|,
\end{multline}
which yields a contradiction, provided $\Lambda >C$, $\Theta> C\Lambda$ and $a>C\Theta$.

So far we have shown that there exist constants $a,b,\Lambda,\Theta$, depending only on $D$ and $K$, such that at any interior maximum of $f$ we have
\begin{equation}\label{shownsf}
|B|\leq  \max\left\{1,\frac{4}{1-\gamma}V,\frac{\sigma}{2n}u\right\}.
\end{equation}
The last part of the proof is to show that $f(x_0,z_0)$ is uniformly bounded. To this end, let us point out first that \eqref{shownsf} obviously implies
\begin{equation}\label{shown_so_far}
|B|\leq  \max\left\{1+2|\tr{T}|,\frac{4}{1-\gamma}V,\frac{\sigma}{2n}u\right\}.
\end{equation}

If the maximum on the right hand side of \eqref{shown_so_far} equals $1+2|\tr{T}|$, then we get
\begin{equation}
f(x_0,z_0)\leq \frac{(1+2n\Lambda)(1+2|\tr{T}|) +\Theta}{Vw}\leq C,
\end{equation}
since $Vw$ is bounded below by Theorem \ref{grad estim}. If the maximum on the right hand side of \eqref{shown_so_far} equals $\tfrac{4}{1-\gamma}V$, then
\begin{equation}
f(x_0,z_0)\leq \frac{4+8n}{(1-\gamma)w}+\frac{\Theta}{Vw}\leq C,
\end{equation}
since $w$ and $Vw$ are bounded below. Finally, if the maximum on the right hand side of \eqref{shown_so_far} equals $\tfrac{\sigma}{2n}u$, then $|\operatorname{tr} T|\leq\tfrac{1}{4} \sigma u$ and $|H+\tr T|\leq \frac{\sigma}{2}u$. This implies $V\geq \frac{\sigma}{4}u$, hence again
\begin{equation}
f(x_0,z_0)\leq \frac{4(\Lambda+1)}{w}+\frac{\Theta}{Vw}\leq C.
\end{equation}
Since $\sigma u\leq C$ on $K\setminus K_{\delta_\ast}$ by Lemma \ref{supbound close to K0}, this finishes the proof of the theorem.
\end{proof}

\subsection{Passing to limits and one-sided minimization}

We will first send $\sigma\to 0$, then prove one-sided minimization, and then send $\eps\to 0$.

\begin{lemma}\label{eps limit prop}
There exists a relatively open set $\Omega^\eps \subseteq K$ containing $\partial K$ such that the following holds.
The (improper) limit $u_{\eps,\sigma} \rightarrow u_\eps$ as $\sigma\to 0$ is in $C_{\textrm{loc}}^{0,1}(\Omega^{\eps})\cap C^\infty_{\textrm{loc}}(\Omega^\eps\setminus\partial K)$ and solves
\begin{align}\label{eq_fb_translators}
\operatorname{div}\left(\frac{Du_{\eps}}{\sqrt{\eps^2+\abs{Du_{\eps}}^2}}\right)+\frac{1}{\sqrt{\eps^2+\abs{Du_{\eps}}^2}}&=0  &  \textrm{in} \,\, \mathring{\Omega}^\eps \nonumber \\
\langle N, Du_{\eps}\rangle&= 0 & \textrm{on} \,\, \delta \Omega^\eps\\
u_{\eps}&= 0 & \textrm{on} \,\, \nonumber\partial K \\
u_{\eps}(x)&\to\infty& \,\,x\to \partial \Omega^\eps\setminus \partial K.\nonumber
\end{align}
Moreover, for $x\in K\setminus \partial K$ we have estimates
\begin{align}\label{grad est-eps}
{H}(x,\tfrac{1}{\eps}u_\eps(x))\geq ce^{-au_{\eps}(x)},
\end{align}
where $c=c(K,D)>0$ and $a=a(K,D)<\infty$, and for any $\delta\in(0,\delta_\ast)$ for $x\in K_\delta$ we have the estimate
\begin{align}\label{curv est-eps}
\frac{|A|}{H}(x,\tfrac{1}{\eps}u_\eps(x))\leq C\left(1+\sup_{\partial K_\delta}\frac{|A|}{H}(y,\tfrac{1}{\eps}u_\eps(y))\right)e^{\rho u_\eps(x)},
\end{align}
where $C=C(K,D)<\infty$ and $\rho=\rho(K,D)<\infty$.
\end{lemma}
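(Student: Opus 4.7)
The plan is to extract a subsequential limit $\sigma_i\to 0$ of the family $\{u_{\eps,\sigma}\}$ using the $\sigma$-uniform estimates already established. The only obstruction to a global limit is that the crude bound \eqref{easy_sup_bound} deteriorates as $\sigma\to 0$, so $u_{\eps,\sigma}$ need not stay finite at every point. One therefore naturally sets
\begin{equation*}
\Omega^{\eps} := \bigl\{\, x \in K : \limsup_{\sigma\to 0} u_{\eps,\sigma}(x) < \infty \,\bigr\},
\end{equation*}
and interprets the improper convergence $u_{\eps,\sigma}\to u_\eps$ as smooth convergence on $\Omega^\eps$ together with $u_{\eps,\sigma}\to +\infty$ elsewhere.

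I would first pass to the limit on a fixed neighborhood of $\partial K$. By Lemma \ref{supbound close to K0}, $u_{\eps,\sigma}\leq C(K,D)$ uniformly on $K\setminus K_{\delta_\star}$. As noted in the proof of Theorem \ref{grad estim}, this sup-bound allows us to fix a $\sigma$-independent constant $\alpha$ in the barrier construction of Lemma \ref{grad bound with tau}, yielding a uniform bound on $|Du_{\eps,\sigma}|$ on $\partial K$. Lieberman's mixed boundary Schauder theory \cite{Lieb1,Lieb2} together with interior elliptic regularity for the quasilinear operator in \eqref{tau=0} then produces uniform $C^{1,\alpha}(K\setminus K_{\delta_\star})\cap C^{\infty}_{\mathrm{loc}}((K\setminus K_{\delta_\star})\setminus\partial K)$ bounds. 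Along $\sigma_i\to 0$ we extract a subsequential limit $u_\eps$ satisfying $u_\eps=0$ on $\partial K$, the Neumann condition on $\delta K$ in this neighborhood, and equation \eqref{eq_fb_translators}, which is obtained by passing \eqref{tau=0} to the limit since $\sigma_i u_{\eps,\sigma_i}\to 0$ pointwise on the bounded set $K\setminus K_{\delta_\star}$.

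To extend the limit to the rest of $\Omega^\eps$, I would localize around interior points. On any subset where $u_{\eps,\sigma_i}\leq M$, Proposition \ref{V_bd_2} gives a positive lower bound on $V$ uniform in $\sigma_i$, and hence a uniform upper bound on $|Du_{\eps,\sigma_i}|$. Theorem \ref{thm_double_approx_ah} then provides a uniform bound on $|A|/V$ along the graph, and elliptic bootstrap on the graphical equation yields uniform $C^k$ bounds. Smooth compactness of the graphs, combined with uniform transversality to $\eta$ in this region, gives a smooth subsequential limit $u_\eps$ solving \eqref{eq_fb_translators} with the Neumann condition on $\delta\Omega^\eps$. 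The estimates \eqref{grad est-eps} and \eqref{curv est-eps} then follow directly from Theorem \ref{grad estim} and Theorem \ref{thm_double_approx_ah}, the vanishing terms $\sigma u_{\eps,\sigma}$ being harmless in the limit.

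The main obstacle is the blow-up condition and the openness of $\Omega^\eps$, which I would handle together: if $u_\eps(x_0)<\infty$, then the uniform estimates of the preceding step apply on a neighborhood of the graph point $(x_0, u_\eps(x_0)/\eps)$ and translate back, via the gradient bound, to a uniform sup-bound for $u_{\eps,\sigma_i}$ on a Euclidean ball around $x_0$, so that this entire ball lies in $\Omega^\eps$. Hence $\Omega^\eps$ is relatively open, and for any $x_0\in\partial\Omega^\eps\setminus \partial K$ no sequence $x_j\to x_0$ inside $\Omega^\eps$ can have $u_\eps(x_j)$ bounded, forcing $u_\eps(x)\to\infty$ as $x\to x_0$. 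This step — propagating smooth graphical control back to pointwise height control on the base — is the delicate point, since it is precisely here that the sharp geometric estimates of Proposition \ref{V_bd_2} and Theorem \ref{thm_double_approx_ah} are needed rather than merely the existence of a subsequential limit.
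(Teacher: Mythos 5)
Your outline reproduces most of the paper's mechanics, but it has one genuine gap relative to the statement: the lemma asserts that the (improper) limit of the \emph{full family} $u_{\eps,\sigma}$ as $\sigma\to 0$ exists, whereas you only extract limits along subsequences $\sigma_i\to 0$ and define $\Omega^\eps$ through a $\limsup$. Nothing in your argument rules out that different subsequences produce different limit functions $u_\eps$ (and correspondingly different sets $\Omega^\eps$), so the object $u_\eps$ used later is not well defined by your proof. The paper closes this with a one-line comparison argument that you are missing: if $\sigma_1\leq\sigma_2$, then $u_{\eps,\sigma_1}$ is a supersolution of \eqref{tau=0} with parameter $\sigma_2$, so the maximum principle gives $u_{\eps,\sigma_1}\geq u_{\eps,\sigma_2}$; hence $u_{\eps,\sigma}(x)$ increases monotonically as $\sigma\downarrow 0$ and the pointwise improper limit $u_\eps(x)\in[0,\infty]$ exists with no subsequence at all. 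You should insert this monotonicity (or some other uniqueness mechanism) at the start; the rest of your argument then goes through for the full family.

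Concerning the remaining steps, your route is workable but heavier than necessary, and one citation is slightly off. The paper obtains openness of $\Omega^\eps$, the divergence $u_\eps(x)\to\infty$ as $x\to\partial\Omega^\eps\setminus\partial K$, the $C^{0,1}_{\mathrm{loc}}$ regularity, and \eqref{grad est-eps} all at once from a single estimate: Theorem \ref{grad estim} gives $V\geq c e^{-a u_{\eps,\sigma}}$, i.e. $|Du_{\eps,\sigma}|\leq C e^{a u_{\eps,\sigma}}$, i.e. the $\sigma$-uniform Lipschitz bound $|D e^{-a u_{\eps,\sigma}}|\leq C$, which is exactly the ``height control on the base'' you try to recover by going up to the graph and back. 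Note also that Proposition \ref{V_bd_2} is stated for the triple-approximate graphs $M^{\eps,\sigma,\tau}$; to use it for $u_{\eps,\sigma}$ you should pass $\tau\to 0$ first, which is precisely what Theorem \ref{grad estim} already does, so it is cleaner to quote that theorem directly. With these adjustments, your use of Lemma \ref{supbound close to K0} and the $\sigma$-independent boundary gradient bound to fix the constants, the passage of \eqref{tau=0} to the limit (using $\sigma u_{\eps,\sigma}\to 0$ locally on $\Omega^\eps$), interior and Lieberman-type boundary elliptic estimates for the smooth local convergence away from $\partial K$, and Theorem \ref{thm_double_approx_ah} for \eqref{curv est-eps} match the paper's proof.
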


\begin{proof}
Suppose $\sigma_1\leq \sigma_2$. Since $u_{\eps,\sigma_1}$ is a supersolution of (\ref{tau=0}) for $\sigma=\sigma_2,$ by the maximum principle we have
$u_{\eps,\sigma_1}\geq u_{\eps,\sigma_2}$. Therefore we can take a pointwise (improper) limit $u_\eps(x) = \lim_{\sigma\rightarrow 0} u_{\eps,\sigma}(x)\in [0,\infty]$ for each $x\in K$. Obviously $u_{\eps}=0$ on $\partial K$. Define
\begin{equation}
\Omega^\eps = \{x\in K : u_\eps(x) < \infty\}.
\end{equation}
By Theorem \ref{grad estim}, we have the uniform Lipschitz estimate
\begin{equation}
|De^{-au_{\eps,\sigma}}|\leq C,
\end{equation}
hence $\Omega^\eps\subseteq K$ is open, $u_\eps(x)\to \infty$ as $x\to \partial \Omega^\eps\setminus \partial K$, and \eqref{grad est-eps} holds.
Finally, by standard elliptic estimates (c.f. the proof of Theorem \ref{existence}), the convergence $u_{\eps,\sigma}\to u_\eps$ is in $C^{\infty}_{\textrm{loc}}(\Omega^\eps\setminus\partial K)$, and passing the estimates from Theorem \ref{grad estim} and Theorem \ref{thm_double_approx_ah} to the limit yields \eqref{grad est-eps} and \eqref{curv est-eps}.
\end{proof}

Consider
\begin{equation}\label{def_ell_t}
L^\eps_t := \{(x,z)\in\Omega^\eps \times \mathbb{R}:\eps z\leq u_\eps(x)-t \}.
\end{equation}
The geometric meaning of \eqref{eq_fb_translators} is that $\{L^\eps_t\}$ is a smooth selfsimilar solution of the free boundary mean curvature flow in $D\times \mathbb{R}$, translating downwards with speed $1/\eps$.

\begin{proposition}\label{prop_onesided_min}
The sets $L^\eps_t$ are one-sided minimizing. Namely, given any compact set $E\subset \operatorname{Int}_{\partial D\times \mathbb{R}} L^\eps_0$ we have the estimate
\begin{equation}
\mathcal{H}^{n+1}(\partial L^\eps_t \cap E)\leq \mathcal{H}^{n+1}(\partial F \cap E)
\end{equation}
whenever $ L^\eps_t \subseteq F$ and $F\setminus L^\eps_t\subseteq E$.
\end{proposition}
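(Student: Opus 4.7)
The plan is to exhibit a calibration and apply the divergence theorem. Consider the ambient vector field
$$X(x,z):=\frac{(-Du_{\eps}(x),\,\eps)}{\sqrt{\eps^{2}+|Du_{\eps}(x)|^{2}}}$$
on $\Omega^{\eps}\times\mathbb{R}$. A direct computation gives four key properties: (i) $|X|\equiv 1$; (ii) on each leaf $\partial L^{\eps}_{s}=\{\eps z=u_{\eps}(x)-s\}$ the field $X$ coincides with the outward unit normal $\nu_{L^{\eps}_{s}}$; (iii) the Neumann condition $\langle N,Du_{\eps}\rangle=0$ from \eqref{eq_fb_translators} makes $X$ tangent to the barrier $\partial D\times\mathbb{R}$; and (iv) the translator PDE from \eqref{eq_fb_translators} yields
$$\operatorname{div} X=-\operatorname{div}_{N}\!\left(\frac{Du_{\eps}}{\sqrt{\eps^{2}+|Du_{\eps}|^{2}}}\right)=\frac{1}{\sqrt{\eps^{2}+|Du_{\eps}|^{2}}}\geq 0.$$

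Next, I would apply the Gauss--Green formula to $X$ on the bounded region $W:=F\setminus L^{\eps}_{t}\subseteq E$. By (iii), the portion of $\partial W$ sitting on $\partial D\times\mathbb{R}$ contributes nothing. The remainder of $\partial W$, sitting in the interior of $D\times\mathbb{R}$, splits (modulo the coincident piece $\partial F\cap\partial L^{\eps}_{t}$, which appears with cancelling signs) into a piece $\Sigma_{1}\subseteq\partial L^{\eps}_{t}$ with outward-to-$W$ normal $-\nu_{L^{\eps}_{t}}$, together with a piece $\Sigma_{2}\subseteq\partial F$ with outward normal $\nu_{F}$. Using (ii) on $\Sigma_{1}$ and the inequality $X\cdot\nu_{F}\leq|X|=1$ from (i) on $\Sigma_{2}$, property (iv) gives
$$0\leq\int_{W}\operatorname{div} X\,d\mathcal{H}^{n+2}=-\mathcal{H}^{n+1}(\Sigma_{1})+\int_{\Sigma_{2}}X\cdot\nu_{F}\,d\mathcal{H}^{n+1}\leq\mathcal{H}^{n+1}(\Sigma_{2})-\mathcal{H}^{n+1}(\Sigma_{1}).$$
Since both $\Sigma_{1}$ and $\Sigma_{2}$ lie in $\overline{W}\subseteq E$, and the coincident part $\partial L^{\eps}_{t}\cap\partial F$ contributes equally to $\mathcal{H}^{n+1}(\partial L^{\eps}_{t}\cap E)$ and to $\mathcal{H}^{n+1}(\partial F\cap E)$, this yields the asserted inequality.

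The main technical wrinkle is the measure-theoretic setup, because the competitor $F$ is only assumed to be a (closed) set. I would first reduce to the case $\mathcal{H}^{n+1}(\partial F\cap E)<\infty$ (else the claim is vacuous), so that $F$ has locally finite perimeter near $E$, and invoke the generalised Gauss--Green formula of De Giorgi to justify the computation above on the Caccioppoli set $W$. The minor remaining issues are the orthogonal contact of $\partial L^{\eps}_{t}$ with $\partial D\times\mathbb{R}$, which together with (iii) lets the boundary term there vanish after a standard approximation, and the Dirichlet edge $\partial K\times\{z=-t/\eps\}$, which by hypothesis lies outside $E$ and therefore plays no role in the divergence balance.
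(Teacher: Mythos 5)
Your proof is correct and takes essentially the same route as the paper: there the unit normal of the translating foliation is used as a calibration with $\operatorname{div}\nu\geq 0$ by mean-convexity, the divergence theorem is applied on $F\setminus L^\eps_t$, and the boundary piece on $\partial D\times\mathbb{R}$ is discarded via the free boundary condition. Your explicit formula for the calibrating field, the sign bookkeeping on $\Sigma_1,\Sigma_2$, and the finite-perimeter remarks merely spell out details the paper leaves implicit.
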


\begin{proof}
Since $\{L^\eps_t\}_{t>0}$ is mean-convex, we can use the unit normal $\nu$ as calibration and evaluate
\begin{equation}
0\leq\int_{F\setminus L_t^\eps}\operatorname{div}\nu\, d\mathcal{H}^{n+2}
\end{equation}
using the divergence theorem. In general, the boundary of $F\setminus L_t^\eps$ consists of three parts. By the free boundary condition, the part contained in $\partial D\times\mathbb{R}$ does not contribute to the flux integral. From this, the assertion follows.
\end{proof}

\begin{proof}[{Proof of Theorem \ref{thm_ell_reg} (elliptic regularization and consequences)}]
Let $u_\eps:\Omega^{\eps}\to\R$ be as in Lemma \ref{eps limit prop}. By (\ref{grad est-eps}), we have
\begin{align}\label{uniform Lipschitz eps}
|De^{-au_{\eps}}|\leq C.
\end{align}
Thus, for any sequence $\eps_k\to 0$ there is a subsequence $\eps_{k'}\rightarrow 0$ and a relatively open set $\Omega\subseteq K$ containing a neighborhood of $\partial K$ such that $u_{\eps_{k'}}\to \hat{u}$ locally uniformly in $\Omega$ and $u_{\eps_{k'}}(x)\to \infty$ as $x\to \partial \Omega\setminus \partial K$. Since $\hat{u}$ arises as a limit of locally uniform Lipschitz functions, it solves
\begin{align}
\operatorname{div}\left(\frac{D\hat{u}}{\abs{D\hat{u}}}\right)+\frac{1}{\abs{D\hat{u}}}&=0  &  \textrm{in} \,\,\mathring{\Omega}\nonumber \\
\langle N, D\hat{u}\rangle&= 0 & \textrm{on} \,\, \delta \Omega\nonumber\\
\hat{u}&= 0 & \textrm{on} \,\,\partial K
\end{align}
in the viscosity sense. By the definition of viscosity solutions, the family of closed sets $\hat{M}_t:=\{x\in K: \hat{u}(x)=t\}$ satisfies the avoidance principle, and thus is a set-theoretic subsolution of the mean curvature flow with free boundary. Hence, by the same argument as in \cite{HH18}*{p. 1154} the limit $\hat{u}$ agrees with the arrival time function $u$ of the free boundary level set flow. In particular, $\Omega=K\setminus \cap_{t\geq 0}K_t$, and the subsequential convergence entails a full limit.

Recall that $\{L^\eps_t\}$ as defined in \eqref{def_ell_t} is a smooth selfsimilar solution of the free boundary mean curvature flow in $D\times \mathbb{R}$, translating downwards with speed $1/\eps$.
The arrival time function of $\{L^\eps_t\}$ is given by $U_\eps(x,z)=u_{\eps}(x)-\eps z$, and converges locally uniformly to $U(x,z)=u(z)$, which is the arrival time function of $\{K_t\times \mathbb{R}\}$.
Hence, $\{L^\eps_t\}$ converges to $\{K_t\times \mathbb{R}\}$ in the strong Hausdorff sense, c.f. \cite{HK}*{Sec. 4.3}. Moreover, by the compactness theorem for Brakke flow with free boundary \cite{Edelen17}, and by the above uniqueness, $\{\mathcal{H}^{n+1}\lfloor \partial L^\eps_t\}$ converges in the sense of Brakke flows to $\{\mathcal{H}^{n+1}\lfloor \partial K_t\times \mathbb{R}\}$. In particular, $t\mapsto \mathcal{H}^n\lfloor\partial K_t$ is a free boundary Brakke flow. We can also pass the one-sided minimization property from Proposition \ref{prop_onesided_min} to the limit to obtain $\mathcal{H}^n(\partial K_t)\leq \mathcal{H}^n(\partial F)$ whenever $K_t\subseteq F\subseteq K$.
Finally, passing the estimates \eqref{grad est-eps} and \eqref{curv est-eps} to the limit at smooth points via the local regularity theorem \cite{W05,Edelen17} we infer that
\begin{equation}
\inf_{\partial K_t^\textrm{reg}}H\geq c e^{-a t}\qquad (t>0),
\end{equation}
and, taking also into account that $\partial K_\delta$ is smooth with bounded curvature for $\delta>0$ small enough, that
\begin{equation}
\sup_{ \partial K_t^\textrm{reg}}\frac{|A|}{H}\leq C e^{\rho t} \qquad (t\geq \delta).
\end{equation}
This finishes the proof of Theorem \ref{thm_ell_reg}.
\end{proof}

\bigskip

\section{Free boundary Brakke flow and limit flows}\label{sec_free_brakke}
For ease of notation let us pretend that $(N^{n+1},g)=(\mathbb{R}^{n+1},\delta)$ (everything generalizes in a straightforward way to other ambient manifolds).

Let $K_t=F_t(K)$ be a free boundary level set flow with smooth strictly mean-convex initial data $K\subset D$. As before, we write
\begin{equation}
\mathcal{K}=\bigcup_{t\geq 0} K_t\times\{t\}\subset D\times \mathbb{R}_+
\end{equation}
for its space-time track. By Theorem \ref{thm_ell_reg} (elliptic regularization and consequences), we can also consider the associated free boundary Brakke flow
\begin{equation}
\mathcal{M}=\{ \mathcal{H}^n\lfloor\partial K_t \}_{t\geq 0}.
\end{equation}
We recall that the pair $(\mathcal{M},\mathcal{K})$ is called a \emph{mean-convex free boundary flow}.

\subsection{Free boundary Brakke flow}
As in \cite[Sec. 3]{Edelen17}, for any Borel-measurable vector field $X$, define the vector field
\begin{equation}
X_\ast=X-1_{\partial D}\langle X, N\rangle N.
\end{equation}

We recall from \cite[Sec. 4]{Edelen17} that in general a free boundary Brakke flow $\mathcal{M}=\{\mu_t\}_{t\geq 0}$ is given by a family of Radon measures in $D$, that is integer $n$-rectifiable for almost all times, such that
\begin{equation}
\frac{d}{dt}\int \phi \, d\mu_t \leq \int \left( -\phi H_\ast^2 +\langle \nabla \phi, H_\ast\rangle+\partial_t \phi\right) \, d\mu_t
\end{equation}
for all nonnegative $C^1$-functions $\phi$. Here, $\tfrac{d}{dt}$ denotes the limsup of difference quotients, and it is assumed that for almost every time the first variation of the associated varifold $V_{\mu_t}$ is represented by a function $H\in L^2((D,\mu_t);\mathbb{R}^{n+1})$, namely
\begin{equation}\label{firstvar}
\delta V_{\mu_t}(X)=-\int H_\ast \cdot X \, d\mu_t
\end{equation}
for all $C^1$-vectorfields $X$ that are tangential along $\partial D$.\footnote{In particular, by \eqref{firstvar}, $V_{\mu_t}$ has free boundary in the sense of integral varifolds.}

\begin{remark}
Thanks to Theorem \ref{thm_ell_reg} (elliptic regularization and consequences) all free boundary Brakke flows that we encounter in the present paper, or more precisely their stabilized version obtained by crossing with a line, are limits of smooth free boundary flows.
\end{remark}

Let $(\mathcal{M},\mathcal{K})$ be a mean-convex free boundary flow. The \emph{support} of $\mathcal{M}$ consists by definition of all points $X=(x,t)$ which have Gaussian density (see Section \ref{sec_tang_gauss} below) at least one.  We write $\partial \mcfK := \mcfK \setminus \mathrm{Int}_{D \times \R} \mcfK$ for the Dirichlet boundary of $\mcfK$ as a subset of spacetime.

\begin{proposition}[support]\label{prop:support}
If $(\mathcal{M},\mathcal{K})$ is a free boundary Brakke flow then the support of $\mathcal{M}$ is equal to $\partial \mathcal{K}$ for $t > 0$.
\end{proposition}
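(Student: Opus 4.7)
The plan is to prove the two inclusions separately, after first identifying the time-$t$ slice of $\partial\mathcal{K}$ for $t>0$ as precisely $\partial K_t$. The forward inclusion $\partial K_t\subseteq\partial\mathcal{K}_t$ is immediate from the strict inclusion $K_{t_2}\subseteq\operatorname{Int}_DK_{t_1}$ (a point of $\partial K_t$ exits $\mathcal{K}$ at any later time), while the reverse uses continuity of the arrival time $u$: if $x\in\operatorname{Int}_DK_t$ then $u(x)>t$, so some spacetime neighborhood of $(x,t)$ lies inside $\mathcal{K}$. With this, the proposition reduces to $\operatorname{supp}(\mathcal{M}_t)=\partial K_t$ for $t>0$ in the Gaussian-density sense.

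For $\operatorname{supp}(\mathcal{M})\subseteq\partial\mathcal{K}$: if $X=(x,t)\notin\partial\mathcal{K}$, then either $x\in\operatorname{Int}_DK_t$ (in which case a small ball $B_r(x)\cap D$ lies in $\operatorname{Int}_DK_s$ for all $s\le t$ by mean-convexity, so $\mu_s(B_r(x))=0$) or $x\notin K_t$ (in which case closedness of $\mathcal{K}$ yields a spacetime neighborhood of $(x,t)$ disjoint from $\mathcal{K}$, so $\mu_s(B_r(x))=0$ for $s$ close to $t$). In either case, the Huisken-type monotonicity for free boundary Brakke flows \cite{Edelen17}, combined with the Gaussian decay of $\rho_X$ outside $B_r(x)$, forces $\Theta(X)=0$, contradicting $X\in\operatorname{supp}(\mathcal{M})$.

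The harder inclusion is $\partial\mathcal{K}\subseteq\operatorname{supp}(\mathcal{M})$ on $\{t>0\}$. Fix $x\in\partial K_t$; the plan is to produce density at least $1$ at $(x,t)$ by pulling it back from smooth approximations via the elliptic regularization of Theorem~\ref{thm_ell_reg}. Since the stabilized smooth flows $\{L^\eps_s\}$ converge both as free boundary Brakke flows and in the strong Hausdorff sense to $\{K_s\times\mathbb{R}\}$, I can pick smooth points $\widetilde X_\eps=(x_\eps,z_\eps,t)\in\partial L^\eps_t\times\{t\}$ with $\widetilde X_\eps\to\widetilde X:=(x,0,t)$. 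At any smooth point of a free boundary flow the Gaussian density (defined via the reflected backward heat kernel) equals $1$, so Huisken's monotonicity for the approximations gives
\begin{equation*}
\Theta^\eps_s(\widetilde X_\eps):=\int\rho_{\widetilde X_\eps}(y,s)\,d\mu^\eps_s(y)\ge 1\qquad\text{for every }s<t.
\end{equation*}
Weak convergence of the Brakke measures passes this to $\Theta_s(\widetilde X)\ge 1$ for each $s<t$, and sending $s\to t^-$ gives $\Theta(\widetilde X)\ge 1$ for the stabilized flow. A short product-kernel computation (the $\mathbb{R}$-fibre Gaussian integrates to $1$) identifies $\Theta(X)=\Theta(\widetilde X)$, hence $\Theta(X)\ge 1$.

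The main obstacle is this density lower bound in the hard direction. Working directly on $\mathcal{M}$ seems hopeless: the one-sided minimization from Theorem~\ref{thm_ell_reg} only furnishes upper density bounds on $\partial K_t$, since the comparison $F=K_t\cup B_r(x)$ yields $\mathcal{H}^n(\partial K_t\cap B_r)\le Cr^n$ but gives no lower density. The density-one information must instead be transported from the smooth $L^\eps$-level to the singular limit, which is precisely what the combined Brakke and strong Hausdorff convergence from Theorem~\ref{thm_ell_reg} makes possible.
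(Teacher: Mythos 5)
Your argument is correct and amounts to a worked-out version of the route the paper itself indicates: the paper's proof is a citation (argue as in White's Theorem 5.3, or alternatively use that the stabilized flow is a limit of smooth mean-convex flows via Theorem \ref{thm_ell_reg} and argue as for Theorem \ref{thm_limit_flows}), and your proof implements this second option by transporting the Gaussian density one at smooth points of the elliptic-regularization approximators to the limit through almost-monotonicity and upper semicontinuity of the (reflected) density. The supporting facts you invoke along the way (closedness of the spacetime track and $\operatorname{Int}_D K_t=\bigcup_{h>0}K_{t+h}$ for the slice identification, strong Hausdorff convergence producing points of $\partial L^\eps_t$ at fixed time, reflected density one at smooth free boundary points) are all supplied by the results the paper already relies on, so I see no gap.
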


\begin{proof}
This follows similarly as in \cite[proof of Thm. 5.3]{White_size}.  Alternatively, one use that the stabilized flow $\{M_t \times \R\}_t$ is a limit of smooth, mean-convex flows in $D \times \R$, and argue as in Theorem \ref{thm_limit_flows} below.
\end{proof}

\subsection{Blowup sequences and limit flows}\label{sec_blowup_seq}
Let  $(\mathcal{M},\mathcal{K})$ be a mean-convex free boundary flow in $D$.
Given $X_i=(x_i,t_i)\in\partial \mathcal{K}$ (with $\liminf_{i\to \infty} t_i>0$ and  $\limsup_{i\to \infty} t_i<\infty$), and $\lambda_i\to \infty$, we consider the \emph{blowup sequence} $(\mathcal{M}^i,\mathcal{K}^i)$ which is obtained from $(\mathcal{M},\mathcal{K})$ by translating $X_i$ to the origin and parabolically rescaling by $\lambda_i$. After passing to a subsequence, we can assume that either
\begin{equation}
\lim_{i\to \infty}\lambda_i d(x_i,\partial D) =\infty \qquad \textrm{(interior case)},
\end{equation}
or
\begin{equation}
\lim_{i\to \infty}\lambda_i d(x_i,\partial D) <\infty \qquad \textrm{(boundary case)}.
\end{equation}
Note that $(\mathcal{M}^i,\mathcal{K}^i)$ is defined in the domain $D^i$ which is obtained from $D$ by shifting $x_i$ to the origin and rescaling by $\lambda_i$.
For $i\to\infty$ the domains $D^i$ converge locally smoothly to $\mathbb{R}^{n+1}$ in the interior case, and to a halfspace, which we denote by $\mathbb{H}$, in the boundary case. By the area bounds from one-sided minimization (Theorem \ref{thm_ell_reg}) and the compactness theorem for free boundary Brakke flows \cite[Thm. 4.10]{Edelen17} after passing to a subsequence we can assume that $\mathcal{M}^i$ converges to a limit $\mathcal{M}'$, which is a Brakke flow in the interior case and a free boundary Brakke flow in the boundary case. After passing to a further subsequence we can also assume that $\mathcal{K}^i$ converges in the Hausdorff sense to a limit $\mathcal{K}'$. Any such pair $(\mathcal{M}',\mathcal{K}')$ is called a \emph{limit flow}.

In the boundary case, when $(\mcfM', \mcfK')$ is a free-boundary flow in $\mathbb{H}$, then one can reflect $(\mcfM', \mcfK')$ around the planar barrier to obtain a boundaryless flow $(\widetilde{\mcfM'}, \widetilde{\mcfK'})$ (c.f. \cite[Prop. 4.4]{Edelen17}), called the \emph{reflected limit flow}.

\begin{proposition}[characterization of planar limit flows]\label{prop:planar-limit}
Let $(\mathcal{M},\mathcal{K})$ be a mean-convex free boundary flow in $D$, and let $(\mathcal{M}^i,\mathcal{K}^i)$ be a blowup sequence converging to a limit flow $(\mathcal{M}',\mathcal{K}')$.  Suppose $\spt\mcfM'$ is contained in some static plane. Then one of the following six cases occurs:
\begin{enumerate}
\item $\mcfK'$ is a static half-space in $\R^{n+1}$, and $\mcfM'$ is the static plane $\partial \mcfK'$.
\item $\mcfK'$ is a (quasi)static plane in $\R^{n+1}$, and $\mcfM'$ is the (quasi)static plane $\partial \mcfK'$ with multiplicity two.
\item $\mcfK'$ is a static quarter-space in $\mathbb{H}$, and $\mcfM'$ is the static half-plane $\partial \mcfK'$ with multiplicity one.
\item $\mcfK'$ is a (quasi)static half-plane in $\mathbb{H}$, and $\mcfM'$ is a (quasi)static half-plane $\partial \mcfK'$ with multiplicity two.
\item $\mcfK'$ is a static slab in $\mathbb{H}$ containing $\partial \mathbb{H}$, and $\mcfM'$ is the static plane $\partial \mcfK'$ with multiplicity one.
\item $\mcfK'$ is the (quasi)static plane coincident with $\partial \mathbb{H}$, and $\mcfM'$ is the (quasi)static plane coincident with $\partial \mathbb{H}$ having multiplicity one.
\end{enumerate}
\end{proposition}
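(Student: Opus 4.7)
The plan is to leverage two structural inputs established earlier: the identification $\spt \mcfM' = \partial \mcfK'$ from Proposition \ref{prop:support}, and the mean-convex monotonicity $K_{t_2}' \subseteq K_{t_1}'$ for $t_1 \leq t_2$, which is inherited in the Hausdorff limit from the strict nesting $K_{t_2} \subseteq \operatorname{Int}_D K_{t_1}$ of the original flow. Under the hypothesis $\spt \mcfM' \subseteq P$ these two together force $\partial K_t' \subseteq P$ for every $t$, so each $K_t'$ is a closed subset of the ambient space whose Dirichlet topological boundary is contained in the single plane $P$. That is a very rigid condition: in $\R^{n+1}$, the only such closed sets are $\emptyset$, all of $\R^{n+1}$, one of the two closed half-spaces bounded by $P$, or $P$ itself; in $\mathbb{H}$ the analogous short list depends on how $P$ sits relative to $\partial \mathbb{H}$.

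In the interior case I would use that the spacetime origin lies in $\partial \mcfK'$, hence in $P$, and invoke monotonicity: either $K_t'$ is a fixed closed half-space for all $t$, giving case (1), or $K_t' \subseteq P$ on a maximal interval and equals $\emptyset$ thereafter, giving case (2). In the boundary case I would further split according to how $P$ meets $\partial \mathbb{H}$. If $\partial K_t'$ meets $\partial \mathbb{H}$ transversally, then the orthogonality of the approximating free boundaries $\partial K_t^i$ along $\partial D^i$ passes to the limit to give $P$ orthogonal to $\partial \mathbb{H}$, and the interior argument applied in $\mathbb{H}$ (with $P \cap \mathbb{H}$ a half-plane) yields case (3) or (4). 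If $P = \partial \mathbb{H}$ we land in case (6); and if $P$ is parallel to $\partial \mathbb{H}$ at distance $c = \lim \lambda_i d(x_i, \partial D) > 0$, then $K_t'$ must be the slab $\{0 \leq x^{n+1} \leq c\}$, i.e.\ case (5). The a priori competing ``detached half-space'' configuration $\{x^{n+1} \geq c\}$ is excluded by the one-sided minimization from Theorem \ref{thm_ell_reg}: passing that property to the limit, $\mcfK'$ cannot abandon the barrier while retaining the same Dirichlet boundary, since the slab has strictly smaller area for the same Dirichlet interface.

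For the multiplicities, $\mcfM'$ is a stationary integral Brakke flow supported in $P$ (or $P \cap \mathbb{H}$), so by the constancy theorem it has a constant positive-integer density on each component of its smooth part. The approximating $\partial K_t^i$ are boundaries of single bodies $K_t^i$, so at most one sheet of $\partial K_t^i$ can accumulate onto $P$ from either side, capping the density at $2$. The density is precisely $1$ when $K_t'$ has non-empty interior on exactly one side of $P$ (cases (1), (3), (5), (6)), and precisely $2$ when $K_t'$ is itself contained in the plane or half-plane (cases (2), (4)), since then the approximating bodies must collapse onto $\mcfK'$ from both sides.

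The main obstacle I anticipate is ruling out the detached half-space in the parallel boundary sub-case; the rest of the argument is essentially a bookkeeping exercise built on the support identification, the mean-convex monotonicity, and the constancy theorem for stationary Brakke flows supported in a plane. The detached configuration is precisely the point where the one-sided minimization output of Theorem \ref{thm_ell_reg}, rather than any purely topological information about $K_t'$, has to be invoked in an essential way.
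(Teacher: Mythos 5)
The central gap is a circularity at the very first step. You base the whole classification on the identity $\spt\mcfM'=\partial\mcfK'$ for the \emph{limit} flow, quoting Proposition \ref{prop:support}. That proposition concerns the original mean-convex flow, where $\mcfM$ is by definition $\{\mathcal{H}^n\lfloor\partial K_t\}$; for a limit flow the identity is assertion (2) of Theorem \ref{thm_limit_flows}, which the paper proves \emph{afterwards, using} Proposition \ref{prop:planar-limit}. What is legitimately available at this stage is only $\spt\mcfM^i=\partial\mcfK^i$ along the blowup sequence, and the genuine content of the step ``$\partial K'_t\subseteq P$'' is to rule out mass drop: a point of $\partial\mcfK'$ is a limit of points of $\partial\mcfK^i=\spt\mcfM^i$, and to conclude that it lies in $\spt\mcfM'$ one needs a uniform lower density bound at those points, which comes from the one-sided minimization of the $\mcfK^i$ (Theorem \ref{thm_ell_reg}) combined with monotonicity. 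In your write-up one-sided minimization is invoked only later, and only to exclude the detached half-space parallel to the barrier; in the paper's (i.e.\ White's \cite{White_size}) argument it is needed already here.

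Two further steps are asserted rather than proved. The density cap ``at most one sheet can accumulate from either side because $\partial K^i_t$ bounds a single body'' is not an argument: the boundary of a single (even mean-convex) domain can a priori collapse onto a plane with arbitrarily many sheets at a fixed time; the bound by $2$ is again an area-comparison consequence of one-sided minimization (filling the region between adjacent sheets gives a strictly better competitor), and the multiplicity-one smoothness statements in cases (1), (3), (5), (6), as well as the identification of the quasistatic multiplicity-two cases, use the local regularity theorems of \cite{W05,Edelen17}, which you never invoke --- the constancy theorem for a stationary varifold in a plane does not by itself control the time behaviour or yield regularity of the flow. Your purely topological classification is also incomplete: a closed set whose Dirichlet boundary lies in $P$ can be \emph{any} closed subset of $P$ (for instance a half-plane with an edge in the interior of $D'$), and excluding such configurations in the interior case is again a density/regularity argument, not bookkeeping. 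Your treatment of the parallel-barrier subcase via one-sided minimization is in the right spirit, but contrary to your closing remark that is not the only place where Theorem \ref{thm_ell_reg} is essential: the paper's proof consists precisely of the three inputs $\spt\mcfM^i=\partial\mcfK^i$, one-sided minimization of each $\mcfK^i$, and local regularity, and each of them is needed at points where your sketch substitutes a circular citation or a heuristic.
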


\begin{proof}
As in \cite[Thm 5.4]{White_size}, this follows from the fact that $\spt \mcfM^i = \partial \mcfK^i$, the one-sided-minimization of each $\mcfK^i$, and the local regularity theorems of \cite{W05,Edelen17}.
\end{proof}

\begin{figure}[h]
\centering
\includegraphics[scale=0.4]{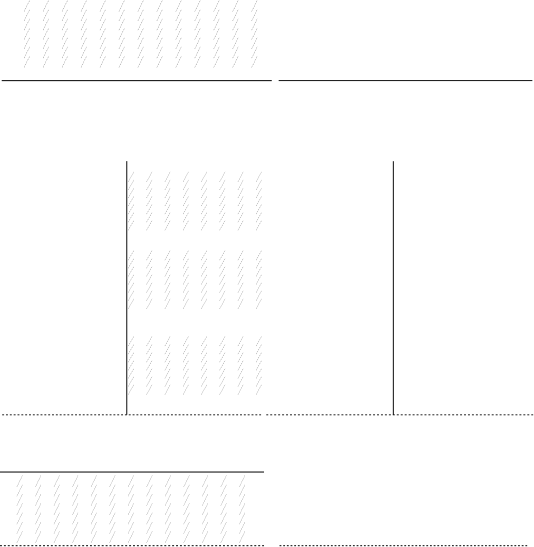}
\caption{Planar limit flows of Proposition \ref{prop:planar-limit}}\label{planar limit flows}
\end{figure}

\begin{theorem}[properties of limit flows]\label{thm_limit_flows}
Let $(\mathcal{M},\mathcal{K})$ be a mean-convex free boundary flow in $D$, and let $(\mathcal{M}^i,\mathcal{K}^i)$ be a blowup sequence converging to a limit flow $(\mathcal{M}',\mathcal{K}')$. Then:
\begin{enumerate}
\item $\mathcal{K}'$ is weakly mean-convex, i.e. $K'_{t_2}\subseteq K'_{t_1}$ whenever $t_2\geq t_1$.
\item The support of $\mathcal{M}'$ equals $\partial \mathcal{K}'$.
\item The sets $\partial \mathcal{K}^i$ Hausdorff converge to $\partial \mathcal{K}'$.
\item $(\mathcal{M}',\mathcal{K}')$ is one-sided minimizing.
\end{enumerate}
\end{theorem}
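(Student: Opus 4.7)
My approach is to transport three structural features of the approximators $(\mathcal{M}^i,\mathcal{K}^i)$ to the joint Hausdorff/Brakke limit: preservation of mean-convexity from Proposition 3.1, the identity $\mathcal{M}^i=\mathcal{H}^n\lfloor \partial K^i_{\cdot}$ (and in particular $\operatorname{spt}(\mathcal{M}^i)=\partial \mathcal{K}^i$) from Theorem \ref{thm_ell_reg}, and one-sided minimization of each time slice from Proposition \ref{prop_onesided_min}. Items (1) and (4) fall out by direct passages to the limit, while (2) and (3) are handled jointly via a mean-convex approximation argument.

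For (1), the inclusion $K^i_{t_2}\subseteq K^i_{t_1}$ for $t_2>t_1$ (Proposition 3.1 applied to each rescaled flow) is stable under Hausdorff convergence of closed sets, yielding $K'_{t_2}\subseteq K'_{t_1}$. For (4), given a competitor $F$ with $K'_t\subseteq F$ and $F\setminus K'_t\subseteq E$ compactly contained in the spacetime interior of $K'_0$, a slight Hausdorff-thickening of $F$ produces competitors $F^i\supseteq K^i_t$ with $F^i\setminus K^i_t\subseteq E$ and $\mathcal{H}^n(\partial F^i\cap E)\to \mathcal{H}^n(\partial F\cap E)$. Combining the one-sided minimization $\mathcal{H}^n(\partial K^i_t\cap E)\leq \mathcal{H}^n(\partial F^i\cap E)$ from Proposition \ref{prop_onesided_min} with lower semi-continuity of mass under Brakke convergence on the left then gives (4).

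For (2) and (3), the inclusion $\operatorname{spt}(\mathcal{M}')\subseteq \partial \mathcal{K}'$ is immediate: any $X\notin \partial \mathcal{K}'$ admits a parabolic neighborhood $B$ eventually disjoint from $\partial \mathcal{K}^i\supseteq \operatorname{spt}(\mathcal{M}^i)$ by Hausdorff convergence, so $\mathcal{M}^i(B)=0$ for large $i$ and hence $\mathcal{M}'(B)=0$. For the reverse inclusion, given $X=(x,t)\in \partial \mathcal{K}'$ I would produce $X_i\in \partial \mathcal{K}^i$ with $X_i\to X$ as follows: any parabolic ball around $X$ contains points of both $\mathcal{K}'$ and its complement, hence (by Hausdorff convergence) points of both $\mathcal{K}^i$ and its complement for large $i$, and a connecting path crosses the topological boundary $\partial \mathcal{K}^i$ at the desired $X_i$. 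Each such $X_i$ carries Gaussian $\mathcal{M}^i$-density at least $1$ (via Huisken monotonicity in the free-boundary setting of \cite{Edelen17}), so upper semi-continuity of density under Brakke convergence gives $\Theta(\mathcal{M}',X)\geq 1$, i.e. $X\in \operatorname{spt}(\mathcal{M}')$. The same approximation then yields (3): the $\liminf$ inclusion $\partial \mathcal{K}'\subseteq \liminf \partial \mathcal{K}^i$ is exactly the construction just described, while $\limsup \partial \mathcal{K}^i \subseteq \partial \mathcal{K}'$ follows from $\partial \mathcal{K}^i=\operatorname{spt}(\mathcal{M}^i)$, the inclusion $\operatorname{spt}(\mathcal{M}')\subseteq \partial \mathcal{K}'$ just proved, and upper semi-continuity of support under Brakke convergence.

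The main obstacle is producing points $X_i\in \partial \mathcal{K}^i$ converging to a prescribed $X\in \partial \mathcal{K}'$, since Hausdorff convergence of sets does not in general transmit to convergence of topological boundaries. This is resolved by combining strict mean-convexity of each $\mathcal{K}^i$ (which foliates $\partial \mathcal{K}^i$ by smooth level sets of the arrival-time function $u^i$) with weak mean-convexity of $\mathcal{K}'$ from (1), together with the locally uniform Lipschitz control on $u^i$ from Theorem \ref{thm_ell_reg} that lets the diagonal extraction converge.
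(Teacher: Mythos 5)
Your items (1) and (4) are in line with the paper's proof (the paper calls (1) trivial and derives (4) from the one-sided minimization of the approximators, as in White's Theorem 6.1), and the direction $\partial\mathcal{K}'\subseteq\operatorname{spt}\mathcal{M}'$ that you flag as the ``main obstacle'' is indeed handled correctly by your density argument: points $X_i\in\partial\mathcal{K}^i$ converging to $X$ exist by the separation argument, they have Gaussian (reflected) density at least $1$ by Proposition \ref{prop:support} applied to the rescaled flows, and Edelen's almost-monotonicity gives upper semicontinuity of the density, hence $X\in\operatorname{spt}\mathcal{M}'$.

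The genuine gap is in the opposite inclusion $\operatorname{spt}\mathcal{M}'\subseteq\partial\mathcal{K}'$ (and, equivalently, in the $\limsup$ half of (3)). Your claim that any $X\notin\partial\mathcal{K}'$ has a parabolic neighborhood eventually disjoint from $\partial\mathcal{K}^i$ ``by Hausdorff convergence'' is false when $X$ lies in the spacetime interior of $\mathcal{K}'$: Hausdorff convergence $\mathcal{K}^i\to\mathcal{K}'$ only says that $\mathcal{K}^i$ is eventually dense in $\mathcal{K}'$ and contained in a small neighborhood of it; it does not prevent thin ``slots'' of the complement of $\mathcal{K}^i$ from running through $\operatorname{Int}\mathcal{K}'$, whose two boundary sheets carry a definite amount of area and could converge, say, to a multiplicity-two plane inside $\operatorname{Int}\mathcal{K}'$ while $\mathcal{K}^i$ still Hausdorff-converges to $\mathcal{K}'$. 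Excluding exactly this scenario is the nontrivial content of (2) and (3), and it is why the paper's proof invokes Proposition \ref{prop:planar-limit} (whose proof rests on $\operatorname{spt}\mathcal{M}^i=\partial\mathcal{K}^i$, the one-sided minimization of each $\mathcal{K}^i$ from Theorem \ref{thm_ell_reg}, and the local regularity theorems of White/Edelen), following White's argument for his Theorem 5.5. In other words, mass concentrating inside $\operatorname{Int}\mathcal{K}'$ must be ruled out by a quantitative area/one-sided-minimization (or planar-classification) argument, not by soft set-theoretic convergence; your proposal never uses these inputs for (2) and (3), and your appeal to mean-convexity and the Lipschitz bound on the arrival times at the end does not substitute for it, since two nearly parallel, nearly flat sheets move arbitrarily slowly and are perfectly compatible with both. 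Once this step is repaired, your deduction of (3) from (2) via upper semicontinuity of the density is fine, given the paper's density-based definition of support.
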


\begin{proof}
The first assertion is trivial.  Assertions two and three follow as in \cite[proof of Thm. 5.5]{White_size}, using Proposition \ref{prop:planar-limit} in place of \cite[Thm 5.4]{White_size}.  Assertion four follows from the one-sided-minization of $\mcfK^i$ as in \cite[Thm 6.1]{White_size}.
\end{proof}

\subsection{Tangent flows and Gaussian density}\label{sec_tang_gauss}
For $x\in D$ close enough to $\partial D$ there is a well defined projection $\xi(x)$ to the nearest point in $\partial D$. Denote by
\begin{equation}
\tilde{x}=2\xi(x)-x
\end{equation}
the point which is obtained by reflection across $\partial D$. Using this, one can define the almost monotone quantity
\begin{equation}
\Theta_{\textrm{refl}(\partial D)}(\mathcal{M},X,r)
\end{equation}
as in \cite[Def. 5.1.1]{Edelen17}, which interpolates between Huisken's monotone quantity in the interior \cite{Huisken_monotonicity} and an almost monotone reflected quantity close to the boundary.

Let $X$ in the support of $(\mathcal{M},\mathcal{K})$ be fixed, and $\lambda_i\to \infty$. Let $(\mathcal{M}^i,\mathcal{K}^i)$ be the sequence of flows which is obtained from $(\mathcal{M},\mathcal{K})$ by translating $X$ to the origin and parabolically rescaling by $\lambda_i$. Any subsequential limit $(\mathcal{M}',\mathcal{K}')$ is called a \emph{tangent flow at $X$}. By the almost monotonicity formula from \cite[Thm. 5.1]{Edelen17} tangent flows are always backwardly selfsimilar, i.e. $(\mathcal{M}',\mathcal{K}')\cap \{t\leq 0\}$ is invariant under parabolic dilation. In particular, they have a well defined \emph{reflected Gaussian density}
\begin{equation}
\Theta_{\textrm{refl}(\partial D)}(\mathcal{M},X):=\lim_{r\to 0}\Theta_{\textrm{refl}(\partial D)}(\mathcal{M},X,r).
\end{equation}
Tangent flows are either \emph{shrinking}, \emph{static} or \emph{quasistatic}, see \cite{White_stratification}.

If $(\mathcal{M}',\mathcal{K}')$ is a limit flow at $X\in \partial D$, which is defined in a halfspace, and $(\widetilde{\mathcal{M}'},\widetilde{\mathcal{K}'})$ is the doubled flow, then by \cite[Thm. 6.4 and Lem. 7.1]{Edelen17} we have
\begin{equation}
\textrm{Ent}[\widetilde{\mathcal{M}'}]\leq\Theta_{\textrm{refl}(\partial D)}(\mathcal{M},X),
\end{equation}
with equality in the case of tangent flows. Here,
\begin{equation}
\textrm{Ent}[\widetilde{\mathcal{M}'}]=\lim_{t\to - \infty} \int \frac{1}{(4\pi |t|)^{n/2}}e^{-\frac{|x|^2}{4|t|}}d\mu'_{t}(x)
\end{equation}
denotes the \emph{entropy} (aka density at $\infty$) of $\widetilde{\mathcal{M}}'$.

\bigskip

\section{Multiplicity one}\label{sec_mult_one}
The goal of this section is to prove that static and quasistatic density two planes respectively halfplanes cannot occur as tangent flows or limit flows (note also that planes respectively halfplanes of density $\geq 3$ are immediately ruled out by one-sided minimization).

\subsection{Limit flows with entropy at most two}
In this section, we consider the following class of limit flows.

\begin{definition}[class of limit flows]
Let $(\mathcal{M}, \mathcal{K})$ be a mean-convex flow in $D$. Let $\mathcal{C}$ be the class of all limit flows $(\mathcal{M}',\mathcal{K}')$ such that
\begin{enumerate}
\item $\textrm{Ent}[\mathcal{M}']\leq 2$ respectively $\textrm{Ent}[\widetilde{\mathcal{M}}']\leq 2$.
\item $(\mathcal{M}',\mathcal{K}')$ respectively $(\widetilde{\mathcal{M}}',\widetilde{\mathcal{K}}')$ is not a static or quasistatic multiplicity two plane.
\end{enumerate}
\end{definition}

We recall that all limits flows are either free boundary flows in a halfspace or flows without boundary in entire space.
In the above definition, in case $(\mathcal{M}',\mathcal{K}')$ is defined in a halfspace, $(\widetilde{\mathcal{M}}',\widetilde{\mathcal{K}}')$ is reflected flow $\mathbb{R}^{n+1}$.

\begin{proposition}[partial regularity]\label{prop_part_reg}
If $(\mathcal{M}',\mathcal{K}')\in \mathcal{C}$, then no tangent flow at a singular point can be static or quasistatic. In particular, the parabolic Hausdorff dimension of the singular set is at most $n-1$.
\end{proposition}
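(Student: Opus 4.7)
Suppose for contradiction that $X$ is a singular point of $(\mathcal{M}',\mathcal{K}') \in \mathcal{C}$ admitting a static or quasistatic tangent flow $(\mathcal{M}'',\mathcal{K}'')$. A diagonal argument based on Section~\ref{sec_blowup_seq} shows that $(\mathcal{M}'',\mathcal{K}'')$ is itself a limit flow of the original $(\mathcal{M},\mathcal{K})$, and the almost monotonicity formula of \cite{Edelen17} gives $\mathrm{Ent}[\widetilde{\mathcal{M}''}] \leq \mathrm{Ent}[\widetilde{\mathcal{M}'}] \leq 2$, since the reflected Gaussian density at $X$ is controlled by the entropy at infinity and agrees with $\mathrm{Ent}[\widetilde{\mathcal{M}''}]$ by backward selfsimilarity.

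Being simultaneously backwardly selfsimilar and static or quasistatic, the (reflected) time slices of $\widetilde{\mathcal{M}''}$ are stationary integral varifolds in $\mathbb{R}^{n+1}$ whose spacetime support is a cone of density at most $2$ at the origin. The plan is to combine this density bound with the one-sided-minimization of $\mathcal{K}''$ supplied by Theorem~\ref{thm_limit_flows} to conclude that $\mathrm{spt}\,\mathcal{M}''$ is contained in a static plane: points of density strictly less than $2$ are regular by Allard's theorem, while among density-$2$ stationary integral cones the one-sided-minimizing property excludes configurations such as two transverse hyperplanes, leaving only (multiplicity one or two) planes.

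Once planar support is established, Proposition~\ref{prop:planar-limit} places $(\mathcal{M}'',\mathcal{K}'')$ in one of six configurations. In the multiplicity-one cases (1), (3), (5), (6), the local regularity theorems of \cite{W05, Edelen17} apply at $X$ and force it to be a regular point, contradicting the hypothesis that $X$ is singular. In the multiplicity-two cases (2), (4), $(\mathcal{M}'',\mathcal{K}'')$ would be a static or quasistatic multiplicity-two (half-)plane, which is precisely the configuration ruled out for limit flows of entropy at most $2$ by the Bernstein-type and sheeting theorems proved elsewhere in Section~\ref{sec_mult_one}, i.e. the second defining property of the class $\mathcal{C}$. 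Either way we reach a contradiction, proving the first assertion.

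For the dimension bound on $\mathcal{S}$, we invoke White's parabolic stratification for Brakke flows \cite{White_stratification}: having just excluded static and quasistatic tangent flows at singular points, no singular point admits a tangent flow invariant under time translation, so the top stratum is disjoint from $\mathcal{S}$, and the standard stratification estimate $\dim_{\mathrm{par}} \mathcal{S}^k \leq k$ yields $\dim_{\mathrm{par}} \mathcal{S} \leq n-1$. The main obstacle in the proof is the passage from ``stationary integral cone of density $\leq 2$'' to ``planar support'' in the second paragraph, together with the exclusion of the multiplicity-two cases in the third; both hinge on the technical core of Section~\ref{sec_mult_one}, where one-sided minimization is combined with the Bernstein-type and sheeting theorems for low entropy free boundary flows in a halfslab.
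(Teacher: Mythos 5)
Your overall skeleton (classify static/quasistatic tangent cones, exclude the multiplicity two plane using the class $\mathcal{C}$, conclude regularity, then apply parabolic stratification) matches the paper, but there are two genuine gaps, one of them fatal. The decisive one is in your second paragraph: a stationary integral cone of density at most $2$ that is one-sided minimizing need \emph{not} have planar support, and in particular ``points of density strictly less than $2$ are regular by Allard's theorem'' is false --- Allard's theorem gives regularity only for density close to $1$. Non-flat area-minimizing (hence one-sided minimizing) cones such as the Simons cone in $\mathbb{R}^{8}$ have density strictly between $1$ and $2$ and are singular at the vertex, so nothing in your argument excludes them; ruling out precisely such cones is flagged in the introduction as a major issue. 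The paper closes this gap with the a priori bound $|A|\leq CH$ from Theorem~\ref{thm_ell_reg}: the (stabilized/doubled) flow is a limit of smooth flows satisfying $|A|\leq CH$, and a static or quasistatic tangent cone has $H=0$, so by the local regularity theorem one gets smooth convergence near the regular part of the cone and hence $A\equiv 0$ there, i.e.\ the cone is flat; together with the stratification bound on its singular set and one-sided minimization (which excludes unions of three or four half-planes, and multiplicity $\geq 3$) this leaves only the static multiplicity one plane, whence the point is regular. Your route omits this estimate entirely and cannot be repaired by Allard plus one-sided minimization alone.

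Second, your exclusion of the multiplicity two cases is circular: the Bernstein-type theorem (Theorem~\ref{thm:bern}) and the sheeting theorem (Theorem~\ref{thm_sheeting}) are proved \emph{after} this proposition and depend on it, through Corollary~\ref{cor:staticlimits} and the separation theorem (Theorem~\ref{thm:separation}); moreover, membership of the tangent flow in $\mathcal{C}$ is not automatic, since property (2) of the definition is exactly what is at stake. The paper's non-circular argument is the equality case of the (almost) monotonicity formula: if a tangent flow to $(\mathcal{M}',\mathcal{K}')\in\mathcal{C}$ were a static or quasistatic multiplicity two plane, the (reflected) Gaussian density at that point would equal $2$, and since the entropy is at most $2$, rigidity in the monotonicity formula would force $(\mathcal{M}',\mathcal{K}')$ (or its double) to coincide with that multiplicity two plane, contradicting property (2) of $\mathcal{C}$. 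Your final stratification paragraph is fine and agrees with the paper's conclusion, but it only becomes available once the two gaps above are filled.
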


\begin{proof}
In case $(\mathcal{M}',\mathcal{K}')$ is defined in a halfspace, we consider its double $(\widetilde{\mathcal{M}}',\widetilde{\mathcal{K}}')$. By the equality case of the monotonicity formula and the definition of the class $\mathcal{C}$, no tangent flow can be a static or quasistatic plane of multiplicity two. Hence, by the stratification of the singular set from \cite[Sec. 9]{White_stratification} each time slice of the singular set has Hausdorff dimension at most $n-1$.

Now, if a tangent flow is a stationary cone, then arguing as above we see that its singular set has dimension at most $n-1$. Since, our flow arises as limit of smooth flows with $|A|\leq CH$ by Theorem \ref{thm_ell_reg} (elliptic regularization) using the local regularity theorem, we infer that $A$ vanishes identically on the regular part, i.e. the cone is flat. Furthermore, the cone cannot be union of three or four half-planes by one-sided minimization (Theorem \ref{thm_limit_flows}). Hence, the cone is a static multiplicity one plane, and by the local regularity theorem the point is regular.

Summing up, all tangent flows at singular points are shrinking. Hence, again by \cite[Sec. 9]{White_stratification} the parabolic Hausdorff dimension of the singular set is at most $n-1$.
\end{proof}

\begin{corollary}[static limit flows]\label{cor:staticlimits}
If $(\mathcal{M}',\mathcal{K}')\in \mathcal{C}$ is static (or quasistatic), then one of the following five cases occurs:
\begin{enumerate}
\item $\mcfK'$ is a static half-space in $\mathbb{R}^{n+1}$, and $\mcfM'$ is the static plane $\partial \mcfK'$.
\item $\mcfM'$ is a pair of two static parallel multiplicity one planes in $\mathbb{R}^{n+1}$ and $\mcfK'$ is the region in between.
\item $\mcfK'$ is a static quarter-space in $\mathbb{H}$, and $\mcfM'$ is the static half-plane $\partial \mcfK'$ with multiplicity one.
\item $\mcfM'$ is a pair of static multiplicity one halfplanes in $\mathbb{H}$ with free boundary and $\mcfK'$ is the region in between.
\item $\mcfM'$ is a static multiplicity one plane in $\mathbb{H}$ parallel to the barrier plane $\partial\mathbb{H}$, and $\mcfK'$ is the region in between.
\end{enumerate}
\end{corollary}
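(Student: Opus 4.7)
\emph{Step 1 (smoothness).} Because the flow is static or quasistatic, its support $\spt\mathcal{M}'$ is time-independent, and this property is preserved under parabolic rescaling at any spacetime point. Hence every tangent flow of $(\mathcal{M}',\mathcal{K}')$ is again static or quasistatic. But $(\mathcal{M}',\mathcal{K}')\in\mathcal{C}$, so Proposition \ref{prop_part_reg} forbids any tangent flow at a singular point from being static or quasistatic. Therefore the flow has no singular points, and $\spt\mathcal{M}'=\partial\mathcal{K}'$ is smooth, properly embedded, and stationary (meeting $\partial\mathbb{H}$ orthogonally in the boundary case).

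\emph{Step 2 (totally geodesic).} The bound $|A|\leq CH$ from Theorem \ref{thm_ell_reg} is parabolically scale-invariant, so it passes to the smooth part of any blowup limit with a constant uniform on compact time intervals. Since $\spt\mathcal{M}'$ is stationary we have $H\equiv 0$ on it, and therefore $|A|\equiv 0$. Each connected component of $\spt\mathcal{M}'$ is then an affine hyperplane in $\mathbb{R}^{n+1}$, or (in the boundary case) either a halfplane meeting $\partial\mathbb{H}$ orthogonally or a plane parallel to $\partial\mathbb{H}$. Smoothness of the union forces distinct components to be disjoint and hence mutually parallel.

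\emph{Step 3 (enumeration).} The entropy bound in the definition of $\mathcal{C}$ (applied to the reflected flow $\widetilde{\mathcal{M}}'$ in the boundary case) caps the Gaussian density at $2$, so $\spt\mathcal{M}'$ consists of at most two components, each carrying multiplicity at most two. Multiplicity two is excluded by condition (2) of $\mathcal{C}$, as is the ``plane coincident with $\partial\mathbb{H}$'' configuration (whose reflection is a multiplicity-two plane). Given the support, $\partial\mathcal{K}'=\spt\mathcal{M}'$ by Proposition \ref{prop:support}, and one-sided minimization from Theorem \ref{thm_limit_flows}(4) pins down $\mathcal{K}'$: for a single plane or halfplane $\mathcal{K}'$ is an adjacent half-space or quarter-space (cases (1) and (3)); for two parallel planes or halfplanes the disconnected ``outer'' configuration is not one-sided minimizing, so $\mathcal{K}'$ must be the slab between them (cases (2) and (4)); and for a single plane parallel to and disjoint from $\partial\mathbb{H}$, one-sided minimization selects the slab bounded by that plane and $\partial\mathbb{H}$ (case (5)).

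The main subtlety I expect is the careful handling of the (quasi)static distinction: one must verify that stationary Brakke support really forces $H\equiv 0$ at smooth points, and that the scale-invariant $|A|/H$ estimate persists through blowup (the latter being essentially automatic from scale invariance together with the local regularity theorem). The other delicate point is the use of the reflected-flow entropy condition (2) of $\mathcal{C}$ to eliminate both the multiplicity-two configurations and the ``plane coincident with the barrier'' case of Proposition \ref{prop:planar-limit}, which is precisely what distinguishes this classification from the closed analogue.
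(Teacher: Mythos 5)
Your proposal is correct and follows essentially the same route as the paper: smoothness via the partial regularity statement (Proposition \ref{prop_part_reg}), flatness from the scale-invariant bound $|A|\leq CH$ of Theorem \ref{thm_ell_reg} combined with $H\equiv 0$ on a static support, and then the enumeration using the entropy bound and condition (2) of $\mathcal{C}$, the support identification, one-sided minimization, and unit regularity. The paper's proof is just a terser version of the same argument, and your explicit exclusion of the barrier-coincident plane via the reflected multiplicity-two condition is exactly the point that removes case (6) of Proposition \ref{prop:planar-limit} from the list.
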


\begin{proof}
The argument from above shows that $(\mathcal{M}',\mathcal{K}')$ must be smooth and flat. Hence, it is the union of one or two planes or halfplanes. Together with the one-sided minimization (Theorem \ref{thm_limit_flows}) and unit-regularity, the assertion follows.
\end{proof}
\begin{figure}[h]
\centering
\includegraphics[scale=0.4]{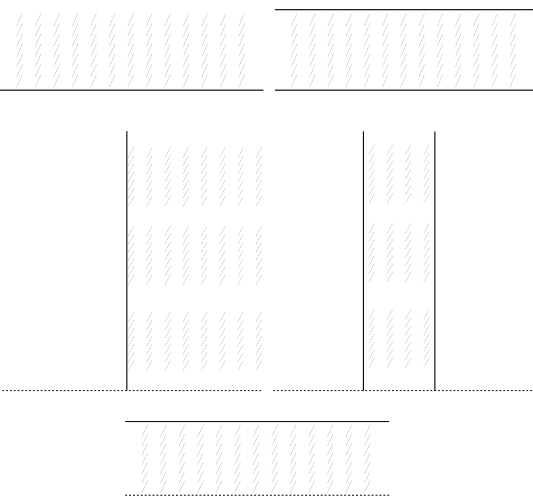}
\caption{(Quasi-)Static limit flows in $\mathcal{C}$}\label{Static limit flows}
\end{figure}

\begin{theorem}[separation theorem]\label{thm:separation}
Let $(\mathcal{M}',\mathcal{K}')\in \mathcal{C}$. In case the flow is defined in a halfspace, suppose there is a halfplane $H$ perpendicular to the barrier plane such that
\begin{equation}
H\subseteq \bigcap_t K_t',
\end{equation}
and suppose the complement of $\cap_t K_t'$ contains points on each side of $H$. Then $(\mathcal{M}',\mathcal{K}')$ is static, and $K'$ is the region between two parallel halfplanes perpendicular to the barrier (a similar statement holds in case the limit flow is defined in entire space and contains a plane).
\end{theorem}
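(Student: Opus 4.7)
The plan is to show that the persistent core $K_\infty' := \bigcap_t K_t'$ is a slab bounded by two parallel hyperplanes (halfplanes in the boundary case), and then upgrade to static via a monotonicity and strong-maximum-principle argument. Throughout I focus on the interior case; in the boundary case I would pass to the reflected flow $(\widetilde{\mathcal{M}}', \widetilde{\mathcal{K}}')$ in $\R^{n+1}$ from Section~\ref{sec_blowup_seq}, where $H$ doubles to a full plane $\widetilde H \subseteq \bigcap_t \widetilde K_t'$ with complement points on each side. Since the construction is symmetric under reflection across $\partial\mathbb{H}$, any static slab structure obtained in $\R^{n+1}$ descends to a static slab between two halfplanes perpendicular to $\partial\mathbb{H}$.

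To analyze $K_\infty'$, I first observe that the one-sided minimization of Theorem~\ref{thm_limit_flows}(4) passes to the decreasing intersection: for any compact $E$ and any $F \supseteq K_\infty'$ with $F \setminus K_\infty' \subseteq E$, one has $\mathcal{H}^n(\partial K_\infty' \cap E) \leq \mathcal{H}^n(\partial F \cap E)$, by taking limits in the corresponding inequality for $K_t'$. Consequently, $\partial K_\infty'$ is a stable, one-sided-minimizing minimal hypersurface of entropy at most $\textrm{Ent}[\mathcal{M}'] \leq 2$, smooth off a set of Hausdorff dimension at most $n-7$. Since by hypothesis the complement of $K_\infty'$ has points on each side of $\widetilde H$, $\partial K_\infty'$ contains a nonempty component $\Sigma_+$ on one side of $\widetilde H$ and $\Sigma_-$ on the other.

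Next I would classify each $\Sigma_\pm$ as a hyperplane parallel to $\widetilde H$. Each $\Sigma_\pm$ is a nontrivial stationary integer rectifiable varifold, so by Allard monotonicity has density at infinity at least $1$. Since $\Sigma_+$ and $\Sigma_-$ are disjoint, their densities at infinity add, and the total is bounded by the entropy of $\partial K_\infty'$, which is $\leq 2$, so each density at infinity equals exactly $1$. The Bernstein-type rigidity for stationary integer varifolds of entropy one then forces each $\Sigma_\pm$ to be a multiplicity-one hyperplane; being contained in a closed halfspace bounded by $\widetilde H$, it must furthermore be parallel to $\widetilde H$. Hence $K_\infty'$ is the closed slab between $\Sigma_+$ and $\Sigma_-$. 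The degenerate case where $\partial K_\infty'$ would touch or coincide with $\widetilde H$ is excluded: by the strong maximum principle for minimal hypersurfaces, a touching would propagate to equality, which in turn would force the two sheets of $\partial K_t'$ to collapse onto $\widetilde H$, producing a multiplicity-two plane as a tangent flow and contradicting the defining property of the class $\mathcal{C}$.

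Finally, to upgrade to static: $\partial K_t' \to \Sigma_+ \cup \Sigma_-$ as $t \to \infty$, with entropy equal to $2$. By Huisken's monotonicity applied to base points $X_0$ far in the future, $\textrm{Ent}[\mathcal{M}'_t] = 2$ for all $t$. On each side of $\widetilde H$, the flow $\partial K_t'$ restricts to an ancient, mean-convex, unit-density Brakke flow trapped between the static planar barrier $\widetilde H$ and the static planar limit $\Sigma_\pm$. A strong-maximum-principle / avoidance argument against these planar barriers then identifies the flow with $\Sigma_\pm$ at every $t$, yielding the static conclusion. I expect the main obstacle to be this last step: converting the asymptotic-in-$t$ identification of $\partial K_t'$ with the slab into a pointwise-in-$t$ identification for all $t$, which requires both the unit density (guaranteed by one-sided minimization) and a Brakke-flow version of the strong maximum principle against a static planar barrier.
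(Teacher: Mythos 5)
There is a genuine gap, and it sits exactly where you flagged it: the ``upgrade to static'' step. The mechanism you propose cannot work as described. Once you know $K'_\infty=\bigcap_t K_t'$ is the slab between two parallel planes $\Sigma_\pm$, the sheets of $\partial K_t'$ lie \emph{outside} that slab (above $\Sigma_+$, below $\Sigma_-$), not ``trapped between $\widetilde H$ and $\Sigma_\pm$''; they avoid the static planes $\Sigma_\pm$ trivially and need not touch them at any finite time, so neither avoidance nor the strong maximum principle for Brakke flows gives you anything: a priori each sheet could be a nontrivial ancient mean-convex flow decaying to $\Sigma_\pm$ only as $t\to+\infty$, and ruling this out is precisely the content of the theorem. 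The argument that actually closes this (both in White and in the paper's proof) is an entropy-rigidity argument applied to the flow itself at all times: one first shows that the doubled flow $\widetilde{\mathcal{M}}'$ \emph{splits into two Brakke-flow components}, one weakly on each side of $\widetilde H$ --- this is where Proposition \ref{prop_part_reg} and one-sided minimization (Theorem \ref{thm_limit_flows}) enter, to handle possible touching of $\widetilde H$ --- and then observes that each nontrivial ancient component has entropy $\geq 1$, with equality only for a static multiplicity-one plane, while the total is $\leq 2$. Your proposal never establishes this splitting and only applies entropy rigidity to the $t\to\infty$ limiting minimal varifold, so the conclusion ``static for all $t$'' is not reached; note that with the splitting in hand the whole detour through $K'_\infty$ becomes unnecessary.

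A second concrete error is your exclusion of the degenerate case in which $\partial K'_\infty$ touches or coincides with $\widetilde H$: you claim this would produce a multiplicity-two plane \emph{as a tangent flow}, ``contradicting the defining property of the class $\mathcal{C}$.'' Membership in $\mathcal{C}$ only forbids the flow itself (or its double) from being a multiplicity-two plane; it says nothing about its tangent flows, and the scenario where the two sheets merely converge to $\widetilde H$ as $t\to+\infty$ produces no multiplicity-two tangent flow at any finite spacetime point in the first place, so even the corrected tool (Proposition \ref{prop_part_reg}, which does rule out static or quasistatic tangent flows at singular points of flows in $\mathcal{C}$) does not apply the way you use it; it has to be deployed at finite times, at putative touching points of the sheets with $\widetilde H$, as in White's splitting argument. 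Two smaller inaccuracies point in the same direction: one-sided minimization alone does not make $\partial K'_\infty$ stationary (one-sided minimizers are merely weakly mean-convex; stationarity of the $t\to\infty$ limit comes from the Brakke-flow structure, i.e. $\int\!\!\int H^2\,d\mu_t\,dt<\infty$, as in the proof of Theorem \ref{thm_longtime}), and the disjointness/nontriviality of $\Sigma_\pm$ that your density count at infinity relies on already presupposes the degenerate case has been excluded. In short: your route through the long-time limit is genuinely different from the paper's two-line reduction (double, split via Proposition \ref{prop_part_reg} and one-sided minimization following White, then count entropy), but as written it defers the essential difficulty to a final step whose proposed mechanism fails.
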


\begin{proof}
Consider the doubled flow $(\widetilde{\mathcal{M}}',\widetilde{\mathcal{K}}')$.
Using the partial regularity result (Proposition \ref{prop_part_reg}) and one sided-minimization (Theorem \ref{thm_limit_flows}), the same argument as in \cite[proof of Thm. 7.4]{White_size} shows that $\widetilde{\mathcal{M}}'$ splits into two components. Since the entropy is at most two, and each nonplanar component contributes strictly more than one, this implies the assertion.
\end{proof}

\subsection{Bernstein-type theorem}
For a set $S\subseteq D$, a point $x\in D$, and a radius $r>0$, the \emph{relative thickness} of $S$ in $B(x,r)$ is
\begin{equation}
\th(S,x,r) =\inf_{|v|=1} \th_v(S,x,r),
\end{equation}
where
\begin{equation}
\th_v(S,x,r) = \frac{1}{r}\sup_{y\in S\cap B(x,r)} |\langle v, y-x\rangle|.
\end{equation}

\begin{lemma}[expanding hole lemma]\label{lemma_exp_hole}
For every $A<\infty$ there exists $\delta=\delta_A>0$ with the following significance.
Suppose $\mathcal{K}$ is a set theoretic subsolution of the free boundary mean curvature flow in our compact domain $D$ or in a halfspace $\mathbb{H}$, or a set theoretic subsolution of the mean curvature flow in $\mathbb{R}^{n+1}$; and suppose that $R>0$ is less than $\diam(D)$ in the former case and arbitrary in the other cases. If there exists $(x,t)$ such that
\begin{equation}
\th(K_t,x,r)<\delta \qquad \textrm{for $r\leq R$}
\end{equation}
and
\begin{equation}
x\notin K_t,
\end{equation}
then
\begin{equation}
\dist(K_{t+r^2},x)\geq A r \qquad \textrm{for $0\leq r \leq \delta R$}.
\end{equation}
\end{lemma}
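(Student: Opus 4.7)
This is the free-boundary analogue of White's interior expanding hole lemma \cite{White_size}*{Lem. 4.1}, and the plan is to follow the same three-step strategy: (i) extract from the multi-scale thickness hypothesis a single slab direction $v$ such that $K_t$ lies in a very thin slab through $x$ at all scales $r \in [\delta R, R]$; (ii) construct an explicit smooth (free-boundary) MCF barrier $\{L_\tau\}_{\tau \in [0, r^2]}$ initially disjoint from $K_t$ whose time-$r^2$ slice contains $B(x, Ar)$; and (iii) invoke the avoidance property of set-theoretic subsolutions from Section~\ref{Sec_level_set} to conclude that $K_{t+r^2} \cap B(x, Ar) = \emptyset$.

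\textbf{Step 1 (slab extraction).} For each $r \leq R$ the thickness hypothesis gives a unit vector $v_r$ with $K_t \cap B(x, r) \subset \{y : |\langle v_r, y - x\rangle| < \delta r\}$. A standard telescoping argument at dyadic scales $r_k = 2^{-k} R$---based on the observation that both slabs at scales $r_k$ and $r_{k-1}$ contain the common set $K_t \cap B(x, r_k)$, forcing consecutive directions $v_{r_k}$ to agree up to $O(\delta)$---yields, for $\delta$ small depending on $A$ and $n$, a single unit vector $v$ with
\begin{equation*}
K_t \cap B(x, r) \subset \{y : |\langle v, y - x\rangle| < C_n \delta r\}, \qquad \delta R \leq r \leq R.
\end{equation*}
In the halfspace setting, the free-boundary orthogonality of $K_t$ at $\partial\mathbb{H}$ forces $v$ to be essentially parallel to $\partial\mathbb{H}$; in the compact $D$ setting, the admissible scales $r \leq \delta R$ can be arranged small enough that $\partial D$ is nearly flat near $x$.

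\textbf{Step 2 (barrier and avoidance).} Construct $\{L_\tau\}_{\tau \in [0, r^2]}$ as an explicit smooth (free-boundary) MCF subsolution whose initial slice $L_0$ lies strictly outside the slab (hence disjoint from $K_t$) while $L_{r^2} \supset B(x, Ar)$. Following White, a natural choice is a rotationally symmetric mean-concave profile centered on the axis through $x$ in direction $v$, evolving so that its complement's ``hole'' sweeps through $B(x, Ar)$ by time $r^2$; the parabolic scaling of MCF together with the available slab width $\delta R$ makes the two geometric constraints simultaneously achievable once $r/R$ is small in terms of $A$---this pins down the admissible $\delta_A$. In the free-boundary settings, taking the barrier reflection-symmetric across $\partial\mathbb{H}$ (or locally across a tangent plane to $\partial D$) automatically yields the free-boundary orthogonality, reducing the construction to the interior case. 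Avoidance of set-theoretic subsolutions then gives $K_{t+r^2} \cap L_{r^2} = \emptyset$, whence $\dist(K_{t+r^2}, x) \geq A r$.

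The main technical obstacle is the explicit barrier construction in Step 2, which is the nontrivial geometric input borrowed from \cite{White_size}. In the free-boundary settings, the reflection trick effectively reduces the construction to the classical interior case, although one must verify that reflection preserves both smoothness and the subsolution property, and in the compact-domain case one must further localize to small scales so as to absorb the curvature of $\partial D$ into the constant $\delta_A$.
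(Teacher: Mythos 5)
Your plan is a genuinely different route from the paper's. The paper does not construct any explicit barrier: it argues by contradiction and compactness, rescaling a sequence of counterexamples by the first radius $\rho_j$ at which the conclusion fails, so that the limit is a set-theoretic subsolution with zero thickness at every scale (hence contained in a static plane, or a halfplane meeting $\partial\mathbb{H}$ orthogonally) which omits the origin; such a flow must vanish instantly, contradicting $\dist(K'_1,0)=A$. That soft argument is exactly what lets the paper avoid the quantitative comparison construction you propose. Your barrier approach is plausible in principle, but as written the gaps sit precisely where the content of the lemma lies.

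Concretely: (i) the telescoping claim in Step 1 is false as stated --- the slabs at scales $r_k$ and $r_{k-1}$ both contain $K_t\cap B(x,r_k)$, but this pins down $v_{r_k}$ only if that set is spread out enough to determine its approximating hyperplane; if $K_t\cap B(x,r_k)$ is empty, or concentrated near a point or a lower-dimensional set, consecutive directions are unconstrained. (This is repairable, since for each fixed $r\le \delta R$ one only needs the hypothesis at a single scale $\rho\sim A r\le R$.) (ii) Step 2, the heart of the matter, is asserted rather than performed. The initial barrier must be confined to $B(x,R)$ (outside that ball nothing is known about $K_t$, so ``strictly outside the slab'' alone does not give disjointness), its outer boundary cannot be static for a subsolution, and its expanding front must sweep a distance $\sim Ar$ in time $r^2$ while its speed is controlled by its mean curvature, i.e. by the reciprocal of the available slab width $\sim\delta\rho$; reconciling these is exactly the quantitative estimate that determines $\delta_A$ (e.g. an extinction-time bound of order $\delta\rho^2$ for a convex ``pancake'' of thickness $\delta\rho$ and radius $\rho$), and none of it is carried out. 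One must also face the fact that a smooth expanding barrier cannot cover the point at which its hole closes, so clearing out all of $B(x,Ar)$ requires either several off-center barriers or a limiting argument. (iii) In the free boundary cases, $K_t$ is merely a family of closed sets, so there is no ``free-boundary orthogonality of $K_t$'' forcing $v$ parallel to $\partial\mathbb{H}$ (the slab may be parallel to the barrier), and reflecting across a tangent plane of a curved $\partial D$ does not produce an exact free boundary subsolution; one must verify the condition $N\cdot\nu\ge 0$ for an honestly constructed barrier near the curved boundary. None of these issues is necessarily fatal, but as submitted the proposal defers the steps that constitute the proof, whereas the paper's compactness argument bypasses them entirely.
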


\begin{proof}
We follow the strategy of the proof of \cite[Thm. 4.1]{White_size}. By translation we may assume $(x,t)=(0,0)$.  Suppose that the result is false for some $A$. Then for every $\delta_j=1/j$ there is a set theoretic subsolution $\mcfK^j$ satisfying the first two conditions but not the last. Let
\begin{equation}
\rho_j = \inf \{r\, |\, \textrm{dist}(K_{r^2}^j,0) \leq Ar\}.
\end{equation}
Since $0\notin K_0^j$, we certainly have $\rho_j>0$, and the failure of the last condition implies that $\rho_j \leq \delta_j R_j$. In particular, in the case where $\mcfK^j$ is defined in $D$, we get $\rho_j\to 0$.

Parabolically rescale by $\rho_j^{-1}$ and pass to a subsequential limit. The limiting domain is either $\mathbb{R}^{n+1}$ or a halfspace $\mathbb{H}$, and the limiting set theoretic subsolution $\mcfK'$ satisfies: $\th(K'_0,0,r)= 0$ for all $r$, as well as $\textrm{dist}(K'_{r^2},0) \geq Ar$ for $r\leq 1$, and $\textrm{dist}(K'_{1},0)=A$.

Since its thickness is zero, $K'_t$ must be contained in a plane $P$, and in the case the domain is $\mathbb{H}$ the halfplane $H=P\cap\mathbb{H}$ must meet $\partial \mathbb{H}$ orthogonally. In either case, the distance condition implies that $K'_{1/2}$ is a proper subset of the static plane or halfplane solution, and must therefore vanish instantly. In particular, $K'_{1} =\emptyset$, which is a contradiction.
\end{proof}

Similarly as in \cite[Sec. 4]{White_size} the expanding hole lemma (Lemma \ref{lemma_exp_hole}) implies the following two corollaries.

\begin{corollary}\label{cor:EH1}
Let $\mcfK$ be as above and assume in addition that it is weakly mean-convex. If $(x,t)$ is a point such that
\begin{equation}
\limsup_{r\to 0} \th(K_t,x,r) < \delta_A,
\end{equation}
and such that $x\notin K_{t+h}$ for all $h>0$, then
\begin{equation}
\liminf_{r\to0} \frac{\dist(K_{t+r^2},x)}{r} \geq A.
\end{equation}
\end{corollary}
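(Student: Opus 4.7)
The plan is to deduce this directly from Lemma \ref{lemma_exp_hole} by shifting the initial time by a small amount $h>0$ and then sending $h\to 0^+$. By the limsup hypothesis, pick $R_{0}>0$ (with $R_{0}<\diam(D)$ in the bounded case) such that $\th(K_{t},x,r)<\delta_{A}$ for all $r\leq R_{0}$. The key observation is that weak mean-convexity $K_{t+h}\subseteq K_{t}$, combined with the obvious monotonicity $\th(S',x,r)\leq \th(S,x,r)$ whenever $S'\subseteq S$, yields
\[
\th(K_{t+h},x,r)\leq \th(K_{t},x,r)<\delta_{A}\qquad \text{for all }h>0,\;r\leq R_{0}.
\]
Moreover, $x\notin K_{t+h}$ for every $h>0$ by hypothesis.

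Hence for every $h>0$ the subsolution $\{K_{t+h+s}\}_{s\geq 0}$ (a set-theoretic subsolution by the semigroup property) satisfies the assumptions of Lemma \ref{lemma_exp_hole} at the base point $(x,t+h)$ with the same $R_{0}$. The lemma gives
\[
\dist(K_{t+h+r^{2}},x)\geq Ar\qquad \text{for all }0\leq r\leq \delta_{A}R_{0}.
\]
To conclude, fix $s\in(0,\delta_{A}R_{0})$. For any $h\in(0,s^{2})$, set $r=\sqrt{s^{2}-h}\leq s\leq \delta_{A}R_{0}$; the above estimate becomes
\[
\dist(K_{t+s^{2}},x)\geq A\sqrt{s^{2}-h}.
\]
Since the left-hand side is independent of $h$, letting $h\to 0^{+}$ gives $\dist(K_{t+s^{2}},x)\geq As$ for every $s\in(0,\delta_{A}R_{0})$. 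Dividing by $s$ and taking $\liminf_{s\to 0}$ yields the stated inequality.

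The argument presents no substantive obstacle beyond Lemma \ref{lemma_exp_hole} itself; the only delicate point is the correct use of weak mean-convexity to propagate the thickness smallness from time $t$ forward in time, which allows us to upgrade the single-time thickness bound at $t$ into the hypothesis required to apply the expanding hole lemma at every nearby later time.
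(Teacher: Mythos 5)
Your argument is correct and is essentially the intended one: the paper simply refers to \cite{White_size}*{Sec.\ 4}, where the corollary is obtained exactly by using weak mean-convexity ($K_{t+h}\subseteq K_t$) together with the monotonicity of $\th$ under inclusion to verify the hypotheses of Lemma \ref{lemma_exp_hole} at the shifted base points $(x,t+h)$, and then letting $h\to 0^+$. The only cosmetic point is your appeal to ``the semigroup property'': that property concerns the level set flow $F_t$, whereas here one only needs that a time-translate of a set-theoretic subsolution is again a set-theoretic subsolution, which is immediate from the (time-translation invariant) avoidance definition.
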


\begin{corollary}
\label{cor:EH2}
If $\mathcal{K}$ is a weakly mean-convex set theoretic subsolution of the free boundary mean curvature flow in a halfspace $\mathbb{H}$ or of the mean curvature flow in $\mathbb{R}^{n+1}$, and there is a point $x$ such that
\begin{equation}
\limsup_{r\to\infty} \th(K_0,x,r) < \delta_A,
\end{equation}
then either
\begin{equation}
\liminf_{r\to\infty} \frac{\dist(K_{r^2},x)}{r} \geq A,
\end{equation}
or
\begin{equation}
\bigcap_{t\geq 0} K_t \neq \emptyset.
\end{equation}
\end{corollary}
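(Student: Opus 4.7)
I would argue by contradiction, assuming both alternatives in the conclusion fail: $\liminf_{r\to\infty}\dist(K_{r^2},x)/r<A$ and $\bigcap_{t\geq 0}K_t=\emptyset$. The plan is to use the vacuity of the intersection to pass to a sufficiently late time $t^*$ at which the thickness of $K_{t^*}$ at $x$ is controlled at \emph{every} scale $r\geq 0$, and then to apply Lemma \ref{lemma_exp_hole} with the scale parameter $R$ sent to infinity, which is legal because the ambient is $\R^{n+1}$ or $\mathbb{H}$.

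First I would observe that $\dist(x,K_t)\to\infty$ as $t\to\infty$. Indeed, for any fixed $R>0$ the family $\{K_t\cap\overline{B}(x,R)\}_{t\geq 0}$ is a nested collection of compact sets (using that $K_t$ is closed and $K_{t_2}\subseteq K_{t_1}$ for $t_2\geq t_1$ by weak mean-convexity), whose total intersection $(\bigcap_t K_t)\cap\overline{B}(x,R)$ is empty; the finite intersection property for nested compacts then forces $K_t\cap\overline{B}(x,R)=\emptyset$ for all sufficiently large $t$.

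Next I would fix $\epsilon>0$ and $R_0<\infty$ such that $\th(K_0,x,r)<\delta_A-\epsilon$ for all $r\geq R_0$, then pick $t^*$ large enough that $\dist(x,K_{t^*})\geq R_0$. For $r\leq R_0$ we have $K_{t^*}\cap B(x,r)=\emptyset$, so $\th(K_{t^*},x,r)=0$; for $r\geq R_0$, the weak mean-convexity inclusion $K_{t^*}\subseteq K_0$ gives $\th(K_{t^*},x,r)\leq\th(K_0,x,r)<\delta_A$. Combined with $x\notin K_{t^*}$, Lemma \ref{lemma_exp_hole} applies at $(x,t^*)$ with arbitrary $R$, yielding $\dist(K_{t^*+r^2},x)\geq Ar$ for $0\leq r\leq\delta_A R$; letting $R\to\infty$ this holds for all $r\geq 0$. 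Reparametrizing via $s=\sqrt{r^2+t^*}$ gives $\dist(K_{s^2},x)/s\geq A\sqrt{1-t^*/s^2}\to A$ as $s\to\infty$, contradicting $\liminf<A$.

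The main obstacle in my view is resisting the more natural-looking blow-down approach: one might try to rescale parabolically by the sequence $r_j\to\infty$ realizing the bad $\liminf$, extract a limit flow $\mathcal{K}^\infty$, and apply Lemma \ref{lemma_exp_hole} to $\mathcal{K}^\infty$. This stumbles on two fronts---the rescaled origin typically lies in $K^\infty_0$ (since $\dist(x,K_0)/r_j\to 0$), forbidding a direct application, and Corollary \ref{cor:EH1} only yields a small-$r$ distance bound which cannot be promoted to the fixed time $t=1$ needed to contradict $\dist(K^\infty_1,0)<A$. The cleaner route is the one above: use the disjunctive hypothesis $\bigcap_t K_t=\emptyset$ up front, via the finite intersection property, to wipe out the uncontrolled intermediate thickness scales $r\in(\dist(x,K_0),R_0)$ by simply waiting until $K_t$ exits $\overline{B}(x,R_0)$.
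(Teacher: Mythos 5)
Your proposal is correct and is essentially the argument the paper intends: the paper deduces Corollary \ref{cor:EH2} from Lemma \ref{lemma_exp_hole} exactly as in White's Section 4, i.e.\ by using $\bigcap_{t\geq 0}K_t=\emptyset$ together with nestedness of the closed sets $K_t$ to find a late time $t^*$ at which $K_{t^*}$ has left a large ball around $x$, so that the thinness hypothesis holds at all scales $r\leq R$ for every $R$ (using $K_{t^*}\subseteq K_0$ and the large-scale bound on $\th(K_0,x,r)$), and then letting $R\to\infty$ in the expanding hole lemma. The contradiction framing is superfluous, since your argument directly shows that $\bigcap_{t\geq0}K_t=\emptyset$ implies $\liminf_{r\to\infty}\dist(K_{r^2},x)/r\geq A$, but this is only a cosmetic remark.
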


The following generalizes White's Bernstein-type theorem \cite[Thm. 7.5]{White_size} to limit  flows defined in a halfspace. Importantly, we do not assume a priori at which angle the thin slab meets the barrier.

\begin{theorem}[Bernstein-type theorem]
\label{thm:bern}
There exists an $\eps>0$ with the following significance. If $(\mcfM',\mcfK')\in\mathcal{C}$ is defined in a halfspace $\mathbb{H}$ and there is a point $x$ such that
\begin{equation}
\liminf_{r\to\infty} \frac{\dist(K'_{r^2},x)}{r} <1,
\end{equation}
and
\begin{equation}
\limsup_{r\to\infty} \th(K'_{-r^2},x,r) < \eps,
\end{equation}
then $\mcfM'$ is either: a pair of static parallel multiplicity one half-planes with free boundary in $\mathbb{H}$; or a static multiplicity one plane parallel to the barrier in $\mathbb{H}$. In either case $\mcfK'$ is the region in between the planes of $\mcfM'$ and the barrier.

Similarly, if $(\mcfM',\mcfK')$ is defined in $\mathbb{R}^{n+1}$, then under the same assumptions $\mcfM'$ is a pair of static parallel multiplicity one planes, with $\mcfK'$ the region in between.
\end{theorem}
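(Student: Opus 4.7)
The plan is to adapt White's Bernstein proof \cite[Thm.~7.5]{White_size} to the halfspace setting, splitting the analysis according to the orientation of the thin slab relative to $\partial\mathbb{H}$. The $\R^{n+1}$ case follows directly from \cite[Thm.~7.5]{White_size} (or the same argument without the case split).

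\emph{Step 1 (nonempty forever).} Weak mean-convexity (Theorem~\ref{thm_limit_flows}) gives $K'_0 \subseteq K'_{-r^2}$ for every $r>0$, so $\limsup_{r \to \infty} \th(K'_0, x, r) \leq \eps$. Choose $\eps < \delta_A$ with $A > 1$. Corollary~\ref{cor:EH2} then yields either $\liminf_{r \to \infty} \dist(K'_{r^2}, x)/r \geq A > 1$, contradicting the first hypothesis, or $\bigcap_{t \geq 0} K'_t \neq \emptyset$, proving $L := \bigcap_t K'_t \neq \emptyset$.

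\emph{Step 2 (classify the tangent at infinity).} Form the parabolic rescalings $\tilde{\mcfK}^k_t = r_k^{-1}(K'_{r_k^2 t} - x)$ for $r_k \to \infty$. The one-sided minimization bound from Theorem~\ref{thm_limit_flows} plus free-boundary Brakke compactness \cite{Edelen17} produces a subsequential limit $\hat{\mcfK} \in \mathcal{C}$, and the reflected Huisken monotonicity formula (Section~\ref{sec_tang_gauss}) forces $\hat{\mcfK}$ to be backwardly self-similar. Scale-invariance of the thinness gives $\th(\hat{K}_{-\rho^2}, 0, \rho) \leq \eps$ for every $\rho > 0$; combined with $\hat{K}_{-\rho^2} = \rho\hat{K}_{-1}$ and weak mean-convexity this places $\hat{K}_{-1} \cap B(0, R)$ inside an $\eps R$-slab about a fixed hyperplane $P \ni 0$ for every $R$. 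For $\eps$ small enough, the only backwardly self-similar elements of $\mathcal{C}$ compatible with this thinness are static multiplicity-one (half)planes (cf.\ Corollary~\ref{cor:staticlimits}; spheres, cylinders, halfspheres, halfcylinders, and quarter-spaces are too thick, and multiplicity-two planes are excluded by $\mathcal{C}$). The free-boundary compatibility of $\hat{\mcfK}$ further forces $P$ to be perpendicular or parallel to $\partial\mathbb{H}$.

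\emph{Step 3 (upgrade to the conclusion).} Suppose first $P \perp \partial\mathbb{H}$. The Brakke-White local regularity theorem \cite{W05, Edelen17} yields smooth graphical convergence $\tilde{\mcfK}^k \to \hat{\mcfK}$ on compact subsets of $P \setminus \{0\}$. Combined with backward persistence (by weak mean-convexity, any point lying in $K'_s$ remains in $K'_t$ for all $t \leq s$, hence in $L$ as the witness time goes to $-\infty$), a diagonal extraction produces an entire halfplane $H \subseteq L$ perpendicular to $\partial\mathbb{H}$ and parallel to $P$. Mean-convexity and nontriviality of $\mcfM'$ place points of the complement of $L$ on each side of $H$, so Theorem~\ref{thm:separation} concludes that $\mcfM'$ is a pair of parallel multiplicity-one halfplanes with free boundary, with $\mcfK'$ the region in between. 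If instead $P \parallel \partial\mathbb{H}$, reflect $(\mcfM', \mcfK')$ across $\partial\mathbb{H}$ to obtain a boundaryless mean-convex flow $(\widetilde{\mcfM}', \widetilde{\mcfK}')$ in $\R^{n+1}$ (the $\mathcal{C}$-exclusion of a multiplicity-two plane in the doubled flow prevents $\mcfM'$ from coinciding with $\partial\mathbb{H}$). The reflected flow inherits entropy $\leq 2$, the thinness hypothesis (the thin direction is preserved under reflection when $P \parallel \partial\mathbb{H}$), and the distance hypothesis; applying \cite[Thm.~7.5]{White_size} gives that $\widetilde{\mcfM}'$ is a pair of parallel multiplicity-one planes, so $\mcfM'$ itself is the single plane parallel to $\partial\mathbb{H}$, with $\mcfK'$ the slab in between.

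\emph{Main obstacle.} The hardest point is the tangent-at-infinity classification in Step~2: the thin-slab direction is a priori unconstrained, and the $\eps$-thinness must be combined with the free-boundary self-similarity condition to restrict the normal of $P$ to the two privileged orientations. A secondary technical difficulty is the persistence argument in the perpendicular case of Step~3, where smooth graphical convergence of the rescalings has to be upgraded via mean-convexity and a diagonal extraction to an honest halfplane inside $L$.
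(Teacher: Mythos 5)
Your Step 1 is exactly the paper's first move (weak mean-convexity plus Corollary \ref{cor:EH2} give $\bigcap_t K'_t\neq\emptyset$), but Step 2 contains a genuine gap that breaks the rest of the argument. You rescale parabolically by $r_k\to\infty$ and classify the blowdown as a static \emph{multiplicity-one} (half)plane, on the grounds that the blowdown lies in $\mathcal{C}$ so multiplicity-two planes are excluded. Membership in $\mathcal{C}$ is not inherited by further limits: condition (2) in the definition of $\mathcal{C}$ (not a multiplicity-two plane) is a property of the specific flow and is precisely the kind of degeneration that blowdowns create. Indeed, in the very situation the theorem is meant to describe --- $\mcfK'$ the region between two parallel multiplicity-one (half)planes a bounded distance apart --- the rescaling by $r_k^{-1}$ collapses the two sheets together, and the blowdown \emph{is} a static multiplicity-two (half)plane. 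So the conclusion of your Step 2 is false in the expected cases, and Step 3 then fails with it: the local regularity theorem of \cite{W05,Edelen17} requires density close to one and cannot deliver smooth graphical convergence to a multiplicity-two limit, so the diagonal extraction of an entire halfplane $H\subseteq L$ is not justified. (There is also a time-direction slip in the ``backward persistence'' step: to place a point in $L=\bigcap_t K'_t$ you need it to lie in $K'_s$ for arbitrarily large \emph{positive} $s$, not for witness times going to $-\infty$.)

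The paper sidesteps exactly this collapse by taking backward \emph{time translations} $(x,t)\mapsto(x,t-T)$, $T\to\infty$, rather than parabolic rescalings: the limit is a static flow whose time slices equal $\Sigma=\bigcap_t K'_t$, and since no rescaling is performed the unit-scale separation of the sheets is preserved. The thinness hypothesis (inherited by $\Sigma$) rules out the half-space and quarter-space cases of Corollary \ref{cor:staticlimits}, leaving three possibilities: $\Sigma$ a (possibly degenerate, multiplicity-two) halfplane perpendicular to the barrier, the region between two free boundary multiplicity-one halfplanes, or the region between the barrier and a parallel plane. In the first two cases the separation theorem (Theorem \ref{thm:separation}) is applied directly to $(\mcfM',\mcfK')$ --- which genuinely is in $\mathcal{C}$ --- using the halfplane contained in $\Sigma$; in the third case one reflects and applies the entire-space case of Theorem \ref{thm:separation}. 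Your final appeal to the separation theorem (and, in the parallel case, to the reflected flow) is in the right spirit, but you need the paper's translation argument, not a blowdown, to produce the halfplane inside $\bigcap_t K'_t$ that the separation theorem requires.
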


\begin{proof}
The statement for flows in $\mathbb{R}^{n+1}$ follows from the proof of \cite[Thm. 7.5]{White_size}, so we focus on the case of free boundary flows in $\mathbb{H}$.

Take $\Sigma = \bigcap_t K'_t$, which for small enough $\eps$ must be nonempty by Corollary \ref{cor:EH2}. Consider the flows obtained by translating $(\mcfM',\mcfK')$ by $(0,-T)$ and let $(\mcfM'',\mcfK'')$ be a limit as $T\rightarrow \infty$. Then $(\mcfM'',\mcfK'')$ is a static flow, and $K''=\Sigma$ at any time $t$.

By the classification of static limit flows from Corollary \ref{cor:staticlimits}, we see that $K''$ is either: a static multiplicity two halfplane; the region in between a pair of multiplicity one free boundary halfplanes and the barrier; or finally the region bounded by the barrier plane and a parallel multiplicity one plane. In the first two cases, Theorem \ref{thm:separation} (separation theorem) immediately implies the result. For the final case, the reflected flow $\widetilde{\mcfK''}$ will be the static flow in $\mathbb{R}^{n+1}$ between two parallel multiplicity one planes, so applying the entire case of Theorem \ref{thm:separation} (separation theorem) shows that the reflected flow $(\widetilde{\mcfK'},\widetilde{\mcfM'})$ consists of the region between two multiplicity one planes parallel to the barrier, which implies the result.
\end{proof}

\subsection{Sheeting theorem}
The goal of this section is to prove a sheeting theorem (Theorem \ref{thm_sheeting}, Corollary \ref{cor:sheeting-bd}, Corollary \ref{cor:sheeting-int}).

Let $(\mathcal{M},\mathcal{K})$ be a mean-convex free boundary flow in a compact domain $D$.  Recall that for any $X=(x,t)\in\mathbb{R}^{n+1}\times \mathbb{R}$ and any $r>0$ we denote by
\begin{equation}
B(X,r) = B(x,r)\times (t-r^2,t+r^2)
\end{equation}
the two-sided parabolic ball with center $X$ and radius $r$.\\

The following lemma shows that if a blowup sequence Hausdorff converges to a multiplicity $2$ plane or halfplane, then we can find some sequence of rescaling factors for which one has smooth convergence to a pair of parallel planes or halfplanes.

\begin{lemma}
\label{lem:sheetblow}
Let $(\mcfM^i,\mcfK^i)$ be a blowup sequence (see Section \ref{sec_blowup_seq}), and suppose that
\begin{equation}\label{eq_ass}
d\big(\mcfK^i\cap B(0,2), (P\times \mathbb{R})\cap B(0,2)\big) \rightarrow 0,
\end{equation}
where $P$ is either a plane or a halfplane. Then, there exists some sequence $\rho_i>0$ converging to zero, such that the parabolic dilates $\mathcal{D}_{\rho_i^{-1}} \mcfM^i$ converge smoothly to either:
\item (a) a pair of parallel planes in $\mathbb{R}^{n+1}$ or
\item (b) a pair of parallel halfplanes with free boundary in a halfspace $\mathbb{H}$ or
\item (c) a multiplicity one plane parallel to the boundary of $\mathbb{H}$.

Furthermore, in all cases $\mathcal{D}_{\rho_i^{-1}} \mcfK^i$ converges to the enclosed region.
\end{lemma}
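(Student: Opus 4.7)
The plan is to rescale parabolically by the transverse thickness of $\mathcal{K}^i$ relative to $P$, so that in the limit the two sheets of $\partial\mathcal{K}^i$ separate by unit distance, and then classify the resulting limit via the Bernstein-type theorem and upgrade convergence via local regularity. First, set $h_i := \sup\{\dist(X, P\times\R) : X \in \mathcal{K}^i \cap B(0,1)\}$. By the hypothesis $h_i\to 0$, so $\rho_i := h_i \to 0$ is a valid choice of scale. The rescaled flows $\widetilde{\mathcal{M}}^i := \mathcal{D}_{\rho_i^{-1}}\mathcal{M}^i$ and $\widetilde{\mathcal{K}}^i := \mathcal{D}_{\rho_i^{-1}}\mathcal{K}^i$ are mean-convex free-boundary Brakke flows in rescaled domains that exhaust $\R^{n+1}$ or a halfspace $\mathbb{H}$, and by construction $\widetilde{\mathcal{K}}^i$ lies in the slab $\{\dist(\cdot, P\times\R)\leq 1\}$ with the bound sharp.

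Next, apply the compactness theorem for free-boundary Brakke flows \cite[Thm. 4.10]{Edelen17} together with the area bounds from one-sided minimization (Theorem \ref{thm_ell_reg}) to extract a subsequence along which $\widetilde{\mathcal{M}}^i\to\mathcal{M}'$ as Brakke flows and $\widetilde{\mathcal{K}}^i\to\mathcal{K}'$ in the Hausdorff sense, yielding a limit flow $(\mathcal{M}',\mathcal{K}')$ of $(\mathcal{M},\mathcal{K})$. Since entropy is scale-invariant and the rescaling preserves the unit-thickness feature (so the limit is strictly thicker than $P\times\R$), $(\mathcal{M}',\mathcal{K}')\in\mathcal{C}$: the entropy is at most $2$ and $\mathcal{M}'$ is not a static or quasistatic multiplicity-two plane or halfplane. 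Moreover $0\in\mathcal{K}'$ by construction, while $\mathcal{K}'$ is contained in the unit slab around $P\times\R$. Taking $x=0$ in Theorem \ref{thm:bern}, the slab containment gives $\th(K'_{-r^2},0,r)\leq 1/r\to 0$, and the nonemptiness of $K'_{r^2}$ near $0$ (inherited from the original Hausdorff convergence) gives $\dist(K'_{r^2},0)/r\to 0$. The Bernstein-type theorem then forces $\mathcal{M}'$ to be one of: (a) a pair of parallel multiplicity-one planes in $\R^{n+1}$; (b) a pair of parallel multiplicity-one halfplanes with free boundary in $\mathbb{H}$; or (c) a multiplicity-one plane parallel to the barrier in $\mathbb{H}$, with $\mathcal{K}'$ the enclosed region in each case.

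Since $\mathcal{M}'$ is smooth, flat, and of multiplicity one on its support, White's local regularity theorem \cite{W05} at interior points and Edelen's free-boundary local regularity theorem \cite{Edelen17} at boundary points promote the Brakke convergence $\widetilde{\mathcal{M}}^i\to\mathcal{M}'$ to smooth convergence on compact subsets, and Hausdorff convergence of $\widetilde{\mathcal{K}}^i$ to the enclosed region follows as in Theorem \ref{thm_limit_flows}. The main obstacle I anticipate is technical rather than conceptual: because $h_i$ is a sup over $B(0,1)$, the thickness-achieving point of $\widetilde{\mathcal{K}}^i$ sits at distance of order $1/h_i\to\infty$ from the origin and may escape the limit along the slab direction, potentially producing a degenerate (multiplicity-two) limit after all. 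Remedies include measuring $h_i$ on a slowly shrinking ball $B(0,r_i)$ whose shrink-rate is matched to the blow-up rate, or an additional Hausdorff-diagonal/translation argument guaranteeing that the limit inherits a non-degenerate, finite-distance unit-thickness feature; once that is secured, the scheme above goes through using only the estimates already developed in Sections \ref{sec_ell_reg} and \ref{sec_mult_one}.
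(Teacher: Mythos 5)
The gap you flag in your last paragraph is not a removable technicality --- it is the entire content of the lemma, and your choice of scale does not close it. Setting $\rho_i=h_i=\sup\{\dist(X,P\times\mathbb{R}):X\in\mcfK^i\cap B(0,1)\}$ gives no control near the origin after dilation: the point realizing $h_i$ sits, in the rescaled picture, at parabolic distance as large as $h_i^{-1}\to\infty$ from $0$, so on compact sets the subsequential limit $(\mcfM',\mcfK')$ may perfectly well be a (quasi)static multiplicity two plane or halfplane --- exactly the configuration excluded in the definition of the class $\mathcal{C}$. Hence your assertion that ``the rescaling preserves the unit-thickness feature, so the limit is strictly thicker than $P\times\mathbb{R}$'' is unjustified, and with it the application of the Bernstein-type theorem (Theorem \ref{thm:bern}). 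A second, related problem is that your scale is purely spatial: the degenerate scenario can be quasistatic, i.e.\ $\mcfK^i$ nearly coincides with $P\times\mathbb{R}$ at backward times while a hole opens at the origin at forward times. In that case the hypothesis $\liminf_{r\to\infty}\dist(K'_{r^2},0)/r<1$ of Theorem \ref{thm:bern} can fail for your limit; your claim that $\dist(K'_{r^2},0)/r\to0$ is ``inherited from the original Hausdorff convergence'' does not survive the change of scale, since that convergence only gives information at unit scale, not at scale $\rho_i$. Your proposed remedies (a slowly shrinking ball, or a recentering/translation) are left unspecified, and recentering is in any case not available: the lemma asserts convergence of the dilates about the fixed center $0$.

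For comparison, the paper's proof fixes both issues simultaneously by a scale-invariant, spacetime choice of $\rho_i$ anchored at the origin: fix $\eps$ small and let $\rho_i$ be the least number such that for all $r\in[\rho_i,1]$ both $\th(K^i_{-r^2},0,r)\leq\eps$ and $\dist(K^i_{r^2},0)\leq r$ hold; positivity of $\rho_i$ follows from the expanding hole corollary (Corollary \ref{cor:EH1}), and $\rho_i\to0$ from the hypothesis \eqref{eq_ass}. Because both conditions are imposed at the center and on the whole range of scales $[\rho_i,1]$, they pass to the dilated limit for all $r\geq1$, giving the hypotheses of Theorem \ref{thm:bern} at $x=0$; and minimality of $\rho_i$ forces at least one equality at $r=1$, namely $\th(K'_{-1},0,1)=\eps$ or $\dist(K'_{1},0)=1$, which is the quantitative nondegeneracy at a fixed finite scale that rules out the multiplicity two (quasi)static plane/halfplane and places the limit in $\mathcal{C}$. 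One-sided minimization gives the (reflected) density bound $2$ at infinity, Theorem \ref{thm:bern} then yields the three multiplicity one configurations, and local regularity \cite{W05,Edelen17} upgrades Brakke convergence to smooth convergence --- those final steps of yours are fine. If you want to rescue a thickness-based scale, you must at minimum make it scale-invariant and centered at $0$ and couple it with the forward-in-time distance condition; at that point you have essentially reconstructed the paper's definition of $\rho_i$.
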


\begin{proof}
Fix $\eps>0$ small enough. Take $\rho_i$ to be the least number such that for all $r\in[\rho_i,1]$ we have
\begin{equation}
\th(K^i_{-r^2},0,r) \leq \eps \quad \textrm{and} \quad \textrm{dist}(K^i_{r^2},0) \leq r.
\end{equation}
By Corollary \ref{cor:EH1} (expanding holes), we have $\rho_i>0$ for each $i$. Moreover,
assumption \eqref{eq_ass} easily implies that $\rho_i\rightarrow 0$.

Consider the dilates $\mathcal{D}_{\rho_i^{-1}}(\mcfM^i,\mcfK^i)$ and take a subsequential limit. Any such limit $({\mcfM}',{\mcfK}')$ satisfies
\begin{equation}
\th({K}'_{-r^2},r)\leq \eps \quad \textrm{and} \quad \textrm{dist}({K}'_{r^2},0) \leq r \quad  \textrm{for all } r\geq 1,
\end{equation}
with at least one inequality being nonstrict for $r=1$, namely
\begin{equation}\label{eq_nonstrict}
\th({K}'_{-1},1)= \eps \quad \textrm{or} \quad \textrm{dist}(K'_1,0) = 1.
\end{equation}
Moreover, by one-sided minimization (Theorem \ref{thm_ell_reg}) the (reflected) density at infinity of $\mcfM'$ is at most $2$. Now due to (\ref{eq_nonstrict}) we can apply Theorem  \ref{thm:bern} (Bernstein-type theorem) to deduce that $\mathcal{M}'$ consists of separate multiplicity $1$ (half-)planes, and then the local regularity theorem \cite{W05,Edelen17} gives smooth convergence of $\mathcal{D}_{\rho_i^{-1}}\mcfM^i$ to ${\mcfM'}$. This proves the lemma.
\end{proof}

The above lemma essentially gives that the $\mcfM^i$ are eventually smooth, but to get smooth convergence at the original scale the strategy is to find a separating surface as follows. Recall that $D^i$ denotes the domain of the rescaled flow $(\mcfK^i, \mcfM^i)$. If we are in case ($a$) or ($b$) of the above lemma, then we let $S^i_t$ be the set of centers of open balls $B$ such that $B\cap D^i \subseteq K^i_t$ and $\bar{B}\cap D^i$ touches $\partial K^i_t$ at two or more points. Set
\begin{equation}
\mathcal{S}^i = \bigcup_t S^i_t \cap D^i.
\end{equation}

\begin{theorem}[sheeting theorem]\label{thm_sheeting}
Let $(\mcfM^i, \mcfK^i)$ be a blowup sequence (see Section \ref{sec_blowup_seq}) and suppose that
\begin{equation}
d\big(\mcfK^i\cap B(0,4), (P\times \mathbb{R})\cap B(0,4)\big) \rightarrow 0,
\end{equation}
where either $\lim D^i = \mathbb{R}^{n+1}$ and $P$ is a plane, or $\lim D^i = \mathbb{H}$ and $P$ is a free boundary halfplane.
Then, for all $i$ large $\mathcal{S}^i \cap B(0,1)$ is a $C^1$-hypersurface that divides $\partial \mathcal{K}^i$ into two nonempty components $\partial \mathcal{K}^i_1$ and $\partial \mathcal{K}^i_2$. In particular, any convergent subsequence of $\mcfM^i$ in $B(0,1)$ converges smoothly to a plane or halfplane with multiplicity two.
\end{theorem}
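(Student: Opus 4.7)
The plan is to follow the sheeting strategy of White \cite{White_size}, with Lemma \ref{lem:sheetblow} providing the key small-scale input and Theorem \ref{thm_ell_reg} (noncollapsing and one-sided minimization) supplying the mean-convex structure that makes the medial-axis argument go through.

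\emph{Step 1 (local two-sheet structure).} Fix $i$ large and any $X_0 = (x_0, t_0) \in \partial \mcfK^i \cap B(0, 2)$. The Hausdorff hypothesis $\mcfK^i \cap B(0,4) \to (P\times\R)\cap B(0,4)$ verifies the hypothesis of Lemma \ref{lem:sheetblow} for the pointed rescaling at $X_0$, producing a scale $\rho_i(X_0)\to 0$ at which $\mathcal{D}_{\rho_i^{-1}}(\mcfM^i - X_0)$ converges smoothly to one of the three models (a)--(c) of that lemma. The third model, a multiplicity-one plane parallel to $\partial\mathbb{H}$, is excluded in our setting: since $P$ is perpendicular to $\partial\mathbb{H}$ by the free-boundary condition, no blowup sheet at a point Hausdorff-close to $P$ can be parallel to $\partial\mathbb{H}$. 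Hence $\partial\mcfK^i$ near $X_0$ is a union of two smooth sheets which are $C^2$-close to a pair of parallel (half-)planes.

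\emph{Step 2 (medial-axis surface).} In coordinates adapted to $P$ (and working with the reflected flow at barrier points), write the two local sheets as graphs $y = u_i^-(x,t) < y = u_i^+(x,t)$, with $u_i^\pm$ smooth and $C^2$-close to constants. A ball $B((x,y,t),r)$ with $u_i^-(x,t) < y < u_i^+(x,t)$ whose intersection with $D^i$ lies in $K^i_t$ and touches $\partial K^i_t$ at two or more points must, by elementary geometry in the nearly-flat picture, satisfy $y = \tfrac{1}{2}(u_i^-(x,t) + u_i^+(x,t))$ and $r = \tfrac{1}{2}(u_i^+(x,t) - u_i^-(x,t))$, with exactly two contact points (no additional tangencies, by smoothness). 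Noncollapsing from Theorem \ref{thm_ell_reg} ensures that each interior inscribed ball on one sheet does reach the opposing sheet, since the sheet separation tends to zero faster than the inverse curvature. Thus locally
\begin{equation*}
\mathcal{S}^i = \bigl\{ y = \tfrac{1}{2}(u_i^-(x,t) + u_i^+(x,t))\bigr\},
\end{equation*}
a smooth graph.

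\emph{Step 3 (gluing and smooth convergence).} Covering $\partial\mcfK^i\cap B(0,1)$ by finitely many such local neighborhoods, uniqueness of the midgraph on overlaps shows that $\mathcal{S}^i\cap B(0,1)$ is a global $C^1$ hypersurface. Since $u_i^- < u_i^+$ pointwise, the local sheets glue to exactly two disjoint components $\partial\mcfK^i_1 = \{y = u_i^-\}$ and $\partial\mcfK^i_2 = \{y = u_i^+\}$ of $\partial\mcfK^i\setminus \mathcal{S}^i$, both nonempty since $P$ lies in the Hausdorff limit. Each component is a Lipschitz graph over $P$ whose Lipschitz norm tends to zero, so the local regularity theorem (\cite{W05} in the interior, \cite{Edelen17} at the barrier) upgrades the Hausdorff convergence of $\mcfM^i$ to smooth convergence to $P$ with multiplicity two.

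\emph{Main obstacle.} The delicate point is Step 2 at the barrier: verifying that the medial-axis picture persists when inscribed balls hit $\partial D^i$. This is handled by passing to the reflected flow, where case (b) of Lemma \ref{lem:sheetblow} reflects to case (a), producing a pair of full parallel planes symmetric across the reflected barrier. The midsurface is then automatically symmetric, meets $\partial D^i$ orthogonally, and coincides with $\mathcal{S}^i$ whose definition uses balls intersected with $D^i$. A secondary issue is that the scale $\rho_i(X_0)$ must be chosen uniformly over $X_0\in\overline{B(0,2)}$; this follows from compactness together with the uniform Hausdorff hypothesis on $B(0,4)$.
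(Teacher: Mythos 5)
Your overall route coincides with the paper's: use Lemma \ref{lem:sheetblow} to obtain a local two-sheeted graphical structure, describe $\mathcal{S}^i$ as a medial-axis-type surface separating $\partial\mcfK^i$ into two pieces, and then get smooth multiplicity-two convergence from multiplicity-one convergence of each piece via the local regularity theorem. However, two steps have genuine gaps. First, Lemma \ref{lem:sheetblow} is a statement about a \emph{sequence} of flows: it produces scales $\rho_i\to 0$ along the sequence, not a scale $\rho_i(X_0)$ for a fixed large $i$ and an arbitrary point $X_0\in\partial\mcfK^i\cap B(0,2)$, and your appeal to ``compactness'' does not by itself yield, for each fixed $i$, a two-sheeted graphical structure at a definite scale around \emph{every} such point, with consistent sheets for the gluing in Step 3. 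The paper never needs that uniform statement: it argues by contradiction, choosing for each bad $i$ a point $X_i\in\mathcal{S}^i\cap B(0,1)$ where $C^1$-regularity fails, translating these points to the origin, and applying Lemma \ref{lem:sheetblow} to the resulting sequence; this is exactly the compactness argument you gesture at, and it must actually be carried out (and only at the putative bad points of $\mathcal{S}^i$, which is what makes it clean).

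Second, your identification of $\mathcal{S}^i$ is incorrect as stated, and its regularity is asserted rather than proved. For two nearby graphs that are not exactly parallel planes, the centers of inscribed balls touching both sheets form the equidistant locus $\{d(\cdot,\mathcal{S}^i_1)=d(\cdot,\mathcal{S}^i_2)\}$, not the arithmetic midgraph $y=\tfrac12(u_i^-+u_i^+)$; and showing that this locus is a $C^1$ hypersurface is precisely the technical heart of the proof, which the paper obtains from the $C^{1,1}$ regularity (with nonvanishing gradient) of the distance function to a $C^{1,1}$ hypersurface with boundary, combined with the implicit function theorem. Your Step 2 also contains a circularity: Theorem \ref{thm_ell_reg} gives $|A|\leq CH$ and one-sided minimization, not noncollapsing; the sharp noncollapsing statement in the paper is a corollary of the structure theorem, which itself relies on the sheeting theorem, so it cannot be invoked here. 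What you actually need at that point --- that an inscribed ball touching $\partial K^i_t$ at two points must span the slab and touch both sheets --- follows instead from the uniformly small $C^{1,1}$ norms of the graphs supplied by Lemma \ref{lem:sheetblow}. Finally, in Step 3 the multiplicity-one convergence of each component should be justified as in the paper, via one-sided minimization forcing the sum of (reflected) densities to be at most $2$, rather than by the unproven claim that each component is globally a Lipschitz graph with small norm.
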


\begin{proof}
We claim that $\mathcal{S}^i\cap B(0,1)$ is a $C^1$ properly embedded hypersurface of $B(0,1)$, which therefore divides $B(0,1)$ into two disjoint open subsets each bounded by $\mathcal{S}^i$ (once this is shown, $\partial\mathcal{K}^i_1$ and $\partial\mathcal{K}^i_2$ can be defined as the respective portions of $\partial\mcfK^i$ in each open subset).

Suppose the claim is false, so that there are points $X_i \in \mathcal{S}^i\cap B(0,1)$ about which $\mathcal{S}^i$ fails to be a $C^1$ embedded hypersurface. Then we may consider the translates $(\tilde{\mcfM}^i,\tilde{\mcfK}^i):=(\mcfM^i-X_i,\mcfK^i -X_i)$. Up to taking a subsequence, the $\tilde{\mcfK}^i \cap B(0,2)$ will Hausdorff converge locally to a translate of $P\times\mathbb{R}$.

Now, Lemma \ref{lem:sheetblow} (note that outcome ($c$) is excluded by the hypotheses on $P$) implies that for large $i$ there are radii $r_i>0$ for which $\mcfK^i \cap B(X_i,r_i)$ splits as the region in $B(X_i,r_i)$ bounded either by: two smooth, disjoint hypersurfaces $\mathcal{S}^i_1,\mathcal{S}^i_2$ (without boundary); or by smooth disjoint hypersurfaces $\mathcal{S}^i_1,\mathcal{S}^i_2$ (with boundary on $\partial D^i$) together with the barrier $\partial D^i$. In either case, each $\mathcal{S}^i_\alpha$ is graphical over (the plane containing) $P$, with uniformly small $C^{1,1}$ norm, and with $C^0$ norm tending to zero as $i\rightarrow \infty$.

But the distance function from a smooth submanifold $\mathcal{S}$ with boundary is $C^{1,1}$ in a small neighborhood $\mathcal{U}\setminus \mathcal{S}$, indeed with nonzero gradient $\nabla d_\mathcal{S}(x) = \frac{x-\pi_\mathcal{S}(x)}{d_\mathcal{S}}$, see e.g. \cite{MP}. Since $\mathcal{S}^i\cap B(X_i,r_i)$ is clearly given by the locus $d(\cdot, \mathcal{S}^i_1) = d(\cdot, \mathcal{S}^i_2)$ and both $\mathcal{S}^i_\alpha$ Hausdorff converge locally to $P\times \mathbb{R}$, the implicit function theorem then implies that $\mathcal{S}^i \cap B(X_i,r_i)$ is in fact a $C^1$ embedded hypersurface. This provides the desired contradiction, and thus proves the claim.

It remains to show that the $\partial\mathcal{K}^i_\alpha$ converge locally smoothly and separately to $P\times\mathbb{R}$. For each $\alpha=1,2$, by the assumption we have that $\partial\mathcal{K}^i_\alpha \cap B(0,1)$ Hausdorff converges to $(P\times\mathbb{R})\cap B(0,1)$ with some multiplicity. But one-sided minimization (Theorem \ref{thm_ell_reg}) implies that the sum of (reflected) densities is at most $2$. Therefore each must converge with multiplicity $1$, and the partial regularity theorem  \cite{W05,Edelen17} then implies the smooth convergence.
\end{proof}

To reformulate Theorem \ref{thm_sheeting} (sheeting theorem) in a more geometric way, it will be convenient to fix certain boundary-straightening maps $\Phi_y$ centered about a point $y\in\partial D$, as in \cite{Edelen17}. Locally these maps may be described as follows: Up to a translation we may take $y=0$, and up to a rotation and reflection we may take $T_y \partial D$ to be $ \{0\}\times \mathbb{R}^{n}$, with the inwards normal of $\partial D$ at $0$ pointing in positive $x_1$-direction. Near $y$ the barrier $\partial D$ is then locally given by a graph $u$ over $T_y\partial D$, and we define a map $\Phi_y: [0,\rho)\times B^n(\rho)\rightarrow D$ by
\begin{equation}
\Phi_y(s,x') = (u(x'),x') + s\nu_{\partial D} (u(x'),x').
\end{equation}
The key properties are that for every small enough $\rho$, we have uniformly
\begin{equation}
|\Phi- \textrm{id}| \leq C\rho^2,\quad
|D\Phi - \textrm{id}| \leq C\rho ,\quad
|D^k \Phi| \leq C_k\rho^{2-k}.
\end{equation}

In particular, let $D^i = \lambda_i(D-x_i)$ be a sequence of rescalings of $D$, with $\lambda_i d(x_i,\partial D)<\infty$ so that $D^i$ converges to a halfspace $\mathbb{H}$. Let $y_i$ be the nearest point projection of $x_i$ to $\partial D^i$; up to translation we may assume $y=\lim y_i=0$. Then the boundary-straightening maps for $D^i$ are given by $\Phi_i :  [0,\lambda_i\rho)\times B^n(\lambda_i \rho) \rightarrow D^i\hookrightarrow \mathbb{R}^{n+1}$ by
\begin{equation}
\label{eq:blow-up-straighten}
\Phi_i(p) = \lambda_i(\Phi_{y_i}(p/\lambda_i)-x_i).
\end{equation}
Hence,
\begin{equation}
\Phi_i \rightarrow \textrm{id}_{\mathbb{H}} \quad\textrm{locally smoothly.}
\end{equation}

Finally, for a function $f$ defined in $B(0,r)$, define the scale-invariant $C^{2,1}$-norm of $f$ to be the usual $C^{2,1}$-norm of the function
\begin{equation}
(x,t)\in B^{n,1}(0,1)\mapsto rf(x/r, t/r^2).
\end{equation}

\begin{remark}
If for some $r$, $M_t\cap B^{n,1}(y,r)$ is parameterized by
\begin{equation}\label{eq_comp_map}
 B^{n,1}(y,r)\ni (x,t) \mapsto \Phi(f(x,t),x),
\end{equation}
then $f(x,t)$ satisfies the graphical mean curvature flow equation (with respect to $\Phi^*\delta$), given by:
\begin{equation}\label{graphical eq}
 \partial_tf=\gamma^{ij}(Df,f,x)D^2_{ij}f+E,
\end{equation}
where $\gamma$ is the pullback of $\delta$ under the map $x\mapsto \Phi(f(x,t),x)$, and $E$ is an analytic function of $f$, $Df$, $D\Phi$, $D^2 \Phi$ and $D^3 \Phi$, such that $E=0$ when $\Phi=\textrm{id}$.
\end{remark}

Using the above notions, the sheeting theorem (Theorem \ref{thm_sheeting}) implies the following two corollaries.

\begin{corollary}[sheeting at the boundary]
\label{cor:sheeting-bd}
Let $D$ be a compact domain or the halfspace $\mathbb{H}=\{x_1\geq 0 \}$. Assume that $0\in \partial D$, and that the inwards unit normal at $0$ points in positive $x_1$-direction.  Let $\Phi=\Phi_0$ be the boundary-straightening map of $D$ centered at $0$. Then, for any $\eta>0$ there exists an $\eps>0$ with the following significance.

Let $(\mcfM',\mcfK')$ be a free boundary flow in $D$, which is either a mean-convex flow in a compact domain $D$, or a limit flow in $D=\mathbb{H}$. Set
\begin{equation}
\mathcal{H}:= \{(x,t) \in \mathbb{R}^{n+1,1} | x_{n+1}=0, x_1\geq0\},
\end{equation}
and suppose that
\begin{equation}\label{eq:sheeting-supp} d \big( \mcfK'\cap B(0,4r), \mathcal{H}\cap B(0,4r)\big) <\varepsilon r
\end{equation}
for some $r>0$.
Then, there exist functions
\begin{equation}
f,g : B^{n,1}(0,3r) \cap \{x_1\geq 0\} \rightarrow \mathbb{R},
\end{equation}
such that:
\begin{enumerate}
\item $f\leq g$.
\item $f$ and $g$ have scale-invariant $C^{2,1}$ norms $\leq \eta$.
\item Inside $B(0,2r)\cap D$, the set $\mcfK'$ coincides with the region between $\Phi(\graph(f))$ and $\Phi(\graph(g))$.
\item $f,g$ satisfy the graphical mean curvature flow equation (\ref{graphical eq}).
\item $f, g$ satisfy the Neumann boundary condition $\partial_{x_1} f = \partial_{x_1} g=0$ on $B^{n,1}(0,r) \cap \{x_1=0\}$.
\item For any fixed $x$, the functions $t\mapsto f(x,t)$ and $t\mapsto g(x,t)$ are increasing and decreasing respectively.
\end{enumerate}
\end{corollary}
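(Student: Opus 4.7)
The plan is a proof by contradiction that reduces to the sheeting theorem (Theorem~\ref{thm_sheeting}). Suppose the statement fails for some $\eta_0 > 0$: then there exists a sequence of counterexamples $(\mcfM^i, \mcfK^i, r_i)$ with the Hausdorff closeness parameter $\eps_i \to 0$. Parabolically rescale each flow by $r_i^{-1}$: this reduces to $r_i = 1$, while the rescaled straightening maps $\Phi_i$ of \eqref{eq:blow-up-straighten} converge smoothly to $\mathrm{id}_{\mathbb{H}}$, and the Hausdorff distance between $\mcfK^i \cap B(0,4)$ and $\mathcal{H} \cap B(0,4)$ tends to zero. The rescaled flows remain in the class of mean-convex free boundary flows (in compact domains) or limit flows (in $\mathbb{H}$), so they satisfy the hypotheses and the one-sided-minimization needed for Theorem~\ref{thm_sheeting}.

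Apply Theorem~\ref{thm_sheeting}: for all $i$ large, $\partial\mcfK^i \cap B(0,1)$ splits as $\partial\mcfK^i_1 \sqcup \partial\mcfK^i_2$, each sheet converging smoothly, with multiplicity one, to $\mathcal{H}$. To extend this sheeting to $B(0,3)$, translate the origin to each point $y$ with $|y|\leq 3$ and reapply Theorem~\ref{thm_sheeting} at the center $y$; the closeness hypothesis is preserved on $B(y,1)$ for all such $y$ provided $\eps$ is chosen small enough at the outset, and a covering argument yields smooth separate convergence of both sheets to $\mathcal{H}$ throughout $B(0,3)$. Since $\Phi_i \to \mathrm{id}_{\mathbb{H}}$ smoothly, each sheet can be realized as a $\Phi_i$-graph over $B^{n,1}(0,3)\cap\{x_1\geq 0\}$: there exist $f^i \leq g^i$ with $\partial\mcfK^i_\alpha \cap B(0,2) = \Phi_i(\graph f^i)$ or $\Phi_i(\graph g^i)$ respectively, and the scale-invariant $C^{2,1}$-norms of $f^i, g^i$ tend to zero.

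The remaining verifications are quick. Properties~(1)--(3) are immediate from the construction. Property~(4), the graphical MCF equation \eqref{graphical eq}, is the standard computation summarized in the remark preceding the corollary. Property~(5), the Neumann condition, encodes the free-boundary orthogonality $\langle N, \nu\rangle = 0$: in the straightened coordinates, $\partial D^i$ corresponds to $\{x_1 = 0\}$ and $N$ to $-\partial_{x_1}$, and a direct calculation shows that a $\Phi_i$-graph meets $\{x_1 = 0\}$ orthogonally precisely when $\partial_{x_1} f^i = \partial_{x_1} g^i = 0$ at $x_1 = 0$. Property~(6), the $t$-monotonicity, follows from (weak) mean convexity $K^i_{t_2} \subseteq K^i_{t_1}$ for $t_2 \geq t_1$: the region enclosed between the two graphs can only shrink in time, forcing $\partial_t f^i \geq 0 \geq \partial_t g^i$. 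For $i$ large this yields admissible $f^i, g^i$ with $C^{2,1}$-norm at most $\eta_0$, contradicting the initial failure assumption. The main technical obstacle is propagating the sheeting conclusion from the ball $B(0,1)$ on which Theorem~\ref{thm_sheeting} is stated to the larger ball $B(0,3)$ required here; this requires careful bookkeeping of scales and a covering-type reduction, and is the one step most in need of verification.
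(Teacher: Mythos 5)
Your overall route---argue by contradiction, rescale by $r_i^{-1}$, and reduce to the sheeting theorem (Theorem \ref{thm_sheeting}), then recenter/cover to pass from the unit ball to $B(0,3r)$---is essentially the paper's deduction, but there is a genuine gap at the first step in the compact-domain case. You assert that after rescaling, the straightening maps $\Phi_i$ of \eqref{eq:blow-up-straighten} converge smoothly to $\mathrm{id}_{\mathbb{H}}$ and that the rescaled flows qualify as a blowup sequence to which Theorem \ref{thm_sheeting} applies. Both of these require the rescaling factors $\lambda_i=r_i^{-1}$ to tend to infinity, i.e.\ $r_i\to 0$, and nothing in your argument provides this: the hypothesis \eqref{eq:sheeting-supp} is assumed ``for some $r>0$,'' so along your contradiction sequence the $r_i$ could a priori stay bounded below (say comparable to $\operatorname{diam} D$). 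In that case the rescaled domain is still a fixed curved compact domain, not close to a halfspace, the rescaled sequence is not a blowup sequence in the sense of Section \ref{sec_blowup_seq}, and Theorem \ref{thm_sheeting} simply does not apply; worse, the conclusion you are aiming for (graphs over $\{x_1\geq 0\}$ via $\Phi_0$ satisfying the Neumann condition on $\{x_1=0\}$) is genuinely false at such macroscopic scales, since the free boundary then sits on the curved part of $\partial D$ far from $T_0\partial D$. So the smallness of $r$ is not bookkeeping---it is the one substantive observation needed.

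The paper supplies exactly this point just before the corollary: for a mean-convex flow, \eqref{eq:sheeting-supp} can only hold with $r\leq r_0(\eps)$, where $r_0(\eps)\to 0$ as $\eps\to 0$. The reason is that a mean-convex flow moves at a definite rate (strict nesting $K_{t_2}\subseteq \mathrm{Int}_D K_{t_1}$, quantified e.g.\ by $H\geq ce^{-\rho t}>0$ from Theorem \ref{thm_ell_reg}), so its spacetime track cannot remain within $\eps r$ of the \emph{static} set $\mathcal{H}$ over a time interval of length $32 r^2$ unless $r$ is small compared to $\eps$ and the flow's speed. Once you add this, $\eps_i\to 0$ forces $r_i\to 0$, the dilates form a genuine blowup sequence with $D^i\to\mathbb{H}$ and $\Phi_i\to\mathrm{id}$, and the remainder of your argument (the application of Theorem \ref{thm_sheeting}, the recentering at points of $B(0,3r)$ to fix the scale mismatch, and the verification of properties (1)--(6) from free-boundary orthogonality and weak mean-convexity) matches the paper's proof. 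Note also that in the limit-flow case $D=\mathbb{H}$ there is no such issue, since $\Phi=\mathrm{id}$ and parabolic dilates of limit flows are again limit flows; the gap concerns only the compact-domain case.
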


Note that for a mean-convex flow, in order for \eqref{eq:sheeting-supp} to hold for some scale $r$, it must be the case that $r\leq r_0(\eps)$, where $\lim_{\eps\rightarrow 0} r_0(\eps) =0$. With this observation the above corollary follows after scaling from the second case of Theorem \ref{thm_sheeting}. The first case immediately yields:

\begin{corollary}[sheeting in the interior]
\label{cor:sheeting-int}
Let $D$ be a compact domain, or a halfspace $\mathbb{H}$, or entire space $\mathbb{R}^{n+1}$. Assume that $0$ is an interior point of $D$ and let
\begin{equation}
\mathcal{V}:= \{(x,t) \in \mathbb{R}^{n+1,1} | x_{n+1}=0\}.
\end{equation}
Then, for any $\eta>0$ there exists an $\eps>0$ with the following significance. Let $(\mcfM',\mcfK')$ be either a mean-convex free boundary flow in a compact domain $D$ or a limit flow in $\mathbb{H}$ or $\mathbb{R}^{n+1}$. Suppose that
\begin{equation}
d\big( \mcfK'\cap B(0,4r), \mathcal{V}\cap B(0,4r)\big) <\varepsilon r
\end{equation}
for some $r<\frac{1}{4}d(0,\partial D)$.
Then, there exist functions
\begin{equation}
f,g : B^{n,1}(0,3r)  \rightarrow \mathbb{R}
\end{equation}
such that:
\begin{enumerate}
\item $f\leq g$.
\item $f,g$ have scale-invariant $C^{2,1}$ norms $\leq \eta$.
\item Inside $B(0,2r)$, the set $\mcfK'$ coincides with the region between $\graph(f)$ and $\graph(g)$.
\item $f,g$ satisfy the graphical mean curvature flow equation.
\item For any fixed $x$, the functions $t\mapsto f(x,t)$ and $t\mapsto g(x,t)$ are increasing and decreasing respectively.
\end{enumerate}
\end{corollary}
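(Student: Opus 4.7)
The plan is to prove Corollary \ref{cor:sheeting-int} by contradiction, via parabolic rescaling, and reduction to the interior case ($\lim D^i = \R^{n+1}$, $P$ a plane) of Theorem \ref{thm_sheeting}. Suppose the corollary fails: there exist $\eta_0>0$, a sequence $\eps_i\to 0$, flows $(\mcfM^i,\mcfK^i)$ (each of the stated type, in a domain $D^i$), and radii $r_i<\tfrac14 d(0,\partial D^i)$ such that
\begin{equation}
d\big(\mcfK^i\cap B(0,4r_i),\,\mathcal{V}\cap B(0,4r_i)\big)<\eps_i r_i,
\end{equation}
but no pair of functions $f^i\leq g^i$ on $B^{n,1}(0,3r_i)$ with scale-invariant $C^{2,1}$-norms $\leq \eta_0$ realizes $\mcfK^i\cap B(0,2r_i)$ as the region between their graphs. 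Parabolically rescaling by $\lambda_i=r_i^{-1}$, one obtains flows $(\tilde{\mcfM}^i,\tilde{\mcfK}^i)$ in rescaled domains $\tilde D^i\supset B(0,4)$ (since $\lambda_i d(0,\partial D^i)>4$) with $d(\tilde{\mcfK}^i\cap B(0,4),\mathcal{V}\cap B(0,4))<\eps_i\to 0$. Each $\tilde{\mcfM}^i$ is itself either a blowup of a mean-convex free boundary flow or a limit flow, so by a diagonal argument we may regard $(\tilde{\mcfM}^i,\tilde{\mcfK}^i)$ as a blowup sequence in the sense of Section \ref{sec_blowup_seq} whose Hausdorff limit on $B(0,4)$ equals the static plane $\mathcal{V}$.

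Applying the interior case of Theorem \ref{thm_sheeting} (sheeting theorem) to this sequence, the separating locus $\mathcal{S}^i\cap B(0,1)$ is a $C^1$-hypersurface dividing $\partial\tilde{\mcfK}^i$ into two components each of limit multiplicity one. Since the Hausdorff closeness actually holds on $B(0,4)$ and each component is one-sided minimizing (Theorem \ref{thm_limit_flows}(4)), I can iterate the separating-surface construction at every interior point of $B(0,4-o(1))$ and invoke the local regularity theorem \cite{W05,Edelen17} at each point: on any fixed ball $B(0,R)$ with $R<4$, for $i$ large each sheet is a smooth mean curvature flow converging locally smoothly to $\mathcal{V}$. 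In particular, for $i$ large the two sheets can be written as graphs $\tilde f^i\leq \tilde g^i$ on $B^{n,1}(0,3)$ with $\|\tilde f^i\|_{C^{2,1}}+\|\tilde g^i\|_{C^{2,1}}\to 0$, and $\tilde{\mcfK}^i\cap B(0,2)$ coincides with the region between their graphs.

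Setting $f^i(x,t)=r_i\tilde f^i(x/r_i,t/r_i^2)$ and $g^i(x,t)=r_i\tilde g^i(x/r_i,t/r_i^2)$ produces the desired graphical representations of $\mcfK^i\cap B(0,2r_i)$ on $B^{n,1}(0,3r_i)$, whose scale-invariant $C^{2,1}$-norms coincide with those of $\tilde f^i,\tilde g^i$ and therefore tend to zero, contradicting the existence of $\eta_0$. The graphical mean curvature flow equation \eqref{graphical eq} for $f^i,g^i$ follows from the fact that each sheet is a classical smooth mean curvature flow in the interior. Finally, for the monotonicity conclusion I use that $\mcfK^i$ is weakly mean-convex (by hypothesis for the compact-domain case, by Theorem \ref{thm_limit_flows}(1) for the limit-flow cases), so that $\mcfK^i_{t_2}\subseteq \mcfK^i_{t_1}$ whenever $t_2\geq t_1$. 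Expressed in the graphical parametrization, this says that the enclosed region $\{f^i\leq x_{n+1}\leq g^i\}$ only shrinks in $t$, forcing $\partial_t f^i\geq 0$ and $\partial_t g^i\leq 0$.

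The main obstacle I anticipate is upgrading the $C^1$ separating hypersurface given by Theorem \ref{thm_sheeting} on $B(0,1)$ to a fully graphical sheeting on the larger parabolic ball $B^{n,1}(0,3)$, with uniform $C^{2,1}$-smallness. This is handled by applying the local regularity theorem on each of the two components: once the components are separated and each has limit density one, they are $\eps$-close to a multiplicity-one plane at every interior point of $B(0,4-o(1))$, so local regularity yields smooth convergence on all of $B^{n,1}(0,3)$, not just the smaller ball where the sheeting theorem initially produces the separating surface. Everything else — the rescaling reduction, reading off the graphical PDE from smooth convergence, and deducing monotonicity from weak mean-convexity — is routine.
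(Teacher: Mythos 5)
Your overall route is the paper's: the authors dispose of this corollary in one line ("the first case immediately yields"), meaning precisely the contradiction/rescaling reduction to case (a) of Theorem \ref{thm_sheeting}, followed by the local regularity theorem to write the two multiplicity-one sheets as small graphs, and weak mean-convexity to get the monotonicity in $t$. So in spirit your proposal matches the intended proof, including the covering/local-regularity step needed to pass from the unit ball in the sheeting theorem to graphs on $B^{n,1}(0,3r)$.

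There is, however, one genuine gap, and it is exactly the point the paper takes care to record in the remark preceding the corollary. Your sentence ``by a diagonal argument we may regard $(\tilde{\mcfM}^i,\tilde{\mcfK}^i)$ as a blowup sequence in the sense of Section \ref{sec_blowup_seq}'' is not justified as written. First, Theorem \ref{thm_sheeting} is stated for blowup sequences of a single fixed flow, so in the contradiction argument you should fix $(\mcfM',\mcfK')$ and let only $\eps_i,r_i$ vary (the $\eps$ in the corollary is allowed to depend on the flow); if the flows vary with $i$, the rescalings are not a blowup sequence of anything, and your statement becomes a uniformity claim that needs a separate compactness argument. Second, and more substantively, a blowup sequence requires $\lambda_i=r_i^{-1}\to\infty$. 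For limit flows in $\mathbb{H}$ or $\R^{n+1}$ this is harmless (any parabolic dilate of a limit flow is again realized as a limit flow by a genuine diagonal argument), but when $(\mcfM',\mcfK')$ is a mean-convex flow in a compact domain you must rule out $r_i$ staying bounded below: otherwise the limit statement would force the fixed flow to coincide with a static multiplicity-two plane on a macroscopic parabolic ball, and no rescaling to a blowup sequence takes place. This is the paper's observation that the closeness hypothesis can only hold for $r\leq r_0(\eps)$ with $r_0(\eps)\to 0$ (for instance because $H>0$ on the regular part by Theorem \ref{thm_ell_reg}, so the flow cannot be static at a fixed scale); it also guarantees $\lambda_i\, d(0,\partial D)\to\infty$, so that the rescaled domains genuinely converge to $\R^{n+1}$ and case (a) of Theorem \ref{thm_sheeting} applies. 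With that observation inserted, and the flow held fixed in the contradiction argument, your proof is complete and coincides with the paper's.
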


In both cases the strong maximum principle implies that in fact either $f$ is strictly dominated by $g$ or $f$ is identically equal to $g$ (the latter can only happen for limit flows).

Finally, we also have a version parallel to the barrier.
\begin{corollary}[graphs above the barrier]
\label{lem:jacobi-barrier_tang}
Let $(\mcfM,\mcfK)$ be a mean-convex free boundary flow in a compact domain $D$. Assume without loss of generality that $0\in \partial D$, and that the inwards unit normal at $0$ points in positive $x_1$-direction. Denote $\mathbb{H}:=\mathbb{R}^{n+1}\cap \{x_1\geq 0\}$ and
\begin{equation}
\mathcal{V}:= \{(x,t) \in \mathbb{R}^{n+1,1} | x_{1}=0\}.
\end{equation}
Then, for any $\eta>0$ there exists an $\eps>0$ with the following significance. Suppose that
\begin{equation}
d\big( \mcfK\cap B(0,4r), \mathcal{V}\cap B(0,4r)\big) <\varepsilon r
\end{equation}
for some $r>0$.
Then, there exist a function
\begin{equation}
g : B^{n,1}(0,3r)  \rightarrow \mathbb{R}
\end{equation}
such that:
\begin{enumerate}
\item $g\geq 0$.
\item $g$ has scale-invariant $C^{2,1}$ norm $\leq \eta$.
\item Inside $B(0,2r)$, the set $\mcfK$ coincides with the region between $\Phi(graph(0))$ and $\Phi(\graph(g))$, where the graphs are over $\partial{\mathbb{H}}$.
\item $g$ satisfies the graphical mean curvature flow equation (\ref{graphical eq}).
\item For any fixed $x$, the function $t\mapsto g(x,t)$ is decreasing.
\end{enumerate}
\end{corollary}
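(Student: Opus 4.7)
The plan is to reduce to the interior sheeting result, Corollary \ref{cor:sheeting-int}, by reflecting the free boundary flow across the barrier to produce a boundaryless mean-convex flow to which that corollary applies. Fix $\eta>0$ and suppose for contradiction that no $\eps>0$ suffices. Then there exist sequences $\eps_i\downarrow 0$, scales $r_i>0$, and mean-convex free boundary flows $(\mathcal{M}^i,\mathcal{K}^i)$ in compact domains $D^i\ni 0$ with the normalized geometry at $0$, such that
\begin{equation*}
d(\mathcal{K}^i\cap B(0,4r_i),\mathcal{V}\cap B(0,4r_i))<\eps_i r_i
\end{equation*}
but no admissible $g^i$ exists at scale $r_i$. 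Let $\Phi^i$ be the boundary-straightening map of $D^i$ centered at $0$; pulling back via $\Phi^i$ and extending by the reflection $\sigma:(s,x')\mapsto(-s,x')$ across $\{s=0\}$ produces a doubled flow $(\mathcal{M}^{i,d},\mathcal{K}^{i,d})$ in a neighborhood of $0$. Orthogonality of the free boundary makes the reflected extension $C^1$ across $\{s=0\}$, and parabolic regularity for the graphical MCF equation in the $\sigma$-symmetric pulled-back metric $(\Phi^i)^\ast g$ bootstraps this to smoothness; mean-convexity is preserved, and since $\sigma(\mathcal{V})=\mathcal{V}$ we retain
\begin{equation*}
d(\mathcal{K}^{i,d}\cap B(0,4r_i),\mathcal{V}\cap B(0,4r_i))<\eps_i r_i.
\end{equation*}

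Now $(\mathcal{M}^{i,d},\mathcal{K}^{i,d})$ is a mean-convex flow with $0$ an interior point. Fix $\eta'\in(0,\eta)$ and let $\eps_{\mathrm{int}}=\eps_{\mathrm{int}}(\eta')>0$ be the threshold from Corollary \ref{cor:sheeting-int}. For $i$ large, $\eps_i<\eps_{\mathrm{int}}$ and $r_i<\tfrac14 d(0,\partial D^{i,d})$, so Corollary \ref{cor:sheeting-int} (after truncating the doubled flow to a fixed compact subdomain) produces
\begin{equation*}
\tilde f^i\leq \tilde g^i:B^{n,1}(0,3r_i)\to\mathbb{R}
\end{equation*}
with scale-invariant $C^{2,1}$-norms $\leq\eta'$, satisfying the graphical MCF equation \eqref{graphical eq}, with $t\mapsto\tilde f^i$ increasing, $t\mapsto\tilde g^i$ decreasing, and such that $\mathcal{K}^{i,d}\cap B(0,2r_i)=\{\tilde f^i\leq s\leq\tilde g^i\}$. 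Since $\mathcal{K}^{i,d}$ is $\sigma$-symmetric, the two boundary graphs are related by $\sigma$, so $\tilde f^i=-\tilde g^i$ and $\tilde g^i\geq 0$.

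Setting $g^i:=\tilde g^i$ and restricting the graphical description to the original side $s\geq 0$, we see that $\mathcal{K}^i\cap B(0,2r_i)$ coincides with the region between $\Phi^i(\graph(0))$ and $\Phi^i(\graph(g^i))$, establishing conclusion (3) at scale $r_i$. Conclusions (1), (2), (4), (5) follow immediately from the corresponding properties of $\tilde g^i$ with the choice $\eta'<\eta$, contradicting the failure of the statement.

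The main obstacle is justifying the reflection construction. Orthogonality of $\partial K^i_t$ against $\partial D^i$ is exactly what ensures the reflected extension is $C^1$ across the straightened barrier; full regularity is then inherited from parabolic Schauder estimates for the graphical MCF equation in the $\sigma$-symmetric metric $(\Phi^i)^\ast g$. The identity $\tilde f^i=-\tilde g^i$ is a direct consequence of the exact $\sigma$-symmetry of $\mathcal{K}^{i,d}$ together with the single-valuedness of the graphical description given by Corollary \ref{cor:sheeting-int}.
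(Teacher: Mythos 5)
Your reduction hinges on the claim that reflecting the free boundary flow across $\partial D$ (in straightened coordinates, extending evenly under $\sigma:(s,x')\mapsto(-s,x')$) produces a smooth mean-convex mean curvature flow to which Corollary \ref{cor:sheeting-int} applies. That step has a genuine gap when the barrier is curved. In Fermi coordinates the pulled-back metric has the form $ds^2+g_s(x')$ with $\partial_s g_s|_{s=0}$ given by (twice) the second fundamental form of $\partial D$, which is nonzero unless $\partial D$ is totally geodesic; hence the evenly-extended metric $(\Phi^i)^*g$ is only Lipschitz across $\{s=0\}$, the doubled hypersurfaces are in general only $C^{1,1}$ across the barrier, and no Schauder bootstrap yields smoothness there. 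Moreover, your appeal to ``parabolic regularity for the graphical MCF equation'' is circular: the graphical description with controlled norms is precisely what is to be proved. Most importantly, the doubled object is neither a mean-convex free boundary flow in a compact domain nor a limit flow in $\mathbb{H}$ or $\mathbb{R}^{n+1}$ (truncating to a compact subdomain does not place it in either class), so Corollary \ref{cor:sheeting-int} — whose proof rests on one-sided minimization and the local regularity theorems for exactly those classes — does not apply to it. Reflection is exact only across a planar barrier, i.e.\ for limit flows in $\mathbb{H}$ as in \cite{Edelen17}; across a curved $\partial D$ it produces only an approximate flow, and all the quantities you need (mean-convexity, one-sided minimization, density bounds, local regularity) would have to be re-derived with error terms controlled by $A_{\partial D}$.

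The paper's proof avoids reflection altogether and is essentially one line: since $K_t\subset D$ and $\mcfK$ is $\eps r$-close to the barrier's tangent plane, the region $\mcfK$ is squeezed locally between $\partial D$ and the single sheet $\partial\mcfK$; by one-sided minimization (Theorem \ref{thm_ell_reg}) the relevant (reflected) density is controlled, and the local regularity theorem then makes $\partial\mcfK$ a single smooth graph $g\geq 0$ over $\partial\mathbb{H}$ in the straightened coordinates, with small scale-invariant $C^{2,1}$ norm; $g$ satisfies the graphical flow equation \eqref{graphical eq} by construction, and $t\mapsto g(x,t)$ is decreasing by mean-convexity. If you wish to keep a reflection-based argument, you would need to work within Edelen's reflected almost-Brakke-flow framework rather than invoking Corollary \ref{cor:sheeting-int} off the shelf, which is substantially more involved than the direct squeeze-plus-regularity argument.
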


\begin{proof}
By one-sided minimization (Theorem \ref{thm_ell_reg}), locally $\mathcal{K}$ must be between $\partial D$ and $\partial \mathcal{K}$. From this, the assertion follows easily.
\end{proof}

In particular, we can rule out static density two planes or halfplanes as potential tangent flows.

\begin{corollary}\label{cor_static_tangent_mult}
Static density two planes respectively halfplanes cannot occur as tangent flows.
\end{corollary}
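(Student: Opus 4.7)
The plan is to contradict the tangent flow hypothesis via the sheeting theorem, using the strict separation of sheets available for the original mean-convex flow. Suppose toward a contradiction that the tangent flow $(\mcfM',\mcfK')$ at some point $X\in\spt\mcfM$ is a static multiplicity-two plane (interior case) or halfplane (boundary case); after translation assume $X=0$. First I would use the definition of tangent flow: there is a sequence $\lambda_i\to\infty$ with $\mathcal{D}_{\lambda_i}(\mcfM,\mcfK)\to(\mcfM',\mcfK')$, and since $\mcfK'$ equals the underlying (half)plane $P$ as a set, Hausdorff convergence yields, for any prescribed $\eps>0$, some scale $r>0$ satisfying
\[
d\!\left(\mcfK\cap B(0,4r),\, P\cap B(0,4r)\right)<\eps r.
\]
This is exactly the hypothesis of Corollary \ref{cor:sheeting-int} in the interior case, and of Corollary \ref{cor:sheeting-bd} (with boundary-straightening map $\Phi=\Phi_0$, after rotating so the tangent halfplane equals $\mathcal{H}$) in the boundary case. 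Applying the sheeting theorem thus yields functions $f\leq g$ on $B^{n,1}(0,3r)$ (intersected with $\{x_1\geq 0\}$ in the boundary case), with small scale-invariant $C^{2,1}$-norm, satisfying the graphical mean curvature flow equation, and such that $\mcfK\cap B(0,2r)$ coincides with the region between $\graph(f)$ and $\graph(g)$ (composed with $\Phi$ in the boundary case).

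Next I would exploit that $\mcfM$ is the original mean-convex flow rather than a limit flow: the strong maximum principle remark immediately following Corollary \ref{cor:sheeting-int} then forces the strict inequality $f<g$ throughout the domain. On the other hand, by Proposition \ref{prop:support} we have $0\in\spt\mcfM=\partial\mcfK$, so $0\in\graph(f)\cup\graph(g)$, forcing at least one of $f(0,0)=0$ or $g(0,0)=0$; by the strict separation, exactly one of these holds, say without loss of generality $f(0,0)=0<g(0,0)$.

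Finally I would derive the contradiction. Shrinking $r$ further, the upper graph $\graph(g)$ stays uniformly away from the origin, so locally $\spt\mcfM\cap B(0,\rho)$ coincides with a single smooth multiplicity-one sheet passing through $0$ (in the boundary case, reflecting across the barrier extends it to a single smooth sheet through $0$ in $\R^{n+1}$). The local regularity theorem \cite{W05,Edelen17} then gives that the Gaussian density of $\mcfM$ at $0$ (respectively the reflected Gaussian density from Section \ref{sec_tang_gauss}) equals $1$, contradicting the hypothesis that the tangent flow has density two.

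The main technical content lies entirely in the sheeting theorem itself, and in particular in the Bernstein-type theorem (Theorem \ref{thm:bern}) used to rule out, for low-entropy limit flows in a halfslab, the exotic configurations that would otherwise obstruct decomposition into two sheets. Once sheeting is in hand, the tangent flow corollary follows almost at once, because strict separation of the two sheets is incompatible with both passing through the tangent point $0$.
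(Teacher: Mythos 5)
Your treatment of the two cases you consider is essentially the paper's own argument: use the tangent-flow convergence to verify the closeness hypothesis of Corollary \ref{cor:sheeting-int} (interior multiplicity-two plane) or Corollary \ref{cor:sheeting-bd} (boundary multiplicity-two half-plane orthogonal to the barrier), invoke the strong maximum principle remark to get strict separation $f<g$ for the original mean-convex flow, and conclude that $X$ lies on a single smooth multiplicity-one sheet, which is incompatible with (reflected) Gaussian density two. Those steps are fine.

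However, there is a genuine gap: the corollary, as the paper uses it, covers a third case that your argument does not touch, namely a static \emph{density}-two plane at a \emph{boundary} point, i.e.\ the static multiplicity-one plane coincident with the barrier plane $\partial\mathbb{H}$ (the static version of case (6) of Proposition \ref{prop:planar-limit}); here ``density two'' is the reflected Gaussian density, not the multiplicity. In that situation neither sheeting corollary you cite applies, since both require the tangent (half)plane to be orthogonal to, not coincident with, the barrier; and your two-sheet strong maximum principle argument has nothing to separate, because there is only one flow sheet --- the other ``sheet'' is the static barrier $\partial D$ itself, which is not part of the flow and need not even be minimal, so no strict separation of the form $f<g$ between two flow graphs is available. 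The paper handles this case with Corollary \ref{lem:jacobi-barrier_tang} (graphs above the barrier): near $X$ the set $\mcfK$ is the region between $\partial D$ and the graph of a single function $g\geq 0$ over the barrier, smooth with small norm, decreasing in time, and with $g(0,0)=0$ since $X\in\spt\mcfM$; because $H>0$ on the regular part (Theorem \ref{thm_ell_reg}), the graph must move strictly inward immediately after time $0$, forcing $g<0$ and hence $\spt\mcfM\not\subset D\times\R$, a contradiction. You need to add this case (note that, unlike its quasistatic analogue in Theorem \ref{thm:quasi-tangent-flows}, it requires no mean-convexity of $D$, which is exactly why it belongs in this corollary); without it, later steps that quote Corollary \ref{cor_static_tangent_mult} in full strength would not be justified.
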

\begin{proof}
Suppose that $X=(x,t)$ is a point of density two with a static (half)plane as tangent flow. Applying Corollary \ref{cor:sheeting-bd}, Corollary \ref{cor:sheeting-int}, or Corollary \ref{lem:jacobi-barrier_tang}, respectively, and using the strong maximum principle (in the first two cases) or the fact that $H>0$  and $\spt\mcfM \subset  D\times \R$ (in the last case),
we see that all points in a neighborhood of $X$ are regular points of multiplicity $1$; a contradiction.
\end{proof}

\subsection{Ruling out density two tangent flows}
In this section, assuming $D$ is mean-convex, we rule out quasistatic density two planes respectively halfplanes as potential tangent flows (recall that the static case has already been ruled out in Corollary \ref{cor_static_tangent_mult}).

For any $X=(x,t)\in\mathbb{R}^{n+1}\times \mathbb{R}$ and any $r>0$ we denote by
\begin{equation}
P(X,r) = B(x,r)\times (t-r^2,t]
\end{equation}
the backwards parabolic ball of radius $r$ with center $X$.

Given any closed spacetime subset $\mcfK'$ of $\mathbb{R}^{n+1}\times \mathbb{R}$, we define two quantities to measure Hausdorff-closeness to a quasistatic plane or halfplane, respectively.
Recall that in \cite[Sec. 9]{White_size}, White defines
\begin{align}\label{def_phi}
&\phi(\mcfK') \textrm{ to be the infimum of $s>0$ such that}\\
&\inf_{\mathcal{V}} d \big(\mcfK' \cap P(0,s^{-1}), \mathcal{V}\cap P(0,s^{-1})\big)<s,\nonumber
\end{align}
where $\inf_\mathcal{V}$ is taken over static planes $\mathcal{V}$ through the origin.
Similarly, we define
\begin{align}\label{def_phi+}
&\phi_+(\mcfK')  \textrm{ to be the infimum of $s>0$ such that}\\
 &\inf_{\mathcal{H}} d(\mcfK' \cap P(0,s^{-1}), \mathcal{H}\cap P(0,s^{-1}))<s,\nonumber
 \end{align}
where $\inf_\mathcal{H}$ is taken over static halfplanes $\mathcal{H}=\mathcal{V}\cap \{ x_1 \geq 0\}$ that intersect $\{x_1=0\}$ orthogonally at 0 (that is, $(0,0)\in \partial\mathcal{H}$ and the inner conormal is $e_1$). \\

The following lemma gives sheeting sequences and their limiting behaviour, for blowups at the boundary:

\begin{lemma}[sheeting sequence at the boundary]
\label{lem:jacobi}
Let $D$ be a compact domain. Assume without loss of generality that $0\in \partial D$, and that the inwards unit normal at $0$ is given by $e_1$. Let $(\mcfM^i,\mcfK^i)$ be either a blowup sequence at $0\in \partial D$ or a sequence of tangent flows at $0$, and suppose that $\phi_+(\mcfK^i)\rightarrow 0$.

Then, for large enough $i$ there are functions $f_i$ and $g_i$, defined on an exhaustion of $\mathbb{R}^n_+ \times (-\infty,0)$, such that:\footnote{Here, we denote $\mathbb{R}^n_+=\{x\in \mathbb{R}^n\, | \, x_1\geq 0\}$ and $\mathbb{H}=\{x\in \mathbb{R}^{n+1}\, | \, x_1\geq 0\}$.}
\begin{enumerate}
\item Either $f_i<g_i$ everywhere, or $f_i \equiv g_i$.
\item For any $U\subset\subset \mathbb{H}$ and $[a,b]\subset (-\infty,0)$, for $i$ large enough the region $K^i_t$ coincides in $U$ with the region between $\Phi_i(\graph (f_i))$ and $\Phi_i(\graph (g_i))$ for all $t\in [a,b]$, where $\Phi_i$ denotes the boundary straightening map for $D^i = \lambda_i D$ as in (\ref{eq:blow-up-straighten}).
\item $f_i$ and $g_i$ converge smoothly on compact subsets to $0$.
\item $f_i$ and $g_i$ solve the graphical mean curvature flow equation in the pullback metric $\Phi_i^* \delta$.
\item $f_i$ and $g_i$ satisfy the zero Neumann boundary condition.
\item $f_i$ and $g_i$ are increasing and decreasing in time, respectively.
\end{enumerate}

Furthermore, if $f_i<g_i$ for infinitely many $i$, then there exist constants $c_i>0$ and a subsequence $c_i\cdot(g_i-f_i)$ that converges smoothly on compact subsets to the constant function $u(x,t)\equiv 1$ on $\mathbb{R}^n_+ \times(-\infty,0)$.
\end{lemma}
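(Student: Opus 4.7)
The plan is to apply the boundary sheeting theorem (Corollary~\ref{cor:sheeting-bd}) at larger and larger scales for each fixed $i$, diagonalize to obtain $f_i,g_i$ on an exhaustion of $\mathbb{R}^n_+\times(-\infty,0)$, and then in the nontrivial case identify the normalized Jacobi field $c_i(g_i-f_i)$ as a constant via a Liouville-type argument.

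First I would pass to a subsequence so that the optimal halfplanes witnessing $\phi_+(\mcfK^i)\to 0$ stabilize; after rotating in $\partial\mathbb{H}=\{x_1=0\}$, we may take the reference halfplane to be $\mathcal{H}=\{x_{n+1}=0,\,x_1\geq 0\}$, and then for any $R<\infty$ and prescribed $\varepsilon_R>0$, the hypothesis and the definition~\eqref{def_phi+} yield
\begin{equation*}
d\bigl(\mcfK^i\cap P(0,4R),\,\mathcal{H}\cap P(0,4R)\bigr)<\varepsilon_R R
\end{equation*}
for all sufficiently large $i$. Choosing $\varepsilon_R\to 0$, Corollary~\ref{cor:sheeting-bd} produces functions $f_i^R\leq g_i^R$ on $B^{n,1}(0,3R)\cap\{x_1\geq 0\}$ solving the graphical MCF equation~\eqref{graphical eq} with respect to $\Phi_i$, satisfying the Neumann condition on $\{x_1=0\}$ and the required monotonicity in $t$, with scale-invariant $C^{2,1}$-norm $\leq\eta_R\to 0$. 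A standard diagonal procedure then produces single $f_i,g_i$ defined on an exhaustion of $\mathbb{R}^n_+\times(-\infty,0)$ verifying items (2), (4), (5), (6), and on each fixed compact set the scale-invariant norm of $f_i,g_i$ tends to $0$, giving~(3). The dichotomy in~(1) follows from the parabolic strong maximum principle and Hopf lemma (at the Neumann boundary) applied to the linear equation satisfied by $g_i-f_i$ on the connected parabolic domain, together with forward uniqueness for the quasilinear graphical MCF equation.

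Suppose now $f_i<g_i$ for infinitely many $i$ and set $u_i:=g_i-f_i>0$. Subtracting the equations for $g_i$ and $f_i$ and applying the mean value theorem, $u_i$ satisfies a uniformly parabolic linear equation
\begin{equation*}
\partial_t u_i=a^{jk}_i(x,t)\,D^2_{jk}u_i+b^j_i(x,t)\,D_ju_i+c_i(x,t)\,u_i,
\end{equation*}
whose coefficients are smooth functions of $f_i,g_i$, their derivatives up to second order, and of $\Phi_i$ with its derivatives up to third order, together with the homogeneous Neumann condition $\partial_{x_1}u_i=0$ on $\{x_1=0\}$. Because $f_i,g_i\to 0$ smoothly on compact subsets and $\Phi_i\to\mathrm{id}_\mathbb{H}$ smoothly, the coefficients converge smoothly to those of the Euclidean heat equation $\partial_tu=\Delta u$. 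I then fix an interior point $(y_0,t_0)\in\mathbb{R}^n_+\times(-\infty,0)$ and set $c_i:=u_i(y_0,t_0)^{-1}$; the parabolic Harnack inequality applied with $i$-uniform constants (valid on each fixed compact set for $i$ large) gives uniform two-sided bounds on $c_iu_i$ on each compact set, and interior Schauder estimates together with a diagonal subsequence yield smooth convergence $c_iu_i\to u$ on compact subsets. Here $u$ is a positive smooth ancient solution of the heat equation on $\mathbb{R}^n_+\times(-\infty,0)$ satisfying $\partial_{x_1}u=0$ on $\{x_1=0\}$, $u(y_0,t_0)=1$, and (inherited from item~(6)) $\partial_tu\leq 0$.

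The hardest step is the final Liouville argument identifying $u\equiv 1$. Reflecting across $\{x_1=0\}$ via the Neumann condition extends $u$ to a positive smooth ancient solution of the heat equation on $\mathbb{R}^n\times(-\infty,0)$ with $\partial_tu\leq 0$. The Li-Yau differential Harnack inequality on flat $\mathbb{R}^n$, applied on a shifted window $(-T,0)$ with $T>0$, gives
\begin{equation*}
\frac{|\nabla u|^2}{u^2}-\frac{\partial_tu}{u}\leq\frac{n}{2(t+T)};
\end{equation*}
sending $T\to\infty$ for fixed $t$ forces $|\nabla u|^2/u^2\leq\partial_tu/u\leq 0$, hence $\nabla u\equiv 0$ and $\partial_tu\equiv 0$, so $u$ is constant; the normalization then gives $u\equiv 1$. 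The principal technical point to verify along the way is parabolic Harnack and Schauder theory with $i$-uniform constants for the linear equation satisfied by $u_i$, which reduces to the $i$-uniform smoothness and uniform ellipticity of its coefficients and thus follows from the scale-invariant bounds on $f_i,g_i$ together with the local smooth convergence $\Phi_i\to\mathrm{id}_\mathbb{H}$.
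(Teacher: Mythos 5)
Your proposal is correct and follows essentially the same route as the paper: obtain $f_i,g_i$ from the boundary sheeting corollary (Corollary \ref{cor:sheeting-bd}) applied at larger and larger scales as $\phi_+(\mcfK^i)\to 0$, then show the normalized difference $c_i(g_i-f_i)$ converges to a positive ancient caloric function that must be constant because it is nonincreasing in time (mean-convexity) while the Harnack inequality forces it to be nondecreasing. The only cosmetic differences are that the paper reflects $u_i=g_i-f_i$ across $\{x_1=0\}$ before passing to the limit and cites White's argument for the Liouville step, whereas you reflect the limit and make the Li--Yau differential Harnack argument explicit.
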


A similar statement holds for blowups in the interior:

\begin{lemma}[sheeting sequence in the interior]
\label{lem:jacobi-int}
Let $D$ be a compact domain. Let $(\mcfM^i,\mcfK^i)$ be either a blowup sequence about a fixed interior point or a sequence of tangent flows at a fixed interior point, and suppose that ${\phi}(\mcfK^i)\rightarrow 0$.

Then, for large enough $i$ there are functions $f_i$ and $g_i$, defined on an exhaustion of $\mathbb{R}^n \times (-\infty,0)$, such that:
\begin{enumerate}
\item Either $f_i<g_i$ everywhere, or $f_i \equiv g_i$.
\item For any $U\subset\subset \mathbb{R}^{n+1}$ and $[a,b]\subset (-\infty,0)$, for $i$ large enough the region $K^i_t$ coincides in $U$, after a suitable rotation, with the region between $\graph (f_i)$ and $\graph (g_i)$ for all $t\in [a,b]$.
\item Both sequences $f_i$ and $g_i$ converge smoothly on compact subsets to 0.
\item $f_i$ and $g_i$ are solutions to the graphical mean curvature flow equation.
\item $f_i$ and $g_i$ are increasing and decreasing in time, respectively.
\end{enumerate}

Furthermore, if $f_i<g_i$ for infinitely many $i$, then there exist constants $c_i>0$ and a subsequence $c_i(g_i-f_i)$ that converges smoothly on compact subsets to the constant function $u(x,t)\equiv 1$ on $\mathbb{R}^n \times(-\infty,0)$.
\end{lemma}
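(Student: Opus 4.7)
I would follow the same strategy as in Lemma \ref{lem:jacobi}, now using Corollary \ref{cor:sheeting-int} in place of Corollary \ref{cor:sheeting-bd}, with no Neumann condition to track. First, since the blowup is about an interior point and $\lambda_i\to\infty$, the rescaled domains satisfy $d(0,\partial D^i)\to\infty$, so the interior sheeting theorem applies on any fixed scale for $i$ large. The hypothesis $\phi(\mcfK^i)\to 0$ furnishes, for each large $i$, rotations $R^i$ and radii $r_i\to\infty$ (with $r_i\leq \tfrac{1}{4}d(0,\partial D^i)$) such that after applying $R^i$ the set $\mcfK^i$ lies within Hausdorff distance $\epsilon_i r_i$ of the horizontal plane $\mathcal{V}=\{x_{n+1}=0\}\times\mathbb{R}$ on $B(0,4r_i)$, with $\epsilon_i\to 0$. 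Applying Corollary \ref{cor:sheeting-int} at scale $r_i$ yields graphical functions $f_i^{(r_i)}\leq g_i^{(r_i)}$. Since these are canonically the pointwise infimum and supremum of the $x_{n+1}$-coordinate over the sheets of the support, they agree wherever two admissible applications overlap; taking their common extension gives $f_i\leq g_i$ on a domain that exhausts $\mathbb{R}^n\times(-\infty,0)$ as $i\to\infty$.

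Properties (1)--(5) are verified much as in the boundary case. The dichotomy (1) is the strong maximum principle for the nonnegative difference $g_i-f_i$, which solves the uniformly parabolic equation obtained by subtracting the two copies of \eqref{graphical eq}. Properties (2) and (4) are immediate from Corollary \ref{cor:sheeting-int}. For (3), fixing any compact $K\subset\mathbb{R}^n\times(-\infty,0)$ and a scale $r$ with $K\subset B^{n,1}(0,r)$, the $C^{2,1}$ norm on $K$ of each of $f_i,g_i$ can be made arbitrarily small for $i$ large, by the flatness improvement in Corollary \ref{cor:sheeting-int}. Property (5) is inherited from the weak mean-convexity of $\mcfK^i$ (Theorem \ref{thm_limit_flows}): $K^i_t$ shrinks with $t$, so $g_i$ is nonincreasing and $f_i$ nondecreasing in $t$.

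For the rescaled Jacobi-field claim, suppose $f_i<g_i$ along a subsequence, and set $w_i:=g_i-f_i>0$. Subtracting \eqref{graphical eq} for $f_i$ and $g_i$ and using that $f_i,g_i\to 0$ smoothly on compact sets, $w_i$ satisfies a linear, uniformly parabolic equation
\begin{equation*}
\partial_t w_i = a_i^{jk} D^2_{jk}w_i + b_i^j D_j w_i,
\end{equation*}
whose coefficients converge locally smoothly to $\delta^{jk}$ and $0$, i.e., to the heat operator (the linearization of the graphical MCF equation at the zero solution). Normalize by $c_i:=1/\sup_{\bar P(0,1)}w_i$. Then $c_iw_i$ is a positive solution with $\sup_{\bar P(0,1)}(c_iw_i)=1$, and the parabolic Harnack inequality together with standard interior parabolic estimates yields a subsequential smooth limit $u$ on $\mathbb{R}^n\times(-\infty,0)$, which is a nonnegative, locally bounded solution of the heat equation, with $\partial_t u\leq 0$ inherited from $w_i$.

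The main obstacle is to conclude that $u\equiv 1$. Here the monotonicity in $t$ is essential: since $u$ is nonincreasing in $t$, the supremum $\sup_{\bar P(0,1)}u=1$ is attained at some point $(x_0,-1)$ with $x_0\in\bar B(0,1)$, which is an interior point of the spacetime domain $\mathbb{R}^n\times(-\infty,0)$. The strong maximum principle for the heat equation then forces $u\equiv 1$ on $\mathbb{R}^n\times(-\infty,-1]$, and forward uniqueness of bounded solutions with constant initial data $u(\cdot,-1)\equiv 1$ extends this to all of $\mathbb{R}^n\times(-\infty,0)$. Without the time-monotonicity this step would fail, since positive ancient solutions of the heat equation with bounded normalization need not be constant (for instance $e^{-a^2 t+ax_1}$), so the mean-convexity of the original flow plays a crucial role.
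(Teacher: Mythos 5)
Your construction of $f_i,g_i$ (applying Corollary \ref{cor:sheeting-int} at larger and larger scales as the flatness improves, then checking (1)--(5)) is essentially the paper's argument, which identifies the subsequential limit as a multiplicity-two plane via Proposition \ref{prop:planar-limit} and then invokes the interior sheeting corollary; likewise the passage to the linear limit equation for $w_i=g_i-f_i$ is the same. The problem is the final step, where you conclude $u\equiv 1$, and this is precisely the crucial ``Furthermore'' clause of the lemma.

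Concretely, the strong maximum principle cannot be applied at $(x_0,-1)$. Your normalization only gives $\sup_{\bar B(0,1)\times\{-1\}}u=1$ and $u\leq 1$ on $\bar B(0,1)\times[-1,0)$; on larger compacta the Harnack chains give bounds $u\leq C(R,T)$ with constants strictly bigger than $1$, so $(x_0,-1)$ is not known to be a maximum of $u$ over $\mathbb{R}^n\times(-\infty,-1]$, which is what the strong maximum principle would require. Worse, the time-monotonicity works against you here: since $u$ is nonincreasing in $t$, one has $u(x_0,t)\geq u(x_0,-1)=1$ for all $t\leq -1$, so earlier values are larger, not smaller; and $(x_0,-1)$ lies on the initial (parabolic-boundary) slice of the cylinder $\bar B(0,1)\times[-1,0]$ on which you do control $u$, so even the local strong maximum principle gives nothing. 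Note also that your proposed counterexample $e^{-a^2t+ax_1}$ solves the backward heat equation, not the heat equation; the relevant fact points the other way: by the Li--Yau differential Harnack inequality, any positive ancient solution of the heat equation on $\mathbb{R}^n\times(-\infty,0)$ satisfies $\partial_t\log u\geq |\nabla \log u|^2\geq 0$, i.e.\ it is forced to be nondecreasing in time. This is exactly the mechanism the paper uses (following the proof of Theorem 9.1 in \cite{White_size}): the Harnack-forced increase combined with the nonincrease coming from mean-convexity gives $\partial_t u\equiv 0$ and $\nabla u\equiv 0$, hence $u$ is constant, and the normalization on the slice $t=-1$ gives $u\equiv 1$. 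Some such Liouville-type ingredient is missing from your argument, and the maximum-principle substitute you propose does not close it.
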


\begin{proof}[Proof of Lemma \ref{lem:jacobi} and Lemma \ref{lem:jacobi-int}]
Taking $i \to \infty$, we get convergence to some tangent flow $(\mcfM', \mcfK')$ which is either: defined in $\mathbb{H}$ and supported in a free-boundary half plane; or defined in $\mathbb{R}^{n+1}$ and supported in a plane. By Proposition \ref{prop:planar-limit} these must be the (quasi-)static flows with multiplicity 2. We can therefore apply Corollary \ref{cor:sheeting-int} respectively Corollary \ref{cor:sheeting-bd} to obtain the required $f_i, g_i$.

Suppose now there is a subsequence with $f_i<g_i$. Since $\Phi_i \rightarrow \textrm{id}$ locally smoothly, the difference
\begin{equation}
u_i := g_i-f_i>0
\end{equation}
satisfies a linear parabolic equation with coefficients converging locally smoothly as $i\rightarrow \infty$ to those of the ordinary heat equation.

Let us first analyze the interior case. Since the functions $u_i$ on the one hand are decreasing by mean-convexity, but on the other hand want to become increasing driven by the Harnack inequality, the argument can be concluded as in the proof of \cite[Thm. 9.1]{White_size}.

In the boundary case, we consider the sequence of functions $\tilde{u}_i$ which is obtained from $u_i$ via doubling at the boundary of $\mathbb{R}^n_+$. Applying the same argument to $\tilde{u}_i$ the proof can be concluded in this case also.
\end{proof}

Finally, we also have a version for blowups parallel to the barrier.  Before stating it we need an auxilary result concerning mean-convex domains.

\begin{lemma}\label{lem:min-surface-outside}
Assume $D$ is mean-convex. Then for any point $x \in \del D$, there exists $r > 0$ and a smooth minimal surface in $B_r(x) \setminus \operatorname{int} D$ which passes through $x$.
\end{lemma}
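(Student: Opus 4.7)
The plan is to construct $\Sigma$ as a smooth minimal graph over $T_x\partial D$ passing through $x$ and lying just beneath $\partial D$ near $x$, with prescribed second-order tangency to $\partial D$ obtained via the implicit function theorem.

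Set up local coordinates so that $x=0$, $T_x\partial D=\R^n\times\{0\}$, and locally $D=\{x_{n+1}>u(x')\}$ for smooth $u$ with $u(0)=0$ and $Du(0)=0$. Mean-convexity of $D$ becomes $\mathcal{Q}(u):=\operatorname{div}\bigl(Du/\sqrt{1+|Du|^2}\bigr)\geq 0$, so at the origin $n\bar\lambda:=\operatorname{tr}(D^2u(0))\geq 0$. Set $A:=D^2u(0)-\bar\lambda I$, a trace-free symmetric matrix with $A\leq D^2u(0)$. By shrinking the coordinate neighborhood if needed, we may arrange that $\|u\|_{C^3}$ is small enough for the implicit function theorem below to apply.

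Assume first the strictly mean-convex case $\bar\lambda>0$. By the implicit function theorem applied to the minimal surface equation at the trivial solution $v\equiv 0$, for each small trace-free symmetric matrix $A'$ there exists a smooth minimal graph $v_{A'}$ on a fixed small ball $B_r\subset\R^n$ with
\begin{equation*}
v_{A'}(0)=0,\qquad Dv_{A'}(0)=0,\qquad D^2 v_{A'}(0)=A'.
\end{equation*}
The surjectivity underpinning this comes from the Laplace linearization: any prescribed value, gradient, and trace-free Hessian at the origin can be realized by a harmonic function on $B_r$ through a suitable choice of boundary data. Taking $A'=A$ yields a minimal graph $v$ through $(0,0)$ with horizontal tangent plane and $D^2 v(0)=A$. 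Taylor expansion gives
\begin{equation*}
v(x')-u(x')=\tfrac{1}{2}x'^T(A-D^2u(0))x'+O(|x'|^3)=-\tfrac{\bar\lambda}{2}|x'|^2+O(|x'|^3),
\end{equation*}
which is strictly negative for $0<|x'|<r_0$ once $r_0$ is small (depending on $\bar\lambda$ and $\|u\|_{C^3}$). Hence $\Sigma:=\operatorname{graph}(v|_{B_{r_0}})$ is a smooth minimal hypersurface in $\overline{B_{r_0}(x)\setminus\operatorname{int}D}$ passing through $x$.

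The main obstacle is the degenerate case $\bar\lambda(x)=0$, where the strict quadratic separation above collapses. If $\mathcal{Q}(u)\equiv 0$ in a neighborhood of $x$ on $\partial D$ then $\partial D\cap B_r(x)$ is itself locally a minimal surface through $x$ and we take it as $\Sigma$. Otherwise, by non-constancy of $\mathcal{Q}(u)\geq 0$ one finds strictly mean-convex points $x_k\in\partial D$ with $x_k\to x$; the strict-case construction produces minimal graphs $\Sigma_k$ through $x_k$. Uniform $C^3$-control on $u$ provides scale-invariant curvature bounds on the $\Sigma_k$ (after parabolic rescaling by $1/r_0(x_k)$ if the natural scales shrink), and a compactness/Arzelà–Ascoli argument then yields a smooth subsequential limit minimal surface $\Sigma$ through $x$, contained in $\overline{B_r(x)\setminus\operatorname{int}D}$, completing the proof.
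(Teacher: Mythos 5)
Your strictly mean-convex case is essentially the paper's own argument: both proofs use the implicit function theorem around the flat disk to produce a minimal graph through $x$, tangent to $\partial D$, whose trace-free Hessian $A$ at $x$ is strictly dominated by $D^2u(0)$, and then conclude by the quadratic comparison $v-u\leq-\tfrac{\bar\lambda}{2}|x'|^2+o(|x'|^2)$. One slip in your version: the implicit function theorem only yields minimal graphs with \emph{small} prescribed trace-free Hessian $A'$, and shrinking the coordinate neighborhood does not make $|D^2u(0)|$ (hence $|A|$) small --- you must dilate, i.e. run the construction for $\Lambda u(\cdot/\Lambda)$ with $\Lambda$ large and scale back, which is exactly the paper's ``after appropriate scaling''. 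This is minor and easily repaired. (The paper also carries the ambient metric $g$ as a parameter in the implicit function theorem so the construction applies to the rescaled domains; under the paper's standing convention that the ambient space is Euclidean, your version suffices.)

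The genuine gap is in your degenerate case $\bar\lambda(x)=0$ with $H\not\equiv 0$ nearby. The strict-case construction at nearby points $x_k$ with $H(x_k)>0$ only guarantees that $\Sigma_k$ avoids $\operatorname{int}D$ on a ball of radius $r_0(x_k)$ comparable to $\bar\lambda(x_k)\to 0$, since the quadratic separation $-\tfrac{\bar\lambda_k}{2}|x'|^2$ must beat a cubic error; thus the region where you control the containment collapses to the point $x$. Passing to a limit without rescaling gives a minimal graph through $x$ whose Hessian at $x$ equals $D^2u(0)$ exactly, i.e. with exact second-order contact with $\partial D$, and the third-order terms can have either sign, so nothing forces the limit to stay outside $\operatorname{int}D$ on any ball of definite radius. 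Rescaling by $1/r_0(x_k)$ does not help either: the rescaled limits live at scales $r_0(x_k)\to 0$ around $x$ and produce no surface of fixed size $r>0$ in the original picture. So your compactness step does not deliver $\Sigma\subset B_r(x)\setminus\operatorname{int}D$. Note that the paper sidesteps this issue entirely: its proof uses $\tr(B)>0$ at the point in question (reading mean-convexity of the barrier as strict), in which case your first case --- which coincides with the paper's proof --- already finishes the lemma; the borderline case $H(x)=0$ that you set out to handle is not treated in the paper and is not settled by your limiting argument.
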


\begin{proof}
Assume without loss of generality that $x=0$, $\mathbb{R}^n = T_0\partial D$. The result follows from the implicit function theorem: Consider for instance the map from $C^{2,\alpha}(B_1) \times \mathcal{S}^2_+(B_1)$ that maps \[(u,g)\to (u(0),Du(0),D^2u(0), H_g u),\] where $H_g u \in C^\alpha(B_1)$ is the mean curvature of $\graph (u)$ in the metric $g$. The linearization in $u$ at the Euclidean disk $(0,\delta)$ is clearly surjective, so in particular for any diagonal, trace-free matrix $A$ with $|A|<\varepsilon$ (and any metric $|g-\delta|<\varepsilon$) we are able to find a smooth ($g$-)minimal surface $\Sigma$ tangent to $D$ with second fundamental form equal to $A$ at $x$.

To complete the proof, let $B$ be the second fundamental form of $\partial D$ at $x$, then $B$ is diagonal in some orthonormal basis and has $\tr(B)>0$, so we may fix a diagonal, trace-free matrix $A<B$ and apply the above after appropriate scaling. Note that $A<B$ ensures that $\Sigma$ is disjoint from $\operatorname{int} D$ in a small enough ball.
\end{proof}

\begin{lemma}[graphical sequence above the barrier]
\label{lem:jacobi-barrier}
Let $D$ be a compact domain. Assume without loss of generality that $0\in \partial D$. Let $(\mcfM^i,\mcfK^i)$ be a blowup sequence at $0$, or a sequence of tangent flows at $0$, and suppose that ${\phi}(\mcfK^i)\rightarrow 0$.  Let $\Sigma_i$ be the corresponding dilates of the minimal surface passing through $0$ as constructed in Lemma \ref{lem:min-surface-outside}, or if the $(\mcfM^i, \mcfK^i)$ are tangent flows take $\Sigma_i \equiv \partial \mathbb{H}$.

Then, for large enough $i$ there are functions $g_i$, defined on an exhaustion of $\partial \mathbb{H} \times (-\infty,0)$, and $f_i$ defined on an exhaustion of $\partial \mathbb{H}$, where $\mathbb{H}=\{x\in \mathbb{R}^{n+1} \, |\, x_1\geq 0\}$, such that setting $f_i(\cdot,t)\equiv f_i(\cdot)$ we have:
\begin{enumerate}
\item Either $g_i > 0 \geq f_i$, or $g_i \equiv 0 \equiv f_i$.
\item For any $U\subset\subset \mathbb{H}$ and $[a,b]\subset (-\infty,0)$, for $i$ large enough the region $K^i_t$ coincides in $U$ with the region between $\Phi_i(\graph (0))$ and $\Phi_i(\graph (g_i))$, where the graphs here are over $\partial \mathbb{H}$.
\item For any $U \subset\subset \mathbb{R}^{n+1}$, for $i$ large enough $\Sigma_i$ coincides in $U$ with $\Phi_i(\graph(f_i))$.
\item $g_i$ and $f_i$ converge smoothly on compact subsets to 0.
\item $g_i$ and $f_i$ are solutions to the graphical mean curvature flow equation in the pullback metric $\Phi_i^* \delta$.
\item $g_i$ is decreasing in time.
\end{enumerate}
Furthermore, if $g_i > 0$ for infinitely many $i$, then there exist constants $c_i>0$ and a subsequence $c_i (g_i - f_i)$ that converges smoothly on compact subsets to the constant function $u(x,t)\equiv 1$ on $\partial\mathbb{H}\times(-\infty,0)$.
\end{lemma}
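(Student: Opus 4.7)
The plan is to obtain $g_i$ by specializing Corollary \ref{lem:jacobi-barrier_tang} to an exhausting sequence of scales, to obtain $f_i$ directly from the minimal surface $\Sigma_i$, and then to establish the final assertion through linearization combined with a Liouville-type rigidity argument in the spirit of Lemma \ref{lem:jacobi}. Since $0\in \partial\mcfK^i\subset D^i$ with $D^i$ converging smoothly to $\mathbb{H}$, any static plane through $0$ lying inside $\mathbb{H}$ must coincide with $\partial\mathbb{H}$. Hence $\phi(\mcfK^i)\to 0$ forces the approximating plane, after rotating so that the inward unit normal at $0$ equals $e_1$, to coincide with $\{x_1=0\}$. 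This places us in the hypotheses of Corollary \ref{lem:jacobi-barrier_tang} at successively larger scales, producing functions $g_i\geq 0$ on an exhaustion of $\partial\mathbb{H}\times(-\infty,0)$ satisfying properties (2), (4), (5), (6) of the lemma.

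For the second sheet, in the blowup case I would take $\Sigma_i$ to be the parabolic rescaling by $\lambda_i$ of the smooth minimal surface from Lemma \ref{lem:min-surface-outside}: it is tangent to $\partial D^i$ at $0$, lies outside $D^i$, and converges smoothly to $\partial\mathbb{H}$. On an exhaustion of $\partial\mathbb{H}$ we may thus write $\Sigma_i=\Phi_i(\graph f_i)$ for smooth $f_i\leq 0$, the inequality encoding that $\Sigma_i$ lies outside $D^i$. In the tangent-flow case one simply sets $\Sigma_i=\partial\mathbb{H}$ and $f_i\equiv 0$. In either case $f_i$ is time-independent and is a trivial solution of \eqref{graphical eq}. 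The dichotomy will follow from the strong maximum principle: the nonnegative difference $g_i-f_i$ satisfies a uniformly parabolic linear equation (obtained by linearizing the graphical MCF operator between $f_i$ and $g_i$) with zero Neumann condition, so either $g_i\equiv f_i$, which forces $g_i\equiv 0\equiv f_i$, or $g_i>f_i$ strictly; combined with $f_i\leq 0$ the latter gives $g_i>0\geq f_i$.

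For the final assertion, suppose $g_i>f_i$ for infinitely many $i$. I would normalize $u_i:=c_i(g_i-f_i)$ with $c_i:=1/(g_i-f_i)(0,-1)$, so that $u_i>0$ and $u_i(0,-1)=1$. Since $g_i$ and $f_i$ both solve \eqref{graphical eq} and both converge smoothly to zero while $\Phi_i^{*}\delta\to\delta$, the difference $u_i$ satisfies a linear strictly parabolic equation with zero Neumann boundary condition whose coefficients converge locally smoothly to those of the heat equation on $\partial\mathbb{H}$. After reflection across $\partial\mathbb{H}$, the parabolic Harnack inequality together with interior Schauder estimates produce a smooth subsequential limit $u>0$ on $\partial\mathbb{H}\times(-\infty,0)$ which solves the heat equation with zero Neumann data, satisfies $u(0,-1)=1$, and is non-increasing in $t$ (since $\partial_t g_i\leq 0$ and $\partial_t f_i=0$). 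The hard part will be the final Liouville-type rigidity: a positive ancient solution of the heat equation on $\partial\mathbb{H}$ with zero Neumann boundary that is non-increasing in $t$ must be constant. This is the rigidity at the heart of Lemma \ref{lem:jacobi} and of White's \cite[Thm. 9.1]{White_size}: after doubling across $\partial\mathbb{H}$ it reduces to the corresponding statement on $\mathbb{R}^n$, which follows from the parabolic Harnack inequality combined with monotonicity in $t$. The normalization $u(0,-1)=1$ then yields $u\equiv 1$, as required.
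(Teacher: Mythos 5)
Your route is the same as the paper's: $g_i$ from Corollary \ref{lem:jacobi-barrier_tang} applied at larger and larger scales (the approximating plane being forced to coincide with $\partial\mathbb{H}$), $f_i$ from writing the smoothly converging dilates $\Sigma_i$ as graphs via $\Phi_i$ (trivially in the tangent-flow case), and the final assertion via the linearization/Harnack/Liouville argument of Lemmas \ref{lem:jacobi} and \ref{lem:jacobi-int}, i.e. of \cite[Thm. 9.1]{White_size}. There is, however, a genuine gap in your proof of the dichotomy (1). Applying the strong maximum principle to the nonnegative difference $g_i-f_i$ only gives the alternative ``$g_i\equiv f_i$ (hence both $\equiv 0$, since $g_i\geq 0\geq f_i$) or $g_i>f_i$ everywhere'', and in the blowup case, where $f_i$ is in general strictly negative away from the origin, the inequalities $g_i>f_i$ and $f_i\leq 0$ do \emph{not} imply $g_i>0$: they leave open the possibility that $\Phi_i(\graph(g_i))$ touches the barrier at a point where $f_i<0$. (In the tangent-flow case $f_i\equiv 0$, so there your deduction is fine.) What (1) requires is a comparison of $g_i$ with the graph of $0$, i.e. with the barrier itself, not with $\Sigma_i$: for tangent flows $0$ is an exact static solution of \eqref{graphical eq} and the strong maximum principle applies directly, while in the blowup case mean-convexity of $D$ makes $0$ a subsolution (the mean curvature vector of $\partial D$ points into $D$, i.e. in the direction of increasing graph height), so if $g_i$ vanished at an interior point it would coincide with a static piece of the barrier at earlier times, which one then rules out using, e.g., the strict positivity of $H$ for dilates of the original flow from Theorem \ref{thm_ell_reg}. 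Some argument of this kind, invoking the barrier and the mean-convexity of $D$, is needed and is missing from your write-up.

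A second, harmless, confusion: in this lemma the graphs are taken over the full hyperplane $\partial\mathbb{H}\cong\mathbb{R}^n$ (the barrier plane), not over the half-plane $\mathbb{R}^n_+$ of Lemma \ref{lem:jacobi}. The base domain has no lateral boundary, there is no Neumann condition (note that Corollary \ref{lem:jacobi-barrier_tang}, unlike Corollary \ref{cor:sheeting-bd}, lists none), and ``reflection/doubling across $\partial\mathbb{H}$'' is not meaningful for $u_i=c_i(g_i-f_i)$. Once you drop it, your limiting argument is exactly the interior one: $u_i>0$ solves a linear parabolic equation whose coefficients converge locally smoothly to those of the heat equation on $\mathbb{R}^n\times(-\infty,0)$, it is nonincreasing in $t$ since $\partial_t g_i\leq 0$ and $\partial_t f_i=0$, and after normalizing at $(0,-1)$ the Harnack inequality together with the time-monotonicity yields a positive ancient caloric limit which must be constant, hence $\equiv 1$. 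That part is correct as soon as the spurious boundary condition is removed, and it is what the paper means by ``follows like in the interior and free boundary cases''.
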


\begin{proof}
Any subsequential limit of the $\mcfK^i$ must be a plane, and hence must be the barrier plane $\partial \mathbb{H}$ with multiplicity 1 (reflected density 2). The existence of the required $g_i$ follows from Corollary \ref{lem:jacobi-barrier_tang}, and the $f_i$ exist since $\Sigma_i$ is smooth, and converging smoothly to the barrier plane at $0$.  The convergence of $c_i(g_i - f_i)$ follows like in the interior and free boundary cases.
\end{proof}

The following theorem shows that density two planes respectively halfplanes are isolated:

\begin{theorem}[isolation]
\label{thm:effective-sep}
Let $(\mcfM,\mcfK)$ be a mean-convex flow in a compact domain $D$, or a limit flow of such a flow. In case $0\in \partial D$, assume without loss of generality that the inwards unit normal at $0$ points in positive $x_1$-direction. Then, there exists $\delta>0$ such that for any tangent flow $(\mcfM',\mcfK')$ to $(\mcfM,\mcfK)$ at $X=(0,t)$, we have:
\begin{enumerate}
\item If $0$ is a boundary point, and $\phi_+(\mcfK')<\delta$, then $\phi_+(\mcfK')=0$.
\item If $0$ is an interior point, and $\phi(\mcfK')<\delta$, then $\phi(\mcfK')=0$.
\item If $0$ is a boundary point, and $\phi(\mcfK')<\delta$, then $\phi(\mcfK')=0$.
\end{enumerate}
\end{theorem}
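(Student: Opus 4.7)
The approach is to argue by contradiction in each of the three cases, combining the sheeting-sequence lemmas (Lemma \ref{lem:jacobi}, Lemma \ref{lem:jacobi-int}, Lemma \ref{lem:jacobi-barrier}) with the backward selfsimilarity of tangent flows. I focus on case (1); cases (2) and (3) are handled identically using the interior or barrier-parallel sheeting lemmas.

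Suppose case (1) fails. Then there exists a sequence of tangent flows $(\mcfM'_j,\mcfK'_j)$ at $X = (0,0)$ (after translating in time) with $0 < \phi_+(\mcfK'_j) \to 0$. Since the tangent flows live in $\mathbb{H}$, the boundary-straightening maps $\Phi_j$ reduce to $\operatorname{id}_{\mathbb{H}}$. Applying Lemma \ref{lem:jacobi} to this sequence produces, for large $j$, functions $f_j \leq g_j$ defined on an exhaustion of $\mathbb{R}^n_+ \times (-\infty, 0)$ representing each $\mcfK'_j$. Since $\phi_+(\mcfK'_j) > 0$ forces $f_j \not\equiv g_j$, the dichotomy in the lemma gives $f_j < g_j$ strictly; the ``furthermore'' clause then supplies constants $c_j > 0$ such that, along a subsequence, $u_j := c_j(g_j - f_j)$ converges smoothly on compact subsets to the constant function $u \equiv 1$.

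The key structural input is backward selfsimilarity: each $\mcfK'_j$ is invariant under the parabolic dilations $(x, t) \mapsto (\lambda x, \lambda^2 t)$ for all $\lambda > 0$ (on $\{t \leq 0\}$), so $\partial \mcfK'_j$ is a parabolic cone about the origin. Because positive dilations preserve the ordering $f_j \leq g_j$, each sheet $\graph(f_j)$ and $\graph(g_j)$ is separately invariant, which translates into the homogeneity identities $f_j(\lambda x, \lambda^2 t) = \lambda f_j(x, t)$ and $g_j(\lambda x, \lambda^2 t) = \lambda g_j(x, t)$ for all $\lambda > 0$; in particular each $u_j$ is parabolically homogeneous of degree one. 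Fixing $(x_0, t_0)$ and $\lambda > 0$, both $(x_0, t_0)$ and $(\lambda x_0, \lambda^2 t_0)$ lie in a common compact subset of $\mathbb{R}^n_+ \times (-\infty, 0)$, so smooth convergence passes the identity to the limit: $u(\lambda x_0, \lambda^2 t_0) = \lambda u(x_0, t_0)$. But $u \equiv 1$ then forces $1 = \lambda$ for every $\lambda > 0$, the desired contradiction.

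The main subtlety lies in the homogeneity step: one must verify that the sheeting description extends consistently, via selfsimilarity, from the bounded regions where Lemma \ref{lem:jacobi} produces it to all of $\mathbb{R}^n_+ \times (-\infty, 0)$, and that positive dilations cannot interchange the two sheets. Both points follow from the strict separation $f_j < g_j$ being preserved under positive scaling together with the unambiguous ``lower/upper'' labeling of the two sheets. Cases (2) and (3) proceed along the same template with Lemmas \ref{lem:jacobi-int} and \ref{lem:jacobi-barrier} in place of Lemma \ref{lem:jacobi}; in case (3) one only needs the homogeneity of the single graph $g_j$ over the barrier plane, since the comparison minimal surface $\Sigma_j = \partial \mathbb{H}$ used in Lemma \ref{lem:jacobi-barrier} for tangent flows is already scale-invariant.
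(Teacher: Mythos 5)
Your argument for the branch $f_j<g_j$ is exactly the paper's: pass the one\hbox{-}homogeneity $u_j(\lambda x,\lambda^2 t)=\lambda u_j(x,t)$, coming from backward selfsimilarity of the tangent flows, to the limit $u\equiv 1$ of $c_j(g_j-f_j)$ and obtain a contradiction. The genuine gap is in how you dispose of the other branch of the dichotomy. You assert that ``$\phi_+(\mcfK'_j)>0$ forces $f_j\not\equiv g_j$'', but this statement is precisely the contrapositive of the nontrivial conclusion of the theorem: if $f_j\equiv g_j$, the flow is locally a single smooth graph, and nothing a priori says that this graph is a flat halfplane orthogonal to the barrier, which is what $\phi_+(\mcfK'_j)=0$ means. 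The case $f_j\equiv g_j$ cannot be excluded by fiat --- the sheeting corollaries explicitly allow it for limit flows, and the theorem is stated for tangent flows of limit flows as well, so it is a genuinely possible (indeed, the eventually occurring) outcome. Your proof as written therefore never actually derives $\phi_+(\mcfK'_j)=0$; it only rules out strict separation.

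The paper closes this branch with a short but essential argument that you omit: when $f_j\equiv g_j$, property (6) of the sheeting lemmas says $f_j=g_j$ is simultaneously nondecreasing and nonincreasing in time, hence constant in $t$; backward selfsimilarity then gives $g_j(rx)=r\,g_j(x)$, and a smooth $1$-homogeneous function is linear; finally, in case (1) the zero Neumann condition forces the resulting halfplane to meet $\partial\mathbb{H}$ orthogonally, in case (2) one gets a plane, and in case (3) (where $f_j\equiv 0$) one gets the barrier plane itself. Only then does one conclude $\phi_+(\mcfK'_j)=0$ (resp.\ $\phi(\mcfK'_j)=0$) for large $j$, contradicting the assumption $\phi_+(\mcfK'_j)>0$. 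You should add this time-constancy/homogeneity/linearity step (plus the Neumann input in case (1)); with it, your proposal coincides with the paper's proof.
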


\begin{proof}
Suppose towards a contradiction that there is a sequence of tangent flows $(\mcfM^i, \mcfK^i)$ at $X=(0,t)$ with ${\phi}(\mcfK^i)\rightarrow 0$ or ${\phi}_+(\mcfK^i)\rightarrow 0$, respectively. We must show that for large enough $i$, we have $\phi(\mcfK^i)=0$, respectively ${\phi}_+(\mcfK^i)= 0$.

In each case (1)--(3) consider the functions $f_i$ and $g_i$ given by Lemma \ref{lem:jacobi}, Lemma \ref{lem:jacobi-int} and Lemma \ref{lem:jacobi-barrier}, respectively, taking $f_i=0$ for case (3). Since we are dealing with sequences of tangent flows, all boundary-straightening maps are trivial, so $\mcfK^i$ corresponds to the region between $\graph(f_i)$ and $\graph(g_i)$. Moreover, since the $(\mcfK^i,\mcfM^i)$ are tangent flows, they are backwardly selfsimilar, i.e. it holds that
\begin{equation}\label{selfsimi_tang_seq}
f_i(rx,r^2 t) = rf_i(x,t)\,\, \textrm{ and } \, \, g_i(rx,r^2 t) = rg_i(x,t)
\end{equation}
for all $r>0$ and all $(x,t)$ with $t<0$.

If $f_i<g_i$ for infinitely many $i$, then by the conclusions of the lemmata above, there exist $c_i>0$ so that $c_i(g_i-f_i)$ converges smoothly to the constant function $u(x,t)\equiv 1$. However, equation \eqref{selfsimi_tang_seq} implies that $u(rx,r^2t) = ru(x,t)$, which is absurd.

So $f_i\equiv g_i$ for all sufficiently large $i$. But then $f_i=g_i$ are both increasing and decreasing, and hence constant in $t$. The selfsimilarity above then states that
\begin{equation}
g_i(rx) = rg_i(x).
\end{equation}
Since $g_i$ is smooth and 1-homogenous, it must be linear.
In case (2) and (3), we conclude that $\mcfK^i$ is a plane for $i$ large enough, hence $\phi(\mcfK^i)=0$. In case (1),  taking also into account the vanishing Neumann boundary data, we conclude that $\mcfK^i$ is a halfplane orthogonal to $\partial \mathbb{H}$ for $i$ large enough, hence $\phi_+(\mcfK^i)=0$. This finishes the proof of the theorem.
\end{proof}

We can now rule out quasistatic density two tangent flows:

\begin{theorem}[tangent flows]
\label{thm:quasi-tangent-flows}Quasistatic multiplicity two planes, respectively halfplanes, cannot occur as tangent flows to a mean-convex free boundary flow $(\mcfM,\mcfK)$. If additionally $D$ is mean-convex, then quasistatic density two planes also cannot occur as tangent flows.
\end{theorem}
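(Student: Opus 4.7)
The plan is to argue by contradiction, combining the sheeting lemmata with a second-order blowup analysis and the backward self-similarity of tangent flows.

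Suppose $(\mathcal{M}', \mathcal{K}')$ is a quasistatic multiplicity-two (half)plane (cases (i) or (ii)), or in the mean-convex $D$ setting a quasistatic multiplicity-one plane coincident with $\partial \mathbb{H}$ (case (iii), which carries reflected density two), arising as a tangent flow at some $X \in \spt \mathcal{M}$. Let $(\mathcal{M}^i, \mathcal{K}^i) := \mathcal{D}_{\lambda_i}((\mathcal{M}, \mathcal{K}) - X)$ with $\lambda_i \to \infty$ be a blowup sequence converging to $(\mathcal{M}', \mathcal{K}')$. Since $\spt\mathcal{K}'$ is a static (half)plane of the relevant type, we have $\phi(\mathcal{K}^i) \to 0$ (cases (i) and (iii)) or $\phi_+(\mathcal{K}^i) \to 0$ (case (ii)). Applying the appropriate sheeting lemma (Lemma~\ref{lem:jacobi-int}, \ref{lem:jacobi-barrier}, or \ref{lem:jacobi}, respectively), I obtain functions $f_i \leq g_i$ with $f_i, g_i \to 0$ smoothly on compact subsets such that $\mathcal{K}^i$ locally coincides with the region enclosed between their graphs; in case (iii), $f_i$ represents the rescaled auxiliary minimal surface through $X$ furnished by Lemma~\ref{lem:min-surface-outside} (whose existence requires mean-convexity of $D$). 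Since each $\mathcal{K}^i$ is a rescaling of the genuine mean-convex domain $\mathcal{K}$ (not a degenerate tangent flow), the case $f_i \equiv g_i$ is excluded for large $i$, and the ``furthermore'' conclusion produces $c_i \to \infty$ with $c_i(g_i - f_i) \to 1$ smoothly on compact subsets.

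Next, I consider the \emph{second-order blowup} $\mathcal{D}_{\mu_i}((\mathcal{M}, \mathcal{K}) - X)$ with $\mu_i := \lambda_i c_i \to \infty$. Because $X \in \spt\mathcal{M} = \partial \mathcal{K}$ and the origin corresponds to $X$, one of the graphs passes through the origin in the blowup coordinates; using the smooth convergence $f_i, g_i \to 0$ and the uniform $C^{2,1}$ control from the sheeting lemma, a Taylor expansion gives
\begin{equation*}
c_i f_i(\tilde{x}/c_i, \tilde{t}/c_i^2) = c_i f_i(0,0) + Df_i(0,0) \cdot \tilde{x} + \partial_t f_i(0,0)\,\tilde{t}/c_i + o_i(1)
\end{equation*}
uniformly on compact subsets, and analogously for $g_i$. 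Since $Df_i(0,0), \partial_t f_i(0,0) \to 0$ and since $\beta_i - \alpha_i := c_i(g_i-f_i)(0,0) \to 1$ with one of $\alpha_i, \beta_i$ equal to zero (pinned by $X$ lying on a sheet), I pass to a subsequence to conclude $c_i f_i(\tilde{x}/c_i, \tilde{t}/c_i^2) \to \alpha_\infty$ and $c_i g_i(\tilde{x}/c_i, \tilde{t}/c_i^2) \to \alpha_\infty + 1$ smoothly. By the compactness theorem for free boundary Brakke flows together with unit regularity, $\mathcal{D}_{\mu_i}((\mathcal{M}, \mathcal{K}) - X)$ then converges to a limit whose $\mathcal{K}$-set is a \emph{static slab of width exactly $1$}, bounded by two parallel (half)planes in cases (i), (ii), or between the barrier plane and a parallel plane in case (iii).

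This limit, being a parabolic dilation limit at the fixed spacetime point $X$ with scales $\mu_i \to \infty$, is itself a tangent flow at $X$, hence by the almost monotonicity formula it must be backwardly self-similar. However, a static slab of fixed width $1$ is carried by parabolic dilation $\mathcal{D}_\lambda$ to a static slab of width $\lambda$, so it cannot be backwardly self-similar for arbitrary $\lambda > 0$---a contradiction. The main technical obstacle is the Taylor-expansion step guaranteeing smooth convergence to a nondegenerate static slab, which relies on the sheeting lemma's uniform $C^{2,1}$ estimates on $f_i, g_i$ coupled with the observation from Proposition~\ref{prop:support} that $X \in \partial \mathcal{K}^i$ forces one of the graphs through the origin, preventing the slab from escaping to infinity under the second rescaling.
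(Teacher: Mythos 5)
Your setup (blowup sequence, the sheeting Lemmata \ref{lem:jacobi}, \ref{lem:jacobi-int}, \ref{lem:jacobi-barrier}, strict separation $f_i<g_i$ from mean-convexity, and the rigidity $c_i(g_i-f_i)\to 1$) matches the paper, but the core of your contradiction --- the ``second-order blowup'' at scale $\mu_i=\lambda_i c_i$ converging to a static slab of width exactly $1$ --- has a genuine gap. The convergence $c_i(g_i-f_i)\to 1$ furnished by the sheeting lemmata holds only on compact subsets of $\mathbb{R}^n\times(-\infty,0)$ (resp. $\mathbb{R}^n_+\times(-\infty,0)$) in the $\lambda_i$-rescaled coordinates; your second rescaling samples $f_i,g_i$ at points $(\tilde x/c_i,\tilde t/c_i^2)$ whose time coordinate tends to $0^-$, i.e. these points eventually leave every such compact subset, exactly in the regime where the singularity at $X$ forms and no uniform control is available (the functions are not even defined at $t=0$, so the Taylor expansion at $(0,0)$ is ill-posed, and the claim that one of the graphs is ``pinned'' through the origin only gives $f_i(0,t)\le 0\le g_i(0,t)$, not $\alpha_i=0$ or $\beta_i=0$). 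Morally the step cannot be repaired: the separation $u_i(0,t)$ is expected to collapse as $t\to 0^-$ (that is what a quasistatic multiplicity-two tangent flow encodes), so the width seen by the second blowup is $c_i u_i(\cdot,\tilde t/c_i^2)$, a ratio of separations at two very different time scales which may well tend to $0$; the backward portion of the limit is then again a multiplicity-two plane --- perfectly self-similar --- and no contradiction with backward self-similarity arises. In effect the argument assumes the width survives the rescaling, which is the very point in question.

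What is missing is precisely the scale-selection mechanism the paper uses: Theorem \ref{thm:effective-sep} (isolation) forces \emph{every} tangent flow at $X$ to be of the same planar type, hence the inner-ball radius satisfies $V(r)/r\to 0$ for all $r\to 0$, and one may then choose special radii $r_i$ with $V(r_i)/r_i<2V(3r_i)/(3r_i)$. Rescaling by $r_i^{-1}$ and applying the sheeting lemmata, this choice translates into the ratio bound $u_i(0,-1)\le \tfrac34\, u_i(0,-9)$ at fixed negative times (where the convergence is valid), which directly contradicts $c_i u_i\to 1$; the barrier-coincident case is then handled with Lemma \ref{lem:min-surface-outside} and Corollary \ref{lem:jacobi-barrier_tang} as you anticipated. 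Without some replacement for this quantitative comparison between two fixed negative time slices, your self-similarity argument does not close.
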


Note the last statement excludes case (6) of Proposition \ref{prop:planar-limit}.

\begin{proof}
We follow the proof strategy in \cite[Thm. 9.2]{White_size}, with some adjustments.

First suppose, for the sake of contradiction, that one tangent flow at a point $X = 0$ is a quasistatic multiplicity two plane, respectively halfplane.
In particular, we are in case (4) or (2) respectively of Proposition \ref{prop:planar-limit}.
Recall that by one-sided minimization, any planar limit flow must have (reflected) density either 1 or 2. In particular, if a limit flow is a multiplicity 2 plane, then it must be defined in $\mathbb{R}^{n+1}$ (that is, it cannot be a plane parallel to the barrier in $\mathbb{H}$).

We may assume without loss of generality that $X=0$, and in case $0$ is a boundary point that the inwards normal of $\partial D$ points in positive $x_1$-direction.
Consider the parabolic dilations $\mathcal{D}_\lambda \mcfK$. By the discussion above, we are in case (1) or (2) of Theorem \ref{thm:effective-sep}, so we must have
\begin{equation}
\lim_{\lambda\to\infty}{\phi}(\mathcal{D}_\lambda \mcfK) =0\quad  \textrm{ or }  \quad \lim_{\lambda\to\infty}{\phi}_+(\mathcal{D}_\lambda \mcfK) =0,
\end{equation}
respectively, at $X=0$. In particular, by definition of $\phi$ and $\phi_+$, every tangent flow at $X=0$ must be of the same type; that is, a quasistatic multiplicity two plane, respectively halfplane.

In either case we may consider the quantity
\begin{equation}
V(r) = \sup\{\rho\, | \, 0\in (\overline{B^{n+1}(y,\rho)}\cap D') \subset K_{-r^2} \text{ for some } y\},
\end{equation}
where $D'$ is $\mathbb{R}^{n+1}$ in the planar case and $\mathbb{R}^{n+1}\cap \{x_1\geq 0\}$ in the halfplanar case. The observation above implies that
\begin{equation}
\lim_{r\to 0} \frac{V(r)}{r} =0.
\end{equation}
In particular, we may take a  sequence $r_i\to 0$ such that
\begin{equation}
\frac{V(r_i)}{r_i} < 2\frac{V(3r_i)}{3r_i}.
\end{equation}
Consider the parabolic dilates $\mcfK^i = \mathcal{D}_{r_i^{-1}}\mcfK$. Applying Lemma \ref{lem:jacobi} and Lemma \ref{lem:jacobi-int}, respectively, gives functions $f_i \leq g_i$ such that $\mcfK^i$ corresponds to the region between $\Phi_i(\graph (f_i))$ and $\Phi_i(\graph (g_i))$, for some diffeomorphisms $\Phi_i$ converging smoothly to $\textrm{id}$. In particular, for large $i$ we have $V(\lambda r_i) \simeq u_i(0,-\lambda^2)$, where $u_i:=g_i-f_i$, and hence
\begin{equation}\label{incom}
u_i(0,-1) \leq \frac{3}{4} u_i(0,-9).
\end{equation}
Since $(\mcfM,\mcfK)$ is a mean-convex flow, we must have $u_i>0.$ Therefore by the above lemmata there exist $c_i>0$ so that $c_iu_i$ converges uniformly to $u(x,t)\equiv 1$. This is incompatible with the inequality (\ref{incom}).

For the remaining density $2$ case, suppose a tangent flow at $X = 0$ is a quasistatic multiplicity one plane coincident with the barrier tangent plane $\partial\mathbb{H}$.  Let $\Sigma$ be the minimal surface in $B_r(0) \setminus \operatorname{int} D$ passing through $0$ as in Lemma \ref{lem:min-surface-outside}, for some $r > 0$.  Set $\mcfM_\Sigma$ to be the mean curvature flow in $B_r(0)$ obtained by keeping $\Sigma$ static, and let $\mcfK_\Sigma$ be the region in $B_r(0)$ between $\Sigma$ and $D$.

By Theorem \ref{thm:effective-sep} (isolation), every tangent flow of $\mcfM$ at $X = 0$ coincides with $\partial\mathbb{H}$, with multiplicity-one. Therefore by Corollary \ref{lem:jacobi-barrier_tang}, there is a parabolic region
\[
\mathcal{U} = \{ |x|^2 \leq |t| , \quad t_0 < t < 0 \} \quad \text{ for some $t_0 < 0$},
\]
so that in $\mathcal{U}$ the flow $(\mcfM + \mcfM_\Sigma, \mcfK \cup \mcfK_\Sigma)$ is a graphical, weakly mean-convex mean curvature flow without boundary, with the property that any blow-up sequence centered at $(0, 0)$ converges smoothly to a multiplicity two plane.  We can then use the same argument as in the interior, using Lemma \ref{lem:jacobi-barrier} in place of Lemma \ref{lem:jacobi-int}, case to deduce a contradiction.
\end{proof}

\subsection{Ruling out density two limit flows}
In this section, for mean-convex free boundary flow $(\mathcal{M},\mathcal{K})$ in a mean-convex domain $D$, we rule density 2 (quasi)static planes or halfplanes as potential limit flows (in the case of tangent flows this has already been done in the previous section). Adapting \cite[Sec. 12]{White_size}, we start with the following lemma:

\begin{lemma}
\label{thm:limit-minimal}
Let $(\mcfM',\mcfK')$ be a limit flow of $(\mathcal{M},\mathcal{K})$, defined in the limiting domain $D'=\mathbb{R}^{n+1}$ or $D'=\mathbb{H}$. Let $(\mcfM'',\mcfK'')$ be a tangent flow of $(\mcfM',\mcfK')$ taken at a density two point $X=(x,t)$. Then:
\begin{enumerate}
\item If $x$ is an interior point of $D'$, and $\mcfK''$ is a static or quasistatic plane, then there is an open neighborhood $U$ of $x$ in $\mathbb{R}^{n+1}$, an open interval $(a,b)$, and a properly embedded smooth minimal hypersurface $\Sigma$ in $U$, such that $K'_\tau \cap U = \Sigma$ for all $\tau\in(a,b)$.
\item If $x$ is a boundary point of $D'$, and $\mcfK''$ is a static or quasistatic halfplane, then there is an open neighborhood $U$ of $x$ in $\mathbb{H}$, an open interval $(a,b)$, and a properly embedded smooth free boundary minimal hypersurface $\Sigma$ in $U$, such that $K'_\tau \cap U = \Sigma$ for all $\tau\in(a,b)$.
\item If $x$ is a boundary point of $D'$, and $\mcfK''$ is a static or quasistatic plane, then there exists a neighborhood $U$ of 0 in $\mathbb{H}$, and an open interval $(a,b)$, such that $K'_\tau \cap U = \partial \mathbb{H}\cap U$ for all $\tau\in (a,b)$.
\end{enumerate}
Furthermore, in case (1) we have $\Theta(\mathcal{M}',\cdot)\geq 2$ on all of $\Sigma\times (-\infty,b]$, and in cases (2), (3) we have $\Theta(\widetilde{\mathcal{M}'},\cdot)\geq 2$ on all of $\widetilde{\Sigma}\times (-\infty,b]$ or $\partial \mathbb{H}\times (-\infty,b]$ respectively.
\end{lemma}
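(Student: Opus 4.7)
The plan is to apply the sheeting corollaries of Section \ref{sec_mult_one} in each of the three cases, and then use the strong maximum principle combined with the time monotonicity of the sheets to reduce the local picture to a single static configuration.

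For cases (1) and (2) the tangent flow $\mcfK''$ is a multiplicity-two (quasi)static plane respectively halfplane with free boundary. In either case I apply Corollary \ref{cor:sheeting-int} or Corollary \ref{cor:sheeting-bd} at a sufficiently small scale at $X$ to obtain functions $f\leq g$ on a parabolic neighborhood $U\times(a,b)\ni X$, with $f$ nondecreasing and $g$ nonincreasing in $t$, each solving the graphical mean curvature flow equation (with vanishing Neumann data on $\partial\mathbb{H}$ in case (2)), such that $\mcfK'$ coincides with the region between $\graph(f)$ and $\graph(g)$. By Proposition \ref{prop:support}, the support of $\mcfM'$ is $\partial\mcfK'=\graph(f)\cup\graph(g)$; hence the density-two condition at $X$ forces both sheets to pass through $X$, i.e.\ $u(X):=g(X)-f(X)=0$. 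The strong maximum principle applied to the nonnegative parabolic solution $u$, vanishing at the interior point $X$, yields $u\equiv 0$ throughout the backward parabolic component, and the time monotonicity ($f$ nondecreasing, $g$ nonincreasing) then extends this to all of $U\times(a,b)$ and forces $\partial_t f\equiv 0$. The graphical MCF equation thus reduces to the minimal surface equation (with the Neumann condition in case (2)), so $\Sigma:=\graph(f)$ is the desired smooth (free boundary) minimal hypersurface, and $K'_\tau\cap U=\Sigma$ for all $\tau\in(a,b)$.

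For case (3) the tangent flow is a multiplicity-one (quasi)static plane coincident with $\partial\mathbb{H}$. I apply Corollary \ref{lem:jacobi-barrier_tang} at $X$ to obtain a single nonnegative function $g\geq 0$ on $U\times(a,b)$, nonincreasing in $t$ and solving graphical MCF in the pullback metric, such that $\mcfK'$ is the region between the barrier $\Phi(\graph(0))$ and $\Phi(\graph(g))$. Since the support of $\mcfM'$ is the Dirichlet boundary $\partial\mcfK'=\Phi(\graph(g))$, while $X$ lies on the barrier $\partial\mathbb{H}$, the density-two condition forces $g(X)=0$. The strong maximum principle followed by the time monotonicity then yields $g\equiv 0$ throughout $U\times(a,b)$, giving $K'_\tau\cap U=\partial\mathbb{H}\cap U$ as required.

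For the ``Furthermore'' density estimate, the multiplicity-two local structure gives $\Theta(\mcfM',\cdot)=2$ directly on the parabolic slab $\Sigma\times(a,b)$ (respectively on $\widetilde{\Sigma}\times(a,b)$ or $\partial\mathbb{H}\times(a,b)$ in the reflected picture of cases (2), (3)). The weak mean-convexity of $\mcfK'$ from Theorem \ref{thm_limit_flows} yields the inclusion $\Sigma\subseteq K'_\tau$ for all $\tau\leq b$, so that $\Sigma$ remains inside the flow for all earlier times; the lower bound $\Theta\geq 2$ then propagates to all of $\Sigma\times(-\infty,b]$ by combining the reflected Huisken monotonicity formula of Section \ref{sec_tang_gauss} with the upper-semicontinuity of Gaussian density. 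The main obstacle is justifying the input to the sheeting corollaries at the specific point $X$: one must extract, from the existence of a single planar tangent flow at $X$ together with Theorem \ref{thm:effective-sep} (isolation) applied to the limit flow itself, Hausdorff closeness to the planar model at a definite parabolic scale around $X$, which is precisely the hypothesis required by Corollaries \ref{cor:sheeting-int}, \ref{cor:sheeting-bd} and \ref{lem:jacobi-barrier_tang}.
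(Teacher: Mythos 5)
Your overall plan (sheeting plus a maximum-principle argument forcing the two sheets to coincide) is in the right spirit, but the central step has a genuine gap. You apply Corollaries \ref{cor:sheeting-int}, \ref{cor:sheeting-bd}, \ref{lem:jacobi-barrier_tang} ``at a sufficiently small scale at $X$'' and then claim that the density-two condition forces $u(X)=g(X)-f(X)=0$, so that the strong maximum principle gives $u\equiv 0$. Neither half of this works as stated. First, the corollaries require Hausdorff closeness to the static spacetime (half)plane in a \emph{two-sided} parabolic ball at a definite scale $r$; the tangent-flow hypothesis (even upgraded via Theorem \ref{thm:effective-sep} to $\phi(\mathcal{D}_\lambda\mcfK')\to 0$ or $\phi_+(\mathcal{D}_\lambda\mcfK')\to 0$) only gives closeness in \emph{backward} parabolic balls at scales tending to zero, and in the quasistatic case the forward closeness is genuinely false at every scale, since the flow may vanish right after time $t$. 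You flag this as ``the main obstacle'' but treat it as a technicality; it is not, and resolving it is exactly the content of the sheeting-sequence Lemmas \ref{lem:jacobi}, \ref{lem:jacobi-int}, \ref{lem:jacobi-barrier}, which produce graphs only on exhaustions of $\{t<0\}$ after rescaling, i.e.\ never on a fixed cylinder reaching up to time $t$. Second, and more seriously, density two at $X$ does \emph{not} force $f$ and $g$ to touch at a point interior to their domain of definition: the sheets can remain strictly separated for all $\tau<t$ with separation $o(r)$ at scale $r$ and still produce a quasistatic multiplicity-two planar tangent flow at $X$ (this slow-collapse/``popping'' scenario is precisely the configuration the lemma must handle). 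In that scenario $u>0$ everywhere the graphical structure is available, $X$ lies on (or beyond) the final-time boundary of that region, and the strong maximum principle has nothing to bite on. The paper's proof instead reruns the proof of Theorem \ref{thm:quasi-tangent-flows}: one chooses scales $r_i$ with the doubling property on the thickness $V$, applies the sheeting-sequence lemmas (which need only $\phi\to 0$ or $\phi_+\to0$), and uses the Jacobi-field/Harnack conclusion that $c_i(g_i-f_i)\to 1$ to rule out the strictly separated alternative; since the limit flow is only weakly mean-convex, the surviving alternative is $f_i\equiv g_i$, which is what yields the static minimal sheet $\Sigma$ (respectively $g_i\equiv 0$ in case (3)). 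Your argument bypasses this mechanism entirely, and without it the conclusion does not follow.

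A smaller issue: your derivation of the ``Furthermore'' clause from $\Sigma\subseteq K'_\tau$ for $\tau\leq b$ plus monotonicity and upper semicontinuity does not give the density bound, since membership in the solid region $K'_\tau$ carries no lower bound on the Gaussian density of $\mcfM'$ at $(y,\tau)$ (the boundary $\partial K'_\tau$ could a priori be far from $y$ at earlier times). The intended argument, following the proof of Theorem 12.2 in \cite{White_size} as the paper indicates, shows that the flow actually coincides with $\Sigma$ (with multiplicity two, respectively reflected multiplicity two, respectively with $\partial\mathbb{H}$) at all times $\tau\leq b$, from which the density bound on $\Sigma\times(-\infty,b]$ follows.
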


\begin{proof}
We proceed as in the proof of Theorem \ref{thm:quasi-tangent-flows} (tangent flows). In the limit flow setting, we have $D'=\mathbb{R}^{n+1}$ or $D'=\mathbb{H}$, so the dilates $\mathcal{D}_{r_i^{-1}}\mcfK'$ correspond in case (1) and (2) to the region between the graphs of $f_i \leq g_i$. However, it is possible to have $f_i\equiv g_i$ for limit flows, and in fact the proof of Theorem \ref{thm:quasi-tangent-flows} shows that this must be the case for sufficiently large $i$. This immediately implies that, in some backwards parabolic neighborhood of $X$, the flow $\mcfK'$ is a smooth static (free boundary) mean curvature flow, which yields (1) and (2).

Similarly, in case (3) we see that $g_i\equiv 0$ for large enough $i$, which yields that $\mcfK'$ equals $\partial \mathbb{H}$ in a  backwards parabolic neighborhood of $X$.

The final assertion follows from arguing similarly as in \cite[proof of Thm. 12.2]{White_size}.
\end{proof}

\begin{theorem}[limit flows]
Let $(\mcfM,\mcfK)$ be a mean-convex free boundary flow in a mean-convex domain $D$. Then (quasi)static density two planes or halfplanes do not occur as limit flows.
\end{theorem}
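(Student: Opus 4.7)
The plan is to argue by contradiction, following the cascading strategy of \cite[Sec.~12]{White_size} with Lemma \ref{thm:limit-minimal} as the key input. Suppose there exists a limit flow $(\mcfM',\mcfK')$ of $(\mcfM,\mcfK)$ which is a (quasi)static density-two plane or halfplane.

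First, I would globalize the density information on the limit flow. Since a (quasi)static density-two plane (or halfplane) is self-similar, every tangent flow of $(\mcfM',\mcfK')$ at any point in its support is again a (quasi)static density-two plane or halfplane of the same kind. Hence Lemma \ref{thm:limit-minimal} applies at every such point, which upgrades to $\Theta(\mcfM',\cdot)\geq 2$, respectively $\Theta(\widetilde{\mcfM}',\cdot)\geq 2$ in the halfspace case, on the entire support of $\mcfM'$ for all sufficiently past times.

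Next, I would pass this structure back to the original flow. The limit flow arises from blowups $(X_i=(x_i,t_i),\lambda_i)$ with $X_i\to X_\infty$ and $\lambda_i\to\infty$. By upper semi-continuity of density under Brakke convergence together with the almost-monotonicity of the reflected Gaussian density \cite[Sec.~5]{Edelen17}, the density-two property forces Gaussian densities of $\mcfM$ at appropriately chosen nearby points and scales to approach $2$. Following the cascade in \cite[Thm.~12.2]{White_size} --- at each stage selecting a ``critical radius'' point where the density-two threshold is just attained --- one extracts in the limit a genuine \emph{tangent flow} of $(\mcfM,\mcfK)$ at some $X_*\in\partial\mcfK$ that is itself a (quasi)static density-two plane or halfplane. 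This contradicts Corollary \ref{cor_static_tangent_mult} in the static case and Theorem \ref{thm:quasi-tangent-flows} in the quasistatic case, thereby handling cases (2) and (4) of Proposition \ref{prop:planar-limit}.

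For case (6) of Proposition \ref{prop:planar-limit}, where $\mcfM'$ is a multiplicity-one plane coincident with $\partial\mathbb{H}$, the above interior cascade is not directly available because the enclosed region degenerates. Here I would instead invoke the mean-convexity of $D$ via Lemma \ref{lem:min-surface-outside} to produce an auxiliary minimal surface $\Sigma$ outside $D$ passing through the blowup point, add the static flow supported on $\Sigma$ to $(\mcfM,\mcfK)$, and use Lemma \ref{lem:jacobi-barrier} to obtain a graphical two-sheeted structure to which the interior cascade of the previous paragraph can then be applied, mirroring the end of the proof of Theorem \ref{thm:quasi-tangent-flows}.

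The hard part is the cascade itself: \emph{a priori} tangent flows of a limit flow are only limit flows of the original, and forcing the sequence of blowup centers to stabilize at a single spacetime point of $\mcfM$ --- so that the extracted profile is actually a tangent flow, to which Theorem \ref{thm:quasi-tangent-flows} can be applied --- requires a careful selection of scales analogous to the ``critical radius'' inequality $V(r_i)/r_i<\tfrac{2}{3}V(3r_i)/(3r_i)$ used in the proof of Theorem \ref{thm:quasi-tangent-flows}. Carrying this out in the free boundary and mean-convex barrier setting, while simultaneously keeping track of whether one is in the interior, boundary, or barrier-coincident case, is the technical heart of the argument.
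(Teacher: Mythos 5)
Your overall frame (argue by contradiction, use Lemma \ref{thm:limit-minimal}, treat the barrier-coincident case (6) of Proposition \ref{prop:planar-limit} separately via Lemma \ref{lem:min-surface-outside} and Lemma \ref{lem:jacobi-barrier}) matches the paper, but the central step of your argument has a genuine gap: you propose a ``cascade'' that converts the hypothesized planar multiplicity-two \emph{limit} flow into a multiplicity-two planar \emph{tangent} flow of the original flow $(\mcfM,\mcfK)$, so as to contradict Corollary \ref{cor_static_tangent_mult} and Theorem \ref{thm:quasi-tangent-flows}. There is no mechanism to make the moving blowup centers $X_i$ stabilize at a single spacetime point: knowing that Gaussian densities of $\mcfM$ at nearby points and scales approach $2$ does not produce a fixed point whose tangent flow is a density-two plane, and the critical-radius trick you invoke from the proof of Theorem \ref{thm:quasi-tangent-flows} only selects \emph{scales} at an already fixed center, so it cannot perform this reduction. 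Indeed, the entire reason the paper (following \cite[Sec.~12]{White_size}) has a separate section for limit flows after tangent flows are handled is that this reduction is not available; Theorem \ref{thm:quasi-tangent-flows} enters the paper's proof only to conclude that a certain intermediate flow is a genuine limit flow rather than a dilate of $(\mcfM,\mcfK)$, not to supply the final contradiction. Your preliminary ``globalization'' step is also inert: applying Lemma \ref{thm:limit-minimal} to the planar multiplicity-two flow itself only reproduces density $2$ on a plane, whose entropy is exactly $2$, which contradicts nothing.

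What the paper actually does, and what is missing from your proposal, is an intermediate rescaling pinned by the isolation theorem: with $\delta$ from Theorem \ref{thm:effective-sep}, one dilates the blowup sequence by the smallest $\mu_i$ with $\phi_+(\mathcal{D}_{\mu_i}\mcfK^i)\geq\delta/2$ (in the plane case one must first replace $\phi_+$ by $\psi=\min(\phi,\phi_h)$ to handle the barrier re-entering from infinity, an issue your proposal does not address). The resulting limit $(\mcfM',\mcfK')$ is \emph{not} planar at unit scale but, by Theorem \ref{thm:effective-sep}, all of its tangent flows at the origin are multiplicity-two (half)planes; Theorem \ref{thm:quasi-tangent-flows} then shows it is a limit flow, so Lemma \ref{thm:limit-minimal} applies and yields a minimal hypersurface along which the (reflected) density is at least $2$ for all past times. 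Translating time to $-\infty$ gives a static limit flow which, because of the non-planarity $\phi_+(\mcfK')\geq\delta/2$ together with monotonicity, has entropy strictly greater than $2$; a blowdown is then a static cone of multiplicity greater than $2$, which by one-sided minimization is non-flat and contradicts the bound $|A|\leq CH$ from Theorem \ref{thm_ell_reg}. Without this entropy-excess mechanism (or a genuinely new argument for stabilizing centers), your proposed contradiction with the tangent-flow theorems cannot be reached.
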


\begin{proof}

We follow the proof strategy of \cite[Thm. 12.3]{White_size}, with some adjustments.   Suppose towards a contradiction that there is a blowup sequence $(\mcfM^i,\mcfK^i)$ that converges to a (quasi)static density two plane or halfplane $(\mcfM^\infty,\mcfK^\infty)$.
Fix $\delta>0$ as in Theorem \ref{thm:effective-sep} (isolation).

Adjusting the sequence a bit we can assume that $0\in \partial D^i \cap \partial K_0^i$, and that the inwards unit normal of $\partial D^i$ at $0$ is given by $e_1$.

\bigskip

Case 1: $(\mcfM^\infty,\mcfK^\infty)$ is a density two halfplane.\\

Let $\mu_i>0$ be the smallest number for which
\begin{equation}\label{eq_smallest_bla}
\phi_+(\mathcal{D}_{\mu_i} \mcfK^i) \geq \delta/2.
\end{equation}
Note that $\mu_i\to 0$ by hypothesis. Let  $(\mcfK',\mcfM')$ be a subsequential limit of $(\mathcal{D}_{\mu_i} \mcfK^i,\mathcal{D}_{\mu_i} \mcfM^i)$.
The limit satisfies
\begin{equation}
\phi_+(\mcfK') \geq \delta/2,
\end{equation}
but
\begin{equation}\label{eq_how_we_rescaled}
\phi_+(\mathcal{D}_\mu \mcfK') \leq \delta/2\quad \textrm{ for } \mu>1.
\end{equation}
By equation \eqref{eq_how_we_rescaled} and Theorem \ref{thm:effective-sep} (isolation) any tangent flow $(\mcfM'',\mcfK'')$ to $(\mcfM',\mcfK')$ at the origin must be a multiplicity 2 halfplane. In particular, by Theorem \ref{thm:quasi-tangent-flows}, $(\mcfM',\mcfK')$ must be a limit flow, not a dilate of $(\mcfM,\mcfK)$.

Now, by Lemma \ref{thm:limit-minimal} there exists a $b\in \mathbb{R}$ and a free boundary minimal hypersurface $\Sigma$ in an open neighborhood $U\subset\mathbb{H}$ of the origin, so that
\begin{equation}
\Theta(\widetilde{\mcfM'},\cdot)\geq 2\quad \textrm{on } \widetilde{\Sigma} \times (-\infty,b].
\end{equation}
Consider the time-translates of $(\widetilde{\mcfM'},\widetilde{\mcfK'})$ by $(x,t)\to (x,t+j)$; sending $j\to \infty$ we get a static limit flow $(\widehat{\mcfM},\widehat{\mcfK})$, with
\begin{equation}
\widehat{K}= \widehat{K}_t \equiv \bigcup_\tau K'_\tau,
\end{equation}
and
\begin{equation}
\Theta(\widehat{\mcfM},\cdot) \geq 2\quad \textrm{on } \Sigma\times \mathbb{R}.
\end{equation}

Together with \eqref{eq_smallest_bla} and monotonicity this implies
\begin{equation}
\textrm{Ent}[\widehat{\mcfM}]>2.
\end{equation}

\bigskip

Case 2: $(\mcfM^i,\mcfK^i)$ converges to a (quasi)static density 2 plane.\\

When scaling down along the sequence there is the potential scenario that the barrier comes back in from infinity. To deal with this, instead of $\phi_+$, we consider the more general quantity
\begin{align}\label{def_tilde_phi}
&\phi_{h}(\mcfK')  \textrm{ to be the infimum of $s>0$ such that }\\
&\inf_{\mathcal{H}} d(\mcfK' \cap P(0,s^{-1}), \mathcal{H}\cap P(0,s^{-1}))<s,\nonumber
 \end{align}
where $ \inf_{\mathcal{H}}$ is now taken over all halfplanes $\mathcal{H}=\mathcal{V}\cap \{x_1 \geq a\}$, $a\leq 0$, and $\mathcal{V}$ is a static plane intersecting $\{x_1=0\}$ orthogonally at 0 (in particular, $\mathcal{H}\ni (0,0)$ and the inner conormal on $\partial \mathcal{H}$ is given by $e_1$). \\

Consider the quantity
\begin{equation}
\psi = \min(\phi, \phi_{h}),
\end{equation}
where $\phi$ is defined in \eqref{def_phi} and $\phi_{h}$ is defined in \eqref{def_tilde_phi}.
Let $\mu_i>0$ be the smallest number for which
\begin{equation}
\psi(\mathcal{D}_{\mu_i} \mcfK^i) \geq \delta/2.
\end{equation}
Note that $\mu_i \rightarrow 0$.  Let $(\mcfK',\mcfM')$ be a subsequential limit, defined in $D'$, of $\mathcal{D}_{\mu_i}(\mcfK^i,\mcfM^i)$.
The limit satisfies
\begin{equation}
\psi(\mcfK') \geq \delta/2, \label{eq_smallest_bla_2}
\end{equation}
but
\begin{equation}\label{eq_how_we_rescaled2}
\psi(\mathcal{D}_\mu \mcfK') \leq \delta/2\quad \textrm{ for } \mu>1.
\end{equation}

In particular, if $(\mcfK'',\mcfM'')$ is a tangent flow to $(\mcfK',\mcfM')$ at 0, then
\begin{equation}
\psi(\mcfK'')\leq \delta/2.
\end{equation}

We consider the following subcases:\\

Case 2a: $0$ is an interior point of $D'$.\\

In this case, it follows that
\begin{equation}
\phi(\mcfK'')\leq \delta/2,
\end{equation}
and thus $\mcfK''$ is a multiplicity 2 plane by Theorem \ref{thm:effective-sep} (isolation).\\

Case 2b: $0$ is a boundary point of $D' = \mathbb{H}$, and $\phi_h(\mcfK'')\leq \delta/2$.\\

In this case, it follows that $\phi_+(\mcfK'')\leq \delta/2$, so by Theorem \ref{thm:effective-sep} (isolation) $\mcfK''$ must be a multiplicity 2 halfplane.\\

Case 2c: $0$ is a boundary point of $D' = \mathbb{H}$, and $\phi(\mcfK'')\leq \delta/2$.\\

Then, by Theorem \ref{thm:effective-sep} (isolation), we have ${\phi}(\mcfK'')=0$ and $\mcfK''$ is the barrier plane. \\

In all Cases 2a, 2b and 2c, Theorem \ref{thm:quasi-tangent-flows} (tangent flows) implies that $(\mcfM',\mcfK')$ must be a limit flow, not a dilate of $(\mcfM,\mcfK)$. By applying Lemma \ref{thm:limit-minimal} (at different centres), it follows that there exists $b\in \mathbb{R}$ and $\Sigma$ containing the origin such that
$\mcfM'$ has (reflected) density at least 2 on $\Sigma\times (-\infty,b]$, where $\Sigma$ is given by either:
\begin{itemize}
\item a minimal hypersurface in $\mathbb{R}^{n+1}$ (Case 2a)
\item a free boundary minimal hypersurface in $\mathbb{H}$ (Case 2a or 2b)
\item the barrier plane $\partial \mathbb{H}$ (Case 2c)
\end{itemize}

In particular the (possibly reflected) flow satisfies:

\begin{equation}
\Theta(\widetilde{\mcfM'},\cdot)\geq 2\quad \textrm{on } \widetilde{\Sigma} \times (-\infty,b],
\end{equation}

where $\widetilde{\Sigma}$ is a minimal hypersurface in $\mathbb{R}^{n+1}$.

Again taking translates of $(\mcfM',\mcfK')$ respectively $(\widetilde{\mcfM'},\widetilde{\mcfK'})$ by $(x,t)\mapsto (x,t+j)$ and sending $j\to \infty$, we get a static limit flow $(\widehat{\mcfM},\widehat{\mcfK})$ defined in $\mathbb{R}^{n+1}$. In each Case 2a, 2b and 2c, (\ref{eq_smallest_bla_2}) prevents this flow from being planar, so by again monotonicity it must satisfy
\begin{equation}
\textrm{Ent}[\widehat{\mcfM}]>2.
\end{equation}

\bigskip

In all Cases 1 and 2, we have thus constructed a static (possibly reflected) limit flow $(\widehat{\mcfM},\widehat{\mcfK})$ defined in $\mathbb{R}^{n+1}$ that has entropy strictly larger than $2$. Let $(\mcfM^*,\mcfK^*)$ be a blowdown limit (i.e. a tangent flow at infinity) of $(\widehat{\mcfM},\widehat{\mcfK})$. Then $(\mcfM^*,\mcfK^*)$ is a static cone of multiplicity strictly larger than 2. Hence, by one-sided minimization it must be non-flat, contradicting the bound $|A|\leq CH$. This finishes the proof of the theorem.
\end{proof}

\bigskip

\section{Conclusion}

In this final section, we explain how to use Theorem \ref{thm_ell_reg} (elliptic regularization and consequences) and multiplicity one (see Section \ref{sec_mult_one}) to conclude the proofs of Theorem \ref{thm_size} (size of the singular set), Theorem \ref{thm_structure} (structure of singularities) and Theorem \ref{thm_longtime} (long-time behavior).

\subsection{Size of the singular set}

\begin{proof}[{Proof of Theorem \ref{thm_size}}]
By the local regularity theorem \cite{W05,Edelen17} a point $X$ is regular if and only if one sees a density one plane respectively halfplane as tangent flow at $X$.
By the results from Section \ref{sec_mult_one} (multiplicity one) we have several restrictions on the possible tangent flows. Namely, in case the barrier is mean-convex the tangent flows at singular points cannot be static or quasistatic, but must be selfsimilar shrinkers respectively selfsimilar shrinkers with free boundary.
For general barrier, the only additional case that can occur is quasistatic density two planes.
Hence, the assertion follows from dimension reduction, c.f. \cite[Sec. 9]{White_stratification}.
\end{proof}

\subsection{Structure of singularities}

\begin{proof}[{Proof of Theorem \ref{thm_structure}}]
Let $(\mathcal{M},\mathcal{K})$ be a mean-convex free boundary flow in $(D^{n+1},g)$, and assume that $D$ is mean-convex.

Given a point $X_0$ in the support of the flow we consider
\begin{multline}
\mathcal{F}_1:=\{ (\mathcal{M}',\mathcal{K}')\, |\, \textrm{$(\mathcal{M}',\mathcal{K}')$ is a limit flow at $X_0$,}\\
\textrm{which is defined in entire space} \},
\end{multline}
and
\begin{multline}
\mathcal{F}_2:=\{ (\widetilde{\mathcal{M}}',\widetilde{\mathcal{K}}')\, |\, \textrm{$(\mathcal{M}',\mathcal{K}')$ is a limit flow at $X_0$,}\\
\textrm{which is defined in a halfspace} \}.
\end{multline}
Let
\begin{equation}
\mathcal{F}:=\mathcal{F}_1\cup\mathcal{F}_2.
\end{equation}
As explained in the proof of Theorem \ref{thm_size} (size of the singular set), the class $\mathcal{F}$ does not contain any singular stationary cones. Moreover, whenever a tangent flow of some flow in the class $\mathcal{F}$ at some point is a static or quasistatic plane, then it is in fact a static multiplicity one plane. Hence,  the arguments from White's second paper \cite{White_structure} apply to the class $\mathcal{F}$ (actually with some simplifications thanks to the a priori bound $|A|\leq CH$ from Theorem \ref{thm_ell_reg}), yielding all assertions of Theorem \ref{thm_structure}, except for the last bullet point. In particular, note that the class $\mathcal{F}$ cannot contain any doubling of a (quasi)-static plane in a halfspace parallel to $\partial{\mathbb{H}}$, since otherwise it would also contain $\partial\mathbb{H}$.
Finally, the assertion in the last bullet point follows from \cite[Cor. 1.5]{HK_inscribed}.
 \end{proof}

\subsection{Long-time behavior}

\begin{proof}[{Proof of Theorem \ref{thm_longtime}}]
We first observe that the proof of the Bernstein-type theorem (Theorem \ref{thm:bern}) goes through if instead of the $|A|\leq CH$ bound (which unfortunately degenerates as $t\to\infty$) one uses the Schoen-Simon result about (mildly singular) minimal hypersurfaces in a slab \cite{SS} similarly as in \cite[Cor. 7.3]{White_size}. Consequently, the sheeting theorem (Theorem \ref{thm_sheeting}) can also be applied as $t\to\infty$.

Now, suppose $(\mathcal{M},\mathcal{K})$ in a mean-convex free boundary flow in a mean-convex domain $D$, such that
\begin{equation}
K_\infty:=\bigcap_t K_t \neq \emptyset.
\end{equation}
We argue as in \cite[Sec. 11]{White_size}.
Let $(\mathcal{M}^T,\mathcal{K}^T)$ be the result of translating $(\mathcal{M},\mathcal{K})$ by $(x,t)\mapsto (x,t-T)$. Then a subsequence will converge to a limit $(\mathcal{M}',\mathcal{K}')$. Note that
\begin{equation}\label{eq_long_limit}
\mathcal{K}'=\bigcap_t K_t\times \mathbb{R}.
\end{equation}
This is independent of the sequence of times going to infinity. Together with the fact that $\mathcal{M}'$ is determined by $\mathcal{K}'$ (the support of $\mathcal{M}'$ equals $\partial{\mathcal{K}}'$, and multiplicity one versus two is determined by whether or not the component of $\mathcal{K}'$ under consideration has interior points), we infer that
\begin{equation}
(\mathcal{M}^T,\mathcal{K}^T)\to (\mathcal{M}',\mathcal{K}')
\end{equation}
as $T\to\infty$ (i.e. it is not necessary to pass to a subsequence). By \eqref{eq_long_limit} the limit $(\mathcal{M}',\mathcal{K}')$ is static, i.e.
\begin{equation}
\textrm{spt}(\mathcal{M}')=\partial K_\infty \times \mathbb{R},\qquad K_t'=K_\infty.
\end{equation}
By one sided minimization (Theorem \ref{thm_limit_flows}) and the sheeting theorem (Theorem \ref{thm_sheeting}), $\partial K_\infty$ is a union of finitely many stable free boundary minimal surfaces. Hence, by Simons \cite{Simons} and Gr\"uter \cite{Gruter} the dimension of the singular set is at most $n-7$. This finishes the proof of Theorem \ref{thm_longtime}.
\end{proof}

\bigskip

\bibliographystyle{amsplain}

\vspace{10mm}

{\sc Nick Edelen, Department of Mathematics, University of Notre Dame, 255 Hurley Bldg, Notre Dame, IN 46556, USA}\\

{\sc Robert Haslhofer, Department of Mathematics, University of Toronto, 40 St George Street, Toronto, ON M5S 2E4, Canada}\\

{\sc Mohammad N. Ivaki, Department of Mathematics, University of Toronto, 40 St George Street, Toronto, ON M5S 2E4, Canada}\\

{\sc Jonathan J. Zhu, Department of Mathematics, Princeton University, Fine Hall, Washington Road, Princeton, NJ 08544, USA}\\

\noindent Email:  nedelen@nd.edu, roberth@math.toronto.edu,\\
 m.ivaki@utoronto.ca, jjzhu@math.princeton.edu

\end{document}